\tikzset{help lines/.style={step=#1cm,very thin, color=gray},
help lines/.default=.5} % draws a grid spaced #1 cm
\tikzset{thick grid/.style={step=#1cm,thick, color=gray},
thick grid/.default=1} % draws a grid spaced #1 cm
\newtheorem{thm}{Theorem}[section]
\newtheorem{lem}[thm]{Lemma}
\newtheorem{cor}[thm]{Corollary}
\newtheorem{prop}[thm]{Proposition}
\newenvironment{customthm}[1]
{\innercustomthm}{\endinnercustomthm}
\newenvironment{customlem}[1]
{\innercustomlem}{\endinnercustomlem}
\newenvironment{customcor}[1]
{\innercustomcor}{\endinnercustomcor}
\theoremstyle{definition}
\newtheorem{defn}[thm]{Definition}
\newtheorem{eg}[thm]{Example}
\theoremstyle{remark}
\newtheorem{rem}[thm]{Remark}
\numberwithin{equation}{section}
\newcommand{\ZZ}{{\ensuremath{\mathbb{Z}}}}
\newcommand{\RR}{{\ensuremath{\mathbb{R}}}}
\newcommand{\Hom}{{\ensuremath{\rm Hom}}}%
\newcommand{\Aut}{{\ensuremath{\rm Aut}}}%
\newcommand{\cA}{\ensuremath{{\mathcal{A}}}}
\newcommand{\cB}{\ensuremath{{\mathcal{B}}}}
\newcommand{\cC}{\ensuremath{{\mathcal{C}}}}
\newcommand{\cD}{\ensuremath{{\mathcal{D}}}}
\newcommand{\cI}{\ensuremath{{\mathcal{I}}}}
\newcommand{\cJ}{\ensuremath{{\mathcal{J}}}}
\newcommand{\cK}{\ensuremath{{\mathcal{K}}}}
\newcommand{\cM}{\ensuremath{{\mathcal{M}}}}
\newcommand{\cO}{\ensuremath{{\mathcal{O}}}}
\newcommand{\cP}{\ensuremath{{\mathcal{P}}}}
\newcommand{\cR}{\ensuremath{{\mathcal{R}}}}
\newcommand{\cS}{\ensuremath{{\mathcal{S}}}}
\newcommand{\cX}{\ensuremath{{\mathcal{X}}}}
\newcommand{\Ob}{\cO b}
\newcommand{\Mor}{\cM or}
\newcommand{\vare}{\varepsilon}
\newcommand{\mat}[1]{\ensuremath{
\left[\begin{matrix}#1
\end{matrix}\right]
}}
\title{Continuously triangulating the continuous cluster category}
\author{Matthew Garcia}
\address{Department of Mathematics, Brandeis University, Waltham, MA 02454}\email{mcgarcia@brandeis.edu}
\author{Kiyoshi Igusa}
\address{Department of Mathematics, Brandeis University, Waltham, MA 02454}\email{igusa@brandeis.edu}
\subjclass[2010]{
55U40: 18E30, 16G20, }
\keywords{Frobenius categories, triangulated categories, topological categories, equivalence coverings}
\begin{document}

\begin{abstract}
In \cite{ccc}, the continuous cluster category was introduced. This is a topological category whose space of isomorphism classes of indecomposable objects forms a Moebius band. It was found in \cite{ccc} that, in order to have a continuously triangulated structure on this category, one needs at least two copies of each indecomposable object forming a 2-fold covering space of the Moebius band. This paper classifies all continuous triangulations of finite coverings of the basic continuous cluster category. This includes the connected 2-fold covering of Igusa-Todorov \cite{ccc}, the disconnected 2-fold covering of Orlov \cite{Orlov} and a third unexpected continuously add-triangulated 2-fold covering of the Moebius strip category.
\end{abstract}

\maketitle
%\section{}
%\subsection{}

\tableofcontents

% Section:

%\newpage
%%%%%%%%%%%%%%%%%%%%%%%%%%
%
%                Section  Introduction
%
%%%%%%%%%%%%%%%%%%%%%%%%%%

% 0: Introduction

\section*{Introduction}

Cluster categories were introduced by Buan, Marsh, Reineke, Reiten and Todorov \cite{BMRRT} to categorify the cluster algebras of Fomin and Zelevinsky \cite{FZ}. This has become an active area of research \cite{A}, \cite{BIRS}, \cite{R}, \cite{P}, \cite{posets}. 

This paper is about the continuous cluster category, introduced in \cite{ccc} and expanded in \cite{ccc2}, \cite{cccD}, \cite{IRT}. This is basically the limit of the cluster categories of type $A_n$ as $n\to \infty$. In this continuous limit, the set of isomorphism classes of indecomposable objects is in bijection with the open Moebius band. Thus, the continuous cluster category has the natural structure of a topological category. A surprising feature of this topology is that the minimal models must contain at least two copies of each indecomposable object giving a covering space of the Moebius band. This paper gives a classification of these categories. An unexpected feature of this classification is that, up to isomorphism, there are three 2-fold coverings of the Moebius strip category! (Corollary \ref{cor: E2}.)

The description of possible topologies on the (infinite) continuous cluster category comes from the study of the simplest of categories. Following the footsteps of the Pythagoreans, we examine the following quiver:
\begin{center}
\begin{tikzpicture}%[scale=3]
\coordinate (A) at (0,0);
\begin{scope}%[xshift=-4cm,yshift=10cm]
	\draw[fill] (A) circle[radius=3pt];
\end{scope}
\end{tikzpicture}
\end{center}
This is the quiver, called $A_1$, with one vertex and no arrows. This quiver represents a $K$-category with one object with endomorphism ring equal to the field $K$. We consider an equivalent $K$-category $\cC_n$ with $n$ isomorphic objects $1,2,\cdots,n$ so that $\cC_n(i,j)=K$ for all $i,j$ (Def. \ref{def: Cn}, Thm. \ref{thm: fat K is Cn}). Continuous triangulations of the continuous cluster category will be given by triples $(\sigma,\tau,\varphi)$ of discrete structures on this finite category $\cC_n$ for some $n\ge2$.

{Briefly, $\sigma,\tau$ are commuting autoequivalences of $\cC_n$, with $\sigma$ being an automorphism, and $\varphi:\sigma\to \tau$ a ``skew-continuous'' natural isomorphism (Def. \ref{def: skew-continuous}) the existence of which puts a severe restriction on $\tau$ (by Prop. \ref{prop: summary of skew-continuity}). These discrete structures on the finite category $\cC_n$ will be classified in Section \ref{Sec 5: App} assuming that $K$ is an algebraically closed field of characteristic not equal to 2. The main body of this paper describes how a continuously triangulated topological $K$-category which is algebraically equivalent to the continuous cluster category is given by such a triple of discrete structures.}

\subsection{Topological $K$-categories}

The paper begins with a precise description of topological $K$-categories. Basically, it is a $K$-category together with a topology on the set of objects and the set of morphisms so that all structure maps are continuous. We restrict attention to the topological full subcategory of indecomposable objects and assume that the rest of the category is constructed in a canonical way using the ``James construction'' (Def. \ref{def: James construction}).

The two main examples we consider are the circle category $\cS^1$ and open Moebius band category $\cM_0$ (Def. \ref{def: S1 category}, \ref{def: Moebius strip category M0}). These are topological $K$-categories where the endomorphism ring of each object is $K$ (making each object indecomposable) and the space of objects is the circle or open Moebius band, respectively. During this discussion, we recall the definition of the continuous cluster category from \cite{ccc} and the topological difficulties we are working to overcome. Basically the problem is the negative sign in the rotation axiom for a triangulated category and the fact that distinguished triangles can be continuously rotated which avoids the negative sign. Since we are taking $K$ to have the discrete topology, we cannot move continuously from 1 to $-1$ (since $char\,K\neq2$). So, the only resolution of this problem is to replace the category with an equivalent covering category.

\subsection{Equivalence coverings}

To do covering theory, we assume that the object space of our topological category $\cB$ is connected and locally simply-connected. We define the finite category $\cC_n$ and derive the basic properties of $\Aut(\cC_n)$. Then, we show that an $n$-fold ``equivalence coverings'' (Def. \ref{def: equivalence covering}) $\widetilde \cB$ of $\cB$ gives a homomorphism
\[
	\sigma:\pi_1(\cB,X_0)\to \Aut(\cC_n),
\]
well-defined up to conjugation. This is called the ``holonomy'' of $\widetilde \cB$. Conversely, for any homomorphism $\sigma$ as above, there is an equivalence covering $\widetilde \cB_\sigma$ with holonomy $\sigma$. And, by Lemma \ref{lem: morphism between equiv covers}, any two equivalence coverings with conjugate holonomies are continuously isomorphic.

Since homomorphisms $\ZZ\to \Aut(\cC_n)$ are given by the image of the generator of $\ZZ$, which we call the ``holonomy functor'' of the covering, we get the following.

\begin{customthm}{A}[Theorem \ref{thmA}]\label{thmA1}
$n$-fold equivalence coverings of $\cS^1$ and $\cM_0$ are classified by their holonomy functors which are automorphisms of $\cC_n$ well-defined up to conjugation.
\end{customthm}

Given $\sigma\in\Aut(\cC_n)$, the corresponding $n$-fold equivalence coverings of $\cS^1$ and $\cM_0$ are denoted $\widetilde\cS^1_\sigma$ and $\widetilde\cM_\sigma$. More generally, we denote by $\widetilde\cB_\sigma$ the equivalence covering of $\cB$ with holonomy $\sigma:\pi_1\cB\to \Aut(\cC_n)$.

\subsection{Autoequivalences of equivalence coverings}

Next we consider continuous autoequivalences of equivalence coverings $\widetilde\cB_\sigma$ which cover the identity functor on $\cB$. Using an explicit construction of $\widetilde\cS_\sigma^1$ and $\widetilde\cM_\sigma$ we prove the following.

\begin{customthm}{B}[special case of Theorem \ref{thmB}]\label{thmB1}
Continuous autoequivalences $F_\tau$ of $\widetilde\cS^1_\sigma$ and $\widetilde\cM_\sigma$ are given by autoequivalences $\tau$ of $\cC_n$ which commute with $\sigma$.
\end{customthm}

Since $\sigma$ commutes with itself, we have, in particular, continuous automorphisms of $\widetilde\cS^1_\sigma$ and $\widetilde\cM_\sigma$ given by the automorphism $\sigma$. Both are denoted  $F_\sigma$.

\subsection{Skew-continuous natural isomorphism}

The last factor needed to construct a continuous triangulation of $add\,\widetilde\cM_\sigma$ is a ``skew-continuous'' natural isomorphism $\varphi:\sigma\to \tau$. We explain briefly how this is used and why $\varphi$ cannot be continuous. We are constructing a triangulated category with distinguished triangles
\[
	X\xrightarrow f Y\xrightarrow g Z\xrightarrow \psi TX
\]
where $X,Y,Z$ are direct sums of objects in $\widetilde\cM_\sigma$. The shift functor is $T=F_\tau$. But the morphism $\psi:Z\to TX=F_\tau X$ is the composition of two morphisms: a morphism $h:Z\to F_\sigma X$ followed by a ``skew-continuous'' isomorphism $F_\varphi X:F_\sigma X\cong F_\tau X$.

By continuity, $F_\sigma$ takes distinguished triangles to distinguished triangles, but $T=F_\tau$ does not (by the rotation axiom) unless a negative sign is inserted. But $F_\sigma$ and $F_\tau$ are naturally (but discontinously) isomorphic by $F_\varphi$. This gives the following commuting diagram in which both horizontal sequences are distinguished triangles.
\[
\xymatrixrowsep{30pt}\xymatrixcolsep{40pt}
\xymatrix{%begin xy matrix
F_\sigma X\ar[r]^{F_\sigma f}\ar[d]^{F_\varphi(X)}&
F_\sigma Y\ar[r]^{F_\sigma g}\ar[d]^{F_\varphi(Y)}&
F_\sigma Z\ar[r]^{F_\sigma h}\ar[d]^{F_\varphi(Z)}&
F_\sigma F_\sigma X\ar[r]^(.4){F_\sigma(F_\varphi X)}\ar[d]^{F_\varphi(F_\sigma X)}& 
F_\sigma F_\tau X=F_\tau F_\sigma X\ar[d]_\cong^{F_\tau (F_\varphi X)} \\
F_\tau X\ar[r]^{F_\tau f} &
F_\tau Y\ar[r]^{F_\tau g} &
F_\tau Z\ar[r]^{F_\tau h} &
F_\tau F_\sigma X\ar[r]_{\cong}^{-F_\tau (F_\varphi X)} &
F_\tau F_\tau X
	}%end xy matrix
\]
The first three squares commute since $F_\varphi$ is a natural transformation. This forces the last square to commute. Since $F_\tau(F_\varphi X)$ is an isomorphism this forces
\[
	F_\sigma F_\varphi=-F_\varphi F_\sigma.
\]
So, $F_\varphi$ is not a continuous functor (since it does not commute with holonomy $F_\sigma$). Because of this minus sign we say that $F_\varphi$ and $\varphi$ are ``skew-continuous''. Proposition \ref{prop: summary of skew-continuity} implies that the existence of $\varphi$ imposes the following restriction on $\sigma$ and $\tau$.

\begin{customlem}{C}[Proposition \ref{prop: summary of skew-continuity}]\label{lemC1}
The following are equivalent.
\begin{enumerate}
\item There exists a skew-continuous natural isomorphism $\varphi:\sigma\to\tau$.
\item Every natural isomorphism $\varphi:\sigma\to\tau$ is skew-continuous.
\item $\sigma,\tau$ are anti-compatible (Def. \ref{def: anti-compatible}).
\end{enumerate}
\end{customlem}

Lemma \ref{lemC1} puts many conditions on $\sigma,\tau$ and $n$:
\begin{enumerate}
\item[(a)] Since $\sigma$ is compatible with itself, $\sigma\neq \tau$.
\item[(b)] Since the identity automorphism of $\cC_n$ is compatible with every autoequivalence of $\cC_n$, we have $\sigma, \tau\neq id$. (However, on the objects of $\cC_n$, the underlying permutations of $\sigma,\tau$ are allowed to be the identity.)
\item[(c)] For $n=1$, any two autoequivalences of $\cC_1$ are compatible. So, we must have $n\ge2$.
\end{enumerate}

\subsection{Classification Theorem}

Using Lemma \ref{lemC1}, the main theorem can be phrased as follows. We use the term ``continuous add-triangulation'' to refer to a continuous triangulation of the topological additive category generated by an equivalence covering $\widetilde \cM_\sigma$ of $\cM_0$ (Def.  \ref{def: continuous triangulation}, \ref{def: add-triangulation}).

\begin{customthm}{D}[Theorem \ref{thm: D in intro}]\label{thmD1}
Continuous add-triangulations, $\widetilde \cM_n(\sigma,\tau,\varphi)$, of $n$-fold equivalence coverings of $\cM_0$ are classified by triples $(\sigma,\tau,\varphi)$ where:
\begin{enumerate}
\item $\sigma$ is a $K$-linear automorphism of $\cC_n$.
\item $\tau$ is a $K$-linear autoequivalence of $\cC_n$ which commutes with $\sigma$ but which is anti-compatible with $\sigma$.
\item $\varphi:\sigma\to \tau$ is a skew-continuous natural isomorphism.
\end{enumerate}
Furthermore, continuous (strong) isomorphisms of triangulated categories $\widetilde \cM_n(\sigma,\tau,\varphi)\cong\widetilde \cM_n(\sigma',\tau',\varphi')$ are given by $\rho\in\Aut(\cC_n)$ so that $\sigma'\rho=\rho\sigma$, $\tau'\rho=\rho\tau$. In particular, $\widetilde \cM_n(\sigma,\tau,\varphi)\cong\widetilde \cM_n(\sigma,\tau,\varphi')$ for any $\varphi,\varphi'$.
\end{customthm}

To prove this theorem we show that the set of all distinguished triangles is determined by a single continuous family of distinguished triangles which we call the ``universal virtual triangle'' (Definition \ref{def: universal virtual triangle}). This is determined by $\sigma,\tau,\varphi$ and we show that such a triple determines a ``continuous Frobenius category'' ${add\,\overline \cM_\sigma(K[[t]])}$ whose underlying subcategory of indecomposable objects is an $n$-fold covering of the ``closed Moebius category'' $\overline \cM$ and whose stable category is $add\,\widetilde\cM_n(\sigma,\tau,\varphi)$. (Theorem \ref{thm: add M(s,t,phi) is continuously triangulated}.)

One consequence of the classification theorem, together with Lemma \ref{lemC1} and the fact that anti-compatibility is a symmetric relation (Corollary \ref{cor: anti-symmetry of s-t pairing}), is the following duality.

\begin{customcor}{E1}[Corollary \ref{duality of equivalence coverings}]\label{cor: E1}
If $\widetilde \cM_n(\sigma,\tau,\varphi)$ is a continuous add-triangulation of an $n$-fold equivalence covering of $\cM_0$ then so is $\widetilde \cM_n(\tau,\sigma,\varphi^{-1})$ (provided $\tau$ is an isomorphism).
\end{customcor}

We call $\widetilde \cM_n(\tau,\sigma,\varphi^{-1})$ the \emph{dual} of $\widetilde \cM_n(\sigma,\tau,\varphi)$. Finally, we classify the minimal examples given by $n=2$.

\begin{customcor}{E2}[Corollary \ref{cor: classification for n=2}]\label{cor: E2}
Up to isomorphism there are three add-triangulated 2-fold equivalence coverings of $\cM_0$: The continuous cluster category \cite{ccc} which is self-dual, Orlov's construction \cite{Orlov} (as a special case of a more general construction) and one new triangulated category which is the dual of Orlov's construction.
\end{customcor}

The last two sections deal with some of the technical details: the combinatorics of the category $\cC_n$ and construction of the continuous Frobenius categories needed to complete the proof of the classification Theorem \ref{thmD1}.

\subsection{Details of the category $\cC_n$}

In Section \ref{Sec 5: App} we discuss the combinatorics of the finite category $\cC_n$. This is the discrete category with object set $[n]=\{1,2,\cdots,n\}$ with $\cC_n(i,j)\cong K$ for all $i,j$, i.e., every morphism $i\to j$ is a scalar multiple of a basic morphism $x_{ji}$. The structures $\sigma,\tau,\varphi$ have easy descriptions in terms of these scalars. Section \ref{Sec 5: App} contains detailed proofs of needed technical results about these discrete structures. As an example, in Theorem \ref{thm: connected equivalence covers} we enumerate all connected add-triangulated equivalence coverings of $\cM_0$.

\subsection{Continuous Frobenius categories}

In the last section, Section \ref{ss: continuous Frobenius}, we show the existence of the objects $\widetilde\cM_n(\sigma,\tau,\varphi)$ of the classification Theorem \ref{thmD1} by constructing a continuous Frobenius category $add\,\overline \cM_\sigma(K[[t]])$.

Recall from Happel \cite{hap} that the stable category of a Frobenius category is triangulated. Following \cite{cfc}, we show that the stable category of the continuous Frobenius category $add\,\overline \cM_\sigma(K[[t]])$ is continuously triangulated in several ways determined by $\tau$ and $\varphi:\sigma\to \tau$.

Topologically, the subcategory $\overline \cM_\sigma(K[[t]])$ of indecomposable objects is a covering of the closed Moebius strip category. The stable category is the additive category of the corresponding open Moebius strip since the boundary consists of the projective-injective objects which become zero in the stable category.

%%%%%%%%%%%%%%%%%%%%%%%%%%
%
%Section  Topological $K$-categories and examples.
%
%%%%%%%%%%%%%%%%%%%%%%%%%%

\section{Topological $K$-categories and examples}\label{sec1}

In this section we construct the key examples of topological $K$-categories: the continuous path categories $\cP_J$ with object space $J\subseteq\RR$, the quotient category $\cR_c=\cP_\RR/\cI_c$, the circle category $\cS^1$ with object space $S^1=\RR/\pi\ZZ$ (the circle with diameter $1$) and the Moebius band category $\cM_0$. We begin with the basic definitions.

%%%%%%%%%%%%%%%%%%%%%%%%%%
%
%            Section  {Topological $K$-categories}
%
%%%%%%%%%%%%%%%%%%%%%%%%%%
%\newpage

\subsection{Topological $K$-categories in general} 

\begin{defn}\label{def: topological categories and functors}
By a \emph{topological category} we mean a small category $\cR$ so that the set of objects $\cO b(\cR)$ and the set of morphisms $\cM or(\cR)$ are topological spaces and the four structure maps of the category are continuous maps:
\begin{enumerate}
\item source and target maps: $s,t:\cM or\to \cO b$
\item identity map: $id:\cO b\to \cM or$
\item composition: $c:\cM or\oplus \cM or\to\cM or$ where $\cM or\oplus \cM or$ is the subset of $\cM or^2$ on which composition is defined.
\end{enumerate}
Since $\cO b(\cR)$ is a retract of $\cM or(\cR)$, the topology on the object space is uniquely determined by the topology on the morphism space. A \emph{continuous functor} between topological categories is a functor $F:\cB\to\cC$ which is a continuous mapping on morphism spaces (and, consequently, also on object spaces). A {natural transformation} $\psi:F\to G$ is \emph{continuous} if the assignment of $\psi_X:FX\to GX$ to each $X\in \cB$ gives a continuous function $\psi:\cO b(\cB)\to\cM or(\cC)$ from the space of objects of $\cB$ to the space of morphisms of $\cC$.
\end{defn}

Recall that a \emph{$K$-category} is an additive category $\cA$ so that each hom set $\Hom(A,B)$ is a vector space over $K$ and composition is $K$-bilinear. By a \emph{topological $K$-category} we mean a $K$-category $\cA$ which is also a topological category so that 
\begin{enumerate}
\item The action of $K$ on hom sets gives a continuous mapping: $K\times \cM or(\cA)\to\cM or(\cA)$.
\item $\Hom(A,B)$ has the discrete topology for all $A,B$.
\end{enumerate}

\begin{eg}\label{eg: trivial example}
For any topological space $X$, let $KX$ be the topological $K$-category with object space $X$ and morphisms being scalar multiples of identity morphisms and zero morphisms, i.e., the morphism space is $K\times X\cup X^2$ where $(0,x)\in K\times X$ is identified with $(x,x)\in X^2$. Composition is given in the obvious way. Any continuous mapping $f:X\to Y$ induces a continuous $K$-linear functor $Kf:KX\to KY$. Also, for any topological $K$-category $\cA$ there is a continuous $K$-linear embedding $KX\to \cA$ where $X$ is the space of nonzero objects of $\cA$. We call $KX$ the \emph{trivial $K$-category} generated by $X$.
\end{eg}

\begin{defn}\label{def: continuous triangulation}
Let $\cR$ be a topological $K$-category. By a \emph{continuous triangulation} of $\cR$ we mean the structure of a triangulated category on $\cR$ so that:
\begin{enumerate}
\item The shift functor $TX=X[1]$ is a continuous linear functor $T:\cB\to\cB$.
\item The set of distinguished triangles $X\to Y\to Z\to TX$ forms a closed subspace ${\bf \Delta}\subset\cO b(\cR)^3\times \cM or(\cR)^3$.
\end{enumerate}
A \emph{continuous triangle functor} between continuously triangulated $K$-categories $(\cB,T,{\bf\Delta})$, $(\cB',T',{\bf\Delta}')$ is a pair $(F,\psi)$ where $F:\cB\to\cB'$ is a continuous linear functor and $\psi:FT\cong T'F$ is a continuous natural $K$-linear isomorphism so that for every distinguished triangle $(X,Y,Z,f,g,h)\in\bf\Delta$, $(FX,FY,FZ,Ff,Fg,\psi_X\circ Fh)\in \bf\Delta'$. We say $(F,\psi)$ is a (strong) \emph{isomorphism} of continuously triangulated categories if $F$ is an isomorphism and $FT=T'F$ (but $\psi:FT\cong T'F$ is not necessarily the identity).
\end{defn}

\begin{rem}\label{rem: weak triangle isomorphisms}
If we drop the assumption $FT=T'F$ we get a notion of isomorphism which is too coarse for our purposes. For example, all continuous triangulations of a covering category $\widetilde \cM_\sigma$ would be isomorphic,  the classification would be reduced to covering theory and we would lose any concept of what are the distinguished triangles in each case.
\end{rem}

\begin{eg}\label{rem: rescaling gives strong isomorphism}
Given a topological triangulation $(T,\Delta)$ of a topological $K$-category $\cR$ and any nonzero scalar $a\in K^\ast$, let $\Delta_a$ be the set of all sextuples $(X,Y,Z,f,g,h)$ so that $(X,Y,Z,f,ag,h)\in \Delta$. Then $(T,\Delta_a)$ gives another triangulation of $\cR$ which is strongly isomorphic to the first triangulation. The strong isomorphism is the identity functor $F=id_\cR$ together with $\psi_X=a\cdot id_{TX}:TX\to TX$. We say that the triangulation $(T,\Delta_a)$ is a \emph{rescaling} of the triangulation $(T,\Delta)$.
\end{eg}

In the construction of variations of the continuous cluster category we will construct only the full topological subcategory of indecomposable objects with $0$ attached by one-point compactification. We recall that the \emph{one-point compactification} $X_+$ of a locally compact Hausdorff space $X$ is given by adding a disjoint point $\ast$, defining the open neighborhoods of $\ast$ to be the complements in $X_+$ of compact subsets of $X$ and the other open subsets of $X_+$ are defined to be the open subsets of $X$. Recall that a continuous mapping $f:X\to Y$ is called \emph{proper} if the inverse image of every compact subset of $Y$ is a compact subset of $X$. Then the following basic fact follows directly from the definitions.

\begin{lem}\label{lem: proper maps induce maps on one point compactifications}
Any proper mapping $f:X\to Y$ between locally compact spaces $X,Y$ induces a continuous mapping on one-point compactifications $f_+:X_+\to Y_+$.\qed
\end{lem}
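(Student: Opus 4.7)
The plan is to define $f_+$ explicitly on points and then verify continuity by checking that preimages of a basis of open sets in $Y_+$ are open in $X_+$. Specifically, I set $f_+(x) = f(x)$ for $x \in X$ and $f_+(\ast_X) = \ast_Y$, where $\ast_X \in X_+$ and $\ast_Y \in Y_+$ denote the points at infinity. This is the only reasonable extension because the map must send the complement of $X$ in $X_+$ (a single point) somewhere, and sending it to $\ast_Y$ is the choice that has a chance of being continuous.

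Next I would verify continuity by checking the two types of open sets in $Y_+$ given in the definition above. For an open set $U \subseteq Y$, viewed as an open subset of $Y_+$, one has $f_+^{-1}(U) = f^{-1}(U)$, which is open in $X$ by the continuity of $f$ and hence open in $X_+$. For a basic open neighborhood of $\ast_Y$ of the form $V = \{\ast_Y\} \cup (Y \setminus K)$ with $K \subseteq Y$ compact, one computes
\[
f_+^{-1}(V) \;=\; \{\ast_X\} \cup f^{-1}(Y \setminus K) \;=\; \{\ast_X\} \cup \bigl(X \setminus f^{-1}(K)\bigr).
\]
This is an admissible open neighborhood of $\ast_X$ in $X_+$ precisely when $f^{-1}(K)$ is compact in $X$.

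The only substantive step, and the single place where the hypothesis is used, is exactly here: one invokes the properness of $f$ to conclude that $f^{-1}(K)$ is compact whenever $K$ is compact, so that $f_+^{-1}(V)$ is open in $X_+$ by the definition of the one-point compactification topology. Without this hypothesis the preimage of a cocompact neighborhood of $\ast_Y$ could fail to be cocompact, and continuity at the added point would fail. Since every open subset of $Y_+$ is a union of sets of the two types considered, continuity on all of $X_+$ follows, which completes the verification.
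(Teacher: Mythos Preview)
Your proof is correct and is exactly the standard verification the paper has in mind; the paper itself does not spell out any argument, stating only that the lemma ``follows directly from the definitions'' and marking it with a \qed. Your write-up simply unpacks that claim.
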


\begin{defn}\label{def: one-point compactification of locally compact category}
Let $\cB$ be a topological $K$-category with a locally compact space of objects and no zero object. Then the \emph{one-point compactification category} $\cB_+$ is defined to be the topological $K$-category whose object space is the one-point compactification of the object space of $\cB$ with the additional point being 0 and with morphism space
\[
	\cM or(\cB_+)=\cM or(\cB)\cup 0\times \cO b(\cB)\cup \cO b(\cB)\times 0
\] 
with topology given as follows. The subspace $\cM or(\cB)$ is an open subspace of $\cM or(\cB_+)$ with the same topology as before. A basic open neighborhood of any zero morphism $0:x_0\to y_0$ where either $x_0$ or $y_0$ is zero is defined to be $\cB_+(U,V):=\{f:x\to y\,|\, x\in U,y\in V\}$ where $U,V$ are open neighborhoods of $x_0,y_0$ respectively in $\cO b(\cB_+)=\cO b(\cB)_+$. The algebraic structure of $\cB_+$ is the obvious one given by the fact that composition of any morphism with a zero morphism is zero.
\end{defn}

A continuous $K$-linear functor $F:\cB\to\cC$ between locally compact $K$-categories is \emph{proper} if the induced map on object spaces is proper. Since the topology on $\cM or(\cB_+)$ is given in terms of the topology on the object space, we have the following extension of the basic Lemma \ref{lem: proper maps induce maps on one point compactifications}.

\begin{prop}\label{prop: proper functors induce continuous functors on one-pt compactification categories}
A continuous proper $K$-linear functor $F:\cB\to\cC$ between locally compact topological $K$-categories induces a unique continuous $K$-linear functor on one-point compactification categories: $F_+:\cB_+\to \cC_+$.\qed
\end{prop}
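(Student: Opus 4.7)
The plan is to define $F_+$ as the unique extension of $F$ that sends the added zero object to the added zero object and all zero morphisms to zero morphisms, and then verify continuity by carefully analyzing preimages of basic open neighborhoods.

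First I would define $F_+$ on objects: by Lemma \ref{lem: proper maps induce maps on one point compactifications} applied to the proper map $F:\cO b(\cB)\to\cO b(\cC)$, there is a unique continuous extension to one-point compactifications sending $0_{\cB_+}\mapsto 0_{\cC_+}$. On morphisms, I set $F_+|_{\cM or(\cB)} = F$ and, for each added zero morphism $0:x\to y$ (where $x$ or $y$ equals $0_{\cB_+}$), define $F_+(0) := 0:F_+(x)\to F_+(y)$, which is an added zero morphism in $\cM or(\cC_+)$. Functoriality and $K$-linearity are immediate: $F_+$ preserves source, target, identities, composition (any composition involving a zero morphism is zero on both sides), and the $K$-action (scalar multiples of zero morphisms are zero).

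The substantive step is continuity of $F_+$ on morphism spaces. Since $\cM or(\cB)$ is an open subspace of $\cM or(\cB_+)$ with its given topology, and $F_+$ restricted there is $F$, it is continuous at all of $\cM or(\cB)$. The only thing to check is continuity at each added zero morphism $0_{xy}:x\to y$ (with $x$ or $y$ equal to $0_{\cB_+}$). A basic open neighborhood of $F_+(0_{xy}) = 0_{F_+(x),F_+(y)}$ in $\cM or(\cC_+)$ has the form $\cC_+(U',V')$ for open neighborhoods $U',V'$ of $F_+(x),F_+(y)$ in $\cO b(\cC_+)$. Since source and target are preserved by any functor, the preimage $F_+^{-1}(\cC_+(U',V'))$ consists precisely of all morphisms $g:x'\to y'$ in $\cM or(\cB_+)$ with $F_+(x')\in U'$ and $F_+(y')\in V'$, namely $\cB_+(F_+^{-1}(U'),F_+^{-1}(V'))$. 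By continuity of $F_+$ on object spaces, $F_+^{-1}(U')$ and $F_+^{-1}(V')$ are open neighborhoods of $x,y$ in $\cO b(\cB_+)$, so this preimage is a basic open neighborhood of $0_{xy}$.

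Uniqueness is forced at each stage: a continuous $K$-linear extension must send the added zero object of $\cB_+$ (the unique zero object of $\cB_+$) to the unique zero object of $\cC_+$, and must send every zero morphism to a zero morphism; on non-zero morphisms it must agree with $F$. I do not expect any serious obstacle; the proof is essentially bookkeeping about the explicit topology in Definition \ref{def: one-point compactification of locally compact category}, with the only mildly delicate point being to observe that basic open neighborhoods of added zero morphisms in $\cM or(\cB_+)$ are determined entirely by source-target data, so that continuity of $F_+$ on morphisms reduces cleanly to continuity on objects.
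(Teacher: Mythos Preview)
Your proof is correct and follows exactly the approach the paper indicates: the paper gives no detailed proof (the proposition carries a \qed), but the sentence preceding it says precisely that since the topology on $\cM or(\cB_+)$ is given in terms of the object space, the result is an extension of Lemma~\ref{lem: proper maps induce maps on one point compactifications}. Your argument makes this explicit by observing that the basic open neighborhoods $\cB_+(U,V)$ of added zero morphisms depend only on source--target data, so continuity on morphisms reduces to continuity on objects, which is Lemma~\ref{lem: proper maps induce maps on one point compactifications}.
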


In all examples, the space of indecomposable objects will be locally compact. The entire category can then be canonically constructed using the \emph{James construction} given as follows.

\begin{defn}\label{def: James construction}
Let $\cB$ be a topological $K$-category with no zero object and with locally compact space of objects. Let $\cB_+$ be the one-point compactification category. Then we define the \emph{topological additive category generated by $\cB$} to be the category $add^{top}\cB$ (denoted $add^{sa}\cB$ in \cite{ccc}) with object space given by the James construction with $0$ as base point \cite[p.224]{Ha}, i.e., it is a quotient of the union of all products 
\[
	\cO b(add^{top}\cB)=\coprod \cO b(\cB_+)^n/\sim
\]
with the quotient topology where the objects are ordered formal direct sums $X_1\oplus\cdots\oplus X_n$ of nonzero objects of $\cB_+$ with summands deleted when they converse to $0$. Morphisms spaces are given by products of morphism spaces: $\Hom(\oplus X_i,\oplus Y_j)=\prod_{ij}\Hom(X_i,Y_j)$, again with the quotient topology:
\[
	\cM or(add^{top}\cB)=\coprod \cM or(\cB_+)^{nm}/\sim
\]
This is a strictly monoidal category which is not strictly symmetric since $f\oplus g\neq g\oplus f$ in general. See \cite{ccc} for more details. 
\end{defn}

%\newpage

\subsubsection{Quotient and orbit categories}

Recall that an \emph{ideal} in a topological $K$-category $\cB$ is a subset $\cI$ of the morphism space of $\cB$ so that $\cI(x,y):=\cI\cap \cB(x,y)$ is a vector subspace of $\cB(x,y)$ for every $x,y\in\cB$ and so that, $f\circ g\in \cI$ if either $f$ or $g$ is in $\cI$. The \emph{quotient category} $\cB/\cI$ is the topological $K$-category with the same object space as $\cB$ but with hom-spaces $(\cB/\cI)(x,y)=\cB(x,y)/\cI(x,y)$ so that the entire morphism space is given the quotient topology with respect to the surjective map $\Mor(\cB)\to \Mor(\cB/\cI)$. Any set of morphisms $X$ generates an ideal $\cI_X$, namely the intersection of all ideals containing that set. Then $\cI_X(x,y)$ is the vector space spanned by all morphisms $x\to y$ factoring through some element in $X$. The ideal generated by a set of objects is defined to be the ideal generated by the identity morphisms of those objects.

Any continuous linear functor $F:\cB\to \cB'$ which is zero on every morphism in $\cI$ induces a unique continuous linear functor $\cB/\cI\to \cB'$. Also, the \emph{kernel} of $F$, the set of all morphisms in $\cB$ which go to zero in $\cB'$, is always an ideal.

\smallskip

Recall that the action of a discrete group $G$ on a space $X$ is called \emph{properly discontinuous} if every $x_0\in X$ has an open neighborhood $U$ so that $gU\cap hU=\emptyset$ when $g\neq h$ in $G$. In that case $X$ will be a covering space of the orbit space $X/G$.

Suppose that $\cB$ is a topological $K$-category and $F$ is a continuous linear automorphism of $\cB$ which acts {property discontinuously} on object and morphism spaces of $\cB$, i.e., the action of the group $G$ of automorphisms of $\cB$ generated by $F$ is properly discontinuous. The \emph{orbit category} $\cB/F$ is the topological $K$-category with object and morphism spaces given by the orbits of the action of this group $G$. Note that any automorphism or endomorphism of $\cB$ which commutes with $F$ induces an automorphism/endomorphism of $\cB/F$.

\subsection{Outline of construction of circle and Moebius band categories}

First, we construct the ``continuous path category'' $\cP_\RR$ (Def. \ref{def: path category}). This has object space $\RR$ and morphisms $\cP_\RR(x,y)=K$ if $x\le y$ and $\cP_\RR(x,y)=0$ otherwise. The generator of $\cP_\RR(x,y)$ (corresponding to $1\in K$) is called the ``{basic morphism}'' (Def. \ref{def: path category}). Composition of basic morphisms is defined to be a basic morphism. For any $J\subseteq \RR$, $\cP_J$ is the full subcategory with object set $J$. Nonzero morphisms $f:x\to y$ in the path category have length $\ell(f)=y-x\ge0$. For any $c>0$, the ``{truncated path category}'' (Def. \ref{def: truncated path category}) is given by $\cR_c=\cP_\RR/\cI_c$ where $\cI_c$ is the ideal of all morphisms of length $\ge c$ (and all zero morphisms). Figure \ref{fig1} illustrates the morphism set of this category.

We take $c=\pi$ and define the ``{circle category}'' (Def. \ref{def: S1 category}) to be $\cS^1=\cR_\pi/G_\pi=\cR_\pi/G_{-\pi}$ where $G_t$, $t\in\RR$, is the continuous family of continuous automorphisms of $\cR_\pi$ given by sending $x$ to $x+t$ and basic morphisms to basic morphisms.

Let $\cP_\RR^2=\cP_\RR\otimes\cP_\RR$. This is the $K$-category with object space $\RR^2$ and morphisms $\cP_\RR^2(x,y)=K$ if $x_1\le y_1$ and $x_2\le y_2$ and $\cP_\RR^2(x,y)=0$ otherwise. Any composition of basic morphisms is a basic morphism. For any $c>0$ this category has an ideal $\cJ_c$ consisting of all morphisms which factor through an object $z=(z_1,z_2)$ with $|z_2-z_1|\ge c$. Let $\cD_c$ be the full subcategory of nonzero objects in the quotient category $\cP_\RR^2/\cJ_c$. Then $\cD_c$ is a locally compact $K$-category with object space equal to the set of all $(x,y)\in\RR^2$ with $|x-y|<c$.

The topological $K$-category $\cD_c$ has a continuous family of automorphisms $G^2_t$ given on objects by $G^2_t(x)=x+(t,t)$ and taking basic morphisms to basic morphisms. Another continuous automorphism of $\cD_c$ is $S(x_1,x_2)=(x_2,x_1)$. The ``{Moebius band category}'' (Def. \ref{def: Moebius strip category M0}) is $\cM_0=\cD_\pi/SG^2_{\pi}=\cD_\pi/SG^2_{-\pi}$. The object space of $\cM_0$ is the set of all $(x,y)\in \RR^2$ so that $|x-y|<\pi$ modulo the equivalence relation $(x,y)\sim (y+\pi,x+\pi)$. This is homeomorphic to the space of all unordered pairs of distinct points on the circle $\widetilde S^1=\RR/2\pi\ZZ$ where $(x,y)\in\cM_0$ corresponds to the pair $\{x,y+\pi\}\subset \widetilde S^1$. These two elements of $\widetilde S^1$ will be called the ``{ends}'' of the object $(x,y)\in \cM_0$ (Def. \ref{def: E(a,b) and ends}).

\subsection{Continuous path categories}

We construct two topological categories with object space $\RR$.

\begin{defn}\label{def: path category}
Let $\cP_\RR=\cP_\RR(K)$ be the topological $K$-category with object space $\RR$ and morphism space
\[
	\Mor(\cP_\RR):=K^\ast\times \{(x,y)\in \RR^2\,|\,x\le y\}\coprod 0\times \RR^2
\]
where $K^\ast=K\backslash\{0\}$ with the discrete topology. Composition of morphisms is defined by
\[
	(b,y,z)\circ(a,x,y)=(ab,x,z).
\]
For any subset $J\subseteq\RR$, $\cP_J$ is the full subcategory of $\cP_\RR$ with objects space $J$. We refer to each $\cP_J$ as a \emph{continuous path category}. The morphism $(1,x,y)$ is called a \emph{basic morphism}.
\end{defn}

For any $c>0$, let $\cI_c$ be the ideal in $\cP_\RR$ generated by the morphisms $(1,x,x+c)$ for all $x\in\RR$. Then $\cI_c$ consists of all morphism of length $\ge c$ and all zero morphisms where the \emph{length} of a morphism is defined to be $\ell(a,x,y)=y-x$.

\begin{defn}\label{def: truncated path category}
The \emph{truncated path category} $\cR_c=\cR_c(K)$ is defined to be the quotient category $\cR_c=\cP_\RR/\cI_c$. This is the topological $K$-category with object space $\cO b(\cR_c)=\RR$ and morphisms 
\[
	\cR_c(x,y) =\begin{cases} K & \text{if } x\le y<x+c\\
   0 & \text{otherwise}
    \end{cases}
\]
The set of all morphism is:
\[
	\cM or(\cR_c)=0\times \RR^2\coprod K^\ast\times\{(x,y)\in\RR^2\,:\, y\in [x,x+c)\}
\]
This has the quotient topology $\cM or(\cR_c)=\cM or'(\cR_c)/\sim$ where
\[
	\cM or'(\cR_c)=0\times \RR^2\coprod K^\ast\times\{(x,y)\in\RR^2\,:\, y\in [x,x+c]\}
\]
under the identification $(a,x,x+c)\sim (0,x,x+c)\in K^\ast\times \RR^2$. 
\end{defn}

\begin{rem}\label{rem: quotient topology} Quotient topology means the morphism $f:x\to y$ given by a fixed nonzero scalar $a\in K^\ast$ will converge to 0 when $y\to x+c$. However, a sequence of morphisms $f_i:x\to y_i$ with distinct scalars $a_i$ will not converge to anything, even if $y_i\to x+c$. The reason is that an open neighborhood of $(0,x,x+c)$ is given by a union over all $a\in K$ of open sets $(a,U_\vare(x),U_\vare(x+c))$ where $U_\vare(z)=(z-\vare,z+\vare)$. By choosing $\vare=|y_i-x|/2$ we get an open neighborhood of $(0,x,x+c)$ which avoid all of the points $f_i=(a_i,x,y_i)$.
\end{rem}

\begin{figure}[h!]\label{fig:example}
	\begin{subfigure}[h]{0.4\textwidth}
\begin{center}
\begin{tikzpicture}%[>=Stealth]
	\draw[fill,color=gray!20!white] (-1,-1)--(2,2)--(1,2)--(-2,-1)--(-1,-1);
	\draw[very thick,->] (-2,0)--(2,0);
	\draw[very thick,->] (0,-1)--(0,2);
	\draw[thick] (-1,-1)--(2,2);
	\draw[dashed,thick] (-2,-1)--(1,2) (-.5,1) node{$c$};
\end{tikzpicture}
\end{center}
	\caption{Morphism space of $\cR_c(K)$.}
	\label{fig1A}
	\end{subfigure}\hspace{0.04\textwidth}
	\begin{subfigure}[h]{0.4\textwidth}
\begin{center}%$\SelectTips{eu}{10}
\begin{tikzpicture}%[>=Stealth]
\begin{scope}[yshift=-1cm]
	\draw[very thick,->] (-2,0) -- (2,0);
	\draw[thick] (-1.5,1)--(1,1) (1,1) .. controls (1.3,1) and (1.3,.8)..(1.3,.5) (1.5,0) .. controls (1.3,0) and (1.3,.2)..(1.3,.5); 
	\draw[thick] (-1.5,.75)--(1,.75) (1,.75) .. controls (1.25,.75) and (1.25,.6)..(1.3,.37) (1.5,0) .. controls (1.3,0) and (1.3,.15)..(1.25,.37); 
	\draw[thick] (-1.5,.5)--(1,.5) (1,.5) .. controls (1.2,.5) and (1.2,.4)..(1.25,.25) (1.5,0) .. controls (1.3,0) and (1.3,.1)..(1.25,.25); 
	\draw[thick] (-1.5,.25)--(1,.25) (1,.25) .. controls (1.2,.25) and (1.2,.2)..(1.25,.125) (1.5,0) .. controls (1.3,0) and (1.3,.05)..(1.25,.125); 
	\draw (-1.5,-.2) node{$x$} (1.5,-.2) node{$x+c$};
	\draw(-1.9,.6) node{$K$};
	\end{scope}
\end{tikzpicture}
\end{center}
	\caption{$\cR_c(x,y)$ goes to 0 when $y\to x+c$.}
	\label{fig1B}\end{subfigure}
\caption{The space of nonzero morphisms of $\cR_c(K)$ is a covering space of the contractible submanifold of $\RR^2$ shaded in (A).}\label{fig1}
\end{figure}
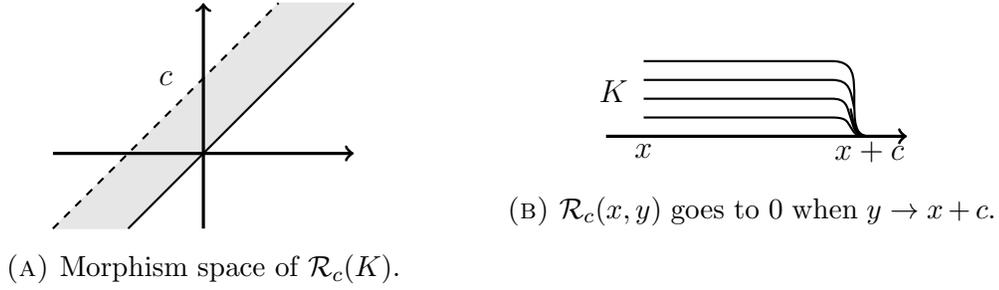

In \cite{ccc}, \cite{cfc}, the value $c=2\pi$ was taken. In this paper, we will take $c=\pi$ and define $S^1:=\RR/\pi\ZZ$, the ``\emph{circle with diameter 1}'' (with the standard circle denoted $\widetilde S^1=\RR/2\pi\ZZ$). The purpose of this will be apparent later.

\subsection{The circle category $\cS^1(K)$}

\begin{defn}\label{def: S1 category}
For any $t\in\RR$ let $G_t$ be the continuous linear automorphism of $\cR_\pi(K)$ given on objects by $G_t(x)=x+t$ and on morphisms by $G_t(a,x,y)=(a,x+t,y+t)$. (So, $G_0$ is the identity functor.) The \emph{circle category}  is defined to be the quotient of $\cR_\pi(K)$ modulo the continuous automorphism $G_\pi$. 
\[
	\cS^1=\cS^1(K):=\cR_\pi(K)/G_\pi
\]
This is a topological $K$-category with object space the circle $S^1=\RR/\pi\ZZ$. Elements of $S^1$ will be denoted $[x]=x+\pi\ZZ$.
\end{defn}

The fundamental domain of $G_\pi$ on objects is the closed interval $[0,\pi]$. (However, on morphisms, we should take $[0,2\pi]$.) By definition, $G_\pi$ sends the right point $\pi$ of the interval $[0,\pi]$ to its left endpoint 0. So, $G_\pi$ gives the ``holonomy'' of the covering map $p:\cR_\pi\to \cS^1$. 

\begin{prop}\label{prop: Rc is universal cover of S1}
The quotient functor $p:\cR_\pi(K)\to \cS^1(K)$ is a universal covering map on object spaces and morphism spaces and all four of these spaces are Hausdorff.
\end{prop}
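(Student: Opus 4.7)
My plan is to handle the object-space assertion and the morphism-space assertion separately, and then derive Hausdorffness of the two quotient spaces from Hausdorffness of the total spaces together with proper discontinuity.

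The object-space claim reduces to the classical fact that $\RR \to \RR/\pi\ZZ = S^1$ is the universal covering map: the translation action of $\pi\ZZ$ on the simply connected Hausdorff space $\RR$ is free and properly discontinuous, so $\RR$ is the universal cover of $S^1$ and both are Hausdorff. So the remaining work is on morphism spaces.

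For the morphism-space assertion, I would first unpack $\cM or(\cR_\pi)$ concretely, using the quotient-topology description and Remark \ref{rem: quotient topology}: it is a ``book'' consisting of the zero-morphism plane $\{0\}\times\RR^2$ together with, for each $a\in K^\ast$, a closed strip $S_a=\{(a,x,y): x\le y\le x+\pi\}$, with the right edge $\{(a,x,x+\pi):x\in\RR\}$ of each $S_a$ identified to the ``spine'' $L=\{(0,x,x+\pi):x\in\RR\}\subset\{0\}\times\RR^2$. I then verify three properties. \emph{(a) Hausdorffness of $\cM or(\cR_\pi)$.} A case analysis covers distinct points in a common ``page'' (the zero plane or a single strip), in different pages off the spine (automatically separated since the strips/zero plane only meet along $L$), and the delicate case of separating a point $(0,x_0,x_0+\pi)\in L$ from another point $q$. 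When $q=(a,x_1,y_1)$ has $y_1<x_1+\pi$, the inequality $(x_1,y_1)\neq(x_0,x_0+\pi)$ is strict, so for $\epsilon>0$ smaller than half that distance the basic spine neighborhood (an $\epsilon$-ball around $(x_0,x_0+\pi)$ taken simultaneously in every strip and in $\{0\}\times\RR^2$) avoids a small ball around $q$. \emph{(b) Proper discontinuity of $G_\pi$.} Since $G_\pi$ shifts both coordinates by $\pi$ and preserves the scalar, any neighborhood of radius $<\pi/2$ in the $x$-coordinate is disjoint from all its nontrivial $G_\pi$-translates, and the action is free. \emph{(c) Simple connectedness.} Each strip $S_a$ strong-deformation retracts onto its glued edge $L$ via a linear retraction in the fiber-coordinate $y-x\in[0,\pi]$; performing these retractions on every page simultaneously (and fixing $\{0\}\times\RR^2$) gives a strong deformation retraction of $\cM or(\cR_\pi)$ onto $\{0\}\times\RR^2\cong\RR^2$, which is contractible. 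Continuity of the homotopy across the spine is exactly the universal-property statement for the quotient topology defining $\cM or(\cR_\pi)$.

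Putting (a)--(c) together, standard covering-space theory yields that the quotient $p_{\cM or}:\cM or(\cR_\pi)\to \cM or(\cR_\pi)/G_\pi=\cM or(\cS^1)$ is a universal covering map, and Hausdorffness of $\cM or(\cS^1)$ follows because the $G_\pi$-orbit of any point is a discrete, hence closed, subset of the Hausdorff space $\cM or(\cR_\pi)$ (this is the standard criterion: a properly discontinuous free action on a Hausdorff space with closed orbits has Hausdorff quotient). The main obstacle is the bookkeeping around the quotient topology at the spine $L$: one must check carefully that the simultaneous deformation retraction in (c) is continuous on the pushout, and that the Hausdorff-separation argument in (a) truly handles the page-to-spine case. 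Both points are resolved by invoking the explicit description of basic open neighborhoods at a spine point given in Remark \ref{rem: quotient topology}.
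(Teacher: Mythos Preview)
Your overall strategy is sound and your explicit ``book'' description, together with the deformation retraction of each strip $S_a$ onto the spine $L$, gives a clean proof that $\cM or(\cR_\pi)$ is contractible; the paper simply asserts contractibility and local simple-connectedness in one line, so you are filling in detail the paper omits.  Likewise your Hausdorffness argument for $\cM or(\cR_\pi)$ via case analysis is essentially correct, and your proper-discontinuity check is fine.

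There is, however, a genuine gap in the final step.  The criterion you invoke, namely that a free properly discontinuous action of a discrete group on a Hausdorff space with closed orbits has Hausdorff quotient, is \emph{false} in general.  A standard counterexample is the $\ZZ$-action on $\RR^2\setminus\{0\}$ by $n\cdot(x,y)=(2^n x,2^{-n}y)$: this is free and properly discontinuous in the sense of the paper, every orbit is discrete (hence closed once the origin is removed), yet the classes of $(1,0)$ and $(0,1)$ cannot be separated in the quotient.  What is actually needed is that the orbit \emph{relation} $\{(p,gp):g\in G,\ p\in X\}\subset X\times X$ be closed, or equivalently that the action be proper.  In your situation this does hold, because $G_\pi$ acts by the uniform translation $(x,y)\mapsto(x+\pi,y+\pi)$ on the $(x,y)$-coordinates and fixes the scalar, so orbits are uniformly $\pi\sqrt2$-separated in the $(x,y)$-plane and the relation is visibly closed; but you should say this rather than appeal to the incorrect criterion.

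By contrast, the paper runs the Hausdorff argument in the opposite direction: it proves Hausdorffness of $\cM or(\cS^1)$ directly by lifting two distinct morphisms $f\neq g=af$ in $\cS^1(X,Y)$ to $\tilde f,\,a\tilde f$ in $\cP_\RR$, using that $\tilde f$ lies outside the closed ideal $\cI_\pi$ to find a small neighborhood $U$ with $U$ and $aU$ in different scalar-pages, and then pushing down.  Hausdorffness of the total space $\cM or(\cR_\pi)$ then follows for free, since a covering space of a Hausdorff space is Hausdorff.  Either route works once the gap above is repaired; your route is more hands-on about the quotient topology, while the paper's is shorter because it exploits the embedding into $\cP_\RR$.
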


%\newpage

\begin{proof}
The functor $G_\pi$ gives a properly discontinuous free $\ZZ$ action on object and morphism spaces of $\cR_\pi(K)$ both of which are contractible and locally simply connected.

The object spaces $\Ob(\cR_\pi)=\RR$ and $\Ob(\cS^1)=S^1$ are clearly Hausdorff as is $\Mor(\cP_\RR)$. So, it suffices to consider $\Mor(\cS^1)$. If $f\neq g$ have distinct source or target, they can be separated by disjoint open sets in $\Ob(\cS^1)^2$. So, assume $f\neq g\in\cS^1(X,Y)$ and $f\neq 0$. Then $\cS^1(X,Y)=K$ and $g=af$ for some $a\in K$. Then $f,g$ lift to distinct nonzero morphisms $\tilde f,a\tilde f\in \cP_\RR(x,y)$. Since $\tilde f$ does not lie in the closed set $\cI_\pi$, there is a small connected open neighborhood $U$ of $\tilde f$ disjoint from $\cI_\pi$. Then $U,aU$ will be disjoint open neighborhoods of $\tilde f,a\tilde f$ giving disjoint open neighborhoods of $f,g$.
\end{proof}

For any two objects $x,y\in \RR=\Ob(\cR_\pi)$ there is a unique integer $k$ so that $x\le (G_\pi)^k(y)=y+\pi k<x+\pi$ and $\cR_\pi(x,(G_\pi)^k(y))=K$ in that case and $\cR_\pi(x,(G_\pi)^j(y))=0$ for $j\neq k$. Thus, $\cS^1(X,Y)=K$ for all $X,Y\in \cS^1$. So, the morphism set is in bijection with $K\times S^1\times S^1$. However, this bijection is not a homeomorphism since $(a,x,y)$ converges to $(0,x,x+\pi)\sim (0,x,x)$ as $y\to x+\pi$ (Fig \ref{fig1}(B)). The space of nonzero morphisms of $\cS^1(K)$ is homeomorphic to $K^\ast\times S^1\times [0,\pi)$.

Define the \emph{support of an object $x$} in a topological $K$-category $\cB$ to be the set of all $y\in\cO b(\cB)$ so that $\cB(x,y)\neq0$ with the quotient topology with respect to the target map $t:\cM or(\cB)\to \cO b(\cB)$ restricted to the nonzero morphisms with source $x$. Then the support of $[x]\in \cS^1(K)$ is homeomorphic to $[x,x+\pi)$ which maps onto $S^1$ by a continuous bijection.

\subsection{Moebius strip category $\cM_0(K)$}

Let $\cP_\RR^2=\cP_\RR\otimes\cP_\RR$ be the topological $K$-category with object space $\RR^2$ and morphisms given by $\cP_\RR^2(x,y)=\cP_\RR(x_1,y_1)\otimes\cP_\RR(x_2,y_2)$. This is $K$ if $x_1\le y_1$, $x_2\le y_2$ and $0$ otherwise. We take the topology induced by the inclusion $\Mor(\cP_\RR^2)\subseteq K\times \RR^4$.

For any $c>0$, let $\cJ_c$ be the ideal in $\cP_\RR^2$ generated by all objects $x=(x_1,x_2)$ so that $|x_1-x_2|\ge c$. Let $\cD_c=\cD_c(K)$ be the full subcategory of nonzero objects in $\cP_\RR^2/\cJ_c$. This is the topological $K$-category with object space
\[
	\Ob(\cD_c)=\{x\in\RR^2\,|\, |x_2-x_1|<c\}.
\]
Note that, if $y_1\ge x_2+c$ then any morphism $(x_1,x_2)\to (y_1,y_2)$ will factor through $(x_2+c,x_2)\in \cJ_c$. Therefore, the space of nonzero morphisms of $\cD_c$ is given by
\[
	\Mor^\ast(\cD_c)=K^\ast\times \{(x,y)=((x_1,x_2),(y_1,y_2))\in\RR^2\times \RR^2\,|\, x_1\le y_1<x_2+c,x_2\le y_2<x_1+c\}.
\]
As in the case of $\cR_c(K)$, there is a zero morphism between any two objects and a morphism $(a,x,y):x\to y$ converges to zero when either $y_1\to x_2+c$ or $y_2\to x_1+c$.

\begin{prop}
$\cR_c$ is isomorphic as topological $K$-category to the full subcategory of $\cD_c$ of diagonal objects $x=(x_1,x_2)\in \RR^2$ with $x_1=x_2$.\qed
\end{prop}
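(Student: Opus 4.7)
The plan is to exhibit an explicit isomorphism $F:\cR_c \to \cD_c^{\Delta}$, where $\cD_c^{\Delta}$ denotes the full subcategory of $\cD_c$ on diagonal objects, and then verify that $F$ and $F^{-1}$ are continuous on both the object and morphism spaces.

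First I would define $F$ on objects by $F(x)=(x,x)$ and on nonzero morphisms by $F(a,x,y)=(a,(x,x),(y,y))$, with zero morphisms sent to zero morphisms. This is clearly a $K$-linear functor: it sends $id_x=(1,x,x)$ to the identity morphism of $(x,x)$, and composition is preserved because both categories compose scalars by multiplication when the source/target conditions are met. On the object level $F$ is a bijection onto the diagonal $\{(x,x)\in \RR^2\}=\Ob(\cD_c^{\Delta})$, which is evident. On hom spaces, $\cR_c(x,y)=K$ precisely when $x\le y<x+c$, while for diagonal objects $\cD_c((x,x),(y,y))$ is nonzero iff $x\le y<x+c$ and $x\le y<x+c$ (the two inequalities from the two tensor factors collapse to the same condition); in that case the hom space is $K$ via the same basic morphism. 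So $F$ is bijective on morphism spaces, and $K$-linearity on each hom is automatic.

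Next I would check continuity. The object map $x\mapsto (x,x)$ is a closed topological embedding $\RR\hookrightarrow \RR^2$ with continuous inverse given by projection, so the object spaces are canonically homeomorphic. For the morphism spaces, recall that both $\Mor(\cR_c)$ and $\Mor(\cD_c)$ are defined by quotienting the corresponding spaces $\Mor'$ in which the scalar $a\in K^\ast$ is allowed on the boundary $y=x+c$ (resp.\ $y_1=x_2+c$ or $y_2=x_1+c$), and then collapsing $(a,x,x+c)\sim (0,x,x+c)$ (resp.\ the analogous diagonal boundary points). Under $F$ these boundary identifications match: for diagonal objects $(x,x)$ and $(y,y)$ the two boundary conditions $y_1=x_2+c$ and $y_2=x_1+c$ both reduce to $y=x+c$. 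Thus $F$ descends to a homeomorphism on morphism spaces, and so does $F^{-1}$.

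I do not expect any genuine obstacle here; the only point requiring a moment of care is the last one, namely that the quotient topology on $\Mor(\cD_c)$ restricts correctly to the diagonal. Specifically one should verify that a net $(a_\alpha,(x_\alpha,x_\alpha),(y_\alpha,y_\alpha))$ of nonzero morphisms in the diagonal converges to a zero morphism in $\cD_c$ if and only if the corresponding net $(a_\alpha,x_\alpha,y_\alpha)$ converges to a zero morphism in $\cR_c$. This follows immediately once one notes (as in Remark \ref{rem: quotient topology}) that such convergence forces $y_\alpha-x_\alpha\to c$, and that on the diagonal the two degeneration conditions $y_1\to x_2+c$ and $y_2\to x_1+c$ coincide. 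Hence $F$ is an isomorphism of topological $K$-categories, as claimed.
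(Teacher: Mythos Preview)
Your proof is correct. The paper omits a proof entirely (the proposition is marked \qed\ immediately after the statement), treating the claim as evident from the definitions; your argument supplies exactly the routine verification that is implicit there, and follows the natural approach.

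One minor remark: the paper does not explicitly write out a space $\Mor'(\cD_c)$ with a quotient description the way it does for $\cR_c$; it only says ``as in the case of $\cR_c(K)$'' that nonzero morphisms converge to zero when $y_1\to x_2+c$ or $y_2\to x_1+c$. Your invocation of such a $\Mor'(\cD_c)$ is therefore filling in an implicit detail rather than quoting a definition, but it is the obvious and correct interpretation. The only place that might deserve one extra sentence is the identification of the subspace topology on $\Mor(\cD_c^\Delta)\subset\Mor(\cD_c)$ with the quotient topology coming from $\Mor'(\cR_c)$; your final paragraph handles this via convergence of nets, which is adequate here since the diagonal is closed and the identifications are fiberwise.
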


As before, we take $c=\pi$. See Figure \ref{fig2}.

\begin{defn}\label{def: Moebius strip category M0}
To construct $\cM_0$ we need two automorphisms of $\cD_\pi$.
\begin{enumerate}
\item Let $S$ be the automorphism of $\cD_\pi$ given by \emph{switching the coordinates}: $S(x_1,x_2)=(x_2,x_1)$, $S(a,x,y)=(a,Sx,Sy)$.
\item For any $t\in\RR$ let $G^2_t=G_t\otimes G_t$ be the automorphism of $\cD_\pi$ given by $G^2_t(x)=x+(t,t)=(G_t(x_1),G_t(x_2))$, $G^2_t(a,x,y)=(a,G_t^2(x),G_t^2(y))$.
\end{enumerate}
These are continuous $K$-linear automorphisms of $\cD_\pi$ and $G^2_t,SG^2_t$ are fixed point free and properly discontinuous for $t\neq0$. The \emph{Moebius band (or Moebius strip) category} is defined to be the orbit category 
\[
	\cM_0:=\cD_\pi/SG^2_\pi.
\]
\end{defn}
We take as fundamental domain of the action of $SG^2_\pi$ the set of all $x\in \RR^2$ so that $0\le x_1+x_2\le 2\pi$ (and morphism from such $x$ to those $y$ where $0\le y_1+y_2\le 4\pi$). The point $SG^2_\pi(x)=(x_2+\pi,x_1+\pi)$ is identified with $x=(x_1,x_2)$.

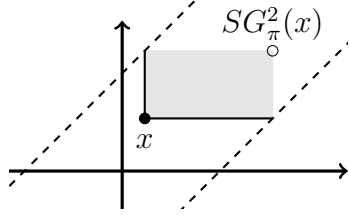
\begin{figure}[htbp]
\begin{center}
\begin{tikzpicture}%[>=Stealth]
\clip (-1.5,-.5) rectangle (3,2.5);
\draw[fill,color=gray!20!white]  (2,.7) rectangle (.3,1.6);
	\draw[very thick,->] (-1.8,0)--(3,0);
	\draw[very thick,->] (0,-1)--(0,2);
	\draw[thick] (2,.7)--(.3,.7)--(.3,1.6);
	\draw[fill] (.3,.7) circle[radius=2pt];
	\draw[dashed,thick] (-1.8,-.5)--(1,2.3) (0.3,-1)--(3.6,2.3) (.3,.4) node{$x$};
	\draw (2,1.6) circle[radius=2pt] node[above]{$SG^2_\pi(x)$};
\end{tikzpicture}
\end{center}
	\caption{Support of $x$ in $\cD_\pi(K)$ is shaded.}
\label{fig2}
\end{figure}

\begin{defn}\label{def: E(a,b) and ends}
The objects of $\cM_0$ can be viewed as unordered pairs of distinct point on the circle $\widetilde S^1=\RR/2\pi\ZZ$ of radius 1. The pair $(a,b)\sim (b,a+2\pi)$ with $a<b<a+2\pi$ corresponds to $(a,b-\pi)\sim (b,a+\pi)$ which is in $\cM_0$ since $|a+\pi-b|<\pi$. We denote this object $E(a,b)$ and we call $a,b\in \widetilde S^1$ the \emph{ends} of $E(a,b)$.
\end{defn}

Analogous to Proposition \ref{prop: Rc is universal cover of S1} we have the following.

\begin{prop}\label{prop: Dc is universal cover of M0}
The quotient functor $p:\cD_\pi(K)\to \cM_0(K)$ is a universal covering map on object spaces and morphism spaces and all four spaces are Hausdorff.
\end{prop}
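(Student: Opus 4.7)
The plan is to mirror Proposition \ref{prop: Rc is universal cover of S1}, replacing the $\ZZ$-action generated by $G_\pi$ on $\cR_\pi$ with the one generated by $SG^2_\pi$ on $\cD_\pi$. I would need three ingredients: freeness and proper discontinuity of the action on both the object space and the morphism space; contractibility and local simple connectivity of these two spaces; and Hausdorffness of all four spaces.

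First, I would verify freeness by a short calculation. Since $S$ commutes with $G^2_t$, the square $(SG^2_\pi)^2$ equals $G^2_{2\pi}$, so even powers act as diagonal translations by multiples of $2\pi$, and odd powers act by $(x_1,x_2)\mapsto(x_2+(2k+1)\pi, x_1+(2k+1)\pi)$. An even-power fixed point would force $2k\pi=0$, and an odd-power fixed point would force $x_1-x_2=(2k+1)\pi$, which is ruled out by $|x_1-x_2|<\pi$. The same affine action on the source and target coordinates of a morphism shows the action on $\Mor(\cD_\pi)$ is free as well. Proper discontinuity is then immediate: any open neighborhood of diameter less than $\pi$ in $\Ob(\cD_\pi)\subset\RR^2$, or in the source and target factors of a morphism, is disjoint from all its nontrivial translates.

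Next, I would note that $\Ob(\cD_\pi)$ is an open strip in $\RR^2$, hence contractible and locally Euclidean. For $\Mor(\cD_\pi)$, the homotopy $(a,x,y)\mapsto((1-t)a,x,y)$ would deformation retract the morphism space onto its zero section $\{0\}\times\Ob(\cD_\pi)^2$, which is contractible; local simple connectivity is clear away from the zero section, and at zero morphisms it follows from the same cone-like local structure as in the one-dimensional case of Proposition \ref{prop: Rc is universal cover of S1}. Standard covering space theory then yields that $p:\cD_\pi\to\cM_0$ is a universal covering on both object and morphism spaces.

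Finally, I would handle Hausdorffness by following the argument at the end of Proposition \ref{prop: Rc is universal cover of S1}, with the closed ideal $\cJ_\pi\subset\cP_\RR^2$ replacing $\cI_\pi\subset\cP_\RR$. The space $\Ob(\cD_\pi)$ is Hausdorff as a subspace of $\RR^2$, and $\Ob(\cM_0)$ is Hausdorff as a quotient by a properly discontinuous action. For $\Mor(\cD_\pi)$, morphisms with distinct source or target separate already at the level of $\Ob(\cD_\pi)^2$; two distinct scalar multiples $f\neq g\in\cD_\pi(x,y)$ with $f\neq 0$ lift to distinct nonzero morphisms $\tilde f, a\tilde f\in\cP_\RR^2(x,y)$, both outside the closed set $\cJ_\pi$, so a small connected neighborhood $U$ of $\tilde f$ disjoint from $\cJ_\pi$ and its scalar multiple $aU$ descend to disjoint open neighborhoods of $f$ and $g$. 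Hausdorffness of $\Mor(\cM_0)$ then follows from the properly discontinuous covering action. The main obstacle I expect is keeping the quotient topology on $\Mor(\cD_\pi)$ under control near the two-dimensional degeneration locus (where $y_1\to x_2+\pi$ or $y_2\to x_1+\pi$), but this should be handled by a straightforward two-dimensional analogue of Remark \ref{rem: quotient topology}, ensuring that only scalar-to-zero convergence forces a morphism to $0$.
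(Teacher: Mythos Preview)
Your approach is exactly what the paper intends: it states this proposition as ``analogous to Proposition~\ref{prop: Rc is universal cover of S1}'' and gives no separate proof, so mirroring that argument with $SG^2_\pi$ in place of $G_\pi$ is the right plan, and your freeness, proper discontinuity, and Hausdorffness arguments are all fine.

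There is one genuine slip. Your proposed deformation retraction of $\Mor(\cD_\pi)$ onto its zero section via $(a,x,y)\mapsto((1-t)a,x,y)$ does not work: the field $K$ carries the \emph{discrete} topology, so for fixed $a\neq 0$ the path $t\mapsto(1-t)a$ is not continuous in $K$. The correct retraction moves in the \emph{spatial} coordinates instead. For instance, push the target $y=(y_1,y_2)$ linearly toward the degeneration locus $(x_2+\pi,x_1+\pi)$ via
\[
H_t(a,x,y)=\bigl(a,\,x,\,(1-t)y+t(x_2+\pi,x_1+\pi)\bigr),
\]
extended by the same formula on the zero section. At $t=1$ every nonzero morphism lands on the boundary where it is identified with a zero morphism, and continuity across the quotient identification holds because the $a$-sheets are glued to the zero section precisely along that locus. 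This retracts $\Mor(\cD_\pi)$ onto a copy of $\Ob(\cD_\pi)\times\Ob(\cD_\pi)$ sitting inside the zero section, which is contractible. With this correction your argument goes through and matches the paper's (implicit) proof.
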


\subsection{Contractible morphisms}

The topological $K$-categories $\cS^1$ and $\cM_0$ share the important property that every nonzero morphism is a scalar multiple of a ``contractible morphism''.

\begin{defn}\label{def: contractible morphism}
A nonzero morphism $f:X\to Y$ in a topological $K$-category $\cC$ will be called \emph{contractible} if there is a continuous path $Z_t$, $t\in[0,1]$, in the object space of $\cC$ so that $Z_0=X$, $Z_1=Y$ and continuous families of morphisms $f_t:X\to Z_t$, $g_t:Z_t\to Y$ so that $f_0=id_X$, $g_1=id_Y$ and $g_t\circ f_t=f$ for all $t$. In particular, $f_1=f$.
\end{defn}

We observe that every faithful (no nonzero morphism goes to zero) continuous functor $\cC\to\cD$ sends contractible morphisms to contractible morphisms.

\begin{prop}\label{prop: morphisms in S,M are contractible}
Every nonzero morphism in $\cS^1$ and $\cM_0$ is a scalar multiple of a uniquely determined contractible morphism.
\end{prop}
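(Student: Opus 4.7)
The plan is to show that, in both $\cS^1$ and $\cM_0$, the contractible morphisms are exactly the basic morphisms (those with scalar $1$ in the universal cover), from which the unique factorization is immediate since every nonzero morphism is uniquely a nonzero scalar times a basic morphism.

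For existence, I work in the universal covering category $\cR_\pi$ (resp.\ $\cD_\pi$). Given a basic morphism $(1,x,y)$ with $x \le y < x+\pi$, set $Z_t = x + t(y-x)$, $f_t = (1,x,Z_t)$, and $g_t = (1,Z_t,y)$; these are continuous families of basic morphisms satisfying $f_0 = \mathrm{id}_x$, $g_1 = \mathrm{id}_y$, and $g_t \circ f_t = (1,x,y)$ by the composition rule. In the Moebius case one checks that the straight line $Z_t$ remains in $\Ob(\cD_\pi)$ since $|(Z_t)_1 - (Z_t)_2|$ is a convex combination of $|x_1-x_2|$ and $|y_1-y_2|$, both less than $\pi$, and analogous inequalities keep $f_t, g_t$ nonzero throughout. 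Projecting this contraction via the continuous $K$-linear functor of Proposition \ref{prop: Rc is universal cover of S1} (resp.\ Proposition \ref{prop: Dc is universal cover of M0}) yields a contraction of the image basic morphism downstairs.

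For uniqueness, let $f : X \to Y$ be a nonzero contractible morphism in $\cS^1$ with contraction data $(Z_t, f_t, g_t)$, and pick the lift $\tilde f : x \to y$ in $\cR_\pi$ with $x \le y < x + \pi$. By the universal covering property I lift $Z_t$ to the unique path $\tilde Z_t$ in $\RR$ starting at $x$, lift $f_t$ to the unique path $\tilde f_t$ in $\Mor(\cR_\pi)$ starting at $\mathrm{id}_x$, and lift $g_t$ to the unique path $\tilde g_t$ ending at $\mathrm{id}_y$. Faithfulness of the projection together with $f_t \ne 0$ (because $g_t \circ f_t = f \ne 0$) forces $\tilde f_t$ to be nonzero for every $t$, so its target $\tilde Z_t$ stays in $[x,x+\pi)$ throughout; in particular $\tilde Z_1 = y$, which makes the source of $\tilde g_t$ agree with $\tilde Z_t$ and yields $\tilde g_t \circ \tilde f_t = \tilde f$. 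Writing $\tilde f_t = (a(t), x, \tilde Z_t)$ with $a(t) \in K^\ast$, continuity of $\tilde f_t$ makes $a : [0,1] \to K^\ast$ continuous; since $K^\ast$ carries the discrete topology and $[0,1]$ is connected, $a$ is constant, and the initial value $a(0) = 1$ gives $a \equiv 1$. The same argument applied to $\tilde g_t$ (using $\tilde g_1 = \mathrm{id}_y$) produces scalar $\equiv 1$, so at $t = 1$ we conclude $\tilde f = (1, x, y)$ is basic, hence $f$ is basic. The argument for $\cM_0$ is identical, with $\cD_\pi \to \cM_0$ playing the role of $\cR_\pi \to \cS^1$ via Proposition \ref{prop: Dc is universal cover of M0}.

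The unique decomposition claim now follows at once: any nonzero morphism is uniquely a nonzero scalar times a basic morphism, and by what we just showed the basic representative is the only contractible one. The main technical obstacle is the covering-space bookkeeping at the start of the uniqueness argument, ensuring that the various lifts $\tilde Z_t, \tilde f_t, \tilde g_t$ are mutually compatible and that $\tilde Z_t$ stays inside the fundamental domain where the lifted homs are nonzero; once this is done, the discreteness of $K^\ast$ immediately collapses each scalar function to the constant $1$.
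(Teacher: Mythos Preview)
Your proof is correct and the existence half coincides with the paper's argument: lift to $\cR_\pi$ (resp.\ $\cD_\pi$), exhibit the straight-line factorisation $(1,x,y)=(1,z_t,y)(1,x,z_t)$, and push down. The paper's proof in fact stops there and does not separately address the ``uniquely determined'' clause; your uniqueness argument---lifting the contraction data $(Z_t,f_t,g_t)$ to the universal cover via Propositions~\ref{prop: Rc is universal cover of S1} and~\ref{prop: Dc is universal cover of M0} and using discreteness of $K^\ast$ to freeze the scalar of $\tilde f_t$ at $1$---supplies that half cleanly. One small economy: once you have $\tilde f_1=(1,x,\tilde Z_1)=(1,x,y)$, the relation $f=f_1=p(\tilde f_1)$ already shows $f$ is basic, so the separate lifting and analysis of $\tilde g_t$ (beyond invoking $g_t\circ f_t=f\neq 0$ to guarantee $f_t\neq 0$) is not needed.
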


\begin{proof}
Any nonzero morphism in $\cS^1$ or $\cM_0$ can be lifted up to a nonzero morphism $(a,x,y)$ in its universal covering $\cR_\pi$ or $\cD_\pi$ which is unique determined up to deck transformations. This is $a$ times the basic morphism $(1,x,y)$ which is contractible since
\[
	(1,x,y)=(1,z_t,y)(1,x,z_t)
\]
where $z_t=ty+(1-t)x$.
\end{proof}

\begin{defn}\label{def: support of a morphism}
The \emph{support} of any nonzero morphism $f:X\to Y$ in a topological $K$-category $\cC$ is defined to be the space of all $Z\in\Ob(\cC)$ so that $f$ factors through $Z$.
\end{defn}

\begin{prop}
The categories $\cS^1$ and $\cM_0$ have the property that every nonzero morphism has contractible support.
\end{prop}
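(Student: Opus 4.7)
The plan is to pull $f$ back to the universal covering category ($\cR_\pi$ for $\cS^1$ and $\cD_\pi$ for $\cM_0$, via Propositions~\ref{prop: Rc is universal cover of S1} and~\ref{prop: Dc is universal cover of M0}), identify the support upstairs as an explicit closed convex geometric region, and verify that the covering projection is injective on that region. The support in $\cS^1$ or $\cM_0$ is then homeomorphic to a convex set and hence contractible.

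For $\cS^1$: once a lift $x\in\RR$ of the source is chosen, $f$ has a unique nonzero lift $(a,x,y)\in\cR_\pi$ with $x\le y<x+\pi$. A factorization $(a,x,y)=(c,z,y)\circ(b,x,z)$ in $\cR_\pi$ with both factors nonzero requires exactly $z\in[x,y]$, so the support in $\cR_\pi$ is this closed interval. Because $y-x<\pi$, the projection $\RR\to S^1=\RR/\pi\ZZ$ is injective on $[x,y]$, and the support of $f$ in $\cS^1$ is homeomorphic to $[x,y]$.

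For $\cM_0$: lift $f$ to a nonzero $(a,x,y)\in\cD_\pi$; by definition this gives $x_1\le y_1<x_2+\pi$ and $x_2\le y_2<x_1+\pi$. A direct check shows that $(a,x,y)$ factors through $z=(z_1,z_2)\in\Ob(\cD_\pi)$ precisely when $(z_1,z_2)$ lies in the closed rectangle $R:=[x_1,y_1]\times[x_2,y_2]$: the bounds $x_i\le z_i\le y_i$ are clearly necessary, and all remaining inequalities (including $|z_1-z_2|<\pi$) then follow automatically from the bounds on $y$.

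The main obstacle, and where the strict inequalities are needed, is to show that the deck group $\langle SG^2_\pi\rangle$ acts freely on $R$. An odd power $SG^2_{n\pi}(z_1,z_2)=(z_2+n\pi,z_1+n\pi)$ sending $R$ into $R$ forces, after expanding coordinate conditions (via $z_2\ge x_2$ for $n\ge 1$ or via $z_1\le y_1$ for $n\le -1$), the inequality $y_1\ge x_2+\pi$, contradicting $y_1<x_2+\pi$. A nonzero even power $G^2_{n\pi}$ translates $R$ by $(n\pi,n\pi)$, and the bound $y_1-x_1<2\pi$ (which follows from $y_1<x_2+\pi$ combined with $x_2<x_1+\pi$) forces $n=0$. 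Hence $R$ embeds into $\cM_0$, and the support of $f$ in $\cM_0$ is homeomorphic to $R$, hence contractible.
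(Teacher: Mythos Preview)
Your proof is correct and follows essentially the same approach as the paper: lift to the universal cover, identify the support upstairs as the closed interval $[x,y]$ (for $\cS^1$) or the closed rectangle $[x_1,y_1]\times[x_2,y_2]$ (for $\cM_0$), and argue that the projection is injective there. The paper is more terse at the last step, simply observing that the rectangle sits inside the support of the object $x$ (the half-open region of Figure~\ref{fig2}), which already embeds in $\cM_0$; your explicit deck-group computation accomplishes the same thing (though note the phrase ``acts freely on $R$'' is not quite the right wording---you mean that no nontrivial deck transformation carries a point of $R$ into $R$).
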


\begin{proof}
This is immediate: Any morphism $f$ in $\cS^1$ lifts to a morphism $\tilde f:x\to y$ in $\RR$ where $x\le y<x+\pi$. The support of $f$ is the image in $\cS^1$ of the closed interval $[x,y]\subseteq\RR$. This is contractible since $|y-x|<\pi$. Similarly a nonzero morphism $f$ in $\cM_0$ lifts to a morphism $\tilde f:(x_1,x_2)\to (y_1,y_2)$ and the support of $f$ is the image of $[x_1,y_1]\times [x_2,y_2]$ which is a contractible subset of the support of $x=(x_1,x_2)$. (See Figure \ref{fig2}.)
\end{proof}

%\newpage

%%%%%%%%%%%%%%%%%%%%%%%%%%
%
%Section  {Equivalence coverings}
%
%%%%%%%%%%%%%%%%%%%%%%%%%%

\setcounter{section}{1}
\section{Equivalence coverings}

We will define $n$-fold equivalence coverings of a topological $K$-category $\cB$ and show that they are classified by their holonomy which is a homomorphism from the fundamental group of the object space of $\cB$ to the group of automorphism of $\cC_n$. We give the definition of $\cC_n$ and derive the basic properties of $\Aut(\cC_n)$. We assume that all objects of $\cB$ are nonzero.

\subsection{Basic topology, projective bundles}

To do covering theory we assume that the object space of $\cB$ is \emph{locally simply connected} which means it has a basis for its topology of simply connected open sets. We recall that a \emph{basis} for a topology on $X$ is a collection of open subsets $U_\alpha$ called \emph{basic open sets} satisfying the following.
\begin{enumerate}
\item $X$ is the union of all $U_\alpha$.
\item The intersection of any two, say $U_\alpha\cap U_\beta$ is a union of other basic open sets $U_\gamma$. 
\item A subset $V$ of $X$ is open if and only if it is the union of basic open sets $U_\alpha$.
\end{enumerate}

We all know that $n$-fold covering spaces $p:\tilde X\to X$ of a connected and locally simply connected Hausdorff space $X$ are classified by homomorphisms
\[
	\sigma:\pi_1(X,x_0)\to S_n
\]
for any \(x_0\in X\) where $S_n$ is the symmetric group on $n$ ``letters'' which are taken to be the elements of $[n]=\{1,2,\cdots,n\}$. We call $\sigma$ the \emph{holonomy} of the covering. 

The categorical version of the $n$-point set $[n]$ is the finite category $\cC_n$ discussed below. In good cases, an ``equivalence covering'' of $\cB$ will be classified by a homomorphism
\[
	\sigma:\pi_1(\Ob(\cB),X_0)\to \Aut(\cC_n)
\]
where homomorphisms $\sigma,\sigma'$ give the same equivalence covering if and only if they differ by \emph{conjugation}, i.e., there exists an element $\tau\in\Aut(\cC_n)$ so that $\sigma'(x)=\tau \sigma(x)\tau^{-1}$ for all $x\in \pi_1(\Ob(\cB),X_0)$.

By Proposition \ref{prop: Aut Cn is projective} below, $\Aut(\cC_n)$ is a subgroup of $PGL_n(K)$, the quotient of $GL_n(K)$ modulo its center which is the group $K^\ast I_n$ of nonzero scalar multiples of the identity matrix $I_n$. (In particular, $\Aut(\cC_1)$ is trivial, being a subgroup of $PGL_1(K)=1$.) So, such a homomorphism $\sigma$ also classifies a projective bundle over $\Ob(\cB)$. We recall the definition.

\begin{defn}\label{def: vector bundle}
Given a topological space $X$ with basic open sets $\{U_\alpha\}$, we define a \emph{system of projective transition matrices} for $X$ to be a family of ``projective matrices'' $g_{\beta\alpha}\in PGL_n(K)$ for all pairs of basic open sets $U_\alpha\subset U_\beta$ having the property that
\begin{equation}\label{eq: compatibility of transition matrices}
	g_{\gamma\beta}g_{\beta\alpha}=g_{\gamma\alpha}
\end{equation}
whenever $U_\alpha\subset U_\beta\subset U_\gamma$. This implies, in particular, that $g_{\alpha\alpha}$ is the identity in $PGL_n(K)$. Two systems of projective matrices $\{g_{\beta\alpha}\}$ and $\{g_{\beta\alpha}'\}$ are \emph{equivalent} if there exist elements $h_\alpha\in PGL_n(K)$ so that $g_{\beta\alpha}'=h_\beta g_{\beta\alpha}h_\alpha^{-1}$ for all $U_\alpha\subset U_\beta$. If $h=h_\alpha$ is independent of $\alpha$ we say that $\{g_{\beta\alpha}'\}$ is \emph{conjugate} to $\{g_{\beta\alpha}\}$ by $h$.

If there exist liftings $\tilde g_{\beta\alpha}$ of $g_{\beta\alpha}$ to $GL_n(K)$ so that $\tilde g_{\gamma\beta}\tilde g_{\beta\alpha}=\tilde g_{\gamma\alpha}
$ whenever $U_\alpha\subset U_\beta\subset U_\gamma$, we say the system $\{g_{\beta\alpha}\}$ is \emph{linearizable}. A system of projective matrices $g_{\beta\alpha}\in PGL_n(K)$ for $X$ determines a (flat) \emph{projective bundle} over $X$. If the system is linearizable, this bundle will be the projectivization of a flat vector bundle $E$ with projection $p:E\to X$ given by the construction:
\[
	E=\coprod_{\alpha} U_\alpha\times K^n/\sim
\]
where we identify $(x,v)\in U_\alpha\times K^n$ with $(x,\tilde g_{\beta\alpha}(x)(v))\in U_\beta\times K^n$ whenever $x\in U_\alpha\subset U_\beta$ with projection $p:E\to X$ given by $p(x,v)=x$.
\end{defn}

An alternate description of such a vector bundle $p:E\to X$ is a family of vector spaces $E_x:=p^{-1}(x)$ parametrized by $x\in X$ together with isomorphisms $h_\alpha:E_x\cong K^n$ for all basic open neighborhoods $U_\alpha$ of $x$ so that
\[
	h_\beta=\tilde g_{\beta\alpha}h_\alpha:E_x\cong K^n
\]
whenever $x\in U_\alpha\subset U_\beta$. Then we give $E$ a topology so that $h_\alpha$ gives a homeomorphism $\pi^{-1}(U_\alpha)\cong U_\alpha\times K^n$.

\begin{eg}\label{eg: standard system}
Suppose that $X$ is a connected space with a basis of simply connected open sets $U_\alpha$. Let $\sigma$ be a homomorphism $\pi_1(X,x_0)\to PGL_n(K)$. Then a system of projective transition matrices for $X$ can be given as follows.
\begin{enumerate}
\item For each $\alpha$, choose a point $x_\alpha\in U_\alpha$ and a path $\lambda_\alpha$ from $x_0$ to $x_\alpha$.
\item For any $U_\alpha\subset U_\beta$, let $g_{\beta\alpha}=\sigma(\lambda_\beta \gamma \lambda_\alpha^{-1})$ where $\gamma$ is any path from $x_\beta$ to $x_\alpha$ in $U_\beta$.
\end{enumerate}
We call this a \emph{standard system} of (projective) transition matrices given by $\sigma$.
\end{eg}

\subsection{The category $\cC_n$} An $n$-fold equivalence covering category of a topological $K$-category $\cB$ will be a topological $K$-category $\widetilde\cB$ which is locally isomorphic to $\cB\otimes \cC_n$ (Def. \ref{defn: B otimes Cn}) where $\cC_n$ is a finite category with $n$ isomorphic objects.% Thus $\cB\otimes \cC_n$ will be equivalent to $\cB$ with $n$ isomorphic copies of every object of $\cB$.

% We give the basic definitions. See Section \ref{Sec 5: App} for more details.

%\kblue{NEED DISCUSSION OF Cn}

%The properties of the category $\cC_n$, its endofunctors and natural transformations are discussed in Here we give the essential definitions. 

\begin{defn}\label{def: Cn}
For every $n\ge1$ let $\cC_n$ be the $K$-category with object set $[n]:=\{1,2,\cdots,n\}$ and $\cC_n(j,i)=Kx_{ij}$. I.e., every hom set is one dimensional over $K$ and is generated by 
\[
x_{ij}:j\to i. 
\]
We define composition $K$-bilinearly by 
\begin{equation}\label{eq: basic morphisms}
 x_{ij}x_{jk}=x_{ik}\quad \forall i,j,k.
\end{equation}
In particular, $x_{ii}=id_i$ and $x_{ji}=x_{ij}^{-1}$. Any choice of basis elements, i.e., any system of nonzero morphisms $x_{ij}\in \cC_n(j,i)$ satisfying \eqref{eq: basic morphisms} will be called a \emph{multiplicative basis} for $\cC_n$.
\end{defn}

We need to find all multiplicative bases for $\cC_n$. This will allow us to determine all automorphisms of $\cC_n$. Note that elements of $\cC_n(j,i)=K x_{ij}$ are all scalar multiples of $x_{ij}$. So, any other generator of $\cC_n(j,i)$ has the form $x_{ij}'=a_{ij}x_{ij}$ where $a_{ij}\in K^\ast$.

\begin{prop}\label{prop: comparison of multiplicative bases}
The following conditions on scalars $a_{ij}\neq0$ are equivalent.
\begin{enumerate}
\item The elements $x_{ij}'=a_{ij}x_{ij}$ form a multiplicative basis, i.e., satisfy \eqref{eq: basic morphisms}.
\item $a_{ij}a_{jk}=a_{ik}$ for all $i,j,k$
\item There are $c_i\in K^\ast$ so that $a_{ij}=c_i/c_j$.
\end{enumerate}
Furthermore, the scalars $c_i$ in $(3)$ are uniquely determined up to multiplication by a nonzero scalar, i.e., the $n$-tuple $(c_1,\cdots,c_n)$ forms a well-defined element of projective space $KP^{n-1}$.
\end{prop}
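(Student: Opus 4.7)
The plan is to prove the circular chain $(1) \Rightarrow (2) \Rightarrow (3) \Rightarrow (1)$ and then handle uniqueness separately; conceptually the equivalence says the obstruction to rescaling a multiplicative basis is a $1$-cocycle on the complete graph on $[n]$, and every such cocycle is a coboundary.

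For $(1) \Leftrightarrow (2)$, I would just expand the proposed relation $x'_{ij}x'_{jk}=x'_{ik}$ using $K$-bilinearity and the defining identity $x_{ij}x_{jk}=x_{ik}$ of $\cC_n$: the left side is $a_{ij}a_{jk}\,x_{ik}$ and the right side is $a_{ik}\,x_{ik}$, so since $x_{ik}\neq 0$ the two coincide if and only if $a_{ij}a_{jk}=a_{ik}$. This is immediate in both directions.

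The main step is $(2) \Rightarrow (3)$, and here the only small care needed is to pin down the diagonal scalars before picking an anchor. Setting $i=j=k$ in (2) gives $a_{ii}^2 = a_{ii}$, and since $a_{ii}\in K^\ast$ this forces $a_{ii}=1$; then $i=k$ gives $a_{ij}a_{ji}=1$, i.e., $a_{ji}=a_{ij}^{-1}$. Now I would pick the anchor index $1$ and set $c_i := a_{i1}\in K^\ast$. Applying (2) with middle index $1$ yields $a_{ij} = a_{i1}a_{1j} = a_{i1}a_{j1}^{-1} = c_i/c_j$. The reverse implication $(3)\Rightarrow (2)$ is the trivial telescoping $(c_i/c_j)(c_j/c_k)=c_i/c_k$.

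For the uniqueness of the $c_i$ up to a common scalar, suppose $a_{ij}=c_i/c_j=c_i'/c_j'$ for all $i,j$. Then the ratio $c_i/c_i'$ equals $c_j/c_j'$ for every pair $i,j$, so this common value is a fixed $\lambda\in K^\ast$ with $c_i=\lambda c_i'$, which is exactly the statement that $(c_1,\dots,c_n)$ is a well-defined element of $KP^{n-1}$. There is no real obstacle; the only thing worth flagging is that one must first normalize $a_{ii}=1$ before the anchor construction, otherwise $c_i:=a_{i1}$ would not satisfy $c_1=1$ and the verification of the identity for $j=1$ would look circular.
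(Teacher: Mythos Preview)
Your proof is correct and follows essentially the same approach as the paper: both anchor at the index $1$ and set $c_i=a_{i1}$. The paper's version is terser---it applies (2) in the form $a_{ij}a_{j1}=a_{i1}$ directly to get $a_{ij}=c_i/c_j$, so the preliminary derivation of $a_{ji}=a_{ij}^{-1}$ is not strictly needed---and it omits the uniqueness argument entirely, which you supply.
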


\begin{proof}
It is clear that (1),(2) are equivalent and that (3) implies (2). To see that (2) implies (3), let $c_j=a_{j1}$.
\end{proof}

Any collection of scalars $a_{ij}\in K^\ast$ satisfying Condition (2) in Proposition \ref{prop: comparison of multiplicative bases} will be called a \emph{multiplicative system} (of scalars). Since an automorphism of $\cC_n$ permutes the objects and takes one multiplicative basis to another we get the following.

\begin{cor}\label{cor: formula for automorphisms of Cn}
A $K$-automorphism $\sigma$ of $\cC_n$ is given on objects and morphisms as follows.
\begin{enumerate}
\item On objects, $\sigma$ is a permutation of $n$: $\sigma \in S_n$.
\item On morphisms we have 
\[
	\sigma(x_{ij})=a_{ij}x_{\sigma(i)\sigma(j)}
\]
where $a_{ij}\in K^\ast$ is a multiplicative system of scalars.
\end{enumerate}
Conversely, any permutation of $n$ and any multiplicative system $(a_{ij})$ defines an automorphism of $\cC_n$.\qed
\end{cor}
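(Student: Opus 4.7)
The plan is to reduce the statement directly to Proposition \ref{prop: comparison of multiplicative bases}. For the forward direction, let $\sigma$ be a $K$-linear automorphism of $\cC_n$. Since $\sigma$ is a bijection on objects and $\Ob(\cC_n)=[n]$ is finite, the object component of $\sigma$ is literally an element of the symmetric group $S_n$. For each pair $(i,j)$, the morphism $\sigma(x_{ij})$ is a nonzero element of $\cC_n(\sigma(j),\sigma(i))=Kx_{\sigma(i)\sigma(j)}$, so there is a unique scalar $a_{ij}\in K^\ast$ with
\[
\sigma(x_{ij})=a_{ij}\,x_{\sigma(i)\sigma(j)}.
\]
Applying $\sigma$ to the defining relation $x_{ij}x_{jk}=x_{ik}$ and using functoriality gives
\[
a_{ij}a_{jk}\,x_{\sigma(i)\sigma(j)}x_{\sigma(j)\sigma(k)}=a_{ik}\,x_{\sigma(i)\sigma(k)},
\]
and since $x_{\sigma(i)\sigma(j)}x_{\sigma(j)\sigma(k)}=x_{\sigma(i)\sigma(k)}$ we conclude $a_{ij}a_{jk}=a_{ik}$. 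Thus $(a_{ij})$ is a multiplicative system in the sense of Proposition \ref{prop: comparison of multiplicative bases}(2).

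For the converse, given any permutation $\sigma\in S_n$ and any multiplicative system $(a_{ij})$, I would define the candidate functor on objects by the permutation and extend it $K$-linearly on morphisms via $x_{ij}\mapsto a_{ij}x_{\sigma(i)\sigma(j)}$. The relation $a_{ii}a_{ii}=a_{ii}$ combined with $a_{ii}\in K^\ast$ forces $a_{ii}=1$, so identities are preserved; composition is preserved by reversing the calculation above. The functor is then invertible: its inverse is constructed from the permutation $\sigma^{-1}$ together with the scalars $a_{ij}^{-1}=a_{ji}$, which again form a multiplicative system. Hence one obtains an automorphism of $\cC_n$.

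There is no real obstacle here; the statement is essentially a bookkeeping consequence of Proposition \ref{prop: comparison of multiplicative bases}, since an automorphism of $\cC_n$ is the same data as a permutation of $[n]$ together with a comparison between two multiplicative bases (namely $\{x_{ij}\}$ and $\{\sigma(x_{ij})\}$, with the latter indexed via the permutation). The only point worth flagging in the write-up is the verification $a_{ii}=1$, which ensures the candidate functor in the converse direction genuinely sends identities to identities.
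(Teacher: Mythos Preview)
Your argument is correct and matches the paper's one-line justification (``an automorphism of $\cC_n$ permutes the objects and takes one multiplicative basis to another''), just with the details spelled out. One small slip: the transition coefficients of $\sigma^{-1}$ are not $(a_{ij}^{-1})$ but rather $(a_{\sigma^{-1}(i)\sigma^{-1}(j)}^{-1})$; with your formula the composite would send $x_{ij}$ to $a_{ij}a_{\sigma(i)\sigma(j)}^{-1}x_{ij}$, which need not be the identity. This does not affect the proof, since invertibility is already clear from the fact that your functor is bijective on objects and a nonzero scalar multiplication on each hom-set.
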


We have the following straightforward generalization.

\begin{prop}\label{prop: transition coef for any tau}
A $K$-linear functor $\tau:\cC_n\to \cC_m$ is given on objects and morphisms as follows.
\begin{enumerate}
\item On objects $\tau$ is a mapping $[n]\to [m]$.
\item On morphisms $\tau$ is given by
\[
	\tau(x_{ij})=b_{ij}y_{\tau(i)\tau(j)}
\]
where $x_{ij}$, $y_{pq}$ are multiplicative bases for $\cC_n,\cC_m$ and $b_{ij}$ is a multiplicative system of scalars. In particular $\tau$ is a faithful functor.
\end{enumerate}
Conversely, any mapping $[n]\to[m]$ and any multiplicative system of scalars $b_{ij}$ determines a unique $K$-linear functor $\tau:\cC_n\to\cC_m$.\qed
\end{prop}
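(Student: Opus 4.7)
The plan is to imitate the proof of Corollary~\ref{cor: formula for automorphisms of Cn}, dropping the hypothesis that $\tau$ is invertible and allowing the object map to fail to be injective. The key structural fact is that every hom space in $\cC_n$ and $\cC_m$ is one-dimensional, spanned by a chosen multiplicative basis element, with the convention that $x_{ii}=id_i$ and $y_{kk}=id_k$. This will make every formula work uniformly regardless of whether the object map is injective.

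First I would extract the data from a $K$-linear functor $\tau:\cC_n\to\cC_m$. On objects, $\tau$ is simply a function $[n]\to[m]$, which I also denote by $\tau$. Since $\cC_m(\tau(j),\tau(i))=K y_{\tau(i)\tau(j)}$ is one-dimensional, there is a unique scalar $b_{ij}\in K$ with $\tau(x_{ij})=b_{ij}y_{\tau(i)\tau(j)}$. I would then show $(b_{ij})$ is a multiplicative system by applying $\tau$ to the defining relations of $\cC_n$. Preservation of identities $x_{ii}=id_i$ gives $b_{ii}=1$, and applying $\tau$ to $x_{ij}x_{jk}=x_{ik}$ together with the multiplicative basis relation $y_{\tau(i)\tau(j)}y_{\tau(j)\tau(k)}=y_{\tau(i)\tau(k)}$ in $\cC_m$ yields
\[
b_{ij}b_{jk}y_{\tau(i)\tau(k)}=\tau(x_{ik})=b_{ik}y_{\tau(i)\tau(k)},
\]
whence $b_{ij}b_{jk}=b_{ik}$. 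Specializing $k=i$ gives $b_{ij}b_{ji}=b_{ii}=1$, so each $b_{ij}\in K^\ast$, and $(b_{ij})$ is a genuine multiplicative system in the sense of Proposition~\ref{prop: comparison of multiplicative bases}. Faithfulness is then immediate: every nonzero morphism $j\to i$ has the form $cx_{ij}$ with $c\in K^\ast$, and $\tau(cx_{ij})=cb_{ij}y_{\tau(i)\tau(j)}\neq 0$.

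For the converse, given any mapping $\tau:[n]\to[m]$ and any multiplicative system of scalars $(b_{ij})$, I would define $\tau(x_{ij}):=b_{ij}y_{\tau(i)\tau(j)}$ and extend $K$-linearly, noting that one-dimensionality of hom spaces forces this extension to be unique. The functor axioms then reduce to the multiplicative system identities: $b_{ii}=1$ (which follows from $b_{ii}^2=b_{ii}$ and $b_{ii}\neq 0$) encodes preservation of identities, while $b_{ij}b_{jk}=b_{ik}$ encodes compatibility with composition. There is no serious obstacle in the argument; the only subtlety is the uniform treatment of the degenerate case $\tau(i)=\tau(j)$, which is handled automatically by the convention $y_{kk}=id_k$ and is precisely what allows the statement to hold without any injectivity assumption on the object map.
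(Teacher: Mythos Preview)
Your proof is correct and is exactly the straightforward verification the paper intends: the proposition is stated with a \qed\ and no proof, having been introduced as the evident generalization of Corollary~\ref{cor: formula for automorphisms of Cn}. Your argument supplies precisely those details, including the observation that $b_{ij}\in K^\ast$ (hence faithfulness) follows from $b_{ij}b_{ji}=b_{ii}=1$.
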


\begin{defn}\label{def: transition coefficients}
The scalars $(a_{ij})$ and $(b_{ij})$ which occur in Corollary \ref{cor: formula for automorphisms of Cn} and Proposition \ref{prop: transition coef for any tau} above will be called the \emph{transition coefficients} of $\sigma,\tau$ with respect to $(x_{ij})$ for $\sigma$ and with respect to $(x_{ij})$ and $ (y_{pq})$ for $\tau$. Recall that these are multiplicative systems of scalars, i.e., satisfy the equivalent conditions listed in Proposition \ref{prop: comparison of multiplicative bases}.
\end{defn}

%\newpage

Recall that the \emph{monomial matrix group} $M_n(K^\ast)$ is the subgroup of $GL_n(K)$ of all matrices which are products $PD$ where $P$ is a permutation matrix and $D$ is a diagonal matrix with diagonal entries in $K^\ast$.

\begin{prop}\label{prop: Aut Cn is projective}
There is an epimorphism ${M_n(K^\ast)\to \Aut(\cC_n)}$ with kernel the center of $M_n(K^\ast)$ which is ${K^\ast I_n}$. Thus:
\[
	\Aut(\cC_n)\cong M_n(K^\ast)/K^\ast I_n \subset PGL_n(K).
\]
\end{prop}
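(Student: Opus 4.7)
The plan is to build the epimorphism $\Phi : M_n(K^\ast) \to \Aut(\cC_n)$ explicitly, using the parametrization already provided by Corollary \ref{cor: formula for automorphisms of Cn}. Each $\sigma\in\Aut(\cC_n)$ is determined by a permutation $\pi\in S_n$ together with a multiplicative system $(a_{ij})$, which by Proposition \ref{prop: comparison of multiplicative bases} has the form $a_{ij}=c_i/c_j$ for some $(c_1,\dots,c_n)\in (K^\ast)^n$, unique up to a common scalar. This data corresponds precisely to a monomial matrix: writing a general element of $M_n(K^\ast)$ as $PD$ where $P$ is the permutation matrix of some $\pi\in S_n$ and $D=\mathrm{diag}(c_1,\dots,c_n)$, I define $\Phi(PD)=\sigma$ by the formulas in Corollary \ref{cor: formula for automorphisms of Cn}.

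Surjectivity of $\Phi$ is immediate from Corollary \ref{cor: formula for automorphisms of Cn}, since every $\sigma$ is of this form. To verify that $\Phi$ is a homomorphism, I would compare $\Phi(P_\sigma D_\sigma\cdot P_\tau D_\tau)$ with the composition $\Phi(P_\sigma D_\sigma)\circ \Phi(P_\tau D_\tau)$ by a direct check on a multiplicative basis. If $\sigma$ has data $(\pi_\sigma,c_i)$ and $\tau$ has data $(\pi_\tau,d_i)$, then
\[
	\sigma\tau(x_{ij})=\sigma\!\left(\tfrac{d_i}{d_j}\,x_{\pi_\tau(i)\pi_\tau(j)}\right)=\tfrac{d_i\, c_{\pi_\tau(i)}}{d_j\, c_{\pi_\tau(j)}}\,x_{\pi_\sigma\pi_\tau(i)\,\pi_\sigma\pi_\tau(j)},
\]
so $\sigma\tau$ has permutation $\pi_\sigma\pi_\tau$ and scalar sequence $e_i=d_i c_{\pi_\tau(i)}$. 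On the other side, the matrix product $P_\sigma D_\sigma P_\tau D_\tau$ acts on a standard basis vector $e_j$ by $e_j\mapsto d_j c_{\pi_\tau(j)}\, e_{\pi_\sigma\pi_\tau(j)}$, which is exactly the monomial matrix associated to $(\pi_\sigma\pi_\tau, e_i)$. Hence $\Phi$ respects composition.

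For the kernel: $\Phi(PD)=\mathrm{id}_{\cC_n}$ forces the underlying permutation to be trivial (so $P=I_n$) and every ratio $c_i/c_j$ to equal $1$, i.e.\ all $c_i$ equal to a common scalar $c\in K^\ast$. Thus $\ker\Phi=K^\ast I_n$, which is evidently the center of $M_n(K^\ast)$ since a monomial matrix commutes with every diagonal matrix of distinct entries only if it itself is a scalar matrix. The first isomorphism theorem then yields $\Aut(\cC_n)\cong M_n(K^\ast)/K^\ast I_n$. The final inclusion into $PGL_n(K)=GL_n(K)/K^\ast I_n$ follows from $M_n(K^\ast)\subset GL_n(K)$ together with the fact that $K^\ast I_n$ is the intersection of $M_n(K^\ast)$ with the center of $GL_n(K)$ (indeed it is the full center of both).

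The only subtle point is getting the bookkeeping for the product of two monomial matrices to match the composition of the associated functors; this amounts to tracking where the permutation acts on the index of the scalar $c_i$, and is really just a convention check rather than a conceptual obstacle. Once this computation is done cleanly, the statement follows from Corollary \ref{cor: formula for automorphisms of Cn} with essentially no further input.
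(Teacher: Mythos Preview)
Your proof is correct and follows essentially the same approach as the paper: define $\Phi(PD)$ via the permutation and the ratios $c_i/c_j$, invoke Corollary~\ref{cor: formula for automorphisms of Cn} for surjectivity, and read off the kernel as the scalar matrices. The paper compresses the homomorphism check into the phrase ``an easy calculation,'' whereas you carry it out explicitly; your computation is right, though you might want to avoid reusing the symbol $e_i$ for both the standard basis vectors and the scalars $d_i c_{\pi_\tau(i)}$.
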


\begin{proof}
The homomorphism $M_n(K^\ast)\to \Aut(\cC_n)$ is given by sending $PD$ to the automorphism $\sigma$ whose underlying permutation is given by $P$ in the sense that $Pe_i=e_{\sigma(i)}$ and whose transition coefficients are $a_{ij}=d_i/d_j$ where $d_i$ are the entries of $D$. For $\sigma$ to be the identity, $P$ must be the identity permutation and $d_i=d_j$ for all $i,j$, i.e., $PD\in K^\ast I_n$. An easy calculation shows that $PD\mapsto \sigma$ is a homomorphism.
\end{proof}

\begin{rem}\label{rem: PD: id cong sigma}
The monomial matrix $PD$ gives a natural isomorphism $id\cong \sigma$ which, on object $i$, is $d_ix_{\sigma(i)i}:i\to \sigma(i)$. This is natural, i.e., the following diagram commutes, since $d_ja_{ij}=d_i$.
\[
\xymatrixrowsep{20pt}\xymatrixcolsep{30pt}
\xymatrix{%begin xy matrix
j\ar[d]_{x_{ij}}\ar[r]^(.4){d_j} &
	\sigma(j)\ar[d]^{a_{ij}x_{\sigma(i)\sigma(j)}}\\
i \ar[r]^(.4){d_i}& 
	\sigma(i)
	}%end xy matrix
\] 
\end{rem}

\begin{defn}\label{defn: B otimes Cn}For a topological $K$-category $\cB$ without zero objects, we define $\cB\otimes_K\cC_n$ to be the topological $K$-category with object space $\Ob(\cB)\times [n]$, the disjoint union of $n$ copies of $\Ob(\cB)$, and morphisms
\[
	(\cB\otimes \cC_n)(X\otimes i,Y\otimes j)=\cB(X,Y)\otimes \cC_n(i,j)=\cB(X,Y)\times x_{ji}.
\]
Since $\cC_n(i,j)=Kx_{ji}$, morphism $X\otimes i\to Y\otimes j$ can be written uniquely as $f\otimes x_{ji}$ where $f\in\cB(X,Y)$ and we give the morphism space of $\cB\otimes \cC_n$ the topology of $\Mor(\cB)\times\{x_{ij}\}$ where $\{x_{ij}\}$ denotes $\{x_{ij}\,|\, i,j\in \Ob(\cC)\}$. This is the disjoint union of $n^2$ copies of $\Mor(\cB)$. 
\end{defn}

For each $i\in[n]$, $\cB\otimes i\subseteq \cB\otimes \cC_n$ is a full subcategory isomorphic to $\cB$ and $id_X\otimes x_{ji}$ gives a continuous natural isomorphism $X\otimes i\cong X\otimes j$ for any $i,j\in[n]$. Given any $K$-linear functor $\tau:\cC_n\to \cC_m$ and any topological $K$-category $\cB$ we get an induced continuous faithful linear functor 
\[
id\otimes \tau:\cB\otimes \cC_n\to\cB\otimes\cC_m.
\]

%\newpage
%-----------------------------------------------------------------------------------
%            sub section {Equivalence coverings}
%-----------------------------------------------------------------------------------

\subsection{Equivalence coverings}

An equivalence covering of $\cB$ will be a category $\widetilde\cB$ which looks locally like $\cB\otimes \cC_n$ with transition maps of the form $id\otimes \sigma$ for $\sigma\in \Aut(\cC_n)$.

In the sequel, for $U,V\in \Ob(\cB)$, we use the notation $\cB(U,V)$ for the space of all morphisms $x\to y$ in $\cB$ where $x\in U$ and $y\in V$. Given continuous mappings $h_1:U\to U', h_2:V\to V'$ where $U',V'\subset\Ob(\cD)$, a mapping of morphism sets $F:\cB(U,V)\to \cD(U',V')$ will be called \emph{fiberwise linear} over $h_1,h_2$ if $F$ sends $\cB(x,y)$ to $ \cD(h_1(x),h_2(y))$ by a $K$-linear map for all $(x,y)\in U\times V$. For example, the morphism map of any linear functor is fiberwise linear over its object map.

\begin{defn}\label{def: equivalence covering}
An \emph{$n$-fold equivalence covering} of $\cB$ is a topological $K$-category $\widetilde \cB$ with maps $p,F_{\beta\alpha},g_{\beta\alpha}$ satisfying the following for $\{U_\alpha\}$ a basis of open sets for $X=\Ob(\cB)$.
\begin{enumerate}
\item $p:\widetilde X=\Ob(\widetilde\cB)\to X=\Ob(\cB)$ is an $n$-fold covering map. For each $U_\alpha$, let $\widetilde U_\alpha=p^{-1}(U_\alpha)$ and assume there is $h_\alpha:\widetilde U_\alpha\to U_\alpha\times[n]$ a homeomorphism over $U_\alpha$, i.e., $h_\alpha$ commutes with projection to $U_\alpha$.
\item For each $U_\alpha\subset U_\beta$,
\[
	F_{\beta\alpha}:\widetilde\cB(\widetilde U_\alpha,\widetilde U_\beta)\to (\cB\otimes \cC_n)(U_\alpha\times[n],U_\beta\times [n])=\cB(U_\alpha,U_\beta)\times\{x_{ji}\}
\] is a continuous fiberwise isomorphism over $h_\alpha,h_\beta$, i.e., $F_{\beta\alpha}$ gives a $K$-linear isomorphism $F_{\beta\alpha}:\widetilde\cB(\widetilde X,\widetilde Y)\cong \cB(X,Y)\times x_{ji}$ if $h_\alpha(\widetilde X)=X\otimes i$ and $h_\beta(\widetilde Y)=Y\otimes j$, which also satisfies the following.
	\begin{enumerate}
	\item $F_{\alpha\alpha}(id_x)=id_y\times x_{ii}$ if $h_\alpha(x)=(y,i)$. 
	\item $F_{\gamma\beta}(g)F_{\beta\alpha}(f)=F_{\gamma\alpha}(gf)$ if $f\in \widetilde\cB(\widetilde U_\alpha,\widetilde U_\beta)$ and $g\in \widetilde\cB(\widetilde U_\beta,\widetilde U_\gamma)$ are composable.
	\end{enumerate}
\item There are automorphisms $g_{\beta\alpha}\in \Aut(\cC_n)$ for all $U_\alpha\subset U_\beta$ satisfying the compatibility condition \eqref{eq: compatibility of transition matrices} whose underlying permutation matrix $\pi_{\beta\alpha}$ gives 
\[
h_\beta h_\alpha^{-1}=(inc:U_\alpha\hookrightarrow U_\beta)\times \pi_{\beta\alpha}:U_\alpha\times[n]\to U_\beta\times[n]
\]
and so that the following diagram commutes whenever $U_\alpha\subset U_{\alpha'}$ and $U_\beta\subset U_{\beta'}$.
\[
%\xymatrixrowsep{10pt}\xymatrixcolsep{10pt}
\xymatrix{%begin xy matrix
\widetilde\cB(\widetilde U_\alpha,\widetilde U_\beta) \ar[d]_\subset\ar[r]^(.4){F_{\beta\alpha}} &
	\cB( U_\alpha, U_\beta)\times \{x_{ji}\} \ar[d]^{inc\otimes g_{\beta'\beta}g_{\alpha'\alpha}^{-1}} \\
\widetilde\cB(\widetilde U_{\alpha'},\widetilde U_{\beta'}) \ar[r]^(.4){F_{\beta'\alpha'}} &
	\cB( U_{\alpha'}, U_{\beta'})\times \{x_{ji}\} 
	}%end xy matrix
\]
where, if the automorphism ${g_{\beta'\beta}g_{\alpha'\alpha}^{-1}}$ of $\cC_n$ sends $x_{ji}$ to the scalar multiple $cx_{\pi(j)\pi'(i)}$ of $x_{\pi(j)\pi'(i)}$, then $inc\otimes {g_{\beta'\beta}g_{\alpha'\alpha}^{-1}}$ sends $(f,x_{ji})$ to $(cf,x_{\pi(j)\pi'(i)})$.
\end{enumerate}
\end{defn}

\begin{thm}\label{thm: classification of gab}
An $n$-fold equivalence covering of $\cB$ is completely determined up to continuous isomorphism by the family of compatible automorphisms $(g_{\beta\alpha})$ in $\Aut(\cC_n)$. When $\Ob(\cB)$ is connected and locally 1-connected, this system gives a homomorphism
\[
	\sigma:\pi_1(\Ob(\cB),X_0)\to \Aut(\cC_n)
\]
well-defined up to conjugation for any object $X_0$. Conversely, any such homomorphism gives an $n$-fold covering $\widetilde \cB_\sigma$ of $\cB$.
\end{thm}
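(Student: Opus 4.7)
The proof follows classical covering-space theory extended to include the linear data on morphism spaces. I would organize it in three stages matching the three assertions.

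First, for the uniqueness part: given two equivalence coverings $\widetilde\cB, \widetilde\cB'$ of $\cB$ with the same family $(g_{\beta\alpha})$ of transition automorphisms (relative to a common basis $\{U_\alpha\}$ of simply connected open sets), I construct a continuous isomorphism $\Phi: \widetilde\cB \to \widetilde\cB'$. On objects, define $\Phi$ locally by $\Phi|_{\widetilde U_\alpha} = (h_\alpha')^{-1} \circ h_\alpha$, using the trivializations supplied by Definition \ref{def: equivalence covering}(1). Since both trivializations satisfy (3) with the \emph{same} underlying permutations $\pi_{\beta\alpha}$, these local maps agree on overlaps. On morphisms, set $\Phi = (F_{\beta\alpha}')^{-1} \circ F_{\beta\alpha}$ over each pair $\widetilde\cB(\widetilde U_\alpha, \widetilde U_\beta)$; using the square in condition (3) of Definition \ref{def: equivalence covering} on both sides, the same transition data $g_{\beta'\beta}g_{\alpha'\alpha}^{-1}$ appears on each, so the two local isomorphisms agree on refinements and glue to a global continuous $K$-linear isomorphism.

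Second, to produce the holonomy homomorphism $\sigma:\pi_1(\Ob(\cB),X_0) \to \Aut(\cC_n)$ when $\Ob(\cB)$ is connected and locally simply connected, I mimic Example \ref{eg: standard system} in reverse. Pick a basepoint $X_0 \in \Ob(\cB)$ and, for each $\alpha$, a basepoint $x_\alpha \in U_\alpha$ together with a path $\lambda_\alpha$ from $X_0$ to $x_\alpha$. For any loop $\gamma$ at $X_0$, choose a chain of basic open sets $U_{\alpha_0}, U_{\alpha_1}, \dots, U_{\alpha_k}$ and simply connected open sets $U_{\beta_i} \subset U_{\alpha_{i-1}} \cap U_{\alpha_i}$ covering $\gamma$; define
\[
\sigma(\gamma) := g_{\alpha_0 \beta_1}\, g_{\alpha_1 \beta_1}^{-1}\, g_{\alpha_1 \beta_2}\, g_{\alpha_2 \beta_2}^{-1}\, \cdots\, g_{\alpha_k \beta_k}^{-1}.
\]
The cocycle condition \eqref{eq: compatibility of transition matrices} shows $\sigma(\gamma)$ is independent of the chain of refinements, and simple connectivity of each basic open set together with the same cocycle condition shows $\sigma(\gamma)$ depends only on the homotopy class of $\gamma$ and that $\sigma$ is a homomorphism. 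Changing the basepoint or the paths $\lambda_\alpha$ conjugates $\sigma$ by a fixed element of $\Aut(\cC_n)$.

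Third, for the converse, given $\sigma: \pi_1(\Ob\cB, X_0) \to \Aut(\cC_n)$, I invoke Example \ref{eg: standard system} to produce a compatible family $(g_{\beta\alpha}) \in \Aut(\cC_n)$ whose underlying permutations $(\pi_{\beta\alpha})$ classify an $n$-fold covering $p: \widetilde X \to X = \Ob(\cB)$ by standard covering theory. To build the morphism space of $\widetilde\cB_\sigma$, glue: set
\[
\Mor(\widetilde\cB_\sigma) = \Bigl( \coprod_{\alpha,\beta} \cB(U_\alpha,U_\beta) \times \{x_{ji}\} \Bigr) \Big/ \sim,
\]
where $(f, x_{ji}) \in \cB(U_\alpha,U_\beta) \times \{x_{ji}\}$ is identified with its image under $\text{id} \otimes g_{\beta'\beta} g_{\alpha'\alpha}^{-1}$ whenever $U_\alpha \subset U_{\alpha'}$ and $U_\beta \subset U_{\beta'}$. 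The cocycle condition makes the equivalence relation transitive, and composition is defined locally by the composition in $\cB \otimes \cC_n$ and extends unambiguously because the transition maps $\text{id} \otimes g_{\beta\alpha}$ are functors.

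The main obstacle will be the third step: verifying that the gluing construction produces a genuine topological $K$-category satisfying Definition \ref{def: equivalence covering}. In particular one must check that composition descends to the quotient (using that $g_{\gamma\beta}g_{\beta\alpha} = g_{\gamma\alpha}$ together with the fact that $\text{id}\otimes (-)$ is functorial), that the topology on $\Mor(\widetilde\cB_\sigma)$ is Hausdorff and locally trivial in the sense of Definition \ref{def: equivalence covering}(2), and that the defining square of (3) commutes by construction. All of these reduce ultimately to the cocycle identity \eqref{eq: compatibility of transition matrices} and the $K$-linearity of each $g_{\beta\alpha}$, but care is needed because $\Aut(\cC_n)$ acts nontrivially on scalars via its projection to $PGL_n(K)$ (Proposition \ref{prop: Aut Cn is projective}), so the scalar and permutation data must be tracked simultaneously.
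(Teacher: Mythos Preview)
Your proposal is correct and follows essentially the same approach as the paper. The paper is more terse: it dismisses the holonomy construction as ``standard covering theory'' and cites Example \ref{eg: standard system} for the converse, whereas you spell out the chain-of-open-sets formula explicitly; for uniqueness the paper builds a single canonical model from $(g_{\beta\alpha})$ (the gluing construction you describe in your third step) and implicitly uses that any covering with this data is isomorphic to it, while you instead construct a direct isomorphism between two given coverings---but these are equivalent organizations of the same argument.
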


\begin{proof}
It is standard covering theory that, for any discrete group $G$ and system of elements $g_{\beta\alpha}\in G$ satisfying the compatibility condition $g_{\gamma\beta}g_{\beta\alpha}=g_{\gamma\alpha}$ gives a covering space of $X$ and a homomorphism $\pi_1(X,x_0)\to G$ for any $x_0\in X$. For $X$ connected and locally 1-connected any such homomorphism $\sigma$ is realized by taking $\{g_{\beta\alpha}\}$ to be the standard system given by $\sigma$ as in Example \ref{eg: standard system}. Thus the only thing to show is that the other structure elements of an $n$-fold equivalence cover of $\cB$ are determined up to isomorphism by $\{g_{\beta\alpha}\}$.

%\newpage

The permutation part of $g_{\beta\alpha}$, denoted $\pi_{\beta\alpha}\in S_n$, determines the covering space $\widetilde X$ and covering map $p:\widetilde X\to X=\Ob(\cB)$ up to isomorphism. We can also choose the local trivializations $h_\alpha:\widetilde U_\alpha\cong U_\alpha\times[n]$ arbitrarily. Then (1) is satisfied.

The morphism set $\Mor(\widetilde\cB)$ is given by pasting together the sets $\cB(U_\alpha,U_\beta)\times\{x_{ji}\}$:
\[
	\Mor(\widetilde\cB)=\coprod \cB(U_\alpha,U_\beta)\times\{x_{ji}\}/\sim
\]
and giving this the quotient topology where the isomorphisms $id\otimes {g_{\beta'\beta}g_{\alpha'\alpha}^{-1}}$ are used to paste together the different copies of each morphism set. The compatibility condition \eqref{eq: compatibility of transition matrices} insures that these identifications are consistent. This gives (3).

It remains to prove (2)(b) which implies (2)(a). But, composable morphisms $f\in \widetilde\cB(U_\alpha,U_\beta), g\in \widetilde\cB(U_\beta,U_\gamma)$ are given by $f=(f_0\otimes x_{ji})$, $g=(g_0\otimes x_{kj})$ with composition $gf=(g_0f_0\otimes x_{ki})$.
\end{proof}

\begin{eg}\label{eg: equivalence coverings of KX}
Let $X$ be a connected and locally simply connected topological space and let $KX$ be the trivial $K$-category of $X$ (Example \ref{eg: trivial example}). For any homomorphism $\sigma:\pi_1(X,x_0)\to \Aut(\cC_n)$ consider the corresponding equivalence covering $\widetilde{KX}_\sigma$ of $KX$. 

Let $\cK_\sigma X$ be the topological subcategory of $\widetilde{KX}_\sigma$ containing all of the objects but only those morphisms lying over the same point in $X$. The fiber $\cK_\sigma X_{x_0}$ of $\cK_\sigma X$ over $x_0\in X$ is isomorphic to $\cC_n$. For any loop $\gamma$ in $X$ starting and ending at $x_0$, unique lifting of paths in $X$ to the covering spaces of objects and morphisms of $\cK_\sigma X$ gives an automorphism of $\cK_\sigma X_{x_0}$. This gives an automorphism $\tau[\gamma]$ of $\cK_\sigma X_{x_0}$ which induces a well-defined homomorphism 
\[
	\tau:\pi_1(X,x_0)\to \Aut(\cK_\sigma X_{x_0}).
\]
Conjugating with an isomorphism $\cK_\sigma X_{x_0}\cong\cC_n$ gives a homomorphism $\pi_1(X,x_0)\to \Aut(\cC_n)$ well-defined up to conjugation. By construction, this must be conjugate to $\sigma$.
\end{eg}

\subsection{Classification of equivalence coverings}

We will show that, under certain conditions, equivalence coverings of a topological $K$-category $\cB$ are classified by their holonomy up to conjugation. The statement is that $\widetilde \cB_\sigma$ and $\widetilde \cB_{\sigma'}$ are continuously isomorphic ``over $\cB$'' if and only if $\sigma,\sigma'$ are conjugate. We need the following definition to overcome the problem that there is, in general, no continuous functor $\widetilde \cB_\sigma\to\cB$.

\begin{defn}\label{def: equivalence of equivalence coverings}
Let $\widetilde \cB_\sigma$, $\widetilde \cB_{\sigma'}$ be two equivalence coverings of the same topological $K$-category $\cB$. A continuous linear functor $F:\widetilde \cB_\sigma\to \widetilde \cB_{\sigma'}$ will be called a \emph{continuous equivalence over $\cB$} if the object map of $F$ is a map of covering spaces over $\cO b\cB$, i.e., the following diagram commutes.
\[
\xymatrixrowsep{15pt}\xymatrixcolsep{10pt}
\xymatrix{%begin xy matrix
\cO b\widetilde\cB_\sigma\ar[rr]^{F}\ar[dr]&  &
	\cO b\widetilde\cB_{\sigma'}\ar[dl]\\
& \cO b\cB
	}%end xy matrix
\]
If the functor $F$ is also an isomorphism, it will be called a \emph{continuous isomorphism over $\cB$}.
\end{defn}

\begin{lem}\label{lem: morphism between equiv covers}
Suppose $\cO b\cB$ is connected and locally simply connected and let $\widetilde\cB_\sigma$ and $\widetilde\cB_{\sigma'}$ be $n$ and $m$-fold equivalence coverings with holonomies $\sigma$ and $\sigma'$ respectively. Let $\tau:\cC_n\to \cC_m$ be a linear functor with the property that $\sigma'(\gamma)\circ\tau=\tau\circ\sigma(\gamma):\cC_n\to \cC_m$ for any $\gamma\in\pi_1(\cO b\cB,X_0)$. Then there is a continuous equivalence
\[
	F_\tau:\widetilde \cB_\sigma\to \widetilde\cB_{\sigma'}
\]over $\cB$ given on $\cB(U_\alpha,U_\beta)\otimes \cC_n$, using standard coordinates, by $id\otimes \tau$.  Thus, $F_\tau F_{\tau'}=F_{\tau\tau'}$. In particular, when $\tau$ is an isomorphism, $F_\tau$ is an isomorphism with inverse $F_{\tau^{-1}}$.
\end{lem}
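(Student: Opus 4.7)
The plan is to construct $F_\tau$ locally in the atlases that witness the equivalence-covering structures, then check that the local pieces descend to a well-defined continuous functor on the quotient; the whole argument hinges on one chart-compatibility identity which is exactly what the hypothesis on $\tau$ is designed to supply. Fix a basis $\{U_\alpha\}$ of simply connected open sets for $\cO b\cB$ and, for each pair $U_\alpha\subset U_\beta$, take the standard systems of transition automorphisms $g_{\beta\alpha}\in\Aut(\cC_n)$ and $g'_{\beta\alpha}\in\Aut(\cC_m)$ arising from $\sigma,\sigma'$ as in Example \ref{eg: standard system}. By (the proof of) Theorem \ref{thm: classification of gab} the morphism spaces of $\widetilde\cB_\sigma$ and $\widetilde\cB_{\sigma'}$ are presented as the quotients of $\coprod \cB(U_\alpha,U_\beta)\times\{x_{ji}\}$ and $\coprod \cB(U_\alpha,U_\beta)\times\{y_{qp}\}$ under the pasting maps $id\otimes g_{\beta'\beta}g_{\alpha'\alpha}^{-1}$ and $id\otimes g'_{\beta'\beta}{g'_{\alpha'\alpha}}^{-1}$ respectively.

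On each chart $\widetilde U_\alpha$ use the trivializations $h_\alpha$ and $h'_\alpha$ to transport the underlying object map $[n]\to[m]$ of $\tau$ (Proposition \ref{prop: transition coef for any tau}) to a continuous map $\widetilde U_\alpha\to\widetilde U'_\alpha$ lying over the identity on $U_\alpha$, and on morphism sets over $(\widetilde U_\alpha,\widetilde U_\beta)$ define $F_\tau$ by $id\otimes\tau$ in the standard coordinates. To show these local definitions glue, I must verify, for any $U_\alpha\subset U_{\alpha'}$ and $U_\beta\subset U_{\beta'}$, the identity
\[
(id\otimes g'_{\beta'\beta}{g'_{\alpha'\alpha}}^{-1})\circ(id\otimes\tau)=(id\otimes\tau)\circ(id\otimes g_{\beta'\beta}g_{\alpha'\alpha}^{-1}),
\]
which reduces to the functor equation $g'_{\beta'\beta}{g'_{\alpha'\alpha}}^{-1}\circ\tau=\tau\circ g_{\beta'\beta}g_{\alpha'\alpha}^{-1}$ from $\cC_n$ to $\cC_m$. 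This is the main obstacle, and it is precisely what the hypothesis handles: by the explicit form of the standard system, $g_{\beta'\beta}g_{\alpha'\alpha}^{-1}=\sigma(\eta)$ for a specific loop $\eta\in\pi_1(\cO b\cB,X_0)$ built from the chosen basepoints and connecting paths, and the same loop yields $g'_{\beta'\beta}{g'_{\alpha'\alpha}}^{-1}=\sigma'(\eta)$; the assumed relation $\sigma'(\eta)\circ\tau=\tau\circ\sigma(\eta)$ then gives the identity. Consequently the local $id\otimes\tau$ pieces descend to a continuous $K$-linear map $F_\tau$ on the quotient. Functoriality, $K$-linearity, and continuity are inherited chart-by-chart from $id\otimes\tau$; and since the object map by construction covers the identity on $\cO b\cB$, $F_\tau$ is a continuous equivalence over $\cB$.

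Finally, the composition rule $F_\tau F_{\tau'}=F_{\tau\tau'}$ follows chart-locally from $(id\otimes\tau)\circ(id\otimes\tau')=id\otimes(\tau\tau')$, together with $F_{id}=id$. When $\tau$ is an isomorphism, $\tau^{-1}$ automatically satisfies the same commutation hypothesis with $\sigma,\sigma'$ (interchanged), so $F_{\tau^{-1}}$ is defined, and the composition rule gives $F_\tau\circ F_{\tau^{-1}}=F_{id}=id$ on $\widetilde\cB_{\sigma'}$ and similarly on $\widetilde\cB_\sigma$, so $F_\tau$ is a continuous isomorphism over $\cB$ with inverse $F_{\tau^{-1}}$.
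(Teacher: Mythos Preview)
Your proof is correct and follows essentially the same approach as the paper: both reduce the gluing of the local maps $id\otimes\tau$ to the single compatibility identity $g'_{\beta'\beta}{g'_{\alpha'\alpha}}^{-1}\circ\tau=\tau\circ g_{\beta'\beta}g_{\alpha'\alpha}^{-1}$, and both derive that identity from the standard-system description of the transition automorphisms together with the hypothesis $\sigma'(\gamma)\circ\tau=\tau\circ\sigma(\gamma)$. Your version is a bit more explicit about the object map and about why the composition rule yields the inverse statement, and you avoid the paper's minor slip of writing $\tau^{-1}$ in a context where $\tau$ need not be invertible.
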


\begin{proof}
It suffices to show that the mappings $id\otimes\tau$ are compatible with the standard identifications defining $\widetilde\cB_\sigma$ and $\widetilde\cB_{\sigma'}$, i.e., that the following diagram commutes.
\[
%\xymatrixrowsep{10pt}\xymatrixcolsep{10pt}
\xymatrix{%begin xy matrix
\cB(U_\alpha,U_\beta) \otimes \cC_n \ar[d]_{inc\,\otimes\, g_{\beta'\beta}g_{\alpha'\alpha}^{-1}}\ar[r]^{id\otimes\tau} &
	\cB(U_\alpha,U_\beta) \otimes \cC_m\ar[d]^{inc\,\otimes\, g_{\beta'\beta}'g_{\alpha'\alpha}'^{-1}} \\
\cB(U_{\alpha'},U_{\beta'}) \otimes \cC_n \ar[r]^{id\otimes\tau} &
	\cB(U_{\alpha'},U_{\beta'}) \otimes \cC_m
	}%end xy matrix
\]
Assuming $\{g_{\beta\alpha}\}, \{g_{\beta\alpha}'\}$ are  standard (Example \ref{eg: standard system}), this is the following equation:
\[
	\tau \sigma(\lambda_{\beta'}\gamma \lambda_\beta^{-1}) \sigma(\lambda_{\alpha}\gamma^{-1} \lambda_{\alpha'}^{-1}) =\sigma'(\lambda_{\beta'}\gamma \lambda_\beta^{-1}) \sigma'(\lambda_{\alpha}\gamma^{-1} \lambda_{\alpha'}^{-1}) \tau
\]
which follows from the assumption that $\sigma'(\gamma)=\tau\sigma(\gamma)\tau^{-1}$ for any loop $\gamma$ in $\cO b\cB$.

$F_\tau F_{\tau'}=F_{\tau\tau'}$ since, in standard coordinates, it is given by $(id\otimes \tau)(id\otimes\tau')=id\otimes \tau\tau'$.
\end{proof}

\begin{thm}[Thm A]\label{thmA}
Suppose that $\cB$ is a topological $K$-category so that every object is nonzero and every automorphism of every object is a scalar multiple of the identity. Suppose also that $\cO b\cB$ is connected and locally simply connected. Then, for any homomorphism
\[
	\sigma:\pi_1(\Ob(\cB),X_0)\to \Aut(\cC_n)
\]
there exists an equivalence covering $\widetilde\cB_\sigma$ of $\cB$ with holonomy $\sigma$. Furthermore, any two equivalence coverings of $\cB$ are continuously isomorphic over $\cB$ if and only if their holonomies are conjugate.
\end{thm}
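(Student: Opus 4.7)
For existence, I plan to feed $\sigma$ into Example \ref{eg: standard system} to produce a standard cocycle $\{g_{\beta\alpha}\} \subset \Aut(\cC_n)$: choose a basepoint $x_\alpha \in U_\alpha$ and a path $\lambda_\alpha$ from $X_0$ to $x_\alpha$ in each basic simply connected $U_\alpha$ of $\Ob(\cB)$, and set $g_{\beta\alpha} = \sigma(\lambda_\beta \gamma \lambda_\alpha^{-1})$ for a path $\gamma$ in $U_\beta$ from $x_\beta$ to $x_\alpha$ (well-defined since $U_\beta$ is simply connected). The cocycle condition \eqref{eq: compatibility of transition matrices} is then automatic, and Theorem \ref{thm: classification of gab} packages these into the desired $n$-fold equivalence covering $\widetilde\cB_\sigma$ with holonomy $\sigma$.

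For the ``if'' half of the classification, suppose $\sigma' = \tau \sigma \tau^{-1}$ for some $\tau \in \Aut(\cC_n)$. Then $\sigma'(\gamma)\circ\tau = \tau\circ\sigma(\gamma)$ for every loop $\gamma$, so Lemma \ref{lem: morphism between equiv covers} produces a continuous equivalence $F_\tau : \widetilde\cB_\sigma \to \widetilde\cB_{\sigma'}$ over $\cB$; the same lemma applied to $\tau^{-1}$ gives a two-sided inverse $F_{\tau^{-1}}$, making $F_\tau$ a continuous isomorphism over $\cB$.

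For the ``only if'' half, suppose $F : \widetilde\cB_\sigma \to \widetilde\cB_{\sigma'}$ is a continuous isomorphism over $\cB$. On a common basis $\{U_\alpha\}$ of simply connected open sets with local trivializations $h_\alpha, h_\alpha'$, I would read $F$ in coordinates as a fiberwise $K$-linear isomorphism of the data $\cB|_{U_\alpha}\otimes \cC_n$ onto itself covering the identity of $U_\alpha$. By continuity together with connectedness of $U_\alpha$, the underlying permutation part is constant on $U_\alpha$, and the hypothesis that every automorphism of every object is a scalar multiple of the identity collapses the scalar ambiguity in the identification $\Aut(\cC_n) \cong M_n(K^\ast)/K^\ast I_n$ of Proposition \ref{prop: Aut Cn is projective}, yielding a well-defined $\tau_\alpha \in \Aut(\cC_n)$ with $F|_{U_\alpha} = \mathrm{id}_{\cB|_{U_\alpha}} \otimes \tau_\alpha$. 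Comparing $F$ with the transition data of the two coverings gives the relation $g_{\beta\alpha}'\tau_\alpha = \tau_\beta g_{\beta\alpha}$, and a standard connectedness and gauge-change argument on $\{\tau_\alpha\}$ then produces a single $\tau \in \Aut(\cC_n)$ with $g_{\beta\alpha}' = \tau g_{\beta\alpha}\tau^{-1}$, whence $\sigma' = \tau \sigma \tau^{-1}$.

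The chief obstacle I expect is this ``only if'' step: one has to show rigorously that a fiberwise automorphism of $\widetilde\cB_\sigma$ over $\cB$ must come from an element of $\Aut(\cC_n)$, and then piece together the local conjugations $\tau_\alpha$ into a single global $\tau$. The scalar-automorphism hypothesis is precisely what rules out extra twisting by non-scalar invertible elements of $\cB(x,x)$ on each fiber, and is what keeps the classifying invariant living in $\Aut(\cC_n)$ rather than in a strictly larger group.
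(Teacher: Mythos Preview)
Your existence argument and the ``if'' direction match the paper's proof exactly: both invoke Theorem~\ref{thm: classification of gab} (via a standard system as in Example~\ref{eg: standard system}) and Lemma~\ref{lem: morphism between equiv covers}.

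For the ``only if'' direction you take a genuinely different route. The paper does not work chart-by-chart with local conjugators $\tau_\alpha$. Instead, it observes that the hypotheses on $\cB$ force each fiber of $\widetilde\cB_\sigma$ over a point of $\Ob\cB$ to be a copy of $\cC_n$, so $\widetilde\cB_\sigma$ contains an intrinsically defined subcategory $\widetilde\cB_\sigma^\ast$ (all morphisms lying over a single point of $\Ob\cB$) isomorphic to the equivalence covering $\widetilde{KX}_\sigma$ of the trivial category $KX$ from Example~\ref{eg: equivalence coverings of KX}. Any continuous isomorphism $\widetilde\cB_\sigma\cong\widetilde\cB_{\sigma'}$ over $\cB$ must preserve this subcategory, and for $\widetilde{KX}_\sigma$ the holonomy is read off directly by path lifting in the covering of morphism spaces, yielding conjugacy immediately. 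This bypasses your local-to-global patching entirely.

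Your cocycle approach is also valid, but one sentence is misstated: the relation $g'_{\beta\alpha}\tau_\alpha = \tau_\beta g_{\beta\alpha}$ says the two systems are \emph{equivalent} in the sense of Definition~\ref{def: vector bundle}, and from this you do \emph{not} in general obtain a single $\tau$ with $g'_{\beta\alpha}=\tau g_{\beta\alpha}\tau^{-1}$ for all pairs. What you actually get is that, taking $\tau=\tau_{\alpha_0}$ at a basic open set containing $X_0$, the holonomy homomorphisms satisfy $\sigma'(\gamma)=\tau\sigma(\gamma)\tau^{-1}$ for every loop $\gamma$; this follows by chaining the equivalence relation along a cover of $\gamma$ and using that the chain starts and ends at $\tau_{\alpha_0}$. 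So your argument closes, but the intermediate claim should be rephrased accordingly. The paper's path-lifting argument buys you exactly this conclusion without the bookkeeping.
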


\begin{proof} The existence of an equivalence covering $\widetilde\cB_\sigma$ of $\cB$ with any holonomy $\sigma$ is a special case of Theorem \ref{thm: classification of gab}. By Lemma \ref{lem: morphism between equiv covers}, two equivalence coverings of $\cB$ with conjugate holonomies are continuously isomorphic over $\cB$. 

Conversely, suppose $\widetilde\cB_\sigma$ and $\widetilde\cB_\tau$ are continuously isomorphic over $\cB$. Let $\widetilde\cB_\sigma$ be the $n$-fold equivalence covering of $\cB$ with holonomy $\sigma$. The conditions on $\cB$ imply that $\widetilde \cB_\sigma$ contains a topological $K$-subcategory $\widetilde\cB_\sigma^\ast$ isomorphic to the equivalence covering $\widetilde{KX}_\sigma$ from Example \ref{eg: equivalence coverings of KX} with $X=\cO b\cB$. This subcategory is characterized by the property that any nonzero morphism $x\to y$ in $\widetilde\cB_\sigma$ lies in $\widetilde\cB_\sigma^\ast$ if and only if $x,y\in \cO b\widetilde \cB_\sigma$ lie over the same point in $X=\cO b\cB$. Therefore, the isomorphism $\widetilde\cB_\sigma\cong \widetilde\cB_\tau$ restricts to an isomorphism $\widetilde\cB_\sigma^\ast\cong \widetilde\cB_\tau^\ast$ giving an isomorphism $\widetilde{KX}_\sigma\cong \widetilde{KX}_\tau$. The formula for the holonomy of $\widetilde{KX}_\sigma$ given in Example \ref{eg: equivalence coverings of KX} then implies that $\sigma$ and $\tau$ are conjugate as claimed.
\end{proof}

\begin{thm}\label{thm: all continuous F are F-tau}
Let $\cB$ be as in Theorem \ref{thmA} above and suppose that every nonzero morphism in $\cB$ is a scalar multiple of a contractible morphism. Then any continuous equivalence $F:\widetilde\cB_\sigma\to \widetilde\cB_{\sigma'}$ is equal to $F_\tau$ for some discrete equivalence $\tau:\cC_n\to \cC_m$ satisfying $\sigma'(\gamma)\circ\tau=\tau\circ\sigma(\gamma)$ for all loops $\gamma$ in $\Ob\cB$.
\end{thm}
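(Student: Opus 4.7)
My plan is to extract $\tau$ as the restriction of $F$ to a single fiber over a basepoint, to use continuity to derive the required intertwining with the holonomies, and then to bootstrap from fiber-agreement to full agreement $F=F_\tau$ using unique path lifting together with the contractibility hypothesis.

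First I will fix a basepoint $X_0\in\Ob\cB$. The ``vertical'' subcategory $\widetilde\cB_\sigma^\ast$ (from the proof of Theorem \ref{thmA}), consisting of all objects and those morphisms that project to identities of $\cB$, has fiber over $X_0$ canonically isomorphic to $\cC_n$ by the local product structure of Definition \ref{def: equivalence covering}; likewise the fiber of $\widetilde\cB_{\sigma'}^\ast$ over $X_0$ is $\cC_m$. Since $F$ is continuous, $K$-linear and covers the identity on $\Ob\cB$, it must restrict to a $K$-linear functor $\tau:\cC_n\to\cC_m$ between these fibers, which is a ``discrete equivalence'' in the sense of Proposition \ref{prop: transition coef for any tau}. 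Next, for any loop $\gamma\in\pi_1(\Ob\cB,X_0)$ the holonomy $\sigma(\gamma)\in\Aut(\cC_n)$ is computed by uniquely lifting $\gamma$ through the (genuine) covering spaces $\Ob\widetilde\cB_\sigma\to\Ob\cB$ and $\Mor(\widetilde\cB_\sigma)\to\Mor(\cB)$; since $F$ covers identity on $\cB$, it sends a $\gamma$-lift in $\widetilde\cB_\sigma$ to the unique $\gamma$-lift in $\widetilde\cB_{\sigma'}$ starting at the $F$-image, so $F$ intertwines parallel transport, yielding $\sigma'(\gamma)\circ\tau=\tau\circ\sigma(\gamma)$. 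Lemma \ref{lem: morphism between equiv covers} then produces $F_\tau:\widetilde\cB_\sigma\to\widetilde\cB_{\sigma'}$ over $\cB$ which agrees with $F$ on the fiber over $X_0$ by construction.

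The hard part will be showing $F=F_\tau$ globally. For an arbitrary object $\tilde X$ I would lift a path in $\Ob\cB$ from $p(\tilde X)$ to $X_0$ starting at $\tilde X$; both $F$ and $F_\tau$ send it to lifts of the same base path in $\Ob\widetilde\cB_{\sigma'}$ sharing the $X_0$-endpoint, so they agree at $\tilde X$ by unique path lifting. The same argument applied inside $\widetilde\cB_\sigma^\ast$ then yields $F=F_\tau$ on every morphism projecting to an identity of $\cB$. For a lift $\tilde f$ of a contractible morphism $f:X\to Y$, I would choose a contraction $(Z_t,f_t,g_t)$ with $f_0=id_X$, $g_1=id_Y$, $g_tf_t=f$ and continuously lift $(Z_t,f_t)$ to $(\tilde Z_t,\tilde f_t)$ in $\widetilde\cB_\sigma$ starting at the chosen source $\tilde X$ with $\tilde f_0=id_{\tilde X}$. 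Then $F\tilde f_t$ and $F_\tau\tilde f_t$ are two continuous lifts of the common base path $Ff_t$ agreeing at $t=0$ on $id_{F(\tilde X)}=id_{F_\tau(\tilde X)}$, so they agree at $t=1$, giving $F(\tilde f_1)=F_\tau(\tilde f_1)$. Any other lift of $f$ with the same source differs from $\tilde f_1$ by post-composition with a morphism in the fiber over $Y$, on which $F$ and $F_\tau$ already agree; $K$-linearity then extends the equality from contractible morphisms to all nonzero morphisms.

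The main obstacle is precisely this last morphism step: one must verify that the relevant portions of $\Mor(\widetilde\cB_\sigma)\to\Mor(\cB)$ function as honest covering spaces so that unique path lifting is available, exploit contractibility to continuously connect $\tilde f$ back to an identity morphism, and account for the fact that continuously lifting $(Z_t,f_t)$ from a prescribed source realises only one of several lifts of $f$, the remaining ones being handled via the already-established agreement on the vertical subcategory.
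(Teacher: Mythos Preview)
Your proposal is correct and follows essentially the same approach as the paper's proof: extract $\tau$ from the fiber over $X_0$, use unique path lifting to show $F=F_\tau$ on the vertical subcategory $\widetilde\cB_\sigma^\ast$, and then invoke contractibility to propagate this equality to all morphisms. The paper's version is terser—it simply observes that contractibility makes every nonzero morphism in $\widetilde\cB_\sigma$ homotopic through nonzero morphisms to an isomorphism (hence to a morphism in $\widetilde\cB_\sigma^\ast$), so $F$ is determined by its restriction there—whereas you spell out the path-lifting argument in the morphism space explicitly and handle the different lifts of a given base morphism with more care, but the underlying strategy is the same.
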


\begin{proof} As in the proof of Theorem \ref{thmA}, let $\widetilde\cB_\sigma^\ast$ and $\widetilde\cB_{\sigma'}^\ast$ denote the subcategories of $\widetilde\cB_\sigma$ and $\widetilde\cB_{\sigma'}$ isomorphic to the equivalence coverings $\widetilde{KX}_\sigma$ and $\widetilde{KX}_{\sigma'}$, respectively, where $X=\Ob \cB$. 

The equivalence $F$ induces an equivalence $F':\widetilde{KX}_\sigma\to\widetilde{KX}_{\sigma'}$ which, over the object $X_0\in\Ob\cB$, restricts to an equivalence $\tau:\cC_n\to \cC_m$. By the unique path lifting property for covering space, the morphism $F'$ must be equal to $F_\tau'$, the equivalence induced by $\tau$. The assumption that every nonzero morphism in $\cB$ is a scalar multiple of a contractible morphism implies that every nonzero morphism in every equivalence covering of $\cB$ is homotopic through nonzero morphisms to an isomorphism. This implies that $F$ is uniquely determined by its restriction to $\widetilde\cB_\sigma^\ast$. Therefore, $F=F_\tau$ as claimed.
\end{proof}

In the special cases $\cB=\cS^1$ and $\cM_0$, the fundamental group is $\ZZ$. So, the holonomy morphism $\pi_1\Ob\cB=\ZZ\to \Aut(C_n)$ is determined by the image of the generator which is a single element $\sigma\in \Aut(\cC_n)$. We denote the corresponding equivalence coverings of $\cS^1,\cM_0$ by $\widetilde \cS^1_\sigma,\widetilde \cM_\sigma$. By Proposition \ref{prop: morphisms in S,M are contractible} both categories satisfy the conditions of Theorem \ref{thm: all continuous F are F-tau}. Therefore, we have the following.

\begin{cor}[Thm B]\label{thmB}
Continuous equivalences $F:\widetilde\cS^1_\sigma\to \widetilde\cS^1_{\sigma'}$ and $\widetilde\cM_\sigma\to \widetilde\cM_{\sigma'}$ are equal to $F_\tau$ where $\tau$ is a linear equivalence $\cC_n\to \cC_m$ so that $\tau\circ \sigma=\sigma'\circ \tau$.\qed
\end{cor}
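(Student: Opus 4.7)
The plan is to deduce this Corollary directly from Theorem~\ref{thm: all continuous F are F-tau} applied to the two specific categories $\cB = \cS^1$ and $\cB = \cM_0$. The strategy has three parts: (i) verify the hypotheses of that Theorem for both categories, (ii) identify the fundamental group of each object space with $\ZZ$, and (iii) reformulate the cocycle condition $\sigma'(\gamma)\tau = \tau\sigma(\gamma)$ in terms of the generator alone.

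For step (i), I need both categories to satisfy the conditions of Theorem~\ref{thmA} together with the extra contractibility hypothesis of Theorem~\ref{thm: all continuous F are F-tau}. In $\cS^1$ and $\cM_0$, every object is nonzero by construction, and each endomorphism space $\cS^1(X,X) = K$ (resp.\ $\cM_0(X,X) = K$) consists of scalar multiples of the identity, so every automorphism of every object is a scalar multiple of the identity. The object spaces $S^1 = \RR/\pi\ZZ$ and the open Moebius band are connected topological manifolds, hence locally simply connected. The remaining hypothesis—every nonzero morphism is a scalar multiple of a contractible morphism—is exactly Proposition~\ref{prop: morphisms in S,M are contractible}. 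Therefore Theorem~\ref{thm: all continuous F are F-tau} applies to each category, and any continuous equivalence $F:\widetilde\cB_\sigma\to\widetilde\cB_{\sigma'}$ has the form $F_\tau$ for some linear equivalence $\tau:\cC_n\to\cC_m$ satisfying $\sigma'(\gamma)\circ\tau = \tau\circ\sigma(\gamma)$ for every loop $\gamma$ in $\Ob\cB$.

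For steps (ii) and (iii), both $\pi_1(S^1)$ and $\pi_1(\cM_0)$ are infinite cyclic: for the circle this is classical, and for the open Moebius band it follows from the deformation retraction onto its core circle. Fix a generator $\gamma_0$ of $\pi_1(\Ob\cB,X_0) \cong \ZZ$. The holonomy homomorphism $\ZZ\to\Aut(\cC_n)$ is then determined by the single automorphism $\sigma := \sigma(\gamma_0)$, and likewise $\sigma' = \sigma'(\gamma_0)$. Because both $\sigma(-)$ and $\sigma'(-)$ are group homomorphisms, the system of equations $\sigma'(\gamma)\tau = \tau\sigma(\gamma)$ indexed by $\gamma\in\pi_1$ is equivalent to its single instance on the generator, namely $\sigma'\circ\tau = \tau\circ\sigma$. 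This yields exactly the conclusion stated.

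There is essentially no obstacle: the argument is a direct specialization of a general result to the situation where $\pi_1$ is generated by one element. The only points to verify carefully are the two topological inputs—infinite cyclicity of $\pi_1(\cM_0)$ and the endomorphism condition on objects of $\cS^1$ and $\cM_0$—both of which follow immediately from the explicit constructions in Section~\ref{sec1}. In short, the work has already been done in Theorem~\ref{thm: all continuous F are F-tau} and Proposition~\ref{prop: morphisms in S,M are contractible}; the Corollary is merely the observation that a homomorphism out of $\ZZ$ is one element.
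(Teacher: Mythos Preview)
Your proof is correct and matches the paper's own argument exactly: the paper notes that $\pi_1(\cO b\,\cS^1)=\pi_1(\cO b\,\cM_0)=\ZZ$, invokes Proposition~\ref{prop: morphisms in S,M are contractible} to verify the contractibility hypothesis, and then reads off the corollary from Theorem~\ref{thm: all continuous F are F-tau}. You have simply spelled out the hypothesis-checking in a bit more detail than the paper does.
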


\begin{rem}\label{rem: tau is not unique}
Similar to the observation in Example \ref{eg: equivalence coverings of KX}, we note that $\tau$ is not uniquely determined in Theorem \ref{thm: all continuous F are F-tau} and Corollary \ref{thmB}. For example, the functor $F$ determines a linear equivalence from a subcategory of $\widetilde\cS^1_\sigma$ isomorphic to $\cC_n$ to a subcategory of $\widetilde\cS^1_{\sigma'}$ isomorphic to $\cC_m$. The choices of these isomorphisms determine $\tau$.
\end{rem}

%\newpage

\section{Skew-continuous natural transformations}

For any two autoequivalences $\sigma,\tau$ of $\cC_n$ there is a natural isomorphism $\varphi:\sigma\to \tau$. This natural isomorphism will be unique up to one scalar which we call the ``rescaling factor'' and will be ``continuous'' if its ``$\sigma$-continuity factor'' is $1$. The natural isomorphism $\varphi$ is ``skew-continuous'' if its $\sigma$-continuity factor is $-1$. 

We give the definitions and a brief explanation why the axioms of a triangulated category require $\varphi:\sigma\to\tau$ to be skew-continuous.

\subsection{Definition of skew-continuity}\label{sec: def of skew-continuity}

We will show that a natural isomorphism $\varphi:\sigma\to\tau$ always exists and is uniquely determined up to a \emph{rescaling factor} $r\in K^\ast$. For example, the only natural isomorphisms $\sigma\to \sigma$ are scalar multiples of the identity.

\begin{prop}\label{prop: natural isomorphisms are unique up to rescaling}
Given autoequivalences $\sigma,\tau$ of $\cC_n$ with transition coefficients $a_{ji},b_{ji}$, there is a unique $\overline c=[c_1,\cdots,c_n]\in KP^{n-1}$, with $c_i\in K^\ast$, so that 
\[
	c_ja_{ji}=b_{ji}c_i
\]
for all $i,j\in \cC_n$. Each lifting $(c_i)\in (K^\ast)^n$ of $\overline c$ gives a natural isomorphism $\varphi: \sigma\to \tau$ by 
\[
	\varphi_i=c_ix_{\tau(i)\sigma(i)}:\sigma(i)\to \tau(i).
\]
Conversely, all natural isomorphisms $\varphi:\sigma\to\tau$ are given in this way.
\end{prop}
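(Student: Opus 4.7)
The plan is to translate naturality into a concrete system of scalar equations and then solve it using the multiplicative system property from Proposition \ref{prop: comparison of multiplicative bases}. Because every hom space in $\cC_n$ is one-dimensional over $K$, each component $\varphi_i\in \cC_n(\sigma(i),\tau(i))=Kx_{\tau(i)\sigma(i)}$ must have the form $\varphi_i=c_i x_{\tau(i)\sigma(i)}$ for some $c_i\in K$, and $c_i\in K^\ast$ exactly when $\varphi_i$ is invertible. So the content of the proposition is to characterize which tuples $(c_i)\in (K^\ast)^n$ give a natural transformation, and to recognize that all such tuples produce natural isomorphisms.

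Next I would expand the naturality square for the generating morphism $x_{ij}\colon j\to i$. Using $\sigma(x_{ij})=a_{ij}x_{\sigma(i)\sigma(j)}$, $\tau(x_{ij})=b_{ij}x_{\tau(i)\tau(j)}$, and the composition rule $x_{pq}x_{qr}=x_{pr}$, the two compositions $\varphi_i\circ\sigma(x_{ij})$ and $\tau(x_{ij})\circ\varphi_j$ both simplify to scalar multiples of $x_{\tau(i)\sigma(j)}$. Naturality therefore reduces to the single family of equations $c_ia_{ij}=b_{ij}c_j$ for all $i,j$, which, after reindexing $i\leftrightarrow j$, is exactly the stated condition $c_ja_{ji}=b_{ji}c_i$. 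Naturality on an arbitrary scalar multiple $\lambda x_{ij}$ follows from this by $K$-linearity, so checking the basic morphisms suffices.

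For uniqueness up to rescaling I would specialize to $j=1$, which gives $c_i=(b_{i1}/a_{i1})c_1$; hence $(c_1,\ldots,c_n)$ is determined by the single scalar $c_1\in K^\ast$, producing a well-defined element $\overline c\in KP^{n-1}$. For existence I would take $c_i:=b_{i1}/a_{i1}$ and verify the full system directly: by Proposition \ref{prop: comparison of multiplicative bases}(2) applied to $(a_{ij})$ and $(b_{ij})$, we have $a_{ij}=a_{i1}/a_{j1}$ and $b_{ij}=b_{i1}/b_{j1}$, so both $c_ia_{ij}$ and $b_{ij}c_j$ reduce to $b_{i1}/a_{j1}$ and the equation holds for every pair $(i,j)$. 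Since each component $c_ix_{\tau(i)\sigma(i)}$ with $c_i\neq 0$ is invertible, the resulting $\varphi$ is automatically an isomorphism, not merely a natural transformation.

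I do not anticipate a serious obstacle; the only nontrivial ingredient is the multiplicativity of the two transition coefficient systems, which is precisely what makes the linear system over $K^\ast$ both consistent and one-parameter (so that uniqueness holds projectively rather than strictly). The main care is bookkeeping with the direction of the indices, since $x_{ij}\colon j\to i$ runs opposite to the subscript order and the coefficients $a_{ij}, b_{ij}$ are indexed accordingly. Once this is handled consistently, the whole argument is a short direct computation.
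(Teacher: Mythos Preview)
Your proposal is correct and follows essentially the same approach as the paper: both reduce naturality to the scalar system $c_ja_{ji}=b_{ji}c_i$ by evaluating the square on basic morphisms, prove existence by the explicit formula $c_i=b_{i1}/a_{i1}$ (the paper writes this as $a_{1i}b_{i1}$, which is the same thing), and deduce uniqueness up to a global scalar by specializing one index. The only cosmetic difference is that the paper verifies existence by a direct chain of equalities while you invoke the multiplicative form $a_{ij}=a_{i1}/a_{j1}$ from Proposition~\ref{prop: comparison of multiplicative bases}; the content is identical.
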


\begin{proof}
Existence of $c_i$ so that $c_ja_{ji}=b_{ji}c_i$ is clear: Take $c_i=a_{1i}b_{i1}r$ for any $r\in K^\ast$. Then \[
b_{ji}c_i=a_{1i}b_{ji}b_{i1}r=a_{1j}a_{ji}b_{j1}r=c_ja_{ji}.
\]
Conversely, the equation $c_ja_{ji}=b_{ji}c_i$ implies, for $j=1$, that $c_i=c_1a_{1i}b_{1i}^{-1}=a_{1i}b_{i1}c_1$. So, these are all the choices. So, $\overline c\in KP^{n-1}$ is uniquely determined.

Any natural isomorphism $\varphi: \sigma\to \tau$ is, by definition, given by $\varphi_i=c_ix_{\tau(i)\sigma(i)}$ for $c_i\in K^\ast$ making the following diagram commute for all $i,j\in\cC_n$:
\[
\xymatrixrowsep{20pt}\xymatrixcolsep{40pt}
\xymatrix{%begin xy matrix
\sigma(i)\ar[d]_{a_{ji}x_{\sigma(j)\sigma(i)}}\ar[r]^{c_ix_{\tau(i)\sigma(i)}} &
	\tau(i)\ar[d]^{b_{ji}x_{\tau(j)\tau(i)}}\\
\sigma(j) \ar[r]^{c_jx_{\tau(j)\sigma(j)}}& 
	\tau(j)
	}%end xy matrix
\]
In other words, $b_{ji}c_i=c_ja_{ji}$, i.e., $(c_i)$ is a representative of $\overline c$.
\end{proof}

If $\sigma,\tau$ are autoequivalences of $\cC_n$ which commute and $\varphi:\sigma\to \tau$ is a natural isomorphism then $\sigma(\varphi)$ and $\varphi(\sigma)$ are two natural isomorphisms
\[
	\sigma(\sigma)\to \sigma(\tau)=\tau(\sigma).
\]
By Proposition \ref{prop: natural isomorphisms are unique up to rescaling}, there is a unique scalar $s\in K^\ast$ so that 
\[
\sigma(\varphi)=s\varphi(\sigma).
\]
We call $s$ the \emph{$\sigma$-continuity factor} of $\varphi$. Note that any scalar multiple of $\varphi$ has the same $\sigma$-continuity factor. So, $s$ depends only on $\sigma$ and $\tau$ and not on $\varphi$. We use the notation:
\[
	s=\{\sigma,\tau\}
\]
We show in Corollary \ref{cor: anti-symmetry of s-t pairing} that $\{\tau,\sigma\}=\{\sigma,\tau\}^{-1}$.

\begin{defn}\label{def: anti-compatible}
Commuting autoequivalences $\sigma,\tau$ of $\cC_n$ will be called \emph{compatible}, resp. \emph{anti-compatible}, if $\{\sigma,\tau\}=1$, resp., $\{\sigma,\tau\}=-1$.
\end{defn}

\begin{defn}\label{def: skew-continuous}
A natural transformation $\varphi:\sigma\to\tau$ between commuting autoequivalences of $\cC_n$ is \emph{continuous} or \emph{skew-continuous} (with respect to $\sigma$) if its $\sigma$-continuity factor is $1$ or $-1$, respectively. In other words, for any $i\in\cC_n$, the square in the following diagram commutes or anticommutes, respectively.
\[
%\xymatrixrowsep{10pt}\xymatrixcolsep{10pt}
\xymatrix{%begin xy matrix
\sigma(i)\ar[d]_{\varphi_i} &
	\sigma^2(i)\ar[d]_{\sigma(\varphi_i)}\ar[r]^= &
	\sigma^2(i)\ar[d]^{\varphi_{\sigma(i)}}\\
\tau(i) & 
	\sigma\tau(i)\ar[r]^= &
	\tau\sigma(i)
	}%end xy matrix
\]
Equivalently, using the scalars $c_i$ so that $\varphi_i=c_ix_{\tau(i)\sigma(i)}$, $\varphi$ is continuous if 
\[
	c_{\sigma(i)}=c_i a_{\tau(i)\sigma(i)}
\]
and $\varphi$ is skew-continuous if
\begin{equation}\label{eq: skew-commutativity condition}
	c_{\sigma(i)}=-c_i a_{\tau(i)\sigma(i)}.
\end{equation}
\end{defn}

The following summary of the above definitions and observations implies Lemma \ref{lemC1} in the introduction.

\begin{prop}\label{prop: summary of skew-continuity}
Let $\sigma,\tau$ be commuting autoequivalences of $\cC_n$. Then any natural isomorphism $\varphi:\sigma\to \tau$ has $\sigma$-continuity factor equal to $\{\sigma,\tau\}$. In particular, $\varphi$ is continuous, resp. skew-continuous if and only if $\sigma,\tau$ are compatible, resp. anti-compatible.\qed
\end{prop}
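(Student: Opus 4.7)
The plan is to observe that the $\sigma$-continuity factor of a natural isomorphism $\varphi:\sigma\to\tau$ is invariant under rescaling of $\varphi$, and that Proposition \ref{prop: natural isomorphisms are unique up to rescaling} forces any two natural isomorphisms $\sigma\to\tau$ to differ by rescaling. The conclusion about compatibility/anti-compatibility is then just unwinding the definitions.

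In more detail, I would proceed as follows. Fix commuting autoequivalences $\sigma,\tau$ of $\cC_n$. By Proposition \ref{prop: natural isomorphisms are unique up to rescaling}, the natural isomorphisms $\sigma\to \tau$ are parametrized by representatives $(c_1,\dots,c_n)\in(K^\ast)^n$ of a uniquely determined class $\overline c\in KP^{n-1}$. Consequently, if $\varphi,\varphi':\sigma\to\tau$ are any two such natural isomorphisms, then there exists $r\in K^\ast$ with $\varphi'=r\varphi$, i.e.\ $\varphi'_i=r\varphi_i$ for every $i\in\cC_n$.

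Next I would compute the $\sigma$-continuity factor of $\varphi'$ in terms of that of $\varphi$. By $K$-linearity of $\sigma$ on morphisms, $\sigma(\varphi')_i=\sigma(r\varphi_i)=r\,\sigma(\varphi_i)$, and evidently $\varphi'(\sigma)_i=\varphi'_{\sigma(i)}=r\,\varphi_{\sigma(i)}=r\,\varphi(\sigma)_i$. Thus if $\sigma(\varphi)=s\,\varphi(\sigma)$, then
\[
\sigma(\varphi')=r\,\sigma(\varphi)=rs\,\varphi(\sigma)=s\,(r\varphi(\sigma))=s\,\varphi'(\sigma),
\]
so $\varphi'$ has the same $\sigma$-continuity factor $s$ as $\varphi$. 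Since $\{\sigma,\tau\}$ was defined as the $\sigma$-continuity factor of \emph{some} choice of natural isomorphism $\sigma\to\tau$, this shows that every natural isomorphism $\varphi:\sigma\to\tau$ has $\sigma$-continuity factor equal to $\{\sigma,\tau\}$, which is the first assertion.

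The second assertion is then purely tautological: by Definition \ref{def: skew-continuous}, $\varphi$ is continuous (respectively skew-continuous) exactly when its $\sigma$-continuity factor equals $1$ (respectively $-1$), while by Definition \ref{def: anti-compatible}, $\sigma$ and $\tau$ are compatible (respectively anti-compatible) exactly when $\{\sigma,\tau\}=1$ (respectively $\{\sigma,\tau\}=-1$). The equality of the two scalars just proved converts each of these conditions into the other. There is no serious obstacle here; the only point that needs care is the rescaling invariance, which follows from $K$-linearity of $\sigma$ on hom-spaces and the uniqueness clause of Proposition \ref{prop: natural isomorphisms are unique up to rescaling}.
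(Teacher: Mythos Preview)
Your proof is correct and is essentially the same as the paper's. The paper treats this proposition as a summary (marked with \qed\ and no written proof), having already observed inline just before it that ``any scalar multiple of $\varphi$ has the same $\sigma$-continuity factor'' and that therefore $s$ depends only on $\sigma,\tau$; your argument simply spells out that observation explicitly using Proposition~\ref{prop: natural isomorphisms are unique up to rescaling} and then unwinds Definitions~\ref{def: anti-compatible} and~\ref{def: skew-continuous}.
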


%\newpage

\subsection{Interpretation of skew-continuity}

Given an automorphism $\sigma$ of $\cC_n$ and an autoequivalence $\tau$ on $\cC_n$ which commutes with $\sigma$ we have, by Corollary \ref{thmB}, continuous linear functors:
\[
	F_\tau:\widetilde\cS^1_\sigma\to \widetilde\cS^1_\sigma,
	\quad F_\tau:\widetilde\cM_\sigma\to \widetilde\cM_\sigma.
\]
Since $\sigma$ obviously commutes with $\sigma$ we also have continuous automorphisms $F_\sigma:\widetilde\cS^1_\sigma\to \widetilde\cS^1_\sigma$ and $F_\sigma:\widetilde\cM_\sigma\to \widetilde\cM_\sigma$.

A natural transformation $\varphi:\sigma\to\tau$ induces a continuous natural transformation $F_\varphi:F_\sigma\to F_\tau$ if and only if $\varphi$ is ``continuous'', i.e., if $\{\sigma,\tau\}=1$, or, equivalently, $\sigma(\varphi_i)=\varphi_{\sigma(i)}$. More generally:

\begin{thm}
Let $\sigma,\sigma'$ be automorphisms of $\cC_n,\cC_m$ respectively. Let $\tau_1,\tau_2$ be $K$-linear functors $\cC_n\to \cC_m$ so that $\sigma'\tau_j=\tau_j\sigma$ for $j=1,2$ giving continous equivalences $F_{\tau_1},F_{\tau_2}:\widetilde\cS^1_{\sigma}\to \widetilde\cS^1_{\sigma'}$ and $F_{\tau_1},F_{\tau_2}:\widetilde\cM^1_{\sigma}\to \widetilde\cM^1_{\sigma'}$. Then, a natural transformation $\varphi:\tau_1\to\tau_2$ induces continuous maps $F_\varphi:\Ob(\widetilde\cS^1_{\sigma})\to \Mor(\widetilde\cS^1_{\sigma'})$ and $F_\varphi:\Ob(\widetilde\cM_{\sigma})\to \Mor(\widetilde\cM_{\sigma'})$ giving continuous natural transformations $F_{\tau_1}\to F_{\tau_2}$, if and only if $\sigma'(\varphi_i)=\varphi_{\sigma(i)}$ for every $i\in\cC_n$.
%Let $\sigma_1,\sigma_2$ be automorphisms of $\cC_n,\cC_m$ respectively. Let $\tau_1,\tau_2$ be (all linear functors are faithful since all nonzero morphisms in $\cC_n$ are invertible!) $K$-linear functors $\cC_n\to \cC_m$ so that $\sigma_2\tau_j=\tau_j\sigma_1$ for $j=1,2$. Then, a natural transformation $\varphi:\tau_1\to\tau_2$ extends to continuous natural transformations $F_\varphi:F_{\tau_1}\to F_{\tau_2}$, i.e., continuous maps $\Ob(\widetilde\cS^1_{\sigma_1})\to \Mor(\widetilde\cS^1_{\sigma_2})$ and $\Ob(\widetilde\cM_{\sigma_1})\to \Mor(\widetilde\cM_{\sigma_2})$ giving natural transformations $F_{\tau_1}\to F_{\tau_2}$, if and only if $\sigma_2(\varphi_i)=\varphi_{\sigma_1(i)}$ for every $i\in\cC_n$.
\end{thm}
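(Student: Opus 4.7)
The plan is to exploit the explicit gluing construction of the equivalence coverings from Definition \ref{def: equivalence covering} and Theorem \ref{thm: classification of gab}. Both $\widetilde\cS^1_\sigma$ and $\widetilde\cS^1_{\sigma'}$ are obtained by pasting local models $\cS^1|_{U_\alpha}\otimes\cC_n$ and $\cS^1|_{U_\alpha}\otimes\cC_m$ along simply connected pieces of $S^1$, with transitions along the generator of $\pi_1(S^1)=\ZZ$ realizing the holonomies $\sigma$ and $\sigma'$. In a local chart $\widetilde U_\alpha\cong U_\alpha\times[n]$ I set $F_\varphi(X\otimes i):=\text{id}_X\otimes \varphi_i$, viewed as a morphism $X\otimes\tau_1(i)\to X\otimes\tau_2(i)$ in the corresponding chart of $\widetilde\cS^1_{\sigma'}$. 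Since Lemma \ref{lem: morphism between equiv covers} gives the chart-local formula $F_{\tau_j}=\text{id}\otimes\tau_j$, the naturality of $\varphi$ as a transformation of functors $\cC_n\to\cC_m$ immediately yields naturality of this local $F_\varphi$ with respect to $F_{\tau_1}$ and $F_{\tau_2}$, and continuity on each chart is clear because the formula is constant in the $X$-variable.

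The only substantive issue is whether these local recipes paste to a single function on all of $\Ob(\widetilde\cS^1_\sigma)$. Since $\Ob(\widetilde\cS^1_\sigma)$ is connected, a standard covering-space argument reduces compatibility between arbitrary charts to compatibility across the single transition coming from the generator of $\pi_1(S^1)$. Across this transition the source object $X\otimes i$ is identified with $X\otimes\sigma(i)$, while a morphism $\psi$ in the target cover is identified with $\sigma'(\psi)$ via the standard gluing formulas of Example \ref{eg: standard system} and Proposition \ref{prop: transition coef for any tau} applied to $\sigma'\in\Aut(\cC_m)$. Hence the two chart-local prescriptions for $F_\varphi$ agree on the overlap if and only if
\[
\sigma'(\varphi_i)=\varphi_{\sigma(i)}\qquad\text{for every }i\in[n].
\]
When this equation holds the local pieces assemble into a globally defined continuous natural transformation $F_{\tau_1}\to F_{\tau_2}$; when it fails the would-be global $F_\varphi$ is forced to take two distinct values at the same object, ruling out both well-definedness and continuity. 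The identical argument applies to $\widetilde\cM_\sigma\to\widetilde\cM_{\sigma'}$, since by Propositions \ref{prop: Rc is universal cover of S1} and \ref{prop: Dc is universal cover of M0} the fundamental group of $\Ob\cM_0$ is also infinite cyclic with holonomies $\sigma$ and $\sigma'$.

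The step I expect to be most delicate is keeping the transition conventions straight: verifying that the gluing of the target cover really applies $\sigma'$, rather than $(\sigma')^{-1}$, to $\varphi_i$, and that this matches the direction in which the source index is shifted by $\sigma$ once we traverse the generating loop. Once these conventions are pinned down from Example \ref{eg: standard system} and the construction of $F_{\tau_j}$ in Lemma \ref{lem: morphism between equiv covers}, the if-and-only-if becomes exactly the single-loop cocycle condition displayed above and the proof concludes without further computation.
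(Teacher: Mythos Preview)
Your argument is correct and is essentially the same as the paper's: both define $F_\varphi$ locally by $X\otimes i\mapsto \text{id}_X\otimes\varphi_i$ and then check that the single holonomy transition forces exactly the condition $\sigma'(\varphi_i)=\varphi_{\sigma(i)}$. The only cosmetic difference is that the paper carries this out as an explicit limit computation on a fundamental domain (letting $t\to\pi$ for $\widetilde\cS^1_\sigma$, resp.\ $x+y\to 2\pi$ for $\widetilde\cM_\sigma$), whereas you phrase the same transition check in the chart-gluing language of Section~2.
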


\begin{proof} (for $\widetilde\cS^1$)
For each $i\in\cC_n$, $j\in\{1,2\}$ and $t\in [0,\pi)$ we have $F_{\tau_j}( t\otimes i)=t\otimes \tau_j(i)$ and
\[
	F_\varphi( t\otimes i)= id_{[0,\pi)}\otimes\varphi_i :F_{\tau_1}(t\otimes i)=t\otimes \tau_1(i) \xrightarrow{c_i} t\otimes\tau_2(i) =F_{\tau_2}(t\otimes i).
\]
In order for $F_\varphi$ to be continuous, we must have
\[
	F_\varphi\lim_{t\to \pi}(t\otimes i)=\lim_{t\to \pi}F_\varphi(t\otimes i).
\]
But, the left hand side is equal to
\[
	F_\varphi( 0\otimes\sigma(i))= id_{[0,\pi)}\otimes \varphi_{\sigma(i)}: 0\otimes \tau_1(\sigma(i)) \xrightarrow{c_{\sigma(i)}} 0\otimes \tau_2(\sigma(i)) 
\]
and the right hand side is equal to
\[
	 id_{[0,\pi)}\otimes \sigma'(\varphi_i):0\otimes\sigma'(\tau_1(i)) \xrightarrow{\sigma'(c_i)} 0\otimes\sigma'(\tau_2(i)).
\]
These are equal if and only if $\sigma'\tau_j=\tau_j\sigma$ for $j=1,2$ and $\sigma'(\varphi_i)=\varphi_{\sigma(i)}$.

For $\widetilde\cM$, the argument is very similar, with $t\otimes i$ replaces with $(x,y)\otimes i$ where $0\le x+y<2\pi$, $|x-y|<\pi$. For $F_\varphi$ to be continuous we must have
\[
	F_\varphi\lim_{x+y\to 2\pi}((x,y)\otimes i)=\lim_{x+y\to2 \pi}F_\varphi((x,y)\otimes i)
\]
\[
	F_\varphi((y-\pi,x-\pi)\otimes \sigma(i))=id\otimes \varphi_{\sigma(i)}=id\otimes \sigma'(\varphi_i).
\]
As in the case of $\widetilde\cS$, this holds if and only if $\sigma'\tau_j=\tau_j\sigma$ for $j=1,2$ and $\sigma'(\varphi_i)=\varphi_{\sigma(i)}$.
%{\color{blue} NEED EXPLANATION FOR $\widetilde\cM_0$.}
\end{proof}

%\newpage

We explain why the rotation axiom for a triangulated category forces us to use a skew-continuous natural transformation $\varphi:\sigma\to\tau$ in the definition of distinguished triangles in $add\,\widetilde\cM_\sigma$. We take a ``basic positive triangle'' which is the simplest example of a distinguished triangle in $add\,\widetilde\cM_\sigma$ where all terms are indecomposable (and thus lie in $\widetilde\cM_\sigma$):
\begin{equation}\label{eq: positive triangle}
	X\otimes i\xrightarrow f Y\otimes i \xrightarrow g Z\otimes i\xrightarrow \psi T (X\otimes i)=X\otimes\tau(i)
\end{equation}
Here $X=(x,y)$, $Y=(x,z)$, $Z=(y+\pi,z)$, $f,g$ are the unique contractible morphisms (Def. \ref{def: contractible morphism}) with the given sources and targets and $\psi$ is the composition:
\[
	\psi:Z\otimes i\xrightarrow h SG^2_\pi X\otimes i=X\otimes \sigma(i)\xrightarrow{id_X\otimes \varphi_i}X\otimes\tau(i)
\]
of the contractible morphism $h:(y+\pi,z)\otimes i\to (y+\pi,x+\pi)\otimes i=(x,y)\otimes \sigma(i)=X\otimes \sigma(i)$ and a natural isomorphism $F_\varphi=id_X\otimes \varphi_i:F_\sigma(X\otimes i)=X\otimes \sigma(i)\cong F_\tau(X\otimes i)=X\otimes \tau(i)$. 

%\newpage

Now consider the following diagram.
\[
%\xymatrixrowsep{10pt}\xymatrixcolsep{10pt}
\xymatrix{%begin xy matrix
X\otimes\sigma(i)\ar[d]^{id_X\otimes\varphi_i}\ar[r]^{\sigma(f)} &
Y\otimes\sigma(i)\ar[d]^{id_Y\otimes\varphi_i}\ar[r]^{\sigma(g)} &
Z\otimes\sigma(i)\ar[d]^{id_Z\otimes\varphi_i}\ar[r]^(.43){\sigma(h)} &
SG^2_\pi X\otimes \sigma(i)\ar[r]^(.53)=\ar[d]^{id\otimes \varphi_i}& 
X\otimes\sigma^2(i)\ar[d]^{id\otimes \sigma(\varphi_{i})}\ar[r]^{id\otimes \varphi_{\sigma(i)}} & 
X\otimes\tau\sigma(i)\ar[d]^{id\otimes\tau(\varphi_i)}\\
X\otimes\tau(i) \ar[r]^{\tau(f)}& 
Y\otimes\tau(i) \ar[r]^{\tau(g)}&
Z\otimes\tau(i) \ar[r]^(.43){\tau(h)}& 
SG^2_\pi X\otimes \tau(i)\ar[r]^(.53)= & 
X\otimes \sigma\tau(i) \ar[r]^{id\otimes \tau(\varphi_i)}& 
X\otimes\tau^2(i)
	}%end xy matrix
\]
The top row is the distinguished triangle \eqref{eq: positive triangle} at $\sigma(i)$ instead of $i$. The morphisms $\sigma(f),\sigma(g),\sigma(h)$ are the required contractible morphisms since $\sigma$, being continuous, takes contractible morphisms to contractible morphisms. The second row is $F_\tau$ applied to \eqref{eq: positive triangle}. 

%\newpage

Since $\varphi$ is a natural transformation, the first three squares in this diagram commute. The fourth square commutes since $SG^2_\pi\otimes id=id\otimes \sigma$ in $\widetilde\cM_\sigma$. By the rotation axiom for triangulated categories this implies that the last square must anti-commute, i.e., 
\[
\sigma(\varphi_i)=-\varphi_{\sigma(i)}.
\]
This is because the shift functor $T=F_\tau$ applied to the distinguished triangle \eqref{eq: positive triangle} gives a distinguished triangle only when the sign of one of the arrow is reversed.

Thus, the axioms of a triangulated category require $\varphi$ to be skew-continuous.

%\newpage

\section{Classification of add-triangulated equivalence coverings of $\cM_0$}\label{sec4: classification thm}

This section contains the main result of this paper: the classification of all continuously add-triangulated equivalence coverings of $\cM_0$ assuming that they exist. The construction of these categories and the proof that they are continuously triangulated is a generalization of the construction of the continuous Frobenius category given in \cite{cfc}. We leave the details of that construction to Section \ref{ss: continuous Frobenius}, the last section, so as not to interrupt the narrative.

\subsection{Classification}

Recall that an equivalence covering of a topological category $\cX$ with a unique zero object is a topological category $\widetilde \cX$ with unique zero object and a functor $\widetilde \cX\to \cX$ which is an equivalence of $K$-categories and a continuous finite covering map on nonzero object spaces. (Unfortunately, the functor is not continuous on morphisms.)

\begin{defn}\label{def: add-triangulation}
 When $\cX$ is a topological $K$-category, a \emph{continuous add-triangulation} of an equivalence covering $\widetilde \cX$ is defined to be a continuous triangulation of the additive category $add^{top}\widetilde\cX$.
\end{defn}

We recall from Example \ref{rem: rescaling gives strong isomorphism} that, for any nonzero scalar $a\in K^\ast$ and any triangulated $K$-category with $\Delta$, the set of distinguished triangles, $\Delta_a$ is defined to be the set of triangles $(X,Y,Z,f,g,h)$ so that $(X,Y,Z,f,ag,h)\in \Delta$. We say that $\Delta,\Delta'$ are \emph{sign equivalent} if $\Delta'=\Delta$ or $\Delta'=\Delta_{-1}$. We also use the notion of ``strong isomorphism'' from Definition \ref{def: continuous triangulation}.

\begin{thm}[Theorem \ref{thmD1}]\label{thm: D in intro}
Continuously add-triangulated $n$-fold equivalence coverings of $\cM_0$ are given, up to sign equivalence, by triples $(\sigma,\tau,\varphi)$ where
\begin{enumerate}
\item $\sigma$ is a linear automorphism of the $n$ point $K$-category $\cC_n$.
\item $\tau$ is a continuous autoequivalence of $\cC_n$, i.e., one which commutes with $\sigma$.
\item $\varphi:\sigma\to\tau$ is a skew-continuous natural isomorphism, i.e., so that
\[
	\varphi_{\sigma(i)}=-\sigma(\varphi_i)
\]
for all $i\in\cC_n$.
\end{enumerate}
Given such a triple $(\sigma,\tau,\varphi)$, the corresponding add-triangulated equivalence covering, which we call $\widetilde \cM_n(\sigma,\tau,\varphi)$, has underlying continuous $K$-category $\widetilde \cM_\sigma$ with holonomy functor $F_\sigma$, shift functor $F_\tau$ and universal virtual triangle (defined in section \ref{ss: universal virtual triangle} below) given by
\[
	X \xrightarrow{\tiny\mat{1\\1}} I_1X\oplus I_2 X\xrightarrow{[-1,1]} F_\sigma X\xrightarrow{\varphi_X} F_\tau X.
\]
Furthermore, $\widetilde \cM_n(\sigma,\tau,\varphi)$ is strongly isomorphic to $\widetilde \cM_n(\sigma',\tau',\varphi')$ if and only if there is an automorphism $\rho$ of $\cC_n$ so that $\rho\circ\sigma=\sigma'\circ\rho$ and $\rho\circ\tau=\tau'\circ \rho$.
\end{thm}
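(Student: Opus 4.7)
The plan is to prove Theorem 4.1 in four stages: extract the triple from an abstract continuous add-triangulation, construct the category from a given triple (deferring the Frobenius details to Section 7), check that the universal virtual triangle indeed determines all distinguished triangles, and finally classify strong isomorphisms.

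First I would extract the data. Start with any continuous add-triangulation of an $n$-fold equivalence covering $\widetilde\cR$ of $\cM_0$. Its underlying topological $K$-category of indecomposables must be $\widetilde\cM_\sigma$ for some $\sigma\in\Aut(\cC_n)$, well-defined up to conjugation, by Theorem \ref{thmA}. The shift functor $T$ preserves the subcategory $\widetilde\cM_\sigma$ (it sends indecomposables to indecomposables, and continuously), so by Corollary \ref{thmB} we have $T=F_\tau$ for a unique-up-to-conjugation autoequivalence $\tau$ of $\cC_n$ commuting with $\sigma$. The holonomy functor $F_\sigma$ is continuous, hence sends distinguished triangles to distinguished triangles. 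Proposition \ref{prop: natural isomorphisms are unique up to rescaling} then supplies a natural isomorphism $\varphi:\sigma\to\tau$, unique up to a scalar, and the argument at the end of Section 3.2 (applying $F_\tau$ to a basic positive triangle and invoking the rotation axiom on the resulting ladder diagram, whose first four squares commute for free) forces $\sigma(\varphi_i)=-\varphi_{\sigma(i)}$, i.e., $\varphi$ is skew-continuous. Rescaling $\varphi\mapsto a\varphi$ leaves this relation intact, and it is precisely the rescaling freedom that identifies $\Delta$ with $\Delta_{-a}$; this is why the classification is up to sign equivalence only.

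For the construction direction I would invoke Section \ref{ss: continuous Frobenius}: given $(\sigma,\tau,\varphi)$, the continuous Frobenius category $add\,\overline\cM_\sigma(K[[t]])$ has stable category $add\,\widetilde\cM_\sigma$ and, by Happel's theorem and a generalization of \cite{cfc}, inherits a triangulated structure whose shift functor is $F_\tau$ and whose connecting morphism for the canonical ``projective resolution'' triangle is precisely $\varphi_X\circ h$, matching the universal virtual triangle in the statement. Continuity of the triangulation follows because every step of the Frobenius construction is continuous in the parameter $X$. I would then show that the universal virtual triangle determines the full set $\bf\Delta$: any distinguished triangle with indecomposable $X,Y,Z$ is obtained from the universal virtual triangle at some $X$ by rotations and isomorphisms of the other two terms (this is where contractibility of nonzero morphisms in $\widetilde\cM_\sigma$, Proposition \ref{prop: morphisms in S,M are contractible}, is used), and general triangles are obtained by direct sums and the octahedral axiom. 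Closedness of ${\bf\Delta}\subset\Ob^3\times\Mor^3$ follows from the explicit continuous family.

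For the isomorphism statement, suppose $(F,\psi):\widetilde\cM_n(\sigma,\tau,\varphi)\to \widetilde\cM_n(\sigma',\tau',\varphi')$ is a strong isomorphism. Then $F$ is a continuous $K$-linear isomorphism of covering categories over $\cM_0$, so by Corollary \ref{thmB} (and the uniqueness discussion in Remark \ref{rem: tau is not unique}) $F=F_\rho$ for some $\rho\in\Aut(\cC_n)$ with $\rho\sigma=\sigma'\rho$. The defining condition $FT=T'F$ of a strong isomorphism becomes $F_{\rho\tau}=F_{\tau'\rho}$, hence $\rho\tau=\tau'\rho$ (using the faithfulness built into Proposition \ref{prop: transition coef for any tau}). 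Conversely, any such $\rho$ yields a strong isomorphism, with the required natural isomorphism of the universal virtual triangles existing because both $\varphi$ and $\rho^{-1}\varphi'\rho$ are skew-continuous natural isomorphisms $\sigma\to\tau$ and therefore differ by a scalar (Proposition \ref{prop: natural isomorphisms are unique up to rescaling}); Example \ref{rem: rescaling gives strong isomorphism} absorbs this scalar into $\psi$. This also gives the independence from $\varphi$ stated at the end of the theorem.

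The main obstacle will be verifying that the universal virtual triangle actually generates $\bf\Delta$ in the additive completion and that the resulting $\bf\Delta$ is closed in the required topology; equivalently, checking that the Frobenius stable category construction in Section \ref{ss: continuous Frobenius} genuinely produces a continuous triangulation (all axioms plus the topological conditions of Definition \ref{def: continuous triangulation}) rather than a merely algebraic one. Once this is in place, the rest of the argument is bookkeeping with Theorems \ref{thmA}, \ref{thmB}, and Proposition \ref{prop: summary of skew-continuity}.
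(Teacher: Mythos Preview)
Your overall architecture matches the paper's: extract $(\sigma,\tau,\varphi)$ via Theorems~\ref{thmA} and~\ref{thmB} and the skew-continuity argument of Section~3.2, construct $\widetilde\cM_n(\sigma,\tau,\varphi)$ via the Frobenius category of Section~\ref{ss: continuous Frobenius}, and handle strong isomorphisms through $F_\rho$. The isomorphism argument you sketch is essentially what the paper intends.

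The gap is in your third stage, where you claim the universal virtual triangle generates $\bf\Delta$ by ``rotations and isomorphisms'' for indecomposable $X,Y,Z$ and then ``direct sums and the octahedral axiom'' in general. This does not work. First, by the end-counting argument (the paper's Lemma~\ref{lem: Z has size of X+Y when f:X to Y is generic}), for a generic morphism between indecomposables the cone already has \emph{two} components, so triangles with all three terms indecomposable are a thin special class, not a generating set. Second, neither direct sums nor the octahedral axiom let you reconstruct an arbitrary distinguished triangle from such pieces: octahedral relates cones along a composition, it does not decompose a triangle on $f:X\to Y$ with $X,Y$ decomposable into smaller ones.

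The paper's mechanism (Theorem~\ref{thm: virtual triangle give all triangles}) is genuinely different and uses continuity in an essential way. One first reduces to generic $f:X\to Y$ by taking limits (Lemma~\ref{lem: distinguished triangles are limits of generic ones}, using that $\bf\Delta$ is closed). Then one embeds the triangle on $f$ into the family of triangles on
\[
X\xrightarrow{(f,1,1)} Y\oplus I_1^{\varepsilon_1}X\oplus I_2^{\varepsilon_2}X
\]
and \emph{deforms} $\varepsilon_1,\varepsilon_2$ toward their maximal values. At those extremes $f$ factors through $I_1^{\varepsilon_1}X\oplus I_2^{\varepsilon_2}X$, the map $Y\to Z$ becomes split mono, and the triangle decomposes as a direct sum of a universal virtual triangle and trivial triangles $0\to Y=Y\to 0$. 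Continuity of the family (the component count of $Z$ is constant along the deformation) then pins down the original triangle. You should replace your octahedral argument with this deformation-and-limit argument; it is the actual content of Lemma~\ref{main lemma}.
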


Before proving this result, we point out some immediate consequences.

\begin{cor}\label{cor: classification up to strong isomorphism}
Add-triangulated $n$-fold equivalence covering of $\cM_0$ are given up to strong isomorphism by conjugacy classes of pairs $(\sigma,\tau)$ where $\sigma,\tau$ are commuting, skew-compatible, self-equivalences of $\cC_n$ and $\sigma$ is an automorphism of $\cC_n$.
\end{cor}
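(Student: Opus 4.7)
The plan is to derive this corollary directly from Theorem \ref{thm: D in intro} (the classification up to sign equivalence) combined with Proposition \ref{prop: summary of skew-continuity} (Lemma \ref{lemC1}), so there should be no substantial new content; the task is essentially to repackage the earlier theorem.

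First I would observe that, by Proposition \ref{prop: summary of skew-continuity}, condition (3) of Theorem \ref{thm: D in intro} (the existence of a skew-continuous natural isomorphism $\varphi:\sigma\to\tau$) is equivalent to the pair $(\sigma,\tau)$ being anti-compatible, which is what the corollary calls skew-compatible. Thus the triples $(\sigma,\tau,\varphi)$ in Theorem \ref{thm: D in intro} are in bijection with pairs $(\sigma,\tau)$ satisfying the conditions of the corollary, together with a choice of $\varphi$ (such a $\varphi$ existing precisely because of anti-compatibility).

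Next I would argue that the strong isomorphism class of $\widetilde{\cM}_n(\sigma,\tau,\varphi)$ depends only on $(\sigma,\tau)$ and not on $\varphi$. This is the special case $\rho=\mathrm{id}$, $\sigma'=\sigma$, $\tau'=\tau$ of the strong-isomorphism criterion in Theorem \ref{thm: D in intro}, which already asserts $\widetilde{\cM}_n(\sigma,\tau,\varphi)\cong \widetilde{\cM}_n(\sigma,\tau,\varphi')$ for any two valid $\varphi,\varphi'$. Hence we may pass to the quotient that forgets $\varphi$ without losing any strong-isomorphism information.

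Finally I would re-interpret the strong-isomorphism criterion itself. By Theorem \ref{thm: D in intro}, $\widetilde{\cM}_n(\sigma,\tau,\varphi)\cong \widetilde{\cM}_n(\sigma',\tau',\varphi')$ (strongly) if and only if there is $\rho\in\Aut(\cC_n)$ with $\rho\sigma=\sigma'\rho$ and $\rho\tau=\tau'\rho$, i.e., with $\sigma'=\rho\sigma\rho^{-1}$ and $\tau'=\rho\tau\rho^{-1}$. This is exactly the statement that $(\sigma,\tau)$ and $(\sigma',\tau')$ are simultaneously conjugate in $\Aut(\cC_n)$. Combining the three steps yields the claimed bijection between conjugacy classes of such pairs and strong isomorphism classes of add-triangulated $n$-fold equivalence coverings of $\cM_0$. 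There is no real obstacle here; the only point requiring care is to note that one must allow arbitrary $\varphi$ on one side and the empty choice on the other, using step two to justify the reduction.
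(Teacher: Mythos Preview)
Your argument is correct and matches the paper's approach: the corollary is stated there without proof as an immediate consequence of Theorem~\ref{thm: D in intro}, and you have simply spelled out the three obvious steps (using Proposition~\ref{prop: summary of skew-continuity} to replace condition~(3) by anti-compatibility, invoking the $\rho=\mathrm{id}$ case to eliminate $\varphi$, and reading the conjugation criterion as simultaneous conjugacy). There is nothing to add.
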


\begin{cor}[Corollary \ref{cor: E1}]\label{duality of equivalence coverings} If $\widetilde\cM_n(\sigma,\tau,\varphi)$ is an add-triangulated equivalence covering of $\cM_0$ where $\tau$ is an automorphism of $\cC_n$, then $\widetilde\cM_n(\tau,\sigma,\varphi^{-1})$ is also an add-triangulated equivalence covering of $\cM_0$.
\end{cor}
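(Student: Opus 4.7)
The plan is to verify directly that the triple $(\tau,\sigma,\varphi^{-1})$ satisfies the three hypotheses of the classification theorem (Theorem \ref{thm: D in intro}), and then invoke that theorem to conclude. So the work reduces to checking three items.

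First, condition (1) of Theorem \ref{thm: D in intro} requires that the first coordinate be a $K$-linear automorphism of $\cC_n$. This is precisely the extra hypothesis we are given: $\tau$ is an automorphism of $\cC_n$. Second, condition (2) requires that the second coordinate be a $K$-linear autoequivalence of $\cC_n$ that commutes with the first coordinate. Here $\sigma$ is an automorphism (in particular, a continuous autoequivalence), and since $\sigma\tau=\tau\sigma$ was part of the data in the original triple, the commutation condition is automatic. Third, we need $\varphi^{-1}$ to be a well-defined natural isomorphism $\tau\to\sigma$; this follows immediately from the fact that $\varphi:\sigma\to\tau$ is a natural isomorphism (invertibility and naturality of $\varphi$ transfer to $\varphi^{-1}$ componentwise).

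The only substantive point, and the step I expect to be the crux, is verifying that $\varphi^{-1}:\tau\to\sigma$ is \emph{skew-continuous with respect to $\tau$}. By Proposition \ref{prop: summary of skew-continuity}, this is equivalent to the statement that $\tau$ and $\sigma$ are anti-compatible, i.e.\ $\{\tau,\sigma\}=-1$. The hypothesis that $\widetilde\cM_n(\sigma,\tau,\varphi)$ is a valid add-triangulation gives, again by Proposition \ref{prop: summary of skew-continuity}, that $\{\sigma,\tau\}=-1$. So one needs the identity
\[
\{\tau,\sigma\}=\{\sigma,\tau\}^{-1},
\]
which is precisely the content of Corollary \ref{cor: anti-symmetry of s-t pairing} cited just before the statement. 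Combining, $\{\tau,\sigma\}=(-1)^{-1}=-1$, so $\tau$ and $\sigma$ are anti-compatible, and hence every natural isomorphism $\tau\to\sigma$ (in particular $\varphi^{-1}$) is skew-continuous with respect to $\tau$.

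With all three conditions verified, Theorem \ref{thm: D in intro} produces the add-triangulation $\widetilde\cM_n(\tau,\sigma,\varphi^{-1})$, completing the argument. I expect no other obstacles: the duality is essentially an unpacking of the symmetric structure already present in Proposition \ref{prop: summary of skew-continuity} together with the anti-symmetry of the pairing $\{-,-\}$, so the proof is a short check rather than a genuine construction.
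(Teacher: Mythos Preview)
Your proposal is correct and follows exactly the approach the paper intends: the corollary is stated as an immediate consequence of the classification theorem together with Proposition \ref{prop: summary of skew-continuity} and the anti-symmetry $\{\tau,\sigma\}=\{\sigma,\tau\}^{-1}$ from Corollary \ref{cor: anti-symmetry of s-t pairing}, and your argument unpacks precisely these ingredients.
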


We call $\widetilde\cM_n(\tau,\sigma,\varphi^{-1})$ the \emph{dual} of $\widetilde\cM_n(\sigma,\tau,\varphi)$.

The proof of Theorem \ref{thm: D in intro} relies on the following lemmas.

\begin{lem}[Theorem \ref{thmA}]
The continuous $n$-fold equivalence coverings of $\cM_0$ are given by $\widetilde \cM_\sigma$ where $\sigma$ is an automorphism of $\cC_n$.
\end{lem}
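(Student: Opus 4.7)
The plan is to apply Theorem \ref{thmA} directly, so the real work is verifying that $\cM_0$ meets its hypotheses and identifying the fundamental group of its object space.

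First I would check the three conditions on $\cB = \cM_0$ required by Theorem \ref{thmA}. By construction in Definition \ref{def: Moebius strip category M0}, the Moebius band category $\cM_0$ has no zero object: its objects are unordered pairs $E(a,b)$ of distinct points on $\widetilde S^1$ (cf.\ Definition \ref{def: E(a,b) and ends}). Next, because $\cM_0 = \cD_\pi/SG^2_\pi$ is a quotient of $\cD_\pi = \cP_\RR^2/\cJ_\pi$, the endomorphism ring of every object of $\cM_0$ is isomorphic to $K$, so every automorphism is a scalar multiple of the identity. Finally, the object space of $\cM_0$ is the open Moebius band, which is a smooth connected manifold and hence is connected and locally simply connected.

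Second, I would identify $\pi_1(\Ob(\cM_0))$. The open Moebius band deformation retracts onto its core circle, so its fundamental group is $\ZZ$. (Alternatively, Proposition \ref{prop: Dc is universal cover of M0} exhibits the contractible space $\Ob(\cD_\pi)$ as the universal cover of $\Ob(\cM_0)$ with deck group generated by $SG^2_\pi$, which is an infinite cyclic action.)

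Third, I would invoke Theorem \ref{thmA}. With $\pi_1(\Ob(\cM_0),X_0) \cong \ZZ$, a homomorphism
\[
   \sigma : \pi_1(\Ob(\cM_0),X_0) \to \Aut(\cC_n)
\]
is determined by the image of a fixed generator, which is a single element of $\Aut(\cC_n)$ that I also denote $\sigma$. Theorem \ref{thmA} then produces the equivalence covering $\widetilde\cM_\sigma$ and asserts that any $n$-fold equivalence covering of $\cM_0$ is continuously isomorphic over $\cM_0$ to $\widetilde\cM_{\sigma'}$ for a unique conjugacy class $[\sigma']$ in $\Aut(\cC_n)$. There is essentially no obstacle here beyond bookkeeping: the lemma is a direct specialization of the abstract classification of Theorem \ref{thmA} to the concrete case $\cB = \cM_0$, $\pi_1 = \ZZ$.
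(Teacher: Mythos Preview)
Your proposal is correct and takes essentially the same approach as the paper. In fact, the paper does not give a separate proof of this lemma at all: it simply labels it ``[Theorem \ref{thmA}]'' and relies on the discussion following Theorem \ref{thm: all continuous F are F-tau}, where it is observed that $\pi_1\Ob\cM_0=\ZZ$ so that the holonomy homomorphism is determined by a single element $\sigma\in\Aut(\cC_n)$; your verification of the hypotheses and identification of the fundamental group just make this specialization explicit.
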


\begin{lem}[Corollary \ref{thmB}]
Continuous functors $\widetilde \cM_\sigma\to \widetilde \cM_{\sigma'}$ are given by $K$-linear functors $\tau:\cC_n\to\cC_m$ so that $\tau\circ\sigma=\sigma'\circ\tau$. In particular, continuous autoequivalences of $\widetilde \cM_\sigma$ are given by autoequivalences $\tau$ of $\cC_n$ which commute with $\sigma$.
\end{lem}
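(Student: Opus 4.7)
The plan is to deduce this directly from Theorem \ref{thm: all continuous F are F-tau} (and its consequence Corollary \ref{thmB}) applied to the base category $\cB=\cM_0$. The work is to verify the hypotheses of that theorem for $\cM_0$ and to observe that the holonomy condition collapses to a single commutation relation because the fundamental group of the open Moebius band is cyclic.

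First I would check the three hypotheses needed for Theorem \ref{thm: all continuous F are F-tau} with $\cB=\cM_0$. Every object of $\cM_0$ is nonzero and every endomorphism is a scalar multiple of the identity, since $\cM_0(E(a,b),E(a,b))\cong K$ for each indecomposable $E(a,b)$. The object space $\Ob\cM_0$ is an open Moebius band, hence connected and locally simply connected. Finally, every nonzero morphism in $\cM_0$ is a scalar multiple of a contractible morphism by Proposition \ref{prop: morphisms in S,M are contractible}.

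The second step is to invoke the theorem: any continuous equivalence $F:\widetilde\cM_\sigma\to\widetilde\cM_{\sigma'}$ has the form $F_\tau$ for a $K$-linear functor $\tau:\cC_n\to\cC_m$ satisfying $\sigma'(\gamma)\circ\tau=\tau\circ\sigma(\gamma)$ for every $\gamma\in\pi_1(\Ob\cM_0,X_0)$. Since $\pi_1(\Ob\cM_0)\cong\ZZ$, any holonomy homomorphism $\pi_1(\Ob\cM_0)\to\Aut(\cC_n)$ is determined by the image of a chosen generator, which (by the abuse of notation already adopted in the paper) I again call $\sigma$, and similarly $\sigma'$. The family of identities $\sigma'(\gamma)\tau=\tau\sigma(\gamma)$ then collapses to the single equation $\tau\circ\sigma=\sigma'\circ\tau$. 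Conversely, Lemma \ref{lem: morphism between equiv covers} converts any such $\tau$ into a continuous equivalence $F_\tau$ over $\cM_0$, so the correspondence $\tau\mapsto F_\tau$ is surjective in both directions.

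The ``in particular'' statement follows by specializing to $m=n$ and $\sigma'=\sigma$, so that a continuous autoequivalence of $\widetilde\cM_\sigma$ is given by an autoequivalence $\tau$ of $\cC_n$ that commutes with $\sigma$. There is no genuine obstacle once Theorem \ref{thm: all continuous F are F-tau} is in hand; the only subtlety worth keeping in mind is Remark \ref{rem: tau is not unique}, namely that the functor $\tau$ representing a given $F$ depends on a chosen identification of the fiber of $\widetilde\cM_\sigma$ over a base object $X_0$ with $\cC_n$, so $\tau$ is determined by $F$ only up to conjugation inside $\Aut(\cC_n)$ (which of course does not affect the commutation condition).
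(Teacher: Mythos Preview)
Your proof is correct and follows essentially the same route as the paper: verify that $\cM_0$ satisfies the hypotheses of Theorem~\ref{thm: all continuous F are F-tau} (via Proposition~\ref{prop: morphisms in S,M are contractible}), then observe that $\pi_1(\Ob\cM_0)\cong\ZZ$ so the holonomy compatibility condition reduces to the single equation $\tau\sigma=\sigma'\tau$. Your explicit mention of the converse via Lemma~\ref{lem: morphism between equiv covers} and the caveat from Remark~\ref{rem: tau is not unique} are welcome additions that the paper leaves implicit at this point.
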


\begin{lem}\label{main lemma}
Continuous triangulations are induced from a universal virtual triangle.
\end{lem}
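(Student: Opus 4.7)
The plan is to show that a continuous add-triangulation $(F_\tau, \mathbf{\Delta})$ of $add\,\widetilde\cM_\sigma$ is determined by, and can be reconstructed from, a single continuous family of distinguished triangles---the universal virtual triangle---indexed by the indecomposable objects $X \in \widetilde\cM_\sigma$. I would prove this in two directions: first extract the universal virtual triangle from an arbitrary continuous triangulation, then show that $\mathbf{\Delta}$ is recoverable from it.

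For the extraction step, I would fix an indecomposable $X$ and complete the diagonal map $X \to I_1 X \oplus I_2 X$ into the two canonical injective envelopes to a distinguished triangle $X \to I_1 X \oplus I_2 X \to Z \to F_\tau X$. Closedness of $\mathbf{\Delta} \subset \Ob(\cR)^3\times\Mor(\cR)^3$ forces $Z$ to depend continuously on $X$. Lifting $X$ along a generator of $\pi_1\cM_0 = \ZZ$ and using the holonomy identification $SG_\pi^2 X = F_\sigma X$ forces $Z \cong F_\sigma X$ rather than $Z \cong F_\tau X$, because the local monodromy on the cofiber is $\sigma$, not $\tau$. The connecting map $Z \to F_\tau X$ must then factor as $\varphi_X \circ h$ for some $h: Z \to F_\sigma X$, and the sign $[-1,1]$ on the middle map is fixed by the requirement that the resulting square involving $\sigma(\varphi)$ and $\varphi_\sigma$ anti-commutes---precisely the skew-continuity condition from Section \ref{sec: def of skew-continuity}. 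Rescaling absorbs the residual scalar, yielding the stated form
\[
X \xrightarrow{\binom{1}{1}} I_1 X \oplus I_2 X \xrightarrow{[-1,1]} F_\sigma X \xrightarrow{\varphi_X} F_\tau X.
\]

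For the reconstruction step, I would show that $\mathbf{\Delta}$ is generated from this family by three operations: closure under direct sums of triangles, closure under isomorphism of triangles, and closure under rotation (with the appropriate $F_\tau$-sign prescribed by the axiom). Since every object of $add\,\widetilde\cM_\sigma$ is a finite direct sum of indecomposables, and since the third term of a distinguished triangle with prescribed first morphism is unique up to a non-unique isomorphism, any distinguished triangle decomposes via the octahedral axiom into a direct sum of triangles with indecomposable first term, each of which is isomorphic to an $F_\tau$-rotation of the universal virtual triangle at some indecomposable. Continuity, applied to the choice of isomorphism, forces this isomorphism to be canonical on each connected component of the object space, so $\mathbf{\Delta}$ is determined.

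The main obstacle is verifying the octahedral axiom for the class of triangles thus generated, i.e., showing that this class is actually a triangulation rather than merely a collection closed under rotations. Rather than attempt a direct combinatorial verification, the paper defers this to Section \ref{ss: continuous Frobenius}, where the triangulation is realized as the stable category of the continuous Frobenius category $add\,\overline\cM_\sigma(K[[t]])$; Happel's theorem then delivers the octahedral axiom for free, and one only needs to check that the universal virtual triangle in that stable category coincides with the one prescribed here---a direct computation from the Frobenius structure.
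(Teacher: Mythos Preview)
Your extraction of the universal virtual triangle is broadly in the spirit of the paper's Section~\ref{ss: universal virtual triangle}, though some details are off: $I_1 X, I_2 X$ are not ``injective envelopes'' in the stable category but a parametrized family $I_1^{\vare_1} X, I_2^{\vare_2} X$ whose virtual limit at $\vare=0$ is projective-injective and hence zero there; and the third term is not literally $F_\sigma X$ but an object close to it for small $\vare$, through which the connecting map factors.

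The reconstruction step, however, has a genuine gap. Your claim that every distinguished triangle with indecomposable first term is isomorphic to an $F_\tau$-rotation of the universal virtual triangle is false. The basic positive triangle \eqref{eq: positive triangle}, namely $X\otimes i \to Y\otimes i \to Z\otimes i \to T(X\otimes i)$ with $X=(x,y)$, $Y=(x,z)$, $Z=(y+\pi,z)$, has all three terms indecomposable and is not a rotation of any triangle with a two-term direct sum in the middle. More broadly, the octahedral axiom does not decompose an arbitrary triangle into a direct sum of triangles with indecomposable first term; that is simply not what the axiom provides.

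The paper's actual argument (Theorem~\ref{thm: virtual triangle give all triangles}) is a continuity/deformation argument, not algebraic generation. One first reduces (Lemma~\ref{lem: distinguished triangles are limits of generic ones}) to triangles whose first morphism $f:X\to Y$ is \emph{generic} (no shared ends), using closedness of $\mathbf{\Delta}$. Then one embeds such a triangle in the two-parameter family \eqref{eq: mock pushout diagram}:
\[
X\xrightarrow{(f,1,1)} Y\oplus I_1^{\vare_1}X\oplus I_2^{\vare_2}X \to Z \to TX.
\]
As $\vare_1,\vare_2\to 0$ this recovers the original triangle; as $\vare_1,\vare_2$ approach their maximal values, $f$ factors through $I_1^{\vare_1}X\oplus I_2^{\vare_2}X$, so the triangle splits as a direct sum of universal virtual triangles and trivial triangles $0\to Y\xrightarrow{=} Y\to 0$. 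The key count (Lemma~\ref{lem: Z has size of X+Y when f:X to Y is generic}) shows the number of components of $Z$ is constant along the deformation, so nothing collapses, and continuity propagates the identification from one end of the deformation to the other. This is the mechanism that determines all triangles from the universal virtual one---not closure under rotation and direct sum.
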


We show in section \ref{ss: continuous Frobenius} below that each universal virtual triangle comes from a continuous Frobenius category with continuous choice of universal exact sequences. 
Here we consider the converse: Given a continuous triangulation, show that it comes from a universal virtual triangle and that, consequently, it is given as the stable category of a continuous Frobenius category. The first statement is Lemma \ref{main lemma} which is more precisely stated as Theorem \ref{thm: virtual triangle give all triangles} below. The second statement is Theorem \ref{thm: add M(s,t,phi) is continuously triangulated} proved in the last section.

\subsection{Universal virtual triangle}\label{ss: universal virtual triangle}

Let $X$ be an indecomposable object of some $\widetilde \cM_\sigma$ with chosen shift functor $F_\tau$. Suppose $p(X)=E(x,y)\in\cM_0$ where $E(x,y)$ denotes the equivalence class of $(x+\pi,y)$ in $\cM_0$ (Def. \ref{def: E(a,b) and ends}). Thus $x<y<x+2\pi$ and $0\le x<y<2\pi$. For any $0<\varepsilon_1< x+2\pi-y$ and $0<\varepsilon_2< y-x$, let $I_1^{\varepsilon_1} X$, $I_2^{\varepsilon_2} X$ be the unique objects over $E(x,x+2\pi-\varepsilon_1)$ and $E(y-\varepsilon_2,y)$ respectively so that there are contractible morphisms $f_1:X\to I_1^{\varepsilon_1} X$, $f_2:X\to I_2^{\varepsilon_2} X$. We denote these morphisms by ``1'' or, more generally, we denote by a scalar $a$ a morphism which is $a$ times a contractible morphism.

Up to rescaling, we are given the add-triangulated structure on $\cM_0$. (The whole point is to find an equivalence covering $\widetilde\cM_\sigma$ with a continuous triangulation which is algebraically equivalent to the given algebraic triangulation of $add\,\cM_0$.) So, we know from \cite{ccc} how to complete the triangle starting with the contractible morphisms $X\to I_1^{\varepsilon_1} X\oplus I_2^{\varepsilon_2} X$ to get:
\[
	X\xrightarrow{\binom11} I_1^{\varepsilon_1} X\oplus I_2^{\varepsilon_2} X\xrightarrow{[g_1,g_2]} Y \xrightarrow h F_\tau X
\]
We can choose $Y$ to be the object in $\widetilde \cM_\sigma$ in the contractible support of $X$ which lies over $E(y-\vare_2,x+2\pi-\vare_1)$. The morphisms $g_i$ will be scalar multiples of the unique contractible morphisms $I_i^\varepsilon X\to Y$. Dividing $g_1$ and $g_2$ by the scalar for $g_2$ and multiplying $h$ by the same scalar we may assume that $g_2$ is contractible, i.e. its scalar is 1. Then $-g_1$ will also be contractible. The morphism $h:Y\to F_\tau X$ factors uniquely through the unique contractible morphism $Y\to F_\sigma X$ inducing a morphism
\[
	\varphi_X:F_\sigma X\to F_\tau X.
\]
\begin{lem} $\varphi_X$ is induced by a skew-continuous natural transformation $\sigma\to \tau$.
\end{lem}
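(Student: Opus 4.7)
The plan is to establish three things: (a) $\varphi_X$ is well-defined, independent of the choices of $\vare_1,\vare_2$ and $Y$; (b) the family $\{\varphi_X\}$ descends from a natural transformation $\varphi:\sigma\to\tau$ between the functors $\sigma,\tau:\cC_n\to\cC_n$; and (c) this $\varphi$ is skew-continuous. The main obstacle is (c); that is where the Moebius geometry produces the crucial sign.

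For (a), if $(\vare_1',\vare_2')$ is another admissible pair with $\vare_j'\le\vare_j$, the evident contractible morphisms $I_j^{\vare_j'}X\to I_j^{\vare_j}X$ and $Y'\to Y$ extend, by the morphism-of-triangles axiom, to a morphism between the two distinguished triangles. Commutativity of its rightmost square, combined with the forced normalizations (both $g_2$ and $-g_1$ contractible), then pins down $\varphi_X$ as the same morphism $F_\sigma X\to F_\tau X$ in both cases.

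For (b), in a local trivialization of $\widetilde\cM_\sigma$ every object takes the form $X_0\otimes i$ with $X_0\in\cM_0$, $i\in[n]$, and $F_\sigma(X_0\otimes i)=X_0\otimes\sigma(i)$, $F_\tau(X_0\otimes i)=X_0\otimes\tau(i)$. Since the hom space $\widetilde\cM_\sigma(X_0\otimes\sigma(i),X_0\otimes\tau(i))=K\cdot(id_{X_0}\otimes x_{\tau(i)\sigma(i)})$ is one-dimensional, $\varphi_{X_0\otimes i}=c_i(X_0)\cdot(id_{X_0}\otimes x_{\tau(i)\sigma(i)})$ for some scalar $c_i(X_0)\in K^\ast$. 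Because the construction of $\varphi_X$ only uses the abstract shape of the basic triangle, and because every nonzero morphism is a scalar multiple of a contractible one (Proposition~\ref{prop: morphisms in S,M are contractible}), naturality of $\varphi_X$ with respect to contractible morphisms between $X_0\otimes i$ and $X_0'\otimes i$ makes $c_i(X_0)=c_i$ independent of $X_0$, while naturality with respect to $id_{X_0}\otimes x_{ji}$ gives the compatibility $c_j a_{ji}=b_{ji}c_i$. By Proposition~\ref{prop: natural isomorphisms are unique up to rescaling} the scalars $(c_i)$ assemble into a natural isomorphism $\varphi:\sigma\to\tau$ whose induced $F_\varphi$ agrees with $\{\varphi_X\}$.

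For (c), I would show that transporting $X$ along a path covering the generator of $\pi_1(\cM_0)$ changes $c_i$ by a factor of $-1$; equivalently $c_{\sigma(i)}=-c_i\cdot a_{\tau(i)\sigma(i)}$, which is equation \eqref{eq: skew-commutativity condition}. Geometrically, the Moebius identification $E(x,y)\sim E(y,x+2\pi)$ interchanges the roles of $I_1^{\vare_1}X$ and $I_2^{\vare_2}X$ in the basic triangle, hence also interchanges the components $g_1$ and $g_2$ of the second arrow. But the normalization demands \emph{$g_2$} contractible and \emph{$-g_1$} contractible, so after the swap the ``new $g_2$'' is the negative of a contractible morphism. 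Propagating this sign through the factorization of $h$ through the contractible morphism $Y\to F_\sigma X$ yields the required $-1$. By Proposition~\ref{prop: summary of skew-continuity} this shows $\{\sigma,\tau\}=-1$, so $\varphi$ is skew-continuous as claimed.
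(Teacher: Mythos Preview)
Your proposal is correct and follows essentially the same approach as the paper. The paper's proof is terser: it invokes continuity of the triangulation to conclude that $c_i$ is constant on each sheet (your (a) and (b) combined), and then gives exactly your argument for (c)---transporting $X$ around the Moebius band swaps $I_1^{\vare_1}X$ and $I_2^{\vare_2}X$, hence flips the sign in the normalization $(-g_1,g_2)$, yielding $c_i a_{\tau(i)\sigma(i)}=-c_{\sigma(i)}$. Your treatment of (a) and especially (b) (deriving $c_j a_{ji}=b_{ji}c_i$ from naturality with respect to $id_{X_0}\otimes x_{ji}$) is more explicit than the paper's, which simply asserts the constancy of $c_i$ from continuity without separately checking the naturality identity; but the substance and the crucial geometric mechanism are the same.
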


\begin{proof}
We have discussed this point many times. The object space of the equivalence covering $\cM_\sigma$ is an $n$-fold covering of the Moebius strip. We number the sheets $1,\cdots,n$ (choosing a fundamental domain). As we go around the Moebius strip, we go from sheet $i$ to sheet $\sigma(i)$. So, when $X$ lies in sheet $i$,$F_\sigma X$ lies in sheet $\sigma(i)$ and $F_\tau X$ lies in sheet $\tau(i)$. Since we have a continuous triangulated structure, the morphism $F_\sigma X\to F_\tau X$ is given by $c_ix_{\tau(i)\sigma(i)}$ where $c_i$ is a fixed scalar for all $X$ in sheet $i$. When we move $X$ continuously to the next sheet, sheet $\sigma(i)$, $X$ moves to $Y=F_\sigma X$ by definition and, by continuity, $\varphi_X$ moves to the morphism 
\[
	c_i \sigma(x_{\tau(i)\sigma(i)})=c_i a_{\tau(i)\sigma(i)}x_{\sigma\tau(i)\sigma^2(i)}.
\]
However, we are over a Moebius strip. So, the positions of $I_1^{\varepsilon_1} X,I_2^{\varepsilon_2} X$ have switched. So, the sign of $\varphi_{F_\sigma X}$ is the negative of the continuation of $\varphi_X$. In other words,
\[
	c_i a_{\tau(i)\sigma(i)}x_{\sigma\tau(i)\sigma^2(i)}=-c_{\sigma(i)}x_{\sigma\tau(i)\sigma^2(i)}.
\]
Comparing this with \eqref{eq: skew-commutativity condition} we see that the scalars $c_i$ define a skew-continuous natural transformation $\varphi:\sigma\to \tau$ as claimed.
\end{proof}

By continuity, as $X$ varies throughout all of $\widetilde\cM_\sigma$ and $\vare_1,\vare_2$ vary through all admissible real numbers (i.e. values so that the following diagram makes sense), the scalars $c_i$ must remain fixed on sheet $i$. So, we have a continuous family of distinguished triangles
\begin{equation}\label{eq: universal virtual triangles}
		X\xrightarrow{\binom11} I_1^{\varepsilon_1} X\oplus I_2^{\varepsilon_2} X\xrightarrow{(-1,1)} Y \xrightarrow{\tilde \varphi} F_\tau X
\end{equation}
where $\tilde\varphi$ is the composition of the unique contractible morphism $Y\to F_\sigma X$ with $\varphi_X:F_\sigma X\to F_\tau X$.

\begin{defn}\label{def: universal virtual triangle}
We denote this family of distinguished triangles by its virtual limit (when $\vare_1,\vare_2$ are infinitesmal, making $Y=F_\sigma X$)
\[
			X\xrightarrow{\binom11} I_1^0 X\oplus I_2^0 X\xrightarrow{(-1,1)} F_\sigma X \xrightarrow{ \varphi} F_\tau X
\]
We call this the \emph{universal virtual triangle}. This is just a fancy way to denote the continuous family of distinguished triangles given by \eqref{eq: universal virtual triangles}.
\end{defn}

\subsection{Construction of all distinguished triangles}
So far we have shown that a continuous triangulation of $add\, \widetilde\cM_\sigma$ has parameters $\sigma,\tau$ and $\varphi:\sigma\to \tau$ and these give rise to a ``universal virtual triangle''. In this subsection we prove the following theorem.

\begin{thm}\label{thm: virtual triangle give all triangles}
The universal virtual triangle defined above determines all distinguished triangles. In particular, the continuous triangulation of $add\, \widetilde\cM_\sigma$ is completely determined by $\tau$ and $\varphi:\sigma\to \tau$.
\end{thm}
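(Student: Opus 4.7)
The plan is to show that the set $\Delta$ of distinguished triangles in $add\,\widetilde\cM_\sigma$ is precisely the closure, under the triangulated-category operations (rotation TR2, isomorphism of triangles TR3, octahedral TR4, and direct sum), of the family of universal virtual triangles from Definition~\ref{def: universal virtual triangle}. Since this family is completely specified by $(\sigma,\tau,\varphi)$, the triangulation is determined by these data. One inclusion is automatic: $\Delta$ contains the universal virtual triangles and is closed under the listed operations. The work lies in the reverse inclusion.

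For the reverse inclusion, I would first address triangles whose initial vertex is indecomposable. Fix an indecomposable $X \in \widetilde\cM_\sigma$ and let $X \xrightarrow{f} B \xrightarrow{g} C \xrightarrow{h} F_\tau X$ be any distinguished triangle. By Proposition~\ref{prop: morphisms in S,M are contractible}, every nonzero component of $f$ is a scalar multiple of a contractible morphism, so $f$ factors canonically as $X \to I_1^{\varepsilon_1}X \oplus I_2^{\varepsilon_2}X \xrightarrow{q} B$ for $\varepsilon_1,\varepsilon_2$ determined by the supports of the components of $f$. Applying the octahedral axiom to this factorization against the universal virtual triangle \eqref{eq: universal virtual triangles} produces an isomorphism of the given triangle with one assembled from the universal virtual triangle, forcing the scalar of $h$ to match $\varphi_X$. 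The degenerate case $f=0$ is handled by the split triangle $X \to X \oplus B \to B \xrightarrow{0} F_\tau X$, and rotations via TR2 treat dually the triangles whose second or third vertex is an isolated indecomposable.

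Next, I would reduce a general distinguished triangle $A \to B \to C \to F_\tau A$ to the indecomposable case. Writing $A = \bigoplus_k A_k$ as a sum of indecomposables and applying the octahedral axiom along each split inclusion $A_k \hookrightarrow A$ splices the triangle into a sequence of distinguished triangles each with indecomposable initial vertex; by the preceding step, each such component is generated by the universal virtual triangle. Reassembling the pieces via closure of $\Delta$ under direct sum and extension (a formal consequence of TR4) recovers the original triangle as a member of the generated closure.

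The main obstacle is the factorization claim in the first step: showing that every morphism $f: X \to B$ from an indecomposable factors canonically through some $I_1^{\varepsilon_1}X \oplus I_2^{\varepsilon_2}X$, and that the octahedral axiom applied to this factorization produces the universal virtual triangle with the correct scalar $\varphi_X$ on the connecting morphism. This rests on the support analysis of contractible morphisms (Definition~\ref{def: support of a morphism}) and the fact that, in the geometric model of Section~\ref{sec1}, $I_1^{\varepsilon_1}X$ and $I_2^{\varepsilon_2}X$ are universal ``initial extensions'' of $X$ in the two endpoint directions. The algebraic blueprint is the discrete treatment of $add\,\cM_0$ from \cite{ccc}; the present argument lifts that reduction through the covering $\widetilde\cM_\sigma \to \cM_0$, with the skew-continuous $\varphi$ recording the sheet-by-sheet sign discrepancy arising from the Moebius twist.
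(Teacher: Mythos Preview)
Your approach is genuinely different from the paper's and has a real gap in the octahedral step. When you factor $f: X \to B$ as $X \xrightarrow{(1,1)} I_1^{\varepsilon_1}X \oplus I_2^{\varepsilon_2}X \xrightarrow{q} B$ and apply TR4, the octahedral axiom does not ``produce an isomorphism of the given triangle with one assembled from the universal virtual triangle.'' It relates \emph{three} triangles: the universal virtual triangle on $X \to I_1\oplus I_2$ with cone $Y$, the given triangle on $X\to B$ with cone $C$, and a \emph{new} triangle on $q:I_1\oplus I_2\to B$ with some cone $C'$, linked by a distinguished triangle $Y\to C\to C'\to TY$. Thus $C$ is pinned down only once you know the triangle on $q$, and you give no argument that this auxiliary triangle is itself determined by the universal virtual triangles. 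Iterating your reduction on the indecomposable summands $I_1,I_2$ does not terminate: each $I_j^{\varepsilon}X$ is an ordinary object of $\widetilde\cM_\sigma$, no ``smaller'' or closer to projective than $X$ was, so you are back where you started. The same non-termination afflicts your reduction of a general first term $A=\bigoplus A_k$ to indecomposable pieces.

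The paper's proof avoids this recursion by exploiting the \emph{continuity} of the triangulation, which your purely algebraic outline never invokes. Instead of factoring $f$, the paper enlarges it to $(f,1,1): X \to Y \oplus I_1^{\varepsilon_1}X \oplus I_2^{\varepsilon_2}X$ and tracks the resulting distinguished triangle as $\varepsilon_1,\varepsilon_2$ vary. For $\varepsilon_j$ near their \emph{maximum} values (so $I_j^{\varepsilon_j}X$ is close to $X$), $f$ factors through $I_1\oplus I_2$ and the map $Y\to Z$ becomes a split monomorphism; the triangle then decomposes as the direct sum of the universal virtual triangle for $X$ and the trivial triangle $0\to Y\xrightarrow{=}Y\to 0$. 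For $\varepsilon_j\to 0$, the objects $I_j^{\varepsilon_j}X$ go to zero in the stable category and the triangle converges to the given one on $f$. Because the set of distinguished triangles is closed and the component count of $Z$ is constant along the deformation (Lemma~\ref{lem: Z has size of X+Y when f:X to Y is generic}), the triangle at one value of $(\varepsilon_1,\varepsilon_2)$ determines it at all values. The preliminary reduction to generic $f$ (Lemma~\ref{lem: distinguished triangles are limits of generic ones}) is again a continuity argument. In short, the paper's proof is a deformation argument, not an octahedral induction; the topology on the category is doing the work that your TR4 step cannot.
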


First, consider morphisms $f:X\to Y$ in $add\,\widetilde\cM_\sigma$ which are \emph{generic} in the sense that $X$, $Y$ do not share any ends. (Recall that the \emph{ends} of $E(x,y)$ are $x,y\in \widetilde S^1=\RR/2\pi\ZZ$.)

\begin{lem}\label{lem: Z has size of X+Y when f:X to Y is generic}
Let $X\to Y\to Z\to TX$ be a distinguished triangle. If $X\to Y$ is generic, then $Z$ has the same number of components as $X\oplus Y$. 
\end{lem}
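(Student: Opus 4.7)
The plan is to reduce by induction to the case where $X$ and $Y$ are both indecomposable, and then to use the universal virtual triangle of Definition \ref{def: universal virtual triangle} directly in the base case.

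\textbf{Reduction to the indecomposable case.} I would decompose $X=\bigoplus_{i=1}^{m}X_i$ and $Y=\bigoplus_{j=1}^{n}Y_j$ into indecomposable summands using the Krull--Schmidt property of $add\,\widetilde\cM_\sigma$, so that $f$ is represented by a matrix of entries in the at-most-one-dimensional spaces $\widetilde\cM_\sigma(X_i,Y_j)$. I induct on $m+n$. Suppose $n\ge 2$, write $Y=Y_1\oplus Y'$, and apply the octahedral axiom to the composition $X\to Y\twoheadrightarrow Y_1$ obtained by projecting onto a single summand. This produces a distinguished triangle $Z_1\to Z\to Y'\to TZ_1$ relating the cone $Z_1$ of $X\to Y_1$ to $Z$. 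Under the genericity hypothesis, the ends of any indecomposable summand of $Z_1$ are drawn only from the ends of $X$ and of $Y_1$, which are disjoint from the ends of $Y'$; it follows that the connecting morphism $Y'\to TZ_1$ must vanish, whence $Z\cong Z_1\oplus Y'$. This gives $|Z|=|Z_1|+|Y'|$ and reduces the claim to $n=1$. A symmetric argument further reduces to $m=1$.

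\textbf{Indecomposable base case.} Suppose $X=E(a,b)$ and $Y=E(c,d)$ with $\{a,b\}\cap\{c,d\}=\emptyset$. If $f=0$ the triangle splits as $X\xrightarrow{0}Y\to Y\oplus TX\to TX$, so $|Z|=2=|X|+|Y|$. If $f\ne 0$, then since $\widetilde\cM_\sigma(X,Y)$ is at most one-dimensional, $f$ is the unique contractible morphism up to scalar, and nonvanishing of the hom-space together with genericity forces the cyclic order $a<c<b<d<a+2\pi$. I would apply the universal virtual triangle to $X$ with admissible parameters chosen so that $I_1X=E(a,d)$ and $I_2X=E(c,b)$, yielding
\[
	X\to E(a,d)\oplus E(c,b)\to F_\sigma X\to F_\tau X.
\]
An octahedral argument combining this triangle with the contractible morphisms $E(a,d)\to E(c,d)=Y$ and $E(c,b)\to E(c,d)=Y$ then identifies the cone of $f$ with a direct sum of two indecomposables supported on the ``non-crossing'' pair of diagonals $\{E(a,c),E(b,d)\}$, shifted appropriately by $\tau$ to the correct sheet of $\widetilde\cM_\sigma$. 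Thus $|Z|=2=|X|+|Y|$.

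\textbf{Main obstacle.} The central difficulty is the indecomposable nonzero sub-case: one must carry out the octahedral manipulation carefully enough to verify that the cone decomposes into exactly two indecomposable summands and does not collapse further. Genericity is the essential input, because a shared endpoint between $X$ and $Y$ would let the two ``non-crossing'' diagonals degenerate into a single indecomposable or into an isomorphism that absorbs a summand, reducing the count. A similar point arises in the reduction step, where splitting of the fourth triangle of the octahedron rests on genericity ruling out nonzero morphisms between the relevant shifted summands; this must be checked by tracking ends of indecomposables through the universal virtual triangle and through the covering coordinates $\sigma(i),\tau(i)$.
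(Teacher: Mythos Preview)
Your reduction step contains a genuine gap. You assert that the connecting morphism $Y'\to TZ_1$ must vanish because the ends of $Y'$ are disjoint from those of $Z_1$. But in $\widetilde\cM_\sigma$ disjointness of ends does \emph{not} force the hom-space to be zero; on the contrary, a nonzero morphism $E(a,b)\to E(c,d)$ exists precisely when $a\le c<b\le d<a+2\pi$, which is the generic (disjoint-end) configuration. So the fourth triangle of your octahedron has no reason to split, and the inductive step $Z\cong Z_1\oplus Y'$ does not follow.

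There is also a hidden assumption worth making explicit. When you say ``the ends of any indecomposable summand of $Z_1$ are drawn only from the ends of $X$ and of $Y_1$,'' you are invoking exactly the fact that distinguished triangles are \emph{exact on ends} (this is established in \cite{ccc} for the underlying algebraic category and transfers to any equivalence covering). Once you grant that fact, the lemma is immediate and no induction or octahedral machinery is needed: by exactness on ends, the multiset of ends of $Z$ is the symmetric difference of the ends of $X$ and of $Y$; under the generic hypothesis these multisets are disjoint, so $Z$ has the same number of ends as $X\oplus Y$, and the number of components is half the number of ends. That is the paper's two-line proof. Your proposal uses the key input but then layers a faulty splitting argument on top of it rather than drawing the conclusion directly.
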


\begin{proof}
We know that distinguished triangles are exact on ends. So, $Z$ must have the same number of ends as $X\oplus Y$. But the number of components is half the number of ends.
\end{proof}

\begin{lem}\label{lem: distinguished triangles are limits of generic ones}
Every distinguished triangle is isomorphic to a continuous limit of distinguished triangles in which the first morphism is generic.
\end{lem}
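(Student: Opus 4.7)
The plan is to perturb the ends of the indecomposable summands of $X$ and $Y$ into generic position, lift $f$ to a morphism between the perturbed objects, complete to a family of distinguished triangles, and take a limit using closedness of $\mathbf{\Delta}$.

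Decompose $X=\bigoplus_\alpha X_\alpha$ and $Y=\bigoplus_\beta Y_\beta$ into indecomposables; each summand is an object $E(a,b)$ with ends $a,b\in\widetilde S^1$. Since the total set of ends is finite, for every small $\varepsilon>0$ I can choose distinct displacements of each end by at most $\varepsilon$, producing $X^\varepsilon=\bigoplus X_\alpha^\varepsilon$ and $Y^\varepsilon=\bigoplus Y_\beta^\varepsilon$ whose combined ends are all distinct and depend continuously on $\varepsilon\in[0,1]$ with $(X^0,Y^0)=(X,Y)$. Each nonzero component $f_{\alpha\beta}\colon X_\alpha\to Y_\beta$ is a scalar multiple of the unique contractible morphism between its source and target (Proposition~\ref{prop: morphisms in S,M are contractible}); since existence of such a morphism between two indecomposables of $\widetilde\cM_\sigma$ is governed by strict (hence open) inequalities on the ends, the same scalar produces a continuous lift $f^\varepsilon\colon X^\varepsilon\to Y^\varepsilon$ of $f$, and by construction $f^\varepsilon$ is generic for $\varepsilon>0$.

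Using the triangulated structure, complete each $f^\varepsilon$ to a distinguished triangle $X^\varepsilon\xrightarrow{f^\varepsilon} Y^\varepsilon\xrightarrow{g^\varepsilon} Z^\varepsilon\xrightarrow{h^\varepsilon} TX^\varepsilon$. By Lemma~\ref{lem: Z has size of X+Y when f:X to Y is generic} together with the combinatorial structure of triangles in $\widetilde\cM_\sigma$, the ends of the indecomposable summands of $Z^\varepsilon$ remain close to those of $Z$ (within a compact neighborhood of the object space bounded away from the diagonal, hence away from the zero object) as $\varepsilon\to 0$. Local compactness of the object space then lets me extract a convergent subsequence with $Z^\varepsilon\to Z'$, $g^\varepsilon\to g'$, $h^\varepsilon\to h'$. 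Closedness of $\mathbf{\Delta}$ yields a distinguished triangle $(X,Y,Z',f,g',h')$, which by the standard uniqueness of cones in a triangulated category is isomorphic to the original $(X,Y,Z,f,g,h)$ via an isomorphism of triangles that is the identity on $X$ and $Y$.

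The principal obstacle is that cones in a triangulated category are only unique up to non-canonical isomorphism, so \emph{a priori} the completions $Z^\varepsilon$ need not vary in any canonical continuous way with $\varepsilon$. The compactness argument above, combined with the closedness axiom for $\mathbf{\Delta}$, supplies a convergent subsequence and hence exhibits the given triangle as a limit of generic distinguished triangles, which is all the lemma demands; genuine continuity of the family is not required.
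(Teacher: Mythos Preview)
Your overall strategy---perturb $f$ to a generic morphism, complete to distinguished triangles, pass to a limit using closedness of $\mathbf{\Delta}$, then invoke uniqueness of cones---matches the paper's. But two of your intermediate claims are false, and the paper's implementation is arranged precisely to avoid them.

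First, you assert that existence of a nonzero morphism between indecomposables is ``governed by strict (hence open) inequalities on the ends,'' so that any small perturbation of both $X$ and $Y$ preserves all nonzero components of $f$. This fails exactly in the non-generic situation you are trying to escape: the support condition for $M(x,y,i)\to M(x',y',j)$ is $x\le x'<y+\pi$ and $y\le y'<x+\pi$, and the lower bounds are \emph{not} strict. When $X_\alpha$ and $Y_\beta$ share an end, displacing that end in the wrong direction kills the morphism. The paper avoids this entirely by perturbing only $Y$, shifting each component $E(x,y)\mapsto E(x+\varepsilon,y+\varepsilon)$ in the forward direction, and taking $f^\varepsilon$ to be the composition of $f$ with the resulting short contractible morphism $Y\to Y^\varepsilon$. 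This guarantees $f^\varepsilon$ exists and converges to $f$ without any case analysis.

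Second, you claim the indecomposable summands of $Z^\varepsilon$ remain ``bounded away from the diagonal, hence away from the zero object.'' This is also false. By Lemma~\ref{lem: Z has size of X+Y when f:X to Y is generic} the generic cone $Z^\varepsilon$ has as many components as $X\oplus Y$, while the original non-generic $Z$ typically has strictly fewer; the surplus components of $Z^\varepsilon$ must therefore go to zero as $\varepsilon\to0$. The paper says this explicitly. In the James-construction topology this is harmless (summands converging to $0$ are deleted in the limit), but it means your compactness argument as stated does not apply, and with it your mechanism for producing convergent $g^\varepsilon,h^\varepsilon$ collapses. You would need instead to track the ends of $Z^\varepsilon$ combinatorially from those of $X^\varepsilon,Y^\varepsilon$ and argue directly that the triangle varies continuously, allowing some components to vanish.
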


\begin{proof}
If $f:X\to Y$ is not generic then, by a small perturbation of the components of $Y$ (moving points over $E(x,y)$ to points over $E(x+\vare,y+\vare)$ but staying on the same sheet of the covering) and by composing $f:X\to Y$ with short contractible morphisms to the new $Y$, we obtain a new distinguished triangle with more components in $Z$ so that, as $Y$ moves back to its original position, some of these new components go to zero. The limit is a distinguished triangle since the space of distinguished triangles is assumed to be closed. The limit must be equivalent to the distinguished triangle we started with since it has the same first morphism $f:X\to Y$. So, every distinguished triangle is equivalent to one which is a limit of distinguished triangles with generic first morphism.
\end{proof}

\begin{proof}[Proof of Theorem \ref{thm: virtual triangle give all triangles}]
By Lemma \ref{lem: distinguished triangles are limits of generic ones}, we only need to prove the theorem in the case of when $f:X\to Y$ is generic. Given any generic morphism $f:X\to Y$, consider the distinguished triangles given by:
\begin{equation}\label{eq: mock pushout diagram}
	X\xrightarrow{(f,1,1)} Y\oplus I_1^{\vare_1}X \oplus I_2^{\vare_2}X\to Z\to TX.
\end{equation}
Here, the first morphism is a monomorphism on ends and, therefore, the last morphism $Z\to TX$ is generic. By Lemma \ref{lem: Z has size of X+Y when f:X to Y is generic}, the number of components of $Y\oplus I_1^{\vare_1}X \oplus I_2^{\vare_2}X$ is equal to the number of components of $X\oplus Z$. Since $I_1^{\vare_1}X$, $I_2^{\vare_2}X$ each has the same number of components as $X$ this implies that $Z$ has the same number of components as $X\oplus Y$.

As $\vare_1,\vare_2$ both go to zero, the distinguished triangle \eqref{eq: mock pushout diagram} converges to a distinguished triangle $X\to Y\to Z'\to TX$ where, by Lemma \ref{lem: Z has size of X+Y when f:X to Y is generic}, $Z'$ has the same number of components as $X\oplus Y$. Since $Z$ converges to $Z'$ and they have the same number of components we conclude that the components of $Z$ converge to the components of $Z'$. In particular, they do not vanish. So, we get all distinguished triangles of the form
\[
	X\xrightarrow fY\to Z\to TX
\]
with $f$ generic as a limit (or equivalent to a limit) of distinguished triangles of the form \eqref{eq: mock pushout diagram}. Thus, we are reduced to showing that all distinguished triangles of the form \eqref{eq: mock pushout diagram} are determined by the universal virtual triangle.

As the values of $\vare_1,\vare_2$ change, the distinguished triangle \eqref{eq: mock pushout diagram} varies continuously with $Z$ having a fixed number of components. Thus, $X,Y$ are fixed and the components of $Z$ move around continuously. By continuity, the set of triangles for each value of the parameters $\vare_1,\vare_2$ determines the set of triangles for all values of these parameters.

Now deform the parameters $\vare_1,\vare_2$ to an infinitesimal amount less than their maximum possible values. Then the morphism $f:X\to Y$ will factor through $I_1^{\vare_1}X \oplus I_2^{\vare_2}X$. This will cause the induced map $Y\to Z$ to be a split monomorphism in the category $add\,\widetilde \cM_\sigma$. So, the distinguished triangle \eqref{eq: mock pushout diagram} becomes a direct sum of distinguished triangles of the form \eqref{eq: universal virtual triangles} and trivial triangles $0\to Y\xrightarrow{=} Y\to 0$. These are given by the universal virtual triangle by definition concluding the proof of the theorem.
\end{proof}

%\newpage

\subsection{Consequences}

\begin{lem}
Any subset of $[n]$ invariant under both $\sigma$ and $\tau$ generates a $(\sigma,\tau)$-invariant full subcategories of $\cC_n$ and therefore of $\widetilde\cM_n(\sigma,\tau,\varphi)$.\qed
\end{lem}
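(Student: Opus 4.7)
The plan is to verify the lemma in two steps, first for $\cC_n$ and then to transport the conclusion to $\widetilde\cM_n(\sigma,\tau,\varphi)$. Let $S\subseteq[n]$ be a subset such that the underlying permutations of $\sigma$ and $\tau$ each carry $S$ to $S$, and let $\cC_S\subseteq\cC_n$ denote the full subcategory on the objects in $S$.

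First I would verify $(\sigma,\tau)$-invariance of $\cC_S$ in $\cC_n$ using the explicit description from Corollary \ref{cor: formula for automorphisms of Cn} and Proposition \ref{prop: transition coef for any tau}: since $\sigma(x_{ij})=a_{ij}x_{\sigma(i)\sigma(j)}$ and $\tau(x_{ij})=b_{ij}x_{\tau(i)\tau(j)}$ with $a_{ij},b_{ij}\in K^\ast$, for any $i,j\in S$ the images lie in $\cC_n(\sigma(j),\sigma(i))$ and $\cC_n(\tau(j),\tau(i))$, which are hom spaces of $\cC_S$ by $\sigma,\tau$-invariance of $S$. Consequently $\sigma,\tau$ restrict to $K$-linear self-equivalences of $\cC_S$. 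Moreover, $\varphi_i=c_ix_{\tau(i)\sigma(i)}\in\cC_n(\sigma(i),\tau(i))$ lies in $\cC_S$ whenever $i\in S$, so $\varphi$ also restricts to a (still skew-continuous) natural isomorphism of the restrictions.

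Next I would push the result into $\widetilde\cM_n(\sigma,\tau,\varphi)$, whose underlying topological $K$-category is $\widetilde\cM_\sigma$. Locally $\widetilde\cM_\sigma$ is trivialized as $\cM_0\otimes\cC_n$ (Definition \ref{def: equivalence covering}), and the full subcategory on those objects whose sheet label lies in $S$ glues correctly because the transition automorphism $\sigma\in\Aut(\cC_n)$ preserves $\cC_S$ by the previous paragraph. Call this full subcategory $\widetilde\cM_{\sigma|S}$. The holonomy functor $F_\sigma$ and the shift functor $F_\tau$ are defined locally by $\mathrm{id}\otimes\sigma$ and $\mathrm{id}\otimes\tau$ (Lemma \ref{lem: morphism between equiv covers}), hence each preserves $\widetilde\cM_{\sigma|S}$. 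Likewise the interpolating objects $I_1^{\vare_1}X$ and $I_2^{\vare_2}X$ used in the universal virtual triangle lie in the same sheet as $X$ (they are reached by contractible morphisms within the support of $X$), so the universal virtual triangle of Definition \ref{def: universal virtual triangle} restricts to $add^{top}\widetilde\cM_{\sigma|S}$ and, by Theorem \ref{thm: virtual triangle give all triangles}, determines all distinguished triangles of this restriction.

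The core content is entirely formal once Corollary \ref{cor: formula for automorphisms of Cn} and the local trivialization of Definition \ref{def: equivalence covering} are in hand; the only mild subtlety is checking that the auxiliary data (the natural isomorphism $\varphi$ and the interpolating objects used to build the universal virtual triangle) stay inside the sheet-indexed subcategory, which I would expect to be the main place a reader might ask for justification.
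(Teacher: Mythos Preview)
Your proposal is correct and follows exactly the natural verification the paper has in mind; note that the paper states this lemma with an immediate \qed\ and gives no argument at all, treating it as self-evident from the formulas in Corollary~\ref{cor: formula for automorphisms of Cn} and the local description of $\widetilde\cM_\sigma$. Your two-step check (first that $\sigma,\tau,\varphi$ restrict to $\cC_S$, then that the sheet-indexed full subcategory of $\widetilde\cM_\sigma$ is preserved by $F_\sigma,F_\tau$ and contains the universal virtual triangle) is precisely the content a reader would supply.
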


\begin{cor}\label{cor: n is even}
Let $\widetilde \cM_n(\sigma,\tau,\varphi)$ be an add-triangulated equivalence covering of $\cM_0$ so that $\tau$ is an automorphism of $\cC_n$. Then $n$ is even.
\end{cor}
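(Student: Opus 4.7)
The plan is to extract a multiplicative identity from the skew-continuity equation by taking a product over all $n$ objects, then use that both $\sigma$ and $\tau$ act as permutations of $[n]$ to force a sign cancellation.

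First, I would invoke the skew-continuity condition from Definition \ref{def: skew-continuous}, equation \eqref{eq: skew-commutativity condition}: fixing a natural isomorphism $\varphi_i = c_i x_{\tau(i)\sigma(i)}$, we have
\[
c_{\sigma(i)} = -\,c_i\, a_{\tau(i)\sigma(i)} \qquad (i=1,\dots,n),
\]
where $(a_{ij})$ denotes the transition coefficients of $\sigma$. Taking the product of these $n$ equations gives
\[
\prod_{i=1}^n c_{\sigma(i)} \;=\; (-1)^n \Bigl(\prod_{i=1}^n c_i\Bigr)\Bigl(\prod_{i=1}^n a_{\tau(i)\sigma(i)}\Bigr).
\]
Since $\sigma$ is an automorphism, its underlying action on $[n]$ is a permutation, so $\prod_i c_{\sigma(i)} = \prod_i c_i$. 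The $c_i$ are nonzero, so they cancel, yielding
\[
1 \;=\; (-1)^n \prod_{i=1}^n a_{\tau(i)\sigma(i)}.
\]

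Next I would evaluate the remaining product. By Proposition \ref{prop: comparison of multiplicative bases}, the multiplicative system $(a_{ij})$ can be written $a_{ij}=d_i/d_j$ for some $d_i \in K^\ast$. Hence
\[
\prod_{i=1}^n a_{\tau(i)\sigma(i)} \;=\; \frac{\prod_{i=1}^n d_{\tau(i)}}{\prod_{i=1}^n d_{\sigma(i)}}.
\]
This is the crucial place where the hypothesis that $\tau$ is an \emph{automorphism} (and not merely an autoequivalence) is used: it guarantees that the underlying object map of $\tau$ is a bijection $[n]\to [n]$, so $\prod_i d_{\tau(i)} = \prod_i d_i$. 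The analogous statement for $\sigma$ is automatic. Therefore the ratio is $1$, and we obtain $(-1)^n = 1$, i.e., $n$ is even.

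The argument has no serious obstacle beyond identifying the right quantity to multiply; the only subtle point is recognizing that the corollary genuinely needs $\tau$ bijective on objects, since an autoequivalence whose object map is not a permutation would break the telescoping of $\prod d_{\tau(i)}$ and permit odd $n$.
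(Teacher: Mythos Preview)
Your proof is correct and considerably more direct than the paper's. The paper argues by contradiction: assuming $n$ odd, it locates an odd-length $\sigma$-cycle $A$ of size $p$ lying in an odd-length $\tau$-cycle of $\sigma$-orbits of size $q$, works inside the resulting $[p]\times[q]$ block, and iterates the skew-continuity relation $p$ times around a $\sigma$-cycle to force a transition factor to satisfy $a^p=-1$; composing around the $\tau$-cycle of orbits then yields $a^{pq}=-1$ while simultaneously $a^{pq}=1$, a contradiction. Your argument bypasses all of this combinatorics by simply multiplying the $n$ skew-continuity equations and observing that both sides telescope once $\sigma$ and $\tau$ are bijections on $[n]$. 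The paper's approach has the virtue of localizing the obstruction to a specific $\sigma\tau$-invariant block (which fits the paper's later analysis of indecomposable coverings), but your global product trick is cleaner, uses nothing beyond Proposition~\ref{prop: comparison of multiplicative bases} and equation~\eqref{eq: skew-commutativity condition}, and makes transparent exactly why the automorphism hypothesis on $\tau$ is needed.
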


\begin{proof}
Suppose $n$ is odd. Then, as permutations of $n$, $\sigma$ must have an odd number of odd cycles and the action of $\tau$ on these odd cycles must have an odd cycle. This gives a full subcategory $\cC_{pq}$ of $\cC_n$, with $p,q$ odd, whose object set we identify with $[p]\times[q]$ on which $\sigma$ acts by cyclically permuting the first factor and $\tau$ acts by permuting both factors but acting cyclically the second: $\sigma(i,j)=(i+1,j)$, $\tau(i,j)=(\tau_1(i),j+1)$.

For each $(i,j)\in[p]\times[q]$ we have the natural transformation $\varphi_{ij}$ which is a nonzero morphism
\[
	\varphi_{ij}:(i+1,j)\to (\tau_1(i),j+1)
\]
Since $\varphi$ is skew commutative, $\sigma(\varphi_{ij})=-\varphi_{i+1,j}$. Doing this $p$ times we get $\sigma^p(\varphi_{ij})=-\varphi_{ij}$. This means that the $p$-th power of the transition factor of $\sigma$ from $A_j=[p]\times j$ to $A_{j+1}$ is $(a_{A_j,A_{j+1}})^p=-1$ for every $j$. Composing these, the $pq$-th power of this transition factor will also be $-1$. But this is impossible since this is the transition factor $a_{A_j,A_{j}}=1$.
\end{proof}

\begin{cor}\label{cor: minimal case is even}
If $\widetilde \cM_n(\sigma,\tau,\varphi)$ is a minimal add-triangulated equivalence covering of $\cM_0$ then $\tau$ is an automorphism of $\cC_n$ and $n$ is even.
\end{cor}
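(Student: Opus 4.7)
The second conclusion, that $n$ is even, will follow immediately from Corollary~\ref{cor: n is even} once the first conclusion is established, so the plan is to show that in a minimal add-triangulated equivalence covering $\widetilde \cM_n(\sigma,\tau,\varphi)$ the autoequivalence $\tau$ must be an automorphism of $\cC_n$. The only way for $\tau$ to be an autoequivalence without being an automorphism is for its underlying map $[n]\to[n]$ on objects to fail to be a bijection (by Proposition~\ref{prop: transition coef for any tau} the morphism data is always faithful, so the obstruction is purely at the level of objects). I will argue this by contradiction: assuming $\tau$ is not a bijection on $[n]$, I will produce a proper $(\sigma,\tau)$-invariant subset of $[n]$ and invoke the lemma stated immediately above the Corollary to contradict minimality.

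The key step is to verify that $S:=\tau([n])$ is such a proper invariant subset. Properness is immediate: since $[n]$ is finite, failure of injectivity forces $|\tau([n])|<n$. Invariance under $\tau$ is clear since $\tau(S)=\tau(\tau([n]))\subseteq \tau([n])=S$. Invariance under $\sigma$ uses the commutativity $\sigma\tau=\tau\sigma$ given by (2) of Theorem~\ref{thmD1} together with the fact that $\sigma$ is a genuine automorphism (hence a bijection on $[n]$):
\[
\sigma(S)=\sigma(\tau([n]))=\tau(\sigma([n]))=\tau([n])=S.
\]

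By the lemma preceding the Corollary, $S$ generates a proper $(\sigma,\tau)$-invariant full subcategory of $\cC_n$, and hence of $\widetilde\cM_n(\sigma,\tau,\varphi)$. To promote this to a proper add-triangulated sub-equivalence-covering I need to check that its triangulation restricts; by Theorem~\ref{thm: virtual triangle give all triangles} it is enough to verify that the universal virtual triangle
\[
X\xrightarrow{\tiny\mat{1\\1}} I_1X\oplus I_2X\xrightarrow{[-1,1]} F_\sigma X\xrightarrow{\varphi_X} F_\tau X
\]
restricts. This is automatic because $I_1^{\vare_1}X$ and $I_2^{\vare_2}X$ lie in the same sheet as $X$, while $F_\sigma X$ and $F_\tau X$ lie in sheets $\sigma(i)$ and $\tau(i)$ which belong to $S$ whenever $i$ does, by the invariance just established. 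The restricted structure is a strictly smaller add-triangulated equivalence covering of $\cM_0$, contradicting minimality. Hence $\tau$ is a bijection on $[n]$, i.e.\ an automorphism of $\cC_n$, and Corollary~\ref{cor: n is even} then yields that $n$ is even.

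The main obstacle I anticipate is purely bookkeeping: verifying that the restriction to $S$ genuinely inherits the full structure of an add-triangulated equivalence covering of $\cM_0$ (an equivalence covering with holonomy $\sigma|_S$, shift $F_{\tau|_S}$, and skew-continuous natural isomorphism $\varphi|_S$). All three restrictions make sense because $\sigma(S)=S=\tau(S)$ and the defining equation \eqref{eq: skew-commutativity condition} for skew-continuity of $\varphi$ is pointwise in $i$, so it survives passage to the subset $S$.
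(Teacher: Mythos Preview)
Your proof is correct and follows essentially the same approach as the paper: both arguments show $\tau$ must be an automorphism by producing, from a non-bijective $\tau$, a proper $(\sigma,\tau)$-invariant subset of $[n]$ that contradicts minimality, and then invoke Corollary~\ref{cor: n is even}. The only cosmetic difference is that the paper takes the image of $\tau^n$ (the eventual image, on which $\tau$ acts bijectively) while you take $\tau([n])$; either choice yields a proper invariant subset and your verification that the triple $(\sigma|_S,\tau|_S,\varphi|_S)$ still satisfies the hypotheses of Theorem~\ref{thm: D in intro} is a welcome elaboration of what the paper leaves implicit.
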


\begin{proof}
This follows from Corollary \ref{cor: n is even} since $\tau$ must be an isomorphism: otherwise, the image of $\tau^n$ is a smaller $\sigma,\tau$ invariant full subcategory of $\cC_n$ which generates a proper equivalence subcover of $\widetilde \cM_n(\sigma,\tau,\varphi)$ contradicting its minimality.
\end{proof}

%\newpage

In particular, $\widetilde \cM_n(\sigma,\tau,\varphi)$ is minimal when $n=2$ since any proper subcovering would have $n=1$. 

\subsection{2-fold equivalence coverings of $\cM_0$}

We will describe all $2$-fold equivalence coverings $\widetilde \cM_2(\sigma,\tau,\varphi)$ of $\cM_0$. There are four cases for the underlying permutations of $\sigma,\tau$, although Case (0) is not possible. See Figure \ref{Figure99}.
\begin{enumerate}
\item[(0)] $\sigma=\tau=id$ (as permutations)
\item $\sigma=id$, $\tau=(12)$
\item $\sigma=(12),\tau=id$
\item $\sigma=\tau=(12)$
\end{enumerate}
In each case, we use the notation $a_{ij}$, $b_{ij}$ for the transition coefficients of $\sigma$ and $\tau$ with respect to a fixed choice of multiplicative basis (Definition \ref{def: transition coefficients}). The parameters $a_{12},b_{12}$ determine the others. The natural transformation $\varphi$ is specified by parameters $c_i\in K^\ast$, $i=1,2$ where $\varphi_i=c_ix_{\tau(i)\sigma(i)}$. Recall that these parameters satisfy the following conditions.
\begin{enumerate}
\item[(a)] (naturality of $\varphi$, Prop. \ref{prop: natural isomorphisms are unique up to rescaling}) $b_{ji}c_i=c_ja_{ji}$ for all $i,j$. 
\item[(b)] (skew-commutativity of $\varphi$ \eqref{eq: skew-commutativity condition}) $a_{\tau(i)\sigma(i)}c_i=-c_{\sigma(i)}$. 
\end{enumerate}
Finally, recall that ``rescaling'' of the distinguished triangles means multiplying all $\varphi_i$ and thus all $c_i$ by the same nonzero scalar $r$. This preserves the $c$-homogeneous relations (a), (b). Up to rescaling, only the ratio $c_1/c_2$ is well-defined (when $n=2$).

Our goal is to show that, in Cases 1,2,3, an add-triangulated category $\widetilde \cM_n(\sigma,\tau,\varphi)$ exists and is unique up to continuous isomorphism and rescaling.

We need the following trivial observation.

\begin{lem}
If $\sigma$ is the identity permutation then its transition coefficients $a_{ij}$ are well-defined, i.e., independent of the choice of $x_{ij}$.
\end{lem}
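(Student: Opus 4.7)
The plan is a direct computation showing invariance under a change of multiplicative basis. By Proposition \ref{prop: comparison of multiplicative bases}, every multiplicative basis of $\cC_n$ has the form $x'_{ij} = (e_i/e_j)\, x_{ij}$ for some $(e_1,\dots,e_n)\in (K^\ast)^n$ (uniquely determined up to a common scalar). Let $a_{ij}$ denote the transition coefficients of $\sigma$ with respect to $\{x_{ij}\}$, and $a'_{ij}$ those with respect to $\{x'_{ij}\}$.

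First I would derive the general change-of-basis formula without assuming $\sigma$ has trivial underlying permutation. By $K$-linearity,
\[
    \sigma(x'_{ij}) = \frac{e_i}{e_j}\,\sigma(x_{ij}) = \frac{e_i}{e_j}\, a_{ij}\, x_{\sigma(i)\sigma(j)} = \frac{e_i\, e_{\sigma(j)}}{e_j\, e_{\sigma(i)}}\, a_{ij}\, x'_{\sigma(i)\sigma(j)},
\]
where in the last equality we used $x_{\sigma(i)\sigma(j)} = (e_{\sigma(j)}/e_{\sigma(i)})\, x'_{\sigma(i)\sigma(j)}$. Comparing with $\sigma(x'_{ij}) = a'_{ij}\, x'_{\sigma(i)\sigma(j)}$ yields
\[
    a'_{ij} = \frac{e_i\, e_{\sigma(j)}}{e_j\, e_{\sigma(i)}}\, a_{ij}.
\]

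Now I would specialize: when the underlying permutation of $\sigma$ is the identity, $\sigma(i)=i$ and $\sigma(j)=j$, so the prefactor $(e_i e_{\sigma(j)})/(e_j e_{\sigma(i)}) = 1$, giving $a'_{ij} = a_{ij}$ for all $i,j$. Hence the $a_{ij}$ do not depend on the chosen multiplicative basis, which is the assertion.

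There is essentially no obstacle here; the content is purely the observation that the ambiguity in $\{x_{ij}\}$ is absorbed into a ratio $(e_i/e_{\sigma(i)})(e_{\sigma(j)}/e_j)$ which becomes trivial exactly when $\sigma$ fixes every object. I would present the derivation in one short display followed by the specialization, keeping the proof to a few lines.
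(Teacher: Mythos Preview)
Your proof is correct, but it differs from the paper's approach. The paper gives a one-line conceptual argument: when the underlying permutation of $\sigma$ is trivial, $\sigma$ maps each hom space $\cC_n(j,i)\cong K$ to itself, and $a_{ij}$ is simply the eigenvalue of this $K$-linear action, which is manifestly independent of any choice of basis. You instead derive the general change-of-basis formula $a'_{ij}=(e_ie_{\sigma(j)})/(e_je_{\sigma(i)})\,a_{ij}$ and then specialize. Your route is slightly longer but buys more: the general transformation law you obtain is exactly what is used later (Proposition~\ref{prop: change of good basis formula}) to analyze how transition factors change under a change of good basis, so your computation has independent value. The paper's argument, on the other hand, is shorter and makes the invariance transparent without any computation.
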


\begin{proof}
$a_{ij}$ is the unique eigenvalue of the action of $\sigma$ on $\cC_n(j,i)\cong K$.
\end{proof}

\noindent{\bf Case 0}: $\sigma=\tau=id$. The minimal invariant subcategory of $\cC_2$ is $\cC_1$ which contradicts Corollary \ref{cor: minimal case is even}. So, this case is not possible.
\smallskip

\noindent{\bf Case 1}: $\sigma=id,\tau=(12)$. Since $\{1,2\}$ is a single $\tau$-orbit, there is a multiplicative basis $x_{ij}$ for $\cC_2$ so that $\tau(x_{12})=x_{21}$ and $\tau(x_{21})=x_{12}$. I.e., $b_{ij}=1$ for all $i,j$. 

Since $\sigma,\tau$ commute, we must have $a_{12}=a_{21}$ with product $a_{11}=1$. So, $a_{12}=\pm1$.

The skew-commutativity of $\varphi$ implies that $a_{21}c_1=-c_1$. So, $a_{12}=a_{21}=-1$.

Natruality of $\varphi$ implies $b_{21}c_1=c_2a_{21}$. So, $c_1/c_2=-a_{21}/b_{21}=-1$. 

To summarize: $(a_{12},b_{12},c_1/c_2)=(-1,1,-1)$ and $\widetilde\cM_n(\sigma,\tau,\varphi)$ is well-defined up to rescaling of triangles as defined in Example \ref{rem: rescaling gives strong isomorphism}. I.e., the triangulated categories in Case 1 are, up to continuous isomorphism, given by the single parameter $c_1\in K^\ast$.

\smallskip

\noindent{\bf Case 2}: $\sigma=(12),\tau=id$. As in Case 1, we have: $a_{ij}=1$ for all $i,j$ and $b_{12}=b_{21}=\pm1$.

By skew-commutativity of $\varphi$ we have $a_{12}c_1=-c_2$. So, $c_1/c_2=-1$.

By naturality of $\varphi$ we have $b_{21}c_1=c_2a_{21}$. So, $b_{21}=b_{12}=-1$.

To summarize: $(a_{12},b_{12},c_1/c_2)=(1,-1,-1)$ and $\widetilde\cM_n(\sigma,\tau,\varphi)$ is well-defined up to the rescaling factor $c_1$.

\smallskip

\noindent{\bf Case 3}: $\sigma=\tau=(12)$. In this case we have a choice of normalizing either $\sigma$ or $\tau$. To help decide, we first evaluate $c_1/c_2$.

By skew-commutativity of $\varphi$ we have $a_{22}c_1=-c_2$. So, $c_1/c_2=-1$.

By naturality of $\varphi$ we have $b_{21}c_1=c_2a_{21}$. So, $a_{21}=-b_{21}$.

We still have two choices. We choose to normalize $\sigma$ and take a multiplicative basis so that $a_{ij}=1$ for all $i,j$ and $b_{12}=b_{21}=-1$.

To summarize: $(a_{12},b_{12},c_1/c_2)=(1,-1,-1)$, as in Case 2 and $\widetilde\cM_n(\sigma,\tau,\varphi)$ is well-defined up to the rescaling factor $c_1$. However, with a different choice of $x_{ij}$, we would have another description of the same category: $(a_{12},b_{12},c_1/c_2)=(-1,1,-1)$ as in Case 1.

\begin{rem}
Case 3, with parameters $c_1=1,c_2=-1$, is the continuous cluster category constructed in \cite{ccc},\cite{cfc}. The objects of the 2-fold equivalence cover of $\cM_0$ are ordered pairs of distinct points on the circle $\RR/2\pi\ZZ$ with holonomy given by rotation by $\pi$ (making $a_{12}=1$) and with the distinguished triangles rescaled by $-1$ in the holonomy. This gives $b_{12}=-1$. The details given in the present paper show why this is in fact continuous.

Case 1 is a variation of a special case of a construction Orlov \cite{Orlov}. In this case, Orlov actually constructs a 4-fold equivalence covering of $\cM_0$ but we interpret it as a recipe to construct a 2-fold covering. The indecomposable objects of Orlov's category are matrix factorizations on an ordered pair of points in $\RR/2\pi\ZZ$: 
\[
%\xymatrixrowsep{10pt}\xymatrixcolsep{10pt}
\xymatrix{%begin xy matrix
P_1 \ar@/_1pc/[r]^{d_1} & 
	P_0 \ar@/_1pc/[l]^{d_0}
	}%end xy matrix
\]
However, we modify this by identifying $(P_1,P_0,d_\ast)$ with $(P_0,P_1,d_\ast)$, i.e., by taking unordered pairs of points. In our setting we take $d_0,d_1$ to be the unique contractible morphisms between the two points in $\cR_{2\pi}/G_{2\pi}$. This gives one object for every unordered pair of points on a circle of radius 1, i.e., for every object in $\cM_0$.

Orlov takes the shift functor to be given by changing the sign of both $d_0$ and $d_1$. This gives another isomorphic copy of each object. So, there are two copies of each object. Changing the sign of $d_i$ gives a recognizably distinct component in the space of objects. So, $\sigma$ is the identity permutation and $\tau=(12)$.

Case 2 is a new construction. The underlying topological category $\widetilde\cM_\sigma$ is the same as in Case 3, but the continuous triangulated structure is different. Note that $b_{12}=-1$ implies that, although the shift functor $\tau$ is the identity on objects, it is not the identity functor.
\end{rem}

\begin{cor}[Corollary \ref{cor: E2}]\label{cor: classification for n=2}
Up to rescaling (Example \ref{rem: rescaling gives strong isomorphism}), there are exactly three two-fold equivalence coverings of $\cM_0$ as shown in Figure \ref{Figure99}.
\end{cor}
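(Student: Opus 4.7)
The plan is to combine the classification Theorem \ref{thm: D in intro} with the case analysis carried out immediately above the statement. First I would invoke Corollary \ref{cor: minimal case is even} to ensure that for $n=2$ we may assume $\tau$ is an automorphism of $\cC_2$, and to dispose of Case 0 ($\sigma=\tau=id$) as inadmissible. The possible underlying permutation pairs $(\sigma,\tau)\in S_2\times S_2$ are therefore exhausted by Cases 1, 2, 3 listed just before the corollary, so it remains to show that each case yields exactly one continuous add-triangulation up to rescaling, and that the three cases yield mutually non-isomorphic categories.

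For the uniqueness within each case, I would cite the explicit determination of $(a_{12},b_{12},c_1/c_2)$ carried out in the discussion of Cases 1, 2, 3. In each case, the constraints of commutativity $\sigma\tau=\tau\sigma$, naturality of $\varphi$ (Proposition \ref{prop: natural isomorphisms are unique up to rescaling}), and skew-commutativity \eqref{eq: skew-commutativity condition} force the triple $(a_{12},b_{12},c_1/c_2)$ to the unique value shown, once a multiplicative basis of $\cC_2$ is fixed; the remaining freedom is exactly the overall rescaling factor $c_1\in K^{\ast}$ of Example \ref{rem: rescaling gives strong isomorphism}. Case 3 requires the extra observation that the two descriptions $(1,-1,-1)$ and $(-1,1,-1)$ obtained from the two admissible basis normalizations of $\cC_2$ describe the same category under a basis change, so there really is a single category here.

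For the mutual non-isomorphism of Cases 1, 2, 3, I would apply the second part of Theorem \ref{thm: D in intro}: a strong isomorphism between $\widetilde\cM_2(\sigma,\tau,\varphi)$ and $\widetilde\cM_2(\sigma',\tau',\varphi')$ demands $\rho\in\Aut(\cC_2)$ with $\rho\sigma=\sigma'\rho$ and $\rho\tau=\tau'\rho$. By Proposition \ref{prop: Aut Cn is projective} the underlying permutation of $\rho$ lies in the abelian group $S_2$, so conjugation by $\rho$ preserves the permutation parts of $\sigma$ and $\tau$ separately. Since the three permutation pairs $(id,(12)),\,((12),id),\,((12),(12))$ are pairwise distinct, no such $\rho$ can exist across cases. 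Existence of each $\widetilde\cM_2(\sigma,\tau,\varphi)$ as a continuously add-triangulated category is then supplied by the continuous Frobenius construction of Section \ref{ss: continuous Frobenius} (Theorem \ref{thm: add M(s,t,phi) is continuously triangulated}), and the identifications with \cite{ccc} (Case 3, self-dual), \cite{Orlov} (Case 1), and its dual (Case 2, via Corollary \ref{duality of equivalence coverings}) complete the picture shown in Figure \ref{Figure99}.

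The main obstacle is not algebraic but bookkeeping: verifying that the basis-change ambiguity in Case 3 does not accidentally identify Case 3 with Case 1 or Case 2, and confirming that the parameters $(a_{12},b_{12},c_1/c_2)$ extracted from each case match the standard descriptions of the continuous cluster category, Orlov's construction, and its dual as claimed in Corollary \ref{cor: E1}.
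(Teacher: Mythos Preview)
Your proposal is correct and follows essentially the same approach as the paper: the corollary is obtained directly from the case-by-case analysis of the four permutation pairs $(\sigma,\tau)\in S_2\times S_2$ together with Theorem~\ref{thm: D in intro}, with Case~0 excluded and Cases~1--3 each pinned down to a single triple up to the rescaling factor. Your explicit argument that the three cases are mutually non-isomorphic (via the abelianness of $S_2$, so that conjugation by any $\rho\in\Aut(\cC_2)$ preserves the underlying permutation pair) is a detail the paper leaves implicit, and your caveat about the two basis-dependent descriptions of Case~3 is exactly the right point to check---it is resolved, as you note, by the fact that the permutation pair is a conjugation invariant.
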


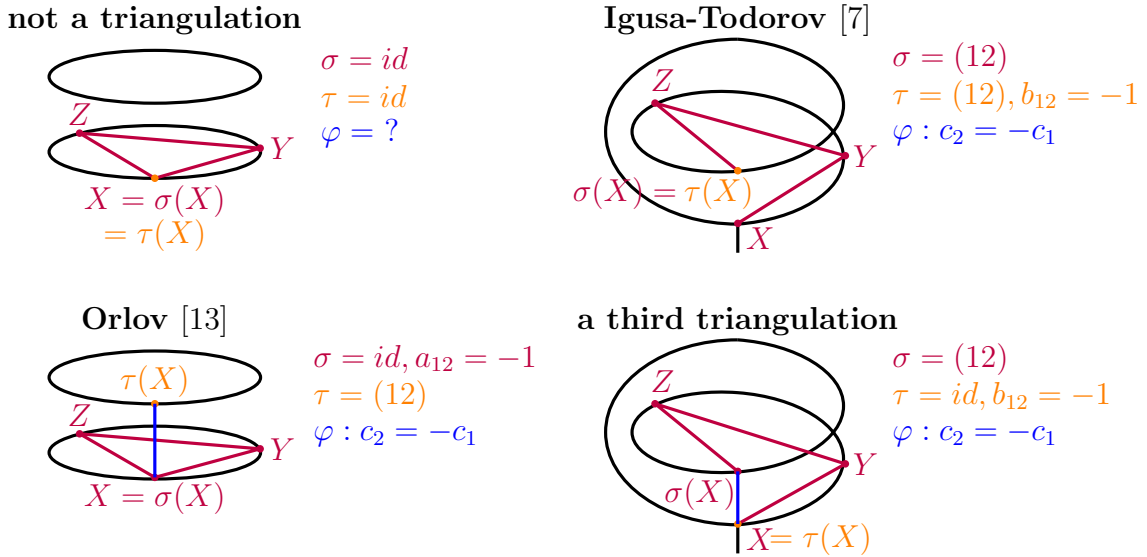
\begin{figure}[htbp]
\begin{center}
\begin{tikzpicture}[decoration={segment length=20pt}, very thick]
%not a triangulation
\draw (.25,4.25) ellipse (40pt and 10pt)
(.25,3.25) ellipse (40pt and 10pt)
(.25, 5) node {\textbf{not a triangulation}};

\draw [purple] (.25,2.6) node {$X = \sigma(X)$ }
(.25,2.9) -- (1.65, 3.3) 
(1.95, 3.3) node {$Y$}
(1.65, 3.3) -- (-.75,3.5)
(-.75, 3.75) node {$Z$}
(-.75,3.5) -- (.25,2.9)

(2.3,4.5) node[right] {$\sigma = id$};

\fill[purple] 
(1.65, 3.3) circle (.05cm)
(-.75, 3.5) circle (.05cm)
;

\fill[orange]
(.25, 2.9) circle (.05cm);

\draw [orange] (.25,2.15) node {$=\tau(X)$}
(2.3,4) node[right] {$\tau = id$}
;

\draw[blue] 
(2.3,3.5) node[right]  {$\varphi = $ ?};

%Orlov

\begin{scope}[yshift=-.5cm]
\node(a) {} node (b) at (1,1.25){} node (c) at (1,-1.75){};
\draw (.25,.75) ellipse (40pt and 10pt)
(.25,-.25) ellipse (40pt and 10pt)
(.25, 1.5) node {\textbf{Orlov \cite{Orlov}}};

\draw [purple] (.25,-.85) node {$X = \sigma(X)$}
(.25,-.58) -- (1.65, -.2)
(1.95, -.2) node {$Y$}
(1.65, -.2) -- (-.75, 0)
(-.75, .25) node {$Z$}
(-.75,0) -- (.25, -.58)
(2.2, 1) node[right] {$\sigma = id, a_{12}=-1$}
;

\fill[purple] 
(.25, -.58) circle (.05cm)
(1.65, -.2) circle (.05cm)
(-.75, 0) circle (.05cm)
;

\draw [orange] (.25,.7) node {$\tau(X)$}
(2.2,.5) node[right] {$\tau = (12)$}
;

\fill[orange] (.25,.4) circle (.05cm);

\draw[blue] (.25,-.58) -- (.25,.4)
(2.2,0) node[right] {$\varphi: c_2=-c_1$};
\end{scope}

%Igusa-Todorov

\begin{scope}[xshift=.5cm]
\node(a) {} node (b) at (7.5,4.75){} node (c) at (7.5,1.75){};
\draw[decorate,decoration={coil, amplitude=40, aspect=.5}] (7.5,4.75) -- (c);
\draw (7.5,4.75) arc (90: 270 : 50pt and 35pt)
(7.5, 5) node {\textbf{Igusa-Todorov \cite{cfc}}}
;

\draw [purple] (7.8,2.1) node {$X $}
(7.5,2.3) -- (8.93, 3.2)
(9.2, 3.2) node {$Y$}
(8.93, 3.2) -- (6.4, 3.9)
(6.5, 4.2) node {$Z$}
(6.4,3.9) -- (7.5, 3)
(6, 2.7) node {$\sigma(X) =$}
(9.4, 4.5) node[right] {$\sigma = (12)$}
;

\fill[purple] (7.5,2.3) circle (.05cm)
(8.93, 3.2) circle (.05cm)
(6.4, 3.9) circle (.05cm)
;

\draw [orange] (7.25,2.7) node {$\tau(X)$}
(9.4,4) node[right] {$\tau = (12), b_{12}=-1$}
;

\fill[orange] (7.5,3) circle (.05cm);

\draw[blue] %(7,2.3) -- (7,3)
(9.4,3.5) node[right] {$\varphi: c_2 = -c_1$}
;
\end{scope}

%a third

\begin{scope}[yshift=-.5cm, xshift=.5cm]
\node(a) {} node (b) at (7.5,1.25){} node (c) at (7.5,-1.75){};
\draw[decorate,decoration={coil, amplitude=40, aspect=.5}] (7.5,1.25) -- (c);
\draw (7.5,1.25) arc (90: 270 : 50pt and 35pt)
(7.5, 1.5) node {\textbf{a third triangulation}};

\draw [purple] (7.8,-1.4) node {$X$}
(7.5,-1.2) -- (8.93, -.4)
(9.2, -.4) node {$Y$}
(8.93, -.4) -- (6.4, .4)
(6.5, .7) node {$Z$}
(6.4,.4) -- (7.5, -.5)
(7.0, -.8) node {$\sigma(X)$}
(9.4, 1) node[right] {$\sigma = (12)$}
;

\fill[purple] 
(8.93, -.4) circle (.05cm)
(6.4, .4) circle (.05cm)
(7.5, -.5) circle (.05cm)
;

\draw [orange] (8.6,-1.4) node {$= \tau(X)$}
(9.4,.5) node[right] {$\tau = id, b_{12}=-1$}
;

\fill[orange] (7.5,-1.2) circle (.05cm);

\draw[blue] (7.5,-1.2) -- (7.5,-.5)
(9.4,0) node[right] {$\varphi: c_2 = -c_1 $}

;
\end{scope}

\end{tikzpicture}
\caption{Schematic drawings of the three possible add-triangulated 2-fold equivalence coverings of $\cM_0$.}
\label{Figure99}
\end{center}
\end{figure}
%

%\newpage

%%%%%%%%%%%%%%%%%%%%%%%%%%
%
%                Section K-linear Structure
%
%%%%%%%%%%%%%%%%%%%%%%%%%%

\section{Automorphisms of ${\cC}_n$}\label{lin}\label{Sec 5: App}

We now discuss technical details about the discrete structures $\cC_n,\sigma,\tau,\varphi$ that we used to classify the continuous triangulations of equivalence coverings of the Moebius category $\cM_0$. This will allow us to construct continuous Frobenius categories in the last section thereby showing that all continuously triangulated categories listed in the classification theorem exist. In Theorem \ref{thm: connected equivalence covers} we use this discussion to list all connected add-triangulated coverings of $\cM_0$. More generally we show, in Corollary \ref{cor: only finitely many indecomposable n-fold coverings} that there are, up to isomorphism, only finitely many ``indecomposable'' add-triangulated equivalence coverings of $\cM_0$. We assume that $K$ is algebraically closed.

\subsection{Fattened Categories and the category $\cC_n$} By ``fattening'' a category we mean taking several isomorphic copies of each object. This is the discrete version of an equivalence covering. We will concentrate on fattening the category $\cC_1$ having one object with endomorphism ring $K$ and show that such a category is necessarily isomorphic to $\cC_n$ from Definition \ref{def: Cn}. All categories will be $K$-categories and all functors will be $K$-linear.

\begin{thm}\label{thm: fat K is Cn}
Let $\cC$ be any category with $n$ objects, all of which are isomorphic with endomorphism ring $K$. Then $\cC$ is isomorphic to $\cC_n$.
\end{thm}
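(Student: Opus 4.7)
The plan is to produce an explicit isomorphism $\cC_n \to \cC$ by first choosing a coherent system of isomorphisms in $\cC$, then transporting the multiplicative basis of $\cC_n$ along them.

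First I would label the objects of $\cC$ as $1,2,\ldots,n$ and fix any system of isomorphisms $f_{i1}\colon 1\to i$ in $\cC$, with the normalization $f_{11}=\mathrm{id}_1$. Set $f_{1i}=f_{i1}^{-1}\colon i\to 1$, and define
\[
	y_{ij} := f_{i1}\circ f_{1j}\colon j\to 1\to i.
\]
Each $y_{ij}$ is an isomorphism, $y_{ii}=\mathrm{id}_i$, and a direct computation gives the multiplicative identities
\[
	y_{ij}\circ y_{jk} = f_{i1}f_{1j}f_{j1}f_{1k} = f_{i1}f_{1k} = y_{ik}
\]
since $f_{1j}f_{j1}=\mathrm{id}_1$. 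So $\{y_{ij}\}$ behaves formally exactly like the canonical basis $\{x_{ij}\}$ of $\cC_n$.

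Next I would show that $\cC(j,i)$ is one-dimensional over $K$ and generated by $y_{ij}$. Given any morphism $g\colon j\to i$, the composite $f_{1i}\circ g\circ f_{j1}$ lies in $\End_\cC(1) = K$, so equals $\lambda\cdot\mathrm{id}_1$ for a unique $\lambda\in K$; then
\[
	g = f_{i1}\circ(\lambda\cdot\mathrm{id}_1)\circ f_{1j} = \lambda\cdot y_{ij},
\]
and the coefficient $\lambda$ is uniquely determined because $y_{ij}$ is an isomorphism (so nonzero) and $K$-bilinearity of composition is assumed.

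Finally, define a $K$-linear functor $F\colon \cC_n\to \cC$ by sending object $i$ to object $i$ and the basis morphism $x_{ij}\in\cC_n(j,i)$ to $y_{ij}\in\cC(j,i)$, extended $K$-linearly. The multiplicative identity for the $y_{ij}$ matches the defining relation \eqref{eq: basic morphisms} for $\cC_n$, so $F$ is a functor; it is bijective on objects and, by the previous paragraph, a $K$-linear isomorphism on each hom space, hence an isomorphism of $K$-categories. There is no real obstacle—the entire argument is a matter of choosing the isomorphisms $f_{i1}$ and verifying that they ``straighten'' $\cC$ to the canonical form $\cC_n$; no algebraic closure or characteristic hypothesis is needed for this particular statement.
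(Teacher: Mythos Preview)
Your proof is correct and follows essentially the same approach as the paper: both pick a coherent system of isomorphisms among the $n$ objects (you use a ``star'' $f_{i1}\colon 1\to i$, the paper uses a ``path'' $f_i\colon v_i\to v_{i+1}$) to produce a multiplicative basis $\{y_{ij}\}$ and then send $x_{ij}\mapsto y_{ij}$. If anything, your version is a bit more explicit in verifying that each hom space is one-dimensional, which the paper's terse proof leaves implicit.
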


\begin{proof}
Label the objects of $\cC$ as $v_1,\cdots,v_n$. For each $i<n$ choose an isomorphism $f_i:v_i\cong v_{i+1}$ with inverse $g_i$. For any $i,j\in [n]$ let $y_{ji}:v_i\to v_j$ be given by the unique reduced composition of the morphism $f_k,g_k$ (reduced means $f_k\circ g_k$, $g_k\circ f_k$ do not occur in the composition). Then $i\mapsto v_i$ and $x_{ji}\mapsto y_{ji}$ gives an isomorphism $\cC_n\cong \cC$.
\end{proof}

%We say that $(a_{ij})$ is a \emph{multiplicative system of scalars} if it satisfies these equivalent conditions. Note that (3) implies $a_{ii}=1$ for all $i$ and $a_{ji}=a_{ij}^{-1}$. 

\subsection{Automorphisms of $\cC_n$} Recall from Corollary \ref{cor: formula for automorphisms of Cn} that a $K$-linear automorphism $\sigma$ of $\cC_n$ is given by a permutation of $n$ and multiplicative system of scalars $a_{ij}$ called the transition coefficients of $\sigma$. The formula is
\[
	\sigma(x_{ij})=a_{ij}x_{\sigma(i)\sigma(j)}
\]
where $\{x_{ij}\in \cC_n(j,i)\}$ is a multiplicative basis for $cC_n$.

We will show that, if the underlying permutation of $\sigma$ is an $n$ cycle, then we can arrange for the transition coefficients $a_{ij}$ to be equal to 1 for all $i,j$ by changing the basis for the hom sets. In particular, $\sigma^n$ is the identity functor on $\cC_n$. More generally, we can arrange for $a_{ij}$ to be equal to 1 when $i,j$ are in the same orbit of $\sigma$.

\begin{eg}
Suppose $\sigma=(123\cdots n)$ is a single $n$-cycle on objects. Given $\sigma(x_{ij})=a_{ij}x_{i+1,j+1}$, by Proposition \ref{prop: comparison of multiplicative bases} we can choose $c_i\in K^\ast$ so that $a_{ij}=c_i/c_j$. Since $K$ is algebraically closed, there exists a $d\in K^\ast$ so that
\[
	d^n=c_1c_2\cdots c_n
\]
Thus $d$ is the \emph{geometric mean} of the $c_i$ and $d$ is unique up to multiplication by an $n$-th root of unity. Then we can choose another multiplicative basis $x_{ij}'=b_{ij}x_{ij}$ depending on the transition coefficients of $\sigma$ with respect to $x_{ij}$ by:
\begin{enumerate}
\item $b_{ii}=1$
\item $b_{ij}=d^{j-i}(c_ic_{i+1}\cdots c_{j-1})^{-1}$ if $i<j$. In particular $b_{i,i+1}=d/c_i$ if $i\le n-1$. 
\item $b_{ij}=b_{ji}^{-1}$ if $j<i$. In particular $b_{n1}=b_{1n}^{-1}=c_1\cdots c_{n-1}d^{1-n}=d/c_n$. 
\end{enumerate}
Therefore, $b_{i,\sigma(i)}=d/c_i$ for all $i$. It is easy to see that $(b_{ij})$ is multiplicative. And:
\[
	\sigma(x_{i,i+1}')=\sigma(b_{i,i+1}x_{i,i+1})=\frac d{c_i}\sigma(x_{i,i+1})
	=\frac d{c_i}\frac{c_i}{c_{i+1}}x_{i+1,i+2}=\frac d{c_{i+1}}x_{i+1,i+2}=x_{i+1,i+2}'
\]
This implies that
\[
	\sigma(x_{ij}')=\sigma(x_{i,i+1}'\cdots x_{j-1,j}')=x_{i+1,i+2}'\cdots,x_{j,j+1}'=x_{i+1,j+1}'
\]
for all $i,j$. In fraction form, $b_{ij}=s_i/s_j$ where $s_i=c_1\cdots c_{i-1}d^{-i}$.
\end{eg}

This proves the following.

\begin{lem}
If $\sigma\in\Aut(\cC_n)$ is an $n$ cycle then $\cC_n$ has a multiplicative basis $(x_{ij})$ so that $\sigma(x_{ij})=x_{\sigma(i)\sigma(j)}$ for all $i,j\in\cC_n$. In particular, $\sigma^n$ is the identity automorphism of $\cC_n$.\qed
\end{lem}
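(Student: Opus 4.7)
The plan is to exhibit the desired basis explicitly by rescaling a given multiplicative basis, reducing the problem to a single scalar equation that is solvable because $K$ is algebraically closed.

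Start with any multiplicative basis $(x_{ij})$ and write $\sigma(x_{ij}) = a_{ij} x_{\sigma(i)\sigma(j)}$ where, by Proposition \ref{prop: comparison of multiplicative bases}, there exist $c_i \in K^\ast$ with $a_{ij} = c_i/c_j$. I would search for a new multiplicative basis of the form $x'_{ij} = b_{ij} x_{ij}$ with $b_{ij} = s_i/s_j$ (again by Proposition \ref{prop: comparison of multiplicative bases}, this is the general form of a rescaling preserving multiplicativity). A direct calculation gives
\[
\sigma(x'_{ij}) = b_{ij} a_{ij} x_{\sigma(i)\sigma(j)} = \frac{b_{ij} a_{ij}}{b_{\sigma(i)\sigma(j)}}\, x'_{\sigma(i)\sigma(j)},
\]
so the condition $\sigma(x'_{ij}) = x'_{\sigma(i)\sigma(j)}$ is equivalent to requiring that the quantity $s_i c_i / s_{\sigma(i)}$ be independent of $i$. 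Call this common value $d$; then the recursion $s_{\sigma(i)} = s_i c_i / d$ determines $(s_i)$ from any choice of $s_1$.

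The key consistency check is that, since $\sigma$ is an $n$-cycle, iterating the recursion $n$ times must return $s_1$ to itself. Because $\{1,\sigma(1),\ldots,\sigma^{n-1}(1)\} = [n]$, this iteration yields
\[
s_1 = s_{\sigma^n(1)} = s_1\, \frac{c_1 c_2 \cdots c_n}{d^n},
\]
which forces $d^n = c_1 c_2 \cdots c_n$. Here is the only nontrivial step: such a $d$ exists precisely because $K$ is algebraically closed (the standing hypothesis of Section \ref{Sec 5: App}). Choosing any such $d$ and any $s_1 \in K^\ast$, the scalars $s_i$ are well-defined, the $b_{ij} = s_i/s_j$ form a multiplicative system, and by construction $\sigma(x'_{ij}) = x'_{\sigma(i)\sigma(j)}$ for all $i,j$.

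Finally, for the last sentence, observe that since $\sigma$ is an $n$-cycle on objects, $\sigma^n$ fixes every object of $\cC_n$, and then $\sigma^n(x'_{ij}) = x'_{\sigma^n(i)\sigma^n(j)} = x'_{ij}$ on the chosen basis, so $\sigma^n$ acts as the identity on all hom-spaces. Hence $\sigma^n = \mathrm{id}_{\cC_n}$. The only real obstacle is the $n$-th root extraction, which is handled by algebraic closure; the rest is bookkeeping with the multiplicative-system formalism of Proposition \ref{prop: comparison of multiplicative bases}.
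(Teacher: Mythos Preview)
Your proof is correct and follows essentially the same approach as the paper's: both rescale an arbitrary multiplicative basis by $s_i/s_j$, reduce the condition $\sigma(x'_{ij})=x'_{\sigma(i)\sigma(j)}$ to the single equation $d^n=c_1\cdots c_n$, and invoke algebraic closure to extract the $n$-th root. The only cosmetic difference is that the paper writes down the $b_{ij}$ and $s_i$ explicitly and verifies the identity on the generators $x'_{i,i+1}$, whereas you derive the recursion $s_{\sigma(i)}=s_ic_i/d$ and check its consistency; the underlying argument is the same.
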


\begin{prop}
Given any automorphism $\sigma$ of $\cC_n$, there exists a multiplicative basis $(x_{ij})$ so that the transition coefficients $(a_{ij})$ of $\sigma$ with respect to $(x_{ij})$ satisfy the following.
\begin{enumerate}
\item $a_{ij}=1$ if $i,j$ are in the same orbit of $\sigma$
\item $a_{ij}=a_{k\ell}$ if $i,k$ are in the same cycle of $\sigma$ and $j,\ell$ are in the same cycle of $\sigma$.
\end{enumerate}
Furthermore, $(1)$ implies $(2)$.
\end{prop}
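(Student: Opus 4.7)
The plan is to reduce the multi-cycle case to the single-cycle lemma that immediately precedes this proposition. First I would decompose $\sigma$ into its disjoint cycles acting on subsets $S_1,\ldots,S_r$ of $[n]$, and, starting from any multiplicative basis $(x_{ij})$, invoke Proposition \ref{prop: comparison of multiplicative bases} to write the transition coefficients as $a_{ij}=c_i/c_j$ for scalars $c_i\in K^\ast$. Within each cycle $S_p=\{i_1,\ldots,i_k\}$, indexed so that $\sigma(i_m)=i_{m+1}$ cyclically, algebraic closedness of $K$ produces a $k$-th root $d_p\in K^\ast$ of the product $c_{i_1}c_{i_2}\cdots c_{i_k}$, and I would define scalars $s_i$ for $i\in S_p$ by the analog of the single-cycle formula, namely $s_{i_m}=c_{i_1}\cdots c_{i_{m-1}}d_p^{-m}$.

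Because different cycles are disjoint these constructions do not interfere, and together they yield scalars $s_i\in K^\ast$ for every $i\in[n]$. I would then set $x_{ij}':=(s_i/s_j)x_{ij}$, which is a new multiplicative basis by Proposition \ref{prop: comparison of multiplicative bases}(3), and observe that the computation from the single-cycle example applies verbatim inside each cycle to force $\sigma(x_{ij}')=x_{\sigma(i)\sigma(j)}'$ whenever $i$ and $j$ lie in a common cycle. In that new basis the transition coefficients satisfy $a_{ij}'=1$ for $i,j$ in the same orbit, establishing (1).

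For the implication $(1)\Rightarrow(2)$ I would use that the transition coefficients themselves form a multiplicative system (Proposition \ref{prop: comparison of multiplicative bases}(2)). If $i,k$ lie in one cycle and $j,\ell$ in another, then (1) gives $a_{ik}=1$ and $a_{j\ell}=1$, hence $a_{\ell j}=a_{j\ell}^{-1}=1$, so $a_{ij}=a_{ik}a_{kj}=a_{kj}=a_{k\ell}a_{\ell j}=a_{k\ell}$. The main technical point is that the per-cycle renormalizations must glue consistently into one global multiplicative system; this is automatic because the $s_i$'s are chosen independently on each orbit and the fraction form $b_{ij}=s_i/s_j$ is multiplicative by Proposition \ref{prop: comparison of multiplicative bases}(3), so the cycles do not interact and no cross-cycle compatibility needs to be verified.
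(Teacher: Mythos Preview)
Your proposal is correct and follows essentially the same approach as the paper's proof: both arguments write the original transition coefficients as ratios, take geometric means cycle-by-cycle, build the renormalizing scalars via partial products, and then verify (1)$\Rightarrow$(2) by the multiplicativity $a_{ij}=a_{ik}a_{k\ell}a_{\ell j}$. Your explicit remark that the per-cycle choices glue automatically because the $s_i$'s are defined independently on each orbit is a point the paper leaves implicit, but otherwise the arguments are the same up to notation.
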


\begin{proof} 
Choose any multiplicative basis $(y_{ij})$, for which $\sigma$ has {transition} coefficients $b_{ij} = s_i/s_j$. Since any $i$ lies in some cycle of $\sigma$, it suffices to choose a cycle, say $A$, and repeat the same argument for all cycles of $\sigma$. Let $d_A\in K^\ast$ be the geometric mean of the $s_i$ for $i\in A$. If $i_0$ is the smallest element of $A$ then $i=\sigma^m(i_0)$ for some $m\ge0$. Let $c_{i_0}=1$ and let
\[
	c_i=s_{i_0}s_{\sigma(i_0)}\cdots s_{\sigma^{m-1}(i_0)}/d_A^m
\]
if $m>0$. Let $a_{ij}=c_i/c_j$. Then $a_{i,\sigma(i)}=d_A/s_i$ for all $i$. Also, $(a_{ij})$ is a multiplicative system and $x_{ij}=a_{ij}y_{ij}$ is a new multiplicative basis. We have:
\[
	\sigma(x_{i,\sigma(i)})=\frac {d_A}{s_i}\sigma(y_{i,\sigma(i)})=\frac {d_A}{s_i}\frac{s_i}{s_{\sigma(i)}}y_{\sigma(i),\sigma^2(i)}=\frac {d_A}{s_{\sigma(i)}}y_{\sigma(i),\sigma^2(i)}=x_{\sigma(i),\sigma^2(i)}
\]
As in the example, this implies that $\sigma(x_{ij})=x_{\sigma(i),\sigma(j)}$ when $i,j$ are in the same $\sigma$ orbit.

To see that (1) implies (2), let $i,k \in A$ and $j,l \in B$ for two disjoint cycles $A, B$ of $\sigma$. Since $(a_{ij})$ is a multiplicative system, we have $a_{ij} = a_{ik}a_{kl}a_{lj}$. By (1), $a_{ik} = 1 = a_{lj}$, and so $a_{ij} = a_{kl}$.
\end{proof}

\begin{defn}
Given $\cC_n$ with an automorphism $\sigma$, a multiplicative basis $(x_{ij})$ for $\cC_n$ will be called a \emph{good multiplicative basis} (with respect to $\sigma$) if it satisfies the proposition above.

If $A,B,C,\cdots$ denote the orbits of the action of $\sigma$ on $n$, then let $a_{AB}$ be the element of $K^\ast$ so that $\sigma(x_{ij})=a_{AB}x_{\sigma(i)\sigma(j)}$ when $i\in A$ and $j\in B$. We call $a_{AB}$ the \emph{transition factors} of $\sigma$. Let $|A|$ denote the number of elements in the orbit $A$ of $\sigma$.
\end{defn}

We need to know to what extent the good basis and transition factors are uniquely determined by $(\cC_n,\sigma)$.

\begin{prop}\label{prop: change of good basis formula}
Let $(x_{ij})$ be a good basis for $(\cC_n,\sigma)$ with transition factors $a_{AB}$ and suppose that $(x_{ij}')$ is another good basis giving new transition factors $b_{AB}$ for $\sigma$. Then there are unique roots of unity $\delta_A$ for each orbit $A$ of $\sigma$ so that
\begin{enumerate}
\item $\delta_A^{|A|}=1$ for all $A$,
\item $b_{AB}=\delta_A a_{AB}\delta_B^{-1}$ for all orbits $A,B$ of $\sigma$ and
\item $x_{i\sigma(i)}'=\delta_A x_{i\sigma(i)}$ whenever $i\in A$.
\end{enumerate}
Furthermore, any collection of roots of unity $\delta_A$ satisfying (1) will occur for some $(x_{ij}')$.
\end{prop}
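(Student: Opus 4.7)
The plan is to exploit Proposition \ref{prop: comparison of multiplicative bases}, which compares multiplicative bases, and then use the good-basis condition on each orbit to extract the scalars $\delta_A$. First, since $(x_{ij})$ and $(x_{ij}')$ are both multiplicative bases for $\cC_n$, Proposition \ref{prop: comparison of multiplicative bases} gives scalars $c_i \in K^\ast$, unique up to a common factor, so that
\[
	x_{ij}' = (c_i/c_j)\, x_{ij}
\]
for all $i,j \in [n]$. Our task is to read off the $\delta_A$ from these $c_i$ using the hypothesis that both bases are good with respect to $\sigma$.

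The central step is to apply $\sigma$ to $x_{i\sigma(i)}'$ for $i$ in a single orbit $A$ and compute in two ways. On the one hand, goodness of $(x_{ij}')$ inside its own orbit gives $\sigma(x_{i\sigma(i)}') = x_{\sigma(i)\sigma^2(i)}' = (c_{\sigma(i)}/c_{\sigma^2(i)})\, x_{\sigma(i)\sigma^2(i)}$; on the other hand, $\sigma(x_{i\sigma(i)}') = (c_i/c_{\sigma(i)})\, \sigma(x_{i\sigma(i)}) = (c_i/c_{\sigma(i)})\, x_{\sigma(i)\sigma^2(i)}$. Equating these shows that the ratio $c_i/c_{\sigma(i)}$ does not depend on $i \in A$; define this common value to be $\delta_A$. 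Then (3) is immediate since $x_{i\sigma(i)}' = (c_i/c_{\sigma(i)})\, x_{i\sigma(i)} = \delta_A\, x_{i\sigma(i)}$, and iterating $c_{\sigma(i)} = c_i/\delta_A$ around the cycle $A$ gives $c_i = c_i/\delta_A^{|A|}$, hence (1). Uniqueness of $\delta_A$ is forced by (3).

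For (2), take $i \in A$ and $j \in B$ and apply $\sigma$ to $x_{ij}'$ in both ways: from the old basis, $\sigma(x_{ij}') = (c_i/c_j) a_{AB}\, x_{\sigma(i)\sigma(j)}$; from the new basis, $\sigma(x_{ij}') = b_{AB}\, x_{\sigma(i)\sigma(j)}' = b_{AB}(c_{\sigma(i)}/c_{\sigma(j)})\, x_{\sigma(i)\sigma(j)}$. Equating and using $c_i/c_{\sigma(i)} = \delta_A$, $c_{\sigma(j)}/c_j = \delta_B^{-1}$ yields $b_{AB} = \delta_A a_{AB} \delta_B^{-1}$.

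For the converse, given any collection of roots of unity $\delta_A$ with $\delta_A^{|A|} = 1$, I would pick a basepoint $i_A \in A$ for each orbit, set $c_{i_A} = 1$, and define $c_{\sigma^k(i_A)} := \delta_A^{-k}$; the hypothesis $\delta_A^{|A|} = 1$ makes this well-defined on the cyclic orbit. The multiplicative system $a_{ij} := c_i/c_j$ then defines a new multiplicative basis $x_{ij}' = a_{ij} x_{ij}$ by Proposition \ref{prop: comparison of multiplicative bases}, and the same two computations above—now run in reverse—verify that $(x_{ij}')$ is again good for $\sigma$ and realizes the prescribed $\delta_A$. The only subtle point, which I expect to be the main bookkeeping obstacle, is keeping track of the fact that $(c_i)$ is only well-defined up to a global scalar while the $\delta_A$ are genuinely intrinsic; this is resolved by noting that $\delta_A = c_i/c_{\sigma(i)}$ is manifestly invariant under rescaling all $c_i$ by a common factor.
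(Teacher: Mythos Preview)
Your proof is correct and follows essentially the same approach as the paper: both invoke Proposition~\ref{prop: comparison of multiplicative bases} to write $x_{ij}'=(c_i/c_j)x_{ij}$, extract $\delta_A$ as the ratio $c_i/c_{\sigma(i)}$ (shown to be constant on each orbit by applying $\sigma$ and using goodness of both bases), and then construct the converse by choosing a basepoint in each orbit and setting $c_{\sigma^k(i_A)}=\delta_A^{-k}$. The only cosmetic difference is that the paper compares $c_i/c_{\sigma(i)}$ with $c_j/c_{\sigma(j)}$ for arbitrary $i,j$ in the same orbit, whereas you take $j=\sigma(i)$ and step along the cycle; your added remark on why $\delta_A$ is invariant under the global rescaling of the $c_i$ is a welcome clarification that the paper leaves implicit.
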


\begin{proof}
By Proposition \ref{prop: comparison of multiplicative bases}, $x_{ij}'=\frac{c_i}{c_j}x_{ij}$. When $i,j$ are in the same $\sigma$ orbit this gives
\[
	x_{\sigma(i)\sigma(j)}'=\sigma(x_{ij}')=\frac{c_i}{c_j}\sigma(x_{ij})=\frac{c_i}{c_j}x_{\sigma(i)\sigma(j)}=\frac{c_{\sigma(i)}}{c_{\sigma(j)}}x_{\sigma(i)\sigma(j)}
\]
Therefore, $\frac{c_i}{c_{\sigma(i)}}=\frac{c_j}{c_{\sigma(j)}}$ when $i,j$ are in the same orbit $A$ of $\sigma$. Denote this fraction by $\delta_A$. Then (3) will be satisfied. When $i,j$ are in different orbits, say $i\in A,j\in B$ we get:
\[
	b_{AB}x_{\sigma(i)\sigma(j)}'=\sigma(x_{ij}')=\frac{c_i}{c_j}\sigma(x_{ij})=\frac{c_i}{c_j}a_{AB}x_{\sigma(i)\sigma(j)}=b_{AB}\frac{c_{\sigma(i)}}{c_{\sigma(j)}}x_{\sigma(i)\sigma(j)}
\]
Therefore,
\[
	b_{AB}=\frac{c_ic_{\sigma(j)}}{c_jc_{\sigma(i)}}a_{AB}=\delta_A a_{AB}\delta_B^{-1}
\]
To see that condition (1) is satisfied suppose that $A = (12\cdots m)$ is one of the cycles of the permutation $\sigma$. Then
\[
	1=\frac{c_1}{c_1}=\frac{c_1}{c_2}\frac{c_2}{c_3}\cdots \frac{c_m}{c_1}=\delta_A^m
\]
Conversely, suppose $(\delta_A)$ is a collection of scalars satisfying (1). Then, for each orbit $A$ of $\sigma$, choose an element $i_0\in A$. Let $c_{i_0}=1$ and let $c_{\sigma^k(i_0)}=\delta_A^{-k}$. Then $x_{ij}'=\frac{c_i}{c_j}x_{ij}$ will satisfy (2) and (3).
\end{proof}

\begin{cor}
If orbits $A,B$ of $\sigma$ have the same size, say $m$, then the $m$-th power of $a_{AB}$, the $AB$ transition factor of $\sigma$, is well-defined, i.e., independent of the choice of $(x_{ij})$.
\end{cor}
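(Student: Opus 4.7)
The plan is to apply Proposition \ref{prop: change of good basis formula} directly. Given two good multiplicative bases $(x_{ij})$ and $(x_{ij}')$ for $(\cC_n,\sigma)$, with transition factors $a_{AB}$ and $b_{AB}$ respectively, the proposition provides roots of unity $\delta_A$, one for each orbit $A$ of $\sigma$, such that $\delta_A^{|A|}=1$ and
\[
b_{AB}=\delta_A\, a_{AB}\,\delta_B^{-1}
\]
for all orbits $A,B$. These are the only two pieces of information needed.

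Now assume $|A|=|B|=m$. Raising the change-of-basis relation to the $m$-th power and using that $K^\ast$ is abelian gives
\[
b_{AB}^{\,m}=\delta_A^{m}\, a_{AB}^{\,m}\,\delta_B^{-m}.
\]
By condition (1) of Proposition \ref{prop: change of good basis formula}, $\delta_A^{m}=\delta_A^{|A|}=1$, and similarly $\delta_B^{m}=1$. Therefore $b_{AB}^{\,m}=a_{AB}^{\,m}$, so the scalar $a_{AB}^{\,m}\in K^\ast$ depends only on $(\cC_n,\sigma)$ and the pair $(A,B)$, not on the choice of good multiplicative basis.

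There is no real obstacle here: the result is a one-line consequence of the preceding proposition, whose proof does the substantive work of computing how transition factors transform under a change of good multiplicative basis. The only thing to verify is that $|A|=|B|=m$ makes the exponents on both $\delta_A$ and $\delta_B$ equal to the common cycle length, so that both correction factors are killed simultaneously.
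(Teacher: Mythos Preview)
Your proof is correct and follows essentially the same approach as the paper's own proof: invoke Proposition~\ref{prop: change of good basis formula} to obtain $b_{AB}=\delta_A a_{AB}\delta_B^{-1}$ with $\delta_A,\delta_B$ $m$-th roots of unity, then raise to the $m$-th power. The paper's version is simply more terse.
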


\begin{proof}
By Proposition \ref{prop: change of good basis formula}, any other choice of transition factors $b_{AB}$ will be related to $a_{AB}$ by $b_{AB}=\delta_Aa_{AB}\delta_B^{-1}$ where $\delta_A,\delta_B$ are $m$-th roots of unity. Then $b_{AB}^m=\delta_A^ma_{AB}^m\delta_B^{-m}=a_{AB}^m$.
\end{proof}

The properties we have developed in this section will be revisited in later sections. 

\subsection{Other quivers with multiplicity}

Now we consider the case of two vertices. A $K$-representation of the quiver:
\[
\xymatrix{
1\ar[r]^\tau&2
}
\]
consists of two vectors spaces $V_1,V_2$ and a linear map $\tau:V_1\to V_2$. We fatten this quiver, replacing the vertices with triples $(\cC_n,\sigma_1,(x_{ij}))$ and $(\cC_m,\sigma_2,(y_{kl}))$ where $(x_{ij}),(y_{kl})$ are good multiplicative bases for $\sigma_1,\sigma_2$ so that the transition coefficients for $\sigma_1$, $a_{ij}=1$ when $i,j$ are in the same $\sigma_1$ orbit and similarly $a_{kl}'=1$ if $k,l$ are in the same $\sigma_2$ orbit. Our goal is to describe how $\tau$ behaves after this fattening.

A \emph{morphism}
\[
	\tau:(\cC_n,\sigma_1,(x_{ij}))\to(\cC_m,\sigma_2,(y_{k\ell}))
\]
is defined to be a linear functor $\tau:\cC_n\to\cC_m$ so that $\tau\sigma_1=\sigma_2\tau$. In other words:
\begin{enumerate}
\item On objects, $\tau$ is a set map $[n]\to[m]$ where $[n]=\{1,2,\cdots,n\}$.
\item On morphisms, $\tau$ is given by transition coefficients $b_{ij}$, i.e., $\tau(x_{ij})=b_{ij}y_{\tau(i),\tau(j)}$.
\item The equation $\tau\sigma_1=\sigma_2\tau$ is equivalent to:
\[
	a_{ij}b_{\sigma_1(i),\sigma_1(j)}=b_{ij}a_{\tau(i),\tau(j)}'
\]
for all $i,j\in[n]$ and $\tau\sigma_1(i)=\sigma_2\tau(i)$ for all $i\in [n]$.
\end{enumerate}
The proof of (3) is straightforward. Just expand both sides of $\tau\sigma_1(x_{ij})=\sigma_2\tau(x_{ij})$:
\[
	\tau\sigma_1(x_{ij})=a_{ij}\tau(x_{\sigma_1(i)\sigma_1(j)})=a_{ij}b_{\sigma_1(i),\sigma_1(j)}y_{\tau\sigma_1(i)\tau\sigma_1(j)}
\]
\[
	=\sigma_2\tau(x_{ij})=b_{ij}\sigma_2(y_{\tau(i)\tau(j)})=b_{ij}a_{\tau(i)\tau(j)}'y_{\sigma_2\tau(i)\sigma_2\tau(j)}
\]
Then compare the coefficients of $y_{\tau\sigma_1(i)\tau\sigma_1(j)}=y_{\sigma_2\tau(i)\sigma_2\tau(j)}$.

\begin{lem}\label{lem: good comparison basis}
Given any good basis $(y_{kl})$ for $(\cC_m,\sigma_2)$ there is a unique good basis $(x_{ij})$ for $(\cC_n,\sigma_1)$ so that all transition coefficients of $\tau:(\cC_n,\sigma_1)\to (\cC_m,\sigma_2)$ are $b_{ij}=1$ or, equivalently, $\tau(x_{ij})=y_{\tau(i)\tau(j)}$ for all $i,j\in\cC_n$.
\end{lem}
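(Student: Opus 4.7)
The plan is to rescale a chosen multiplicative basis of $\cC_n$ so that the transition coefficients of $\tau$ become identically $1$, and then to observe that such a rescaling is automatically good with respect to $\sigma_1$.

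First I would fix any multiplicative basis $(x_{ij}^{(0)})$ of $\cC_n$ and record the resulting transition coefficients $b_{ij}^{(0)}\in K^\ast$ of $\tau$, so that $\tau(x_{ij}^{(0)})=b_{ij}^{(0)}\,y_{\tau(i)\tau(j)}$. By Proposition \ref{prop: transition coef for any tau} the family $(b_{ij}^{(0)})$ is a multiplicative system, hence by Proposition \ref{prop: comparison of multiplicative bases} it can be written as $b_{ij}^{(0)}=e_i/e_j$ for some scalars $e_i\in K^\ast$. Setting $c_i:=1/e_i$ and $x_{ij}:=(c_i/c_j)\,x_{ij}^{(0)}$ yields a new multiplicative basis for $\cC_n$ and, by direct computation, $\tau(x_{ij})=y_{\tau(i)\tau(j)}$; that is, all transition coefficients of $\tau$ in the new bases equal $1$.

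The key step is then to verify that $(x_{ij})$ is good with respect to $\sigma_1$, and this is where I expect the real content of the argument to sit. Rewriting the compatibility identity for the morphism $\tau$ in the new bases gives
\[
a_{ij}\,b_{\sigma_1(i)\sigma_1(j)}=b_{ij}\,a'_{\tau(i)\tau(j)},
\]
which, since every $b_{ij}=1$, collapses to $a_{ij}=a'_{\tau(i)\tau(j)}$. The condition $\tau\sigma_1=\sigma_2\tau$ on objects implies that whenever $i,j$ lie in a single $\sigma_1$-orbit, their images $\tau(i),\tau(j)$ lie in a single $\sigma_2$-orbit; the assumed goodness of $(y_{kl})$ then forces $a'_{\tau(i)\tau(j)}=1$, and hence $a_{ij}=1$ on each $\sigma_1$-orbit, as required.

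Finally, uniqueness follows at once. Any multiplicative basis $(x_{ij}')$ of $\cC_n$ with $\tau(x_{ij}')=y_{\tau(i)\tau(j)}$ must differ from $(x_{ij})$ by a multiplicative system $f_{ij}=g_i/g_j$, and applying $\tau$ shows $f_{ij}\equiv 1$, so $(x_{ij}')=(x_{ij})$. The only points requiring any care are checking that the rescaling $x_{ij}^{(0)}\mapsto (c_i/c_j)x_{ij}^{(0)}$ preserves multiplicativity and that the expression of $b_{ij}^{(0)}$ as a ratio $c_i/c_j$ is possible; both follow directly from Proposition \ref{prop: comparison of multiplicative bases}, so no deeper input is needed.
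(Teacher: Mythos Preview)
Your proof is correct and amounts to the same argument as the paper's, just phrased computationally rather than by direct pullback. The paper defines $x_{ij}$ in one stroke as the unique preimage of $y_{\tau(i)\tau(j)}$ under the isomorphism $\tau:\cC_n(j,i)\to\cC_m(\tau(j),\tau(i))$, then checks multiplicativity and goodness by pushing forward through $\tau$ and using $\tau\sigma_1=\sigma_2\tau$; your rescaling by $c_i/c_j$ produces exactly this preimage basis, and your use of the scalar identity $a_{ij}b_{\sigma_1(i)\sigma_1(j)}=b_{ij}a'_{\tau(i)\tau(j)}$ is the coefficient version of the same functorial step.
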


\begin{proof}
For every $i,j\in \cC_n$, the functor $\tau$ gives an isomorphism $\cC_n(j,i)\cong \cC_m(\tau(j),\tau(i))$. Let $x_{ij}:j\to i$ be the morphism which maps to $y_{\tau(i)\tau(j)}:\tau(j)\to \tau(i)$. For each $k\in\cC_n$, $x_{ij}x_{jk}=x_{ik}$ since both morphisms map to the same morphism $y_{\tau(i)\tau(j)}y_{\tau(j)\tau(k)}=y_{\tau(i)\tau(k)}$ in $\cC_m$. So, $(x_{ij})$ is a multiplicative basis for $\cC_n$. To see that it is a good basis, suppose that $i,j$ lie in the same orbit of $\sigma_1$. Then $\tau(i),\tau(j)$ lie in the same orbit of $\sigma_2$. So, $\sigma_2(y_{\tau(i)\tau(j)})=y_{\sigma_2\tau(i)\sigma_2\tau(j)}$. This implies that $\sigma_1(x_{ij})=x_{\sigma_1(i)\sigma_1(j)}$ since both map to the same basic morphism $\sigma_2\tau(j)\to \sigma_2\tau(i)$.
\end{proof}

Suppose that $(x_{ij})$ and $(y_{kl})$ are as given in Lemma \ref{lem: good comparison basis} above. Suppose $i,j\in\cC_n$ map to $\tau(i),\tau(j)$ in the same orbit of $\sigma_2$. Then, when $\tau$ is applied to the equation $\sigma_1(x_{ij})=a_{ij}x_{\sigma_1(i)\sigma_1(j)}$, we get:
\[
	\tau\sigma_1(x_{ij})=\tau(a_{ij}x_{\sigma_1(i)\sigma_1(j)})=a_{ij}y_{\tau\sigma_1(i)\tau\sigma_1(j)}=\sigma_2\tau(x_{ij})=y_{\sigma_2\tau(i)\sigma_2\tau(j)}=\sigma_2(y_{\tau(i)\tau(j)})
\]
which implies that $a_{ij}=1$. This proves the second part of the following lemma.

\begin{lem}\label{second lemma about s,t}
If $i,j$ are in different orbits, say $A,B$, of $\sigma_1$ and $\tau(i),\tau(j)$ are in the same orbit of $\sigma_2$ then $a_{ij}^k=1$ for any $k$ which is a common multiple of both $|A|$ and $|B|$. Furthermore, there is a good basis for $\cC_n$ so that $a_{ij}=1$.
\end{lem}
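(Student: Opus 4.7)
The plan is to prove the ``furthermore'' clause first by a direct application of Lemma~\ref{lem: good comparison basis}, and then deduce the power statement for an arbitrary good basis by pulling in Proposition~\ref{prop: change of good basis formula}.

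For the existence of a good basis with $a_{ij}=1$, I would invoke Lemma~\ref{lem: good comparison basis}: fix any good basis $(y_{k\ell})$ for $(\cC_m,\sigma_2)$ and pull it back through $\tau$ to obtain the unique good basis $(x_{ij})$ for $(\cC_n,\sigma_1)$ with $\tau(x_{ij})=y_{\tau(i)\tau(j)}$ for all $i,j$. Applying the identity $\tau\sigma_1=\sigma_2\tau$ to the morphism $x_{ij}$ and expanding both sides gives
\[
a_{ij}\, y_{\tau\sigma_1(i)\tau\sigma_1(j)} \;=\; \tau\sigma_1(x_{ij}) \;=\; \sigma_2\tau(x_{ij}) \;=\; a'_{\tau(i)\tau(j)}\, y_{\sigma_2\tau(i)\sigma_2\tau(j)},
\]
where $a'_{k\ell}$ denotes the transition coefficients of $\sigma_2$ with respect to $(y_{k\ell})$. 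Since $\tau\sigma_1=\sigma_2\tau$, the two basis vectors on the outer sides coincide, so comparing coefficients yields $a_{ij}=a'_{\tau(i)\tau(j)}$. The hypothesis that $\tau(i),\tau(j)$ share a $\sigma_2$-orbit, together with the goodness of $(y_{k\ell})$, forces $a'_{\tau(i)\tau(j)}=1$, so $a_{ij}=1$ for this particular basis, proving the second assertion.

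For the first clause, let $(\tilde x_{ij})$ be an arbitrary good basis of $(\cC_n,\sigma_1)$ and write $\tilde a_{AB}$ for its orbit-level transition factors for $\sigma_1$. By Proposition~\ref{prop: change of good basis formula} applied to the pair of good bases $(\tilde x_{ij})$ and $(x_{ij})$ constructed above, there exist scalars $\delta_C\in K^\ast$, one for each orbit $C$ of $\sigma_1$, with $\delta_C^{|C|}=1$, such that $\tilde a_{AB}=\delta_A\,a_{AB}\,\delta_B^{-1}$ for every pair of orbits $A,B$. For the orbit pair $(A,B)$ in the hypothesis of the lemma we have $a_{AB}=1$ by the case just handled, so $\tilde a_{AB}=\delta_A\delta_B^{-1}$. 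If $k$ is any common multiple of $|A|$ and $|B|$, then $\delta_A^k=\delta_B^k=1$, hence $\tilde a_{AB}^{\,k}=1$, which is precisely the claim.

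The argument is essentially bookkeeping once the two cited results are in hand. The only point worth double-checking is that Proposition~\ref{prop: change of good basis formula} relates the transition factors across \emph{distinct} orbits and not merely within a single orbit; inspection of item~(2) of that proposition confirms it does. The real work has already been absorbed into Lemma~\ref{lem: good comparison basis}, which lets us transport goodness along $\tau$, so no new obstacle appears here.
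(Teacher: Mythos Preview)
Your proof is correct and follows essentially the same route as the paper: the paper also invokes Lemma~\ref{lem: good comparison basis} to produce a good basis with $a_{ij}=1$ (this computation appears in the paragraph just before the lemma statement), and then appeals to Proposition~\ref{prop: change of good basis formula} to conclude that any other good basis differs by roots of unity of order $|A|$ and $|B|$. Your write-up is simply more explicit about the bookkeeping.
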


\begin{proof}
The good basis for $\cC_n$ given in the previous lemma has the property that $a_{ij}=1$ whenever $\tau(i),\tau(j)$ lie in the same $\sigma_2$ orbit in $\cC_m$. But the transition factors $a_{AB}$ are well defined up to roots of unity. So, $a_{ij}$ will be a product of an $|A|$-th root of unity and a $|B|$-th root of unity when $\tau(i),\tau(j)$ lie in the same $\sigma_2$ orbit. Then $a_{ij}^k=1$ as claimed.
\end{proof}

\subsection{Continuous automorphisms of $(\cC_n,\sigma)$}

We describe the group of all automorphisms $\tau$ of $(\cC_n,\sigma)$. First notice that in the discussion above, we now set $n=m$. Hence, $\tau$ acts on objects as a permutation of $n$ vertices; $\tau \in S_n$. The condition $\sigma_2\tau = \tau\sigma_1$ becomes $\sigma\tau = \tau\sigma$. So given $\sigma \in S_n$,  the \emph{centralizer} of $\sigma$ gives us the group of set maps that may underlie an automorphism of $(\cC_n, \sigma)$: 
\[
	C(\sigma) = \{\tau\in S_n | \tau\sigma = \sigma\tau\}.
\]
This group is know as the group of \emph{generalized permutation matrices} or \emph{monomial matrices}. Supposing $\sigma$ has cycle decomposition $\sigma = A_{1,1}\cdots A_{1, e_1}\cdots A_{r,1} \cdots A_{r, e_r}$, where $A_{i,1}, \ldots, A_{i,e_i}$ are cycles of length $\lambda_i$ for $1 \leq i \leq r$, $C(\sigma)$ has the structure of a product of semidirect products:
\[
	C(\sigma) = \prod\limits_{i=1}^r (C_{\lambda_i} \rtimes S_{e_i}),
\]
where $C_n$ is the cyclic group on $n$ elements. Then $\tau$ sends each orbit of $\sigma$ to another orbit of $\sigma$ with the same size. So, $\tau$ induces a permutation of the set of orbits of $\sigma$. Consider one cycle of this action. We define a \emph{$\sigma\tau$-orbit} to be the set of objects of $\cC_n$ which lie in such a cycle. In other words, a $\sigma\tau$-orbit is a collection of $\sigma$ orbits of the same size which are cyclically permuted under the action of $\tau$. This is a minimal subset of the set of objects of $\cC_n$ which is closed under $\sigma$ and $\tau$.

\begin{lem}\label{lem: bij are A roots of 1}
Let $\sigma$ be an automorphism of $\cC_n$ and let $\tau$ be an autoequivalence of $\cC_n$ which commutes with $\sigma$. Choose a good basis for $\sigma$ and let $A\subset[n]$ be one orbit of $\sigma$. Then the $\tau$-transition coefficient $b_{\sigma(i)i}$ is independent of $i\in A$ and $b_{\sigma(i)i}^{|A|}=1$.
\end{lem}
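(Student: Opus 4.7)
The plan is to expand the commutativity identity $\tau\sigma = \sigma\tau$ on the specific basic morphisms $x_{\sigma(i)i}$ for $i\in A$, then exploit that our basis is good for $\sigma$ to collapse most transition coefficients to $1$.

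First I would apply both sides of $\tau\sigma = \sigma\tau$ to $x_{\sigma(i)i}$. Since $\sigma(i)$ and $i$ lie in the common orbit $A$, the good-basis hypothesis gives $\sigma(x_{\sigma(i)i}) = x_{\sigma^2(i)\sigma(i)}$, so the left side expands to $b_{\sigma^2(i)\sigma(i)}\,x_{\tau\sigma^2(i)\,\tau\sigma(i)}$. The right side expands to $b_{\sigma(i)i}\,a_{\tau\sigma(i)\tau(i)}\,x_{\sigma\tau\sigma(i)\,\sigma\tau(i)}$. Since $\tau(i)$ and $\tau\sigma(i) = \sigma\tau(i)$ lie in the same $\sigma$-orbit (namely that of $\tau(i)$), the good-basis property forces $a_{\tau\sigma(i)\tau(i)} = 1$. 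Comparing coefficients yields
\[
b_{\sigma^2(i)\sigma(i)} = b_{\sigma(i)i},
\]
and iterating $i \mapsto \sigma(i)$ across $A$ shows $b_{\sigma(i)i}$ is independent of $i\in A$. Call this common value $b_A$.

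For the second claim I would use that the basic morphisms compose multiplicatively. Chaining the identities $x_{\sigma^k(i)\sigma^{k-1}(i)}\circ \cdots \circ x_{\sigma(i)i} = x_{\sigma^k(i)\,i}$ and specializing to $k=|A|$, the right-hand side becomes $x_{ii}=\mathrm{id}_i$. Applying $\tau$ gives $\mathrm{id}_{\tau(i)}$ on one side, and on the other side each factor contributes a transition coefficient equal to $b_A$ by the first part, while the image basic morphisms $x_{\sigma^k\tau(i)\,\sigma^{k-1}\tau(i)}$ telescope (again using $\sigma^{|A|}\tau(i)=\tau\sigma^{|A|}(i)=\tau(i)$) to $\mathrm{id}_{\tau(i)}$. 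Hence
\[
\mathrm{id}_{\tau(i)} = b_A^{|A|}\cdot \mathrm{id}_{\tau(i)},
\]
forcing $b_A^{|A|}=1$, as required.

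The argument is essentially bookkeeping; the only thing that needs real attention is keeping track of which indices lie in which $\sigma$-orbit, since the good-basis simplification $a_{\cdot\,\cdot}=1$ only applies within a single orbit. The crucial observation making the calculation work is that $\tau$ permutes $\sigma$-orbits (a consequence of $\sigma\tau=\tau\sigma$), so every $a$-coefficient that arises when commuting $\sigma$ past $\tau$ in the computation above is of the form $a_{\tau\sigma^r(i)\tau\sigma^s(i)}$, with both indices automatically inside the single $\sigma$-orbit of $\tau(i)$, and therefore equal to $1$.
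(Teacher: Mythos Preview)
Your proof is correct and follows essentially the same route as the paper: both expand the identity $\sigma\tau=\tau\sigma$ on $x_{\sigma(i)i}$, use the good-basis condition to kill the $a$-coefficients (since all relevant indices lie in a single $\sigma$-orbit), obtain the recursion $b_{\sigma^2(i)\sigma(i)}=b_{\sigma(i)i}$, and then use multiplicativity of the $b_{ij}$ to conclude $b_A^{|A|}=b_{ii}=1$. The only cosmetic difference is that the paper records the general equation $a_{\tau(B)\tau(A)}b_{ji}=b_{\sigma(j)\sigma(i)}a_{BA}$ before specializing to $j=\sigma(i)$, and for the second claim invokes the multiplicative-system identity $\prod b_{\sigma^{k+1}(i)\sigma^k(i)}=b_{ii}$ directly rather than phrasing it as applying $\tau$ to a telescoping product.
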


\begin{proof}Let $i,j$ be in orbits $A,B$ of $\sigma$. Then the equation $\sigma\tau(x_ji) = \tau\sigma(x_{ji})$ becomes:
\begin{equation}\label{tau}
	a_{\tau(B)\tau(A)}b_{ji} = b_{\sigma(j)\sigma(i)}a_{BA}
%	a_{A_{p+1}A_{q+1}}b_{ij} = b_{\sigma(i)\sigma(j)}a_{A_pA_q}
\end{equation} 
where $a_{BA}$ are transition factors of $\sigma$. When $j=\sigma(i)$, $A=B$, so $a_{BA}=a_{\tau(B)\tau(A)}=1$ and
\[
	b_{\sigma(i)i}=b_{\sigma^2(i)\sigma(i)}.
\]
Thus, $\lambda=b_{\sigma(i)i}$ is independent of $i\in A$. Let $m=|A|$. Then the product of all $b_{\sigma(i)i}$ is
\[
	\lambda^m= b_{i\sigma^{m-1}(i)}\cdots b_{\sigma^2(i)\sigma(i)}b_{\sigma(i)i}=b_{ii}=1
\]
proving the lemma.
\end{proof}

\begin{lem}\label{lem: transitions coef aij are n-th roots of one}
Suppose $\sigma,\tau$ are commuting automorphisms of $\cC_n$ and $\cC_n$ is a single $\sigma\tau$-orbit. Then, for any good multiplicative basis $(x_{ij})$ for $\sigma$, the transition coefficients $a_{ij}$ of $\sigma$ will all be $n$-th roots of unity.
\end{lem}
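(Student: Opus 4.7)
The plan is to combine the commutation relation \eqref{tau} from the proof of Lemma~\ref{lem: bij are A roots of 1} with the multiplicativity of the transition coefficients, iterating around $\sigma$-orbits to cancel all $b$-coefficients.

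Since $\cC_n$ is a single $\sigma\tau$-orbit, all $\sigma$-orbits have a common size $m$; label them $A_1,\ldots,A_e$ with $\tau(A_s)=A_{s+1}$ (indices mod $e$), so that $n=me$. In a good basis for $\sigma$, the coefficient $a_{ji}$ depends only on the orbits containing $j$ and $i$; denote by $a_{st}$ the common value when $j\in A_s$ and $i\in A_t$. If $s=t$ then $a_{ji}=1$ and the conclusion is trivial, so assume $s\ne t$. Rewriting \eqref{tau} for $j\in A_s$, $i\in A_t$ gives
\[
\frac{b_{\sigma(j)\sigma(i)}}{b_{ji}}=\frac{a_{\tau(j)\tau(i)}}{a_{ji}}=\frac{a_{s+1,t+1}}{a_{st}}.
\]
Replacing $(j,i)$ with $(\sigma^k(j),\sigma^k(i))$ leaves the right-hand side unchanged, so iterating $m$ times and telescoping yields
\[
1=\frac{b_{\sigma^m(j)\sigma^m(i)}}{b_{ji}}=\left(\frac{a_{s+1,t+1}}{a_{st}}\right)^{\!m},
\]
using that $\sigma^m$ acts as the identity on both $A_s$ and $A_t$. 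Hence $a_{s+1,t+1}^{\,m}=a_{st}^{\,m}$ for all $s,t$, so $a_{st}^{\,m}$ depends only on $k:=t-s$ modulo $e$; set $\gamma_k:=a_{s,s+k}^{\,m}$.

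The multiplicative identity $a_{s_1 s_2}a_{s_2 s_3}=a_{s_1 s_3}$ (from Proposition~\ref{prop: comparison of multiplicative bases}) implies that $\gamma_{k_1}\gamma_{k_2}=\gamma_{k_1+k_2}$, and $\gamma_0=a_{ss}^{\,m}=1$, so $\gamma$ is a group homomorphism $\ZZ/e\to K^\ast$. Every element in the image of such a homomorphism has order dividing $e$, so $\gamma_k^{\,e}=1$, which gives $a_{st}^{\,me}=a_{st}^{\,n}=1$, as claimed.

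The main technical point is spotting the right telescoping: iterating exactly $m$ times is simultaneously what is needed for $\sigma^m$ to act trivially on the chosen orbits (so that the $b$-ratios cancel) and for the ratio $a_{s+1,t+1}/a_{st}$ to be raised to the correct power. After that the remaining content is purely group-theoretic: any homomorphism out of $\ZZ/e$ lands in $e$-torsion, and combined with the $m$-th power this forces $n=me$-torsion.
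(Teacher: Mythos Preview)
Your proof is correct and follows essentially the same route as the paper's. The paper packages your $m$-fold iteration of \eqref{tau} as a single application of $\sigma^m\tau=\tau\sigma^m$ (which immediately kills the $b$-contributions since $\sigma^m$ fixes each orbit pointwise), and it only treats adjacent orbit pairs $(A_p,A_{p+1})$, extending to all pairs implicitly via multiplicativity; your version handles general $(A_s,A_t)$ directly and phrases the last step as a homomorphism $\ZZ/e\to K^\ast$, but the content is the same.
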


\begin{proof}
Let $A_1,\cdots,A_s$ be the orbits of $\sigma$. They have the same size, say $|A_i|=m$ with $n=ms$, and $\tau$ cyclically permutes these orbits: $\tau(A_i)=A_{i+1}$ where the indices are taken modulo $s$. Let $i\in A_p$, $j\in A_{p+1}$. Then the equation $\sigma^m\tau(x_{ji})=\tau\sigma^m(x_{ji})$ gives:
\[
	a_{A_{p+2}A_{p+1}}^m = a_{A_{p+1}A_p}^m
\]
where indices of $A$ are taken modulo $s$. However,
\[
	a_{A_1A_s}\cdots a_{A_3A_{2}} a_{A_2A_1}=a_{A_{1}A_{1}}=1.
\]
So, $a_{A_{p+1}A_p}^m$ is an $s$-th root of unity making $a_{A_{p+1}A_p}$ an $n$-th root of unity. So, all transition coefficients of $\sigma$ are $n$-th roots of unity.
\end{proof}

\begin{prop}\label{prop: transition coef aij, bij are n-th roots of 1}
Suppose $\sigma,\tau$ are commuting automorphisms of $\cC_n$ and $\cC_n$ is a single $\sigma\tau$-orbit. Then there exists a good multiplicative basis $(x_{ij})$ for $\sigma$ so that the transition coefficients $a_{ij},b_{ij}$ of $\sigma,\tau$ are all $n$-th roots of unity.
\end{prop}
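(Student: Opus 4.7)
The plan is to start from the observation that, by Lemma \ref{lem: transitions coef aij are n-th roots of one}, the $\sigma$-coefficients $a_{ij}$ are already $n$-th roots of unity in any good basis; only the $\tau$-coefficients $b_{ij}$ need to be tamed. I will do this by a further change of good basis, using the freedom catalogued in Proposition \ref{prop: change of good basis formula}.

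First I will set up the combinatorics. Let $A_1,\ldots,A_s$ denote the $\sigma$-orbits of $[n]$, each of common size $m=n/s$, and after relabeling assume $\tau(A_p)=A_{p+1}$ cyclically. Pick $r_1\in A_1$, set $r_p:=\tau^{p-1}(r_1)$ for $1\le p\le s$, and let $r_{s+1}:=\tau(r_s)\in A_1$; since $\tau^s$ commutes with $\sigma$ and preserves $A_1$, it acts on $A_1$ as a power of $\sigma|_{A_1}$, so $r_{s+1}=\sigma^t(r_1)$ for some $t\in\ZZ$. Define the ``diagonal'' parameters $\lambda_p:=b_{\sigma(i)i}$ for $i\in A_p$ (well-defined by Lemma \ref{lem: bij are A roots of 1}) and the ``inter-orbit'' parameters $\mu_p:=b_{r_{p+1}r_p}$ for $1\le p\le s$. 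By multiplicativity of the system $b_{ij}$, every $b_{ij}$ is a monomial in the $\lambda_p^{\pm1}$ and $\mu_p^{\pm1}$, so it suffices to make each $\lambda_p$ and each $\mu_p$ an $n$-th root of unity. Lemma \ref{lem: bij are A roots of 1} already gives $\lambda_p^m=1$, and multiplicativity yields
\[
\mu_1\mu_2\cdots\mu_s \;=\; b_{r_{s+1},\,r_1} \;=\; b_{\sigma^t(r_1),\,r_1} \;=\; \lambda_1^t,
\]
which is an $n$-th root of unity.

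Next I will invoke the freedom in the good basis. A change of good basis has the form $x'_{ij}=(c_i/c_j)x_{ij}$ where $c_i/c_{\sigma(i)}=:\delta_p$ is an $m$-th root of unity constant on each $A_p$. A direct calculation from this gives the transformation rule
\[
b'_{ij} \;=\; \frac{c_i\,c_{\tau(j)}}{c_j\,c_{\tau(i)}}\,b_{ij},
\]
and specializing at $(i,j)=(r_{p+1},r_p)$ for $p\le s-1$ produces
\[
\mu'_p \;=\; \frac{c_{r_{p+1}}^2}{c_{r_p}\,c_{r_{p+2}}}\,\mu_p.
\]
Taking $\delta_p=1$ for all $p$ for simplicity, I will solve the recursion $c_{r_{p+2}}=c_{r_{p+1}}^2\mu_p/c_{r_p}$ by fixing $c_{r_1}=1$ and iterating, obtaining $c_{r_p}=c_{r_2}^{p-1}\cdot M_p$ for explicit monomials $M_p$ in the $\mu_q$. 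The wrap-around constraint at $p=s-1$ (where $c_{r_{s+1}}=c_{r_1}=1$) then collapses to a single equation of the form $c_{r_2}^s = $ (explicit nonzero scalar in the $\mu_q$), solvable in $K^*$ because $K$ is algebraically closed. This makes $\mu'_p=1$ for $p=1,\ldots,s-1$, and the identity $\prod_{p=1}^s\mu'_p=(\lambda'_1)^t$ in the new basis (which follows from the same multiplicativity argument applied to $b'_{ij}$) then forces $\mu'_s=(\lambda'_1)^t$, still an $n$-th root of unity since $(\lambda'_1)^m=1$. Since every $b'_{ij}$ is a monomial in the $\lambda'_p$ and $\mu'_p$, all $b'_{ij}$ are $n$-th roots of unity.

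The hard part will be the bookkeeping at the wrap-around index $p=s$. One must verify that the transformation formulas for $\mu'_{s-1}$ and $\mu'_s$ correctly incorporate the extra twisting coming from $r_{s+1}=\sigma^t(r_1)$ (and, more generally, $c_{r_{s+1}}=c_{r_1}/\delta_1^t$ when the $\delta_p$ are not all $1$), and that the product identity $\prod_p\mu'_p=(\lambda'_1)^t$ indeed holds after the change of basis. This requires tracking the parallel transformation $\lambda'_p=(\delta_p/\delta_{p+1})\lambda_p$ and checking that the resulting factors cancel consistently; no new idea beyond careful computation is involved, and once this is done the proof is complete.
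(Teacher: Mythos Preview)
Your argument is correct and follows essentially the same route as the paper: both use Lemma~\ref{lem: transitions coef aij are n-th roots of one} for the $a_{ij}$, Lemma~\ref{lem: bij are A roots of 1} for the within-orbit $\lambda_p$, and then exploit the freedom in the cross-orbit basis elements to normalize the $\mu_p=b_{r_{p+1}r_p}$, with the remaining one pinned down by the product identity $\prod_p\mu_p=\lambda_1^t$. The only cosmetic difference is that the paper constructs the good basis directly and arranges every $\mu_p$ to equal a single $s$-th root $\mu$ of $\lambda_1^t$, whereas you transform an existing good basis (with all $\delta_p=1$) to achieve $\mu'_1=\cdots=\mu'_{s-1}=1$ and $\mu'_s=\lambda_1^t$.
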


\begin{proof} As in the proof of Lemma \ref{lem: transitions coef aij are n-th roots of one}, let $A_1,\cdots,A_s$ be the orbits of $\sigma$ with $\tau(A_p)=A_{p+1}$ and $|A_p|=m$. Renumber the objects so that $A_1=[m]$ and $\sigma(i)=i+1$ module $m$ on $A_1$. Choose a good multiplicative basis $(x_{ij})$ for $\sigma$ on each $A_k$. By Lemma \ref{lem: bij are A roots of 1}, $\lambda_k=b_{\sigma(i)i}$ for $i\in A_k$ is an $m$-th root of unity which depends only on $k$ (and on the multiplicative bases for $A_k$ and $A_{k+1}$ so that $\tau(x_{\sigma(i)i})=\lambda_k x_{\tau\sigma(i)\tau(i)}$ for all $i\in A_k$).

Since $\tau^s(A_1)=A_1$, $\tau^s(1)=k+1\in[m]$ for some $k<m$. Then $b_{k+1,1}=b_{k+1,k}\cdots b_{32}b_{21}=\lambda_1^k$ which is also an $m$-th root of unity. Let $\mu$ be any $s$-th root of $\lambda_1^k$. So, $\mu^n=\lambda_1^{km}=1$.

\underline{Claim}: There a good basis for $\sigma$ so that $b_{\tau(1)1}=b_{\tau^2(1)\tau(1)}=\cdots= b_{\tau^{s}(1)\tau^{s-1}(1)}=\mu$.

Proof: Let $x_{\tau(1)1}$ be any generator of $\cC_n(1,\tau(1))$. For $1\le p\le s-2$ let $x_{\tau^{p+1}(1)\tau^p(1)}=\mu^{-p}\tau^p(x_{\tau(1)1})$. This will satisfy the claim $b_{\tau^{p+1}(1)\tau^p(1)}=\mu$ for $0\le i\le s-2$. The last transition coefficient in the claim is then determined:
\[
	b_{\tau^s(1)\tau^{s-1}(1)}=b_{k+1,\tau^{s-1}(1)}=b_{k+1,1}b_{1\tau^{s-1}(1)}=\lambda^k_1\mu^{1-s}=\mu.
\] 
This proves the Claim.

%Since $\tau^s$ is a permutation of $A_1$ which commutes with $\sigma|A_1$, it must be a power of $\sigma$ on $A_1$, say $\tau^s=\sigma^k$ on $A_1$.

The basis elements $x_{ij}$ for $i,j$ in the same $\sigma$ orbits and $x_{\tau^{i+1}(1)\tau^i(1)}$ given by the Claim extend uniquely to a good basis for all of $\cC_n$. For example, $x_{qp}=x_{q\tau^j(1)}x_{\tau^j(1)\tau^i(1)}x_{\tau^i(1)p}$ if $p\in A_{i+1}$ and $q\in A_{j+1}$. Since $\tau^k(1)\in A_{k+1}$ we get,
\[
	b_{qp}=\lambda_{j+1}^t\mu^{j-i}\lambda_{i+1}^r
\]
for some $t,r$. This is a product of $n$-th roots of unity and thus an $n$-th root of unity. The transition coefficients $a_{ij}$ are $n$-th roots of unity by Lemma \ref{lem: transitions coef aij are n-th roots of one}. 
\end{proof}

\begin{thm}\label{thm: all aij,bij are n! roots of 1}
Let $\sigma$ be an automorphism of $\cC_n$ and $\tau$ an autoequivalence of $\cC_n$ which commutes with $\sigma$ so that $\cC_n$ is not the disjoint union of two $\sigma\tau$ invariant subcategories. Then there exists a good multiplicative basis $(x_{ij})$ for $\sigma$ so that the transitions coefficients of both $\sigma$ and $\tau$ are $n!$-th roots of unity.
\end{thm}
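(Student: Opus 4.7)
The plan is to reduce to Proposition \ref{prop: transition coef aij, bij are n-th roots of 1} by restricting to the eventual image of $\tau$, applying the proposition there, and then extending the basis back to all of $\cC_n$ by taking $\tau$-preimages. Set $I = \bigcap_{k \geq 0} \tau^k([n])$; since $[n]$ is finite this stabilizes at some $\tau^{k_0}([n]) = I$, so $\tau|_I$ is a permutation of $I$, and since $\sigma\tau = \tau\sigma$ we also have $\sigma(I) = I$. Thus $(\sigma|_I, \tau|_I)$ is a pair of commuting automorphisms of $\cC_I$.

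First I would show that $\cC_I$ is a single $(\sigma|_I, \tau|_I)$-orbit so that Proposition \ref{prop: transition coef aij, bij are n-th roots of 1} applies. Any nontrivial $(\sigma|_I, \tau|_I)$-invariant decomposition $I = I_1 \sqcup I_2$ would pull back to $N_j := \{i \in [n] : \tau^{k_0}(i) \in I_j\}$, giving a nontrivial $(\sigma,\tau)$-invariant decomposition of $[n]$ (invariance uses $\sigma\tau = \tau\sigma$ together with the invariance of the $I_j$), which contradicts the hypothesis. Applying the proposition to $\cC_I$ yields a good basis $(x_{ij})_{i,j \in I}$ with all transition coefficients of $\sigma|_I$ and $\tau|_I$ being $|I|$-th roots of unity.

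Next I would extend the basis inductively using the filtration $T_k = \{i : \tau^k(i) \in I,\ \tau^{k-1}(i) \notin I\}$ for $k \geq 1$, with $T_0 = I$. Each $T_k$ is $\sigma$-invariant by commutativity. For $i \in T_k$ and $j$ in any stratum of index $\leq k$, define $x_{ij}$ to be the unique preimage of the already-defined $x_{\tau(i)\tau(j)}$ under the $K$-linear isomorphism $\tau \colon \cC_n(j,i) \xrightarrow{\sim} \cC_n(\tau(j),\tau(i))$ (which exists because $\tau$ is faithful by Proposition \ref{prop: transition coef for any tau} and both hom spaces are one-dimensional); set $x_{ji} = x_{ij}^{-1}$. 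The identity $x_{ij}x_{jk} = x_{ik}$ follows by applying $\tau$ to both sides and invoking inductive multiplicativity in the earlier stratum together with faithfulness of $\tau$. By construction $b_{ij} = 1$ whenever at least one of $i,j$ lies outside $I$.

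The main obstacle, and the crux of the argument, is verifying that this extension actually produces a good basis for $\sigma$. Writing $\sigma(x_{ij}) = a'_{ij}\, x_{\sigma(i)\sigma(j)}$ and applying $\tau$, the commutation $\sigma\tau = \tau\sigma$ forces $a'_{ij} = a'_{\tau(i)\tau(j)}$, so iterating gives $a'_{ij} = a'_{\tau^m(i)\tau^m(j)}$ for all $m \geq 0$. For $m \gg 0$ both $\tau^m(i)$ and $\tau^m(j)$ lie in $I$, where by the proposition $a'$ is constant on pairs of $\sigma$-orbits and takes values in $|I|$-th roots of unity. Hence $a'_{ij}$ depends only on the $\sigma$-orbits of $i$ and $j$ (so the basis is good for $\sigma$) and is an $|I|$-th root of unity. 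Since $|I| \leq n$, every $|I|$-th root of unity is also an $n!$-th root, so all transition coefficients $a_{ij}$ and $b_{ij}$ are $n!$-th roots of unity, completing the proof.
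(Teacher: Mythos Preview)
Your argument is correct and takes a genuinely different route from the paper's. Both proofs begin by passing to the eventual image $I=\cC_m$ of $\tau$ and invoking Proposition~\ref{prop: transition coef aij, bij are n-th roots of 1} there (you are in fact more careful than the paper in verifying that $\cC_I$ is a single $\sigma\tau$-orbit). From that point the two diverge: the paper extends the good basis to all of $\cC_n$ in an ad hoc way, handling the $a_{ij}$ via Lemma~\ref{second lemma about s,t} (comparing each outside $\sigma$-orbit $A$ to an orbit $B\subset\cC_m$ with $\tau^k(A)=\tau^k(B)$) and the $b_{ij}$ by an induction on the nilpotency depth, choosing one basepoint $z_p$ in each new $\sigma$-orbit so that $b_{\tau(z_p),z_p}=1$. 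Your construction is more uniform: you pull the entire multiplicative basis back along $\tau$, forcing $b_{ij}=1$ whenever $\{i,j\}\not\subset I$, and then read off the $a'_{ij}$ from the commutation relation. This is cleaner and in fact yields a slightly stronger conclusion (all $a_{ij}$ and $b_{ij}$ are $|I|$-th roots of unity, not just $n!$-th roots).

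One point deserves a sentence of justification. The commutation $\sigma\tau=\tau\sigma$ gives only $a'_{ij}\,b_{\sigma(i)\sigma(j)}=b_{ij}\,a'_{\tau(i)\tau(j)}$ (this is equation~\eqref{tau}), not $a'_{ij}=a'_{\tau(i)\tau(j)}$ outright. You need to observe that whenever at least one of $i,j$ lies outside $I$ your construction gives $b_{ij}=1$, and since $\sigma$ preserves each stratum $T_k$ the same holds for $b_{\sigma(i)\sigma(j)}$; this is what makes the cancellation go through at every step of your iteration until both indices land in $I$, at which point you stop and invoke Proposition~\ref{prop: transition coef aij, bij are n-th roots of 1}. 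With that remark added, the proof is complete.
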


We call $\cC_n$ \emph{indecomposable} if it is not the disjoint union of two $\sigma\tau$ invariant subcategories.

\begin{proof}
As in Fitting's Lemma, $\tau$ is an automorphism on the image of $\tau^k$ for sufficiently large $k$. Let $\cC_m$ be this image. By Proposition \ref{prop: transition coef aij, bij are n-th roots of 1} there is a good multiplicative basis for $\sigma$ on $\cC_m$ so that all $a_{ij},b_{ij}$ are $m$-th roots of unity when $i,j\in \cC_m$. 

For each orbit $A$ of $\sigma$ disjoint from $\cC_m$ there is an orbit $B$ of $\sigma$ in $\cC_m$ so that $\tau^k(A)=\tau^k(B)$. Then $|A|$ is a multiple of $|B|$. So, by Lemma \ref{second lemma about s,t}, for any good multiplicative basis for $\cC_n$ with respect to $\sigma$, we have $a_{AB}^{|A|}=1$. This implies that every transition coefficient $a_{ij}$ of $\sigma$ will be a product of three roots of unity of order $\le n$. In particular $a_{ij}^{n!}=1$.

Next we show that a good basis for $\sigma$ can be chosen so that $b_{ij}^{n!}=1$ for all $i,j$. Let $k\ge0$ be minimal so that $\tau^k(\cC_n)=\cC_m$. If $k=0$ then the good basis can be chosen so that $b_{ij}^n=1$ for all $i,j$. So, suppose $k\ge1$. Then, by induction on $k$, there is a good basis for $\tau(\cC_n)$ so that $b_{ij}^{n!}=1$ for all $i,j\in\tau(\cC_n)$. Let $A_1,\cdots,A_s$ be the orbits of $\sigma$ not in the image of $\tau$. By Lemma \ref{lem: bij are A roots of 1} we have $b_{ij}^{|A_p|}=1$ for any $i,j\in A_p$. In each $A_p$ choose one element $z_p$ and choose the basis element $x_{\tau(z_p)z_p}\in\cC_n(z_p,\tau(z_p))$ to be $\tau^{-1}(x_{\tau^2(z_p)\tau(z_p)})$, i.e., so that $b_{\tau(z_p)z_p}=1$. Then all other basis elements $x_{ij}$ will be determined and every $b_{ij}$ will be a product of transition coefficients already given. Since any product of $n!$-th roots of unity is an $n!$-th roots of unity, $b_{ij}^{n!}=1$ for all $i,j$.
\end{proof}

Since $\sigma,\tau$ are determined by their transition coefficients and underlying set maps, Theorem \ref{thm: all aij,bij are n! roots of 1} implies that, in the indecomposable case, there are only finitely many possibilities for $\sigma,\tau$ up to isomorphism. This extends to the case of our classification theorem.

\begin{cor}\label{cor: only finitely many indecomposable n-fold coverings}
For a fixed $n$, there are, up to strong isomorphism, only finitely many add-triangulated $n$-fold equivalence coverings of $\cM_0$ which are ``indecomposable'' in the sense that they are not disjoint unions of two add-triangulated subcoverings.
\end{cor}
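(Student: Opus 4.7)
The plan is to combine the classification Theorem \ref{thm: D in intro} with Theorem \ref{thm: all aij,bij are n! roots of 1}. By Theorem \ref{thm: D in intro}, every add-triangulated $n$-fold equivalence covering of $\cM_0$ has the form $\widetilde\cM_n(\sigma,\tau,\varphi)$, and two such coverings are strongly isomorphic precisely when the underlying pairs $(\sigma,\tau)$ and $(\sigma',\tau')$ are simultaneously conjugate by some $\rho\in\Aut(\cC_n)$. In particular, the choice of $\varphi$ does not affect the strong isomorphism class, so it suffices to bound the number of simultaneous conjugacy classes of valid pairs $(\sigma,\tau)$.

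First I would verify that indecomposability of the covering translates into indecomposability of $(\cC_n,\sigma,\tau)$ in the sense defined just before Theorem \ref{thm: all aij,bij are n! roots of 1}: namely, $\cC_n$ cannot be written as a disjoint union of two $\sigma\tau$-invariant full subcategories. The lemma immediately preceding Corollary \ref{cor: n is even} already records that any $\sigma\tau$-invariant subset of $[n]$ generates an invariant full subcategory of $\widetilde\cM_n(\sigma,\tau,\varphi)$; inspecting the universal virtual triangle of Definition \ref{def: universal virtual triangle}, one sees that such a subcategory is itself add-triangulated (since the universal virtual triangle restricts to a universal virtual triangle for the subcategory), so the covering splits as a disjoint union of two add-triangulated subcoverings. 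Conversely, a splitting of the covering plainly gives a $\sigma\tau$-invariant decomposition of $[n]$.

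Next, by Theorem \ref{thm: all aij,bij are n! roots of 1}, any indecomposable pair $(\sigma,\tau)$ admits a good multiplicative basis $(x_{ij})$ for which all transition coefficients $a_{ij}$ of $\sigma$ and $b_{ij}$ of $\tau$ are $n!$-th roots of unity. A different choice of multiplicative basis corresponds exactly to conjugation by an element of $\Aut(\cC_n)$ (Proposition \ref{prop: Aut Cn is projective}), so every indecomposable pair is simultaneously conjugate to one in this ``normalized'' form. It therefore suffices to bound the number of normalized pairs. The underlying permutations of $\sigma$ and $\tau$ contribute at most $(n!)^2$ choices. Given these permutations, a multiplicative system of scalars is determined up to projective equivalence by an $n$-tuple $(c_1,\ldots,c_n)\in (K^\ast)^n$ via Proposition \ref{prop: comparison of multiplicative bases}, and the constraint that every ratio $c_i/c_j$ be an $n!$-th root of unity forces each $c_i$ to lie in a fixed coset of the group $\mu_{n!}\subset K^\ast$ of $n!$-th roots of unity. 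Modulo the common scalar freedom, this gives at most $(n!)^{n-1}$ choices for the transition system of $\sigma$, and likewise for $\tau$.

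Thus the total number of normalized pairs $(\sigma,\tau)$ is bounded by $(n!)^{2n}$, which is finite. Since every indecomposable pair is simultaneously conjugate to a normalized one, the number of strong-isomorphism classes is finite as well. The main subtlety in the argument is the first step, namely identifying indecomposability of the covering with the purely combinatorial notion of indecomposability of $(\cC_n,\sigma,\tau)$; the remaining bookkeeping is a direct counting consequence of the root-of-unity normalization provided by Theorem \ref{thm: all aij,bij are n! roots of 1}.
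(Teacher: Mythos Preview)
Your proposal is correct and follows essentially the same approach as the paper's proof: reduce to pairs $(\sigma,\tau)$ via the classification theorem, invoke Theorem \ref{thm: all aij,bij are n! roots of 1} to normalize the transition coefficients to $n!$-th roots of unity, and conclude finiteness. The paper's own argument is terser---it simply says indecomposability places $(\sigma,\tau)$ in the setting of Theorem \ref{thm: all aij,bij are n! roots of 1}, so there are finitely many choices---whereas you spell out the indecomposability correspondence and give an explicit bound $(n!)^{2n}$, but the skeleton is identical.
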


\begin{proof} By Corollary \ref{cor: classification up to strong isomorphism}, equivalence coverings of $\cM_0$ are given by $\sigma,\tau$ which must be anti-compatible. For the equivalence covering to be indecomposable, $\sigma,\tau$ must be as in Theorem \ref{thm: all aij,bij are n! roots of 1}. So, there are only finitely many choices for $\sigma,\tau$. Anti-compatibility of $\sigma,\tau$ further reduces the set of possibilities. So, the list remains finite.
\end{proof}

\subsection{Skew-continuity of $\varphi:\sigma\to \tau$} \label{phi}

Recall that, if $\sigma,\tau$ are commuting autoequivalences of $\cC_n$, the $\sigma$-{continuity factor} $s=\{\sigma,\tau\}$ is defined to be the scalar $s\in K^\ast$ so that
\[
	\sigma(\varphi_i)=s\varphi_{\sigma(i)}
\]
for all $i\in\cC_n$. (See Section \ref{sec: def of skew-continuity}.) We give another characterization of this scalar.

\begin{prop}\label{prop: characterization of continuity factor}
The continuity factor $s=\{\sigma,\tau\}$ is the unique nonzero scalar so that, for every $i$ and nonzero $f:i\to \tau(i),g:i\to\sigma(i)$, the following diagram commutes.
\[
	%\xymatrixrowsep{10pt}
	\xymatrixcolsep{35pt}
\xymatrix{%begin xy matrix
i\ar[r]^(.45)f\ar[d]_g & \sigma(i)\ar[d]^{\sigma(g)}\\
\tau(i)\ar[r]^(.47){s\tau(f)} & \sigma\tau(i)
	}%end xy matrix
\]
\end{prop}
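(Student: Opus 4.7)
The plan is to reduce the claimed identity to the defining equation $\sigma(\varphi_i)=s\,\varphi_{\sigma(i)}$ of the continuity factor, by exploiting a clever choice of $g$ in terms of $f$ and $\varphi$.

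First I would observe that, since every hom-space in $\cC_n$ is one-dimensional over $K$, both $\sigma(g)\circ f$ and $\tau(f)\circ g$ lie in $\cC_n(i,\sigma\tau(i))\cong K$. Both are nonzero, because every nonzero morphism in $\cC_n$ is an isomorphism (being a scalar multiple of a multiplicative basis element), and a composite of isomorphisms is an isomorphism. Hence there is a unique nonzero scalar $s_{i,f,g}\in K^{\ast}$ with $\sigma(g)\circ f=s_{i,f,g}\,\tau(f)\circ g$. Rescaling $f$ or $g$ scales both sides by the same factor, so $s_{i,f,g}$ depends only on $i$, $\sigma$, and $\tau$.

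Next I would pick any nonzero $f:i\to\sigma(i)$ and take the special choice $g:=\varphi_i\circ f\colon i\to\tau(i)$, which is nonzero because $\varphi_i$ is an isomorphism. Then
\[
\sigma(g)\circ f=\sigma(\varphi_i)\circ\sigma(f)\circ f,
\]
while naturality of $\varphi:\sigma\to\tau$ at $f$, i.e.\ $\tau(f)\circ\varphi_i=\varphi_{\sigma(i)}\circ\sigma(f)$, gives
\[
\tau(f)\circ g=\tau(f)\circ\varphi_i\circ f=\varphi_{\sigma(i)}\circ\sigma(f)\circ f.
\]
The defining identity $\sigma(g)\circ f=s_{i,f,g}\,\tau(f)\circ g$ therefore becomes
\[
\sigma(\varphi_i)\circ\bigl(\sigma(f)\circ f\bigr)=s_{i,f,g}\,\varphi_{\sigma(i)}\circ\bigl(\sigma(f)\circ f\bigr).
\]
Since $\sigma(f)\circ f:i\to\sigma^2(i)$ is nonzero and hence an isomorphism, it is right-cancellable in the one-dimensional space $\cC_n(\sigma^2(i),\sigma\tau(i))$, leaving $\sigma(\varphi_i)=s_{i,f,g}\,\varphi_{\sigma(i)}$. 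By the very definition of the $\sigma$-continuity factor, this forces $s_{i,f,g}=\{\sigma,\tau\}$, independently of $i$, $f$, and $g$. Uniqueness of $s$ is then automatic: any scalar satisfying the equation for a single nonzero pair $(f,g)$ is determined by that equation.

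There is no serious obstacle here; the only real idea is the choice $g=\varphi_i\circ f$, which arranges matters so that a single application of naturality converts $\tau(f)\circ\varphi_i$ into $\varphi_{\sigma(i)}\circ\sigma(f)$, producing precisely the pair $\bigl(\sigma(\varphi_i),\varphi_{\sigma(i)}\bigr)$ appearing in the definition of $\{\sigma,\tau\}$. Everything else is bookkeeping using the one-dimensionality of hom-spaces in $\cC_n$.
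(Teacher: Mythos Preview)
Your argument is correct and essentially follows the paper's own route. The paper writes an arbitrary $g$ as $t\varphi_i\circ f$ for some scalar $t$ (using one-dimensionality of $\cC_n(\sigma(i),\tau(i))$) and then computes $\sigma(g)\circ f$ directly via naturality and the definition of $s$; you instead first reduce to the special case $t=1$ via the rescaling observation and then carry out the same computation. The key ingredients---naturality of $\varphi$ at $f$, the defining equation $\sigma(\varphi_i)=s\varphi_{\sigma(i)}$, and invertibility of all nonzero morphisms in $\cC_n$---are identical in both proofs.
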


%\newpage

\begin{proof}
This diagram is a combination of the following two commuting diagrams:
\[
	%\xymatrixrowsep{10pt}
	\xymatrixcolsep{35pt}
\xymatrix{%begin xy matrix
\sigma(i)\ar[r]^(.45){\sigma(f)}\ar[dd]_{\varphi_i}&\sigma^2(i)\ar[dd]^{\varphi_{\sigma(i)}}&& \sigma(i)\ar[r]^(.45){\sigma(f)}\ar@/_2pc/[dd]_{t\varphi_i} & \sigma^2(i)\ar@/^2pc/[dd]^{t\sigma(\varphi_i)}\\
 && &
i\ar[u]^f\ar[r]^(.45)f\ar[d]_g & \sigma(i)\ar[u]^{\sigma(f)}\ar[d]_{\sigma(g)}\\
\tau(i)\ar[r]^(.47){\tau(f)} & \sigma\tau(i) &&
\tau(i)\ar@{-->}[r] & \sigma\tau(i)
	}%end xy matrix
\]
Since $\cC_n(\sigma(i),\tau(i))=K$, $gf^{-1}:\sigma(i)\to \tau(i)$ is a scalar multiple of $\varphi_i$. Thus $t\varphi_i\circ f=g$ for some $t\in K^\ast$. Since $\sigma$ is a functor, $t\sigma(\varphi_i)\circ\sigma(f)=\sigma(g)$. So, the right hand diagram commutes. The left hand diagram commutes since $\varphi$ is a natural transformation. By definition of $s=\{\sigma,\tau\}$ we get:
\[
	t\sigma(\varphi_i)\circ\sigma(f)=ts\varphi_{\sigma(i)}\circ\sigma(f)=ts\tau(f)\circ\varphi_i
\]
In other words, the dotted arrow in the right hand diagram can be filled in with the morphism $s\tau(f)$ as claimed.
\end{proof}

\begin{cor}\label{cor: anti-symmetry of s-t pairing}
Let $\sigma,\tau$ be commuting autoequivalences of $\cC_n$. Then
\[
	\{\tau,\sigma\}=\{\sigma,\tau\}^{-1}.
\]
\end{cor}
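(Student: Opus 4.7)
The plan is to derive the identity $\{\sigma,\tau\}\{\tau,\sigma\}=1$ directly from the symmetric characterization given in Proposition \ref{prop: characterization of continuity factor}. Since that proposition expresses $\{\sigma,\tau\}$ purely in terms of how two auxiliary morphisms $f\colon i\to\sigma(i)$ and $g\colon i\to\tau(i)$ fit into a commuting square with corners $i$, $\sigma(i)$, $\tau(i)$, $\sigma\tau(i)=\tau\sigma(i)$, the same square can be read in two ways by swapping the roles of $\sigma$ and $\tau$.

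First, I would apply Proposition \ref{prop: characterization of continuity factor} to the commuting autoequivalences $\sigma,\tau$: for any nonzero $f\colon i\to\sigma(i)$ and $g\colon i\to\tau(i)$, one has
\[
\sigma(g)\circ f \;=\; \{\sigma,\tau\}\,\tau(f)\circ g
\]
as morphisms $i\to\sigma\tau(i)$. Next I would apply the same proposition with $\sigma$ and $\tau$ interchanged, using $f'=g\colon i\to\tau(i)$ and $g'=f\colon i\to\sigma(i)$ as the auxiliary morphisms; this yields
\[
\tau(f)\circ g \;=\; \{\tau,\sigma\}\,\sigma(g)\circ f.
\]
Substituting the second identity into the first produces $\sigma(g)\circ f=\{\sigma,\tau\}\{\tau,\sigma\}\,\sigma(g)\circ f$, and it remains only to observe that $\sigma(g)\circ f$ is a nonzero morphism (since $\sigma$ is an equivalence, $\sigma(g)$ is a nonzero scalar multiple of a basic morphism in $\cC_n$, and composition of nonzero basic morphisms in $\cC_n$ is nonzero by \eqref{eq: basic morphisms}). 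Dividing by this nonzero scalar gives $\{\sigma,\tau\}\{\tau,\sigma\}=1$, which is the claim.

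There is no real obstacle here beyond making sure Proposition \ref{prop: characterization of continuity factor} is genuinely symmetric in its inputs: the roles of $f$ and $g$ (and of $\sigma$ and $\tau$) are interchangeable, and the target $\sigma\tau(i)=\tau\sigma(i)$ is the same object because $\sigma$ and $\tau$ commute. Once this symmetry is observed, the corollary is a one-line consequence.
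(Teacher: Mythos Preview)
Your argument is correct and is exactly the intended derivation: the paper states this as an immediate corollary of Proposition \ref{prop: characterization of continuity factor} with no further proof, and your symmetric reading of that square (swapping the roles of $\sigma,\tau$ and of $f,g$) is precisely how one extracts $\{\sigma,\tau\}\{\tau,\sigma\}=1$.
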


There is one additional piece of structure that is required to define a triangulation. Assume that we fix a system of good bases for $(\cC_n, \sigma)$ and a permutation on $n$ elements $\tau$ that commutes with $\sigma$. For any object $i$ of $\cC_n$, we need a natural $K$-linear isomorphism $\varphi_i:\sigma(i)\cong\tau(i)$. We write:
\[
	\varphi_i =  c_ix_{\tau(i)\sigma(i)}:\sigma(i)\to \tau(i)
\]
for each object $i$ in $\cC_n$, where $c_i \in K^*$ and $\cdot c_i$ indicates multiplication by $c_i$. 

We review the construction describing the roles of $\sigma,\tau,\varphi$.
\begin{enumerate}
\item $n$ is the number of sheets in the equivalence covering category $\cM_\sigma\to \cM_0$.
\item $\sigma$ is the holonomy functor which we use as the ``clutching functor'' to construct the $n$-fold equivalence covering category $\cM_\sigma$.
\item The shift functor is given by $\tau$. Since $T X\cong X$ in the underlying algebraic category, we must have $\sigma(i)\cong \tau(i)$ and this is used to define the last morphism in the distinguished triangle (from $(Z,i)$ to $(X,\tau(i))$:
\[
	(X,i)\to (Y,i) \to (Z,i)\to (X,\sigma(i)) \xrightarrow{\varphi_i} (X,\tau(i))=T (X,i)
\]
\item The naturality of the isomorphism $\varphi$ is required by the axiom of triangulated category which says that any morphism from $f:X\to Y$ to $f':X'\to Y'$ can be completed to a map of distinguished triangles. This will give three commuting squares, where the commutativity of the last square is the naturality of $\varphi$.
\item The relation $\sigma\tau=\tau\sigma$ is the requirement that the shift functor $T(X,i)=(X,\tau(i))$ should be continuous, i.e., it should commute with holonomy.
\end{enumerate}

These conditions can be rephrased in terms of the {transition} coefficients $a_{ij},b_{ij},c_i$ of $\sigma,\tau,\varphi$ as follows. First, naturality of $\varphi$ is:
\begin{equation}\label{eq: phi is natural}
	\varphi_i \circ \sigma(x_{ij}) = \tau(x_{ij})\circ \varphi_j\Rightarrow c_ia_{ij} = b_{ij}c_j
\end{equation}
for all $i,j$. In a triangulated category, the shift of a distinguished triangle is a distinguished triangle with the sign of the last arrow reversed. Since $\sigma(i)\cong \tau(i)$ we can replace $\tau$ with $\sigma$ to get the distinguished triangle:
\[
	(X,\sigma(i))\to (Y,\sigma(i)) \to (Z,\sigma(i))\to (X,\sigma^2(i)) \xrightarrow{-\varphi_{\sigma(i)}} T (X,\sigma(i))=(X,\tau\sigma(i))
\]
So, we get the condition (same as \eqref{eq: skew-commutativity condition})
\begin{equation}\label{eq: phi is skew commutative}
-\varphi_{\sigma(i)} = \varphi_i \circ \sigma(x_{\sigma^{-1}\tau(i)i})\Rightarrow c_{\sigma(i)} = -c_ia_{\tau(i)\sigma(i)}
\end{equation}
Finally, we have the commutativity of $\sigma$ and $\tau$ which translates to the condition:
\begin{equation}\label{eq: s,t commute}
	\sigma\tau(x_{ij})=\tau\sigma(x_{ij})\Rightarrow a_{\tau(i)\tau(j)}b_{ij}=a_{ij}b_{\sigma(i)\sigma(j)}
\end{equation}

As an example, we give a complete classification of the the connected add-triangulation equivalence coverings of $\cM_0$.

\begin{thm}\label{thm: connected equivalence covers}
Up to strong isomorphism, there are $n$ connected add-triangulated $n$-fold equivalence coverings $\widetilde\cM_n(\sigma,\tau,\varphi)$ of $\cM_0$ if $n$ is even and none if $n$ is odd. Furthermore:
\begin{enumerate}
\item $\sigma$ is an automorphism of $\cC_{n}$ whose underlying permutation is an $n$ cycle.
\item $\tau$ is an automorphism of $\cC_{n}$ whose underlying permutation is one of the $n$ powers of that of $\sigma$. And each of these occur (assuming $n$ is even).
\item $\varphi:\sigma\to \tau$ is any skew-continuous natural isomorphism.
\end{enumerate}
%The statement is that $\sigma,\tau$ are uniquely determined by their underlying permutations. For $\sigma$, this is an $n$-cycle and for $\tau$, it is one of the $n$ powers of this $n$-cycle.
\end{thm}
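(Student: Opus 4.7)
The plan is to apply the classification Theorem~\ref{thm: D in intro} to the special case where $\sigma$ has underlying permutation equal to a single $n$-cycle, which is exactly the condition for the equivalence covering to be connected.

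First I would use standard covering theory to identify the connectedness hypothesis. The object covering $\cO b\widetilde\cM_\sigma \to \cO b\cM_0$ is a covering of a space with $\pi_1=\ZZ$ whose monodromy is the underlying permutation of $\sigma$; it is connected iff this permutation acts transitively on $[n]$, i.e., is a single $n$-cycle. Since all $n$-cycles in $S_n$ are conjugate and Theorem~\ref{thm: D in intro} allows conjugation of $\sigma$ by any $\rho\in\Aut(\cC_n)$, I may assume $\sigma$ has underlying permutation $(1\,2\,\cdots\,n)$. The good-basis results of Section~\ref{lin} applied to this single $\sigma$-orbit then let me choose a multiplicative basis $\{x_{ij}\}$ for $\cC_n$ with all transition coefficients $a_{ij}=1$; in particular $\sigma^n=\mathrm{id}$. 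This establishes~(1).

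Next I would determine $\tau$. Because $\tau$ commutes with $\sigma$, its image in $[n]$ is $\sigma$-stable, so equals $[n]$; combined with the fact that $\tau$ is faithful on one-dimensional hom sets, this makes $\tau$ an automorphism of $\cC_n$. Its underlying permutation lies in the centralizer of the $n$-cycle $\sigma$ in $S_n$, which is $\langle\sigma\rangle$, so it equals $\sigma^k$ for a unique $k\in\{0,1,\ldots,n-1\}$. The skew-commutativity condition \eqref{eq: skew-commutativity condition} now reads $c_{\sigma(i)}=-c_i\,a_{\tau(i)\sigma(i)}=-c_i$; iterating $n$ times forces $c_i=(-1)^n c_i$, hence $(-1)^n=1$, so $n$ must be even. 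Thus no connected add-triangulated equivalence covering exists when $n$ is odd.

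Finally, assuming $n$ is even, for each $k\in\{0,\ldots,n-1\}$ I would exhibit a valid triple $(\sigma,\tau,\varphi)$. Pick $c_1\in K^*$ arbitrarily and set $c_{\sigma^p(1)}:=(-1)^p c_1$; even parity makes this well-defined, and the skew-commutativity condition holds by construction. The naturality equation \eqref{eq: phi is natural} then forces $b_{ij}=c_i/c_j$, which is automatically multiplicative, and the formula $\tau(x_{ij}):=b_{ij}\,x_{\sigma^k(i)\sigma^k(j)}$ defines an automorphism of $\cC_n$ with underlying permutation $\sigma^k$. Commutativity $\sigma\tau=\tau\sigma$ follows from $b_{\sigma(i)\sigma(j)}=c_{\sigma(i)}/c_{\sigma(j)}=c_i/c_j=b_{ij}$ together with $a_{ij}=1$ via \eqref{eq: s,t commute}. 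By Theorem~\ref{thm: D in intro} this triple yields an add-triangulated equivalence covering $\widetilde\cM_n(\sigma,\tau,\varphi)$, which proves (2) and (3). Distinct values of $k$ give non-isomorphic categories: any strong isomorphism is given by $\rho\in\Aut(\cC_n)$ with $\rho\sigma=\sigma\rho$ and $\rho\tau=\tau'\rho$, so the underlying permutation of $\rho$ centralizes the $n$-cycle $\sigma$ and lies in $\langle\sigma\rangle$, which forces the underlying permutations of $\tau$ and $\tau'=\rho\tau\rho^{-1}$ to coincide. The main obstacle is purely bookkeeping: checking that the scalars $c_i$, $b_{ij}$ satisfy naturality, multiplicativity, and commutativity with $\sigma$ simultaneously; but because $\cC_n$ is a single $\sigma$-orbit and the basis is good, every scalar reduces to $\pm 1$ and the entire verification collapses to the parity identity $(-1)^n=1$.
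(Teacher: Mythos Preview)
Your proof is correct and follows essentially the same route as the paper: invoke the classification theorem, use connectedness to force $\sigma$ to be an $n$-cycle, pass to a good basis with $a_{ij}=1$, identify the underlying permutation of $\tau$ as a power of $\sigma$ via the centralizer, and extract the parity obstruction from skew-continuity. The only cosmetic difference is that the paper first pins down the transition coefficients of $\tau$ (showing $b_{\sigma(i)i}=-1$ via the pairing $\{\sigma,\tau\}$ of Proposition~\ref{prop: characterization of continuity factor}) and then invokes $b_{\sigma(i)i}^n=1$, whereas you iterate the skew-continuity relation $c_{\sigma(i)}=-c_i$ directly; these are the same computation read through naturality.
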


\begin{proof} By the Classification Theorem \ref{thmD1}, add-triangulated equivalence coverings of $\cM_0$ are given by the triples $(\sigma,\tau,\varphi)$ discussed above. To be a connected covering of the Moebius band the holonomy must be cyclic. So, $\sigma$ must be an $n$-cycle. Take a good multiplicative basis. Then all $a_{ij}=1$. So, $\sigma$ is determined by its underlying permutation.

By definition, $\tau$ is an autoequivalence of $\cC_n$ which commutes with $\sigma$. Then, the underlying set map of $\tau$ must be a power of the underlying permutation of $\sigma$. We claim that the transition coefficients $b_{ij}$ of $\tau$ are uniquely determined. In fact,
\[
	b_{\sigma^k(i)i}=(-1)^k.
\]
To see this take $f=x_{\sigma(i)i}$ and $g=x_{\tau(i)i}$ in Proposition \ref{prop: characterization of continuity factor}. Then $\sigma(g)=x_{\sigma\tau(i)\sigma(i)}$ and $\tau(f)=b_{\sigma(i)i}x_{\tau\sigma(i)\tau(i)}$. So, $s\tau(f)\circ g=sb_{\sigma(i)i}x_{\tau\sigma(i)i}=\sigma(g)\circ f=x_{\tau\sigma(i)i}$ and
\[
	b_{\sigma(i)i}=s^{-1}=\{\sigma,\tau\}^{-1}=-1.
\]
By Lemma \ref{lem: bij are A roots of 1}, $b_{\sigma(i)i}^n=1$. So, $n$ must be even. By Corollary \ref{cor: classification up to strong isomorphism}, the strong isomorphism class of $\widetilde\cM_n(\sigma,\tau,\varphi)$ is independent of $\varphi$. So, the classification is complete.
\end{proof}

%In the next section we use the terminology and results reviewed here to complete the proof of Classification Theorem \ref{thmD1}.

%\newpage

%%%%%%%%%%%%%%%%%%%%%%%%%%%%
%%%%%%%%%%%%%%%%%%%%%%%%%%%%

\section{Continuous Frobenius categories}\label{ss: continuous Frobenius}

In Section \ref{sec4: classification thm} we classified all possible continuously triangulated finite coverings of the Mobius strip category and showed that, if they exist, they are classified by the three parameters $\sigma,\tau,\varphi:\sigma\to \tau$. In this section we complete the classification by showing that each of these structures exists. The proof is by explicit construction of the corresponding continuous Frobenius categories. This is a generalization of the construction of the continuous Frobenius category from \cite{cfc}. 

\subsection{Circle categories} We construct covering categories $\widetilde\cP_\sigma(S^1)$ for the circle category with coefficients in $K[[u]]$. Consider the power series rings $K[[t]]\subset K[[u]]$, where $t=u^2$ with the discrete topology. Let $\cC_n(K[[u]])$ be the category with object set $[n]=\{1,2,\cdots,n\}$ and hom sets $\Hom(i,j)=K[[u]]$ for all $i,j$ with composition given by mulitplication. (Thus $\cC_n(K[[u]])=\cC_n\otimes_KK[[u]]$.) $K[[u]]$-linear automorphisms of $\cC_n(K[[u]])$ are given by pairs $(\sigma,(a_{ij}(u)))$ where $\sigma\in S_n$ is a permutation of $n$ and $a_{ji}(u)\in K[[u]]$ are power series transitions coefficients, just as in the case of $\Aut(\cC_n)$. Then $\Aut(\cC_n)$ is the subgroup of $\Aut(\cC_n(K[[u]]))$ of automorphism where the transition coefficients $a_{ji}(u)\in K$ are all constant. Given any $\sigma\in\Aut(\cC_n)$ we will choose a system of constants $c_i\in K^\ast$ so that $a_{ji}=c_jc_i^{-1}$. This is equivalent to choosing a lifting of $\sigma$ to an element $PD\in GL_n(K)\subset GL_n(K[[u]])$ where $P$ is a permutation matrix and $D$ is a diagonal matrix with diagonal entries $c_i$.

%%%%%%%%%%%%%%%%%%%%%%%%%%%%

Consider the circle $S^1=\RR/\pi\ZZ$ from Section \ref{sec1}. This is the meridian circle of the Moebius band. The boundary of the Moebius band is $\widetilde S^1=\RR/2\pi\ZZ$, a two fold covering of $S^1$.

\begin{defn}\label{def: standard P(S1)}\cite{cfc} Let $\cP( S^1)$ denote the topological $K[[u]]$-category with objects $P_x$ for all $x\in S^1$ with the topology given by the circle: $\Ob(\cP( S^1))= S^1$. Every hom set $\Hom(P_x,P_y)$ will be a free $K[[u]]$-module generated by a \emph{basic morphism} $f_{yx}$ which we give the \emph{weight} $\alpha(x,y)\in [0,\pi)$ the forward distance from $x$ to $y$ with composition given by
\[
	f_{zy}\circ f_{yx}=\begin{cases} f_{zx} & \text{if } \alpha(x,y)+\alpha(y,z)<\pi\\
    uf_{zx}& \text{otherwise}
    \end{cases}
\]
In particular, $f_{xx}$ is the identity on $P_x$ and $f_{xy}f_{yx}=uf_{xx}$ if $x\neq y$. 
\end{defn}

As $y$ converges to $x$ from below, $f_{yx}$ converges to $uf_{xx}$. It will be convenient to use a different, continuous notation. For every pair of real number $a\le b$ with corresponding elements of $ S^1$ given by $[a]=a+\pi\ZZ$ and $[b]=b+\pi\ZZ$, let $\tilde f_{ba}:P_{[a]}\to P_{[b]}$ be the morphism $\tilde f_{ba}=u^nf_{ca}$ where $n\ge0$ is maximal so that $c=b-n\pi \ge a$. Then we have
\begin{equation}\label{eq: standard shift}
	\tilde f_{b+\pi,a+\pi}=\tilde f_{ba}
\end{equation}
\begin{equation}\label{eq: half-shift}
	\tilde f_{b+\pi,a}=u \tilde f_{ba}.
\end{equation}
The main point is that composition is given by the continuous formula:
\[
	(r\tilde f_{cb})(s\tilde f_{ba})=rs\tilde f_{ca}
\]
for all $a\le b\le c\in\RR$ and all $r,s\in R=K[[u]]$.

The topological space of basic morphisms is homeomorphic to $ S^1\times [0,\infty)$ where the homeomorphism sends $\tilde f_{ba}$ to $([a],b-a)$. The topological space of all morphisms is given as a quotient of $\RR\times [0,\infty)\times K[[u]]$ modulo the relations \eqref{eq: standard shift}, \eqref{eq: half-shift}.

%%%%%%%%%%%%%%%%%%%%%%%%%%%%

\begin{defn}\label{def: n-fold covering of P(S1)}Given any automorphism $\sigma$ of $\cC_n$ and a choice of lifting to $GL_n(K)$ given by scalars $c_i$, we can construct an $n$-fold covering category $\widetilde\cP_\sigma(S^1)$ with holonomy $\sigma$ as follows. The object space of $\widetilde\cP_\sigma(S^1)$ is defined to be $\widetilde S^1_\sigma$, the $n$-fold covering space of $ S^1$ with holonomy $\sigma$:
\[
	\Ob(\widetilde\cP_\sigma(S^1))=\widetilde S^1_\sigma:=\RR\times [n]/\sim
\]
where $(x+\pi,i)\sim (x,\sigma(i))$. Equivalence classes will be denoted $[x,i]$. Morphisms are $K[[u]]$-linear combinations of basic morphisms:
\[
	\tilde f_{yx}\otimes x_{ji}:[x,i]\to [y,j]
\]
for all $x\le y\in\RR$ and $i,j\in[n]$. These are given to be continuously varying with respect to $x,y\in\RR$ and satisfy the following relations.
\begin{equation}\label{eq:manual shift}
	\tilde f_{y+\pi,x+\pi}\otimes x_{ji}=a_{ji}\tilde f_{yx}\otimes x_{\sigma(j)\sigma(i)}: [x,\sigma(i)]\to [y,\sigma(j)]
\end{equation}
\begin{equation}\label{eq:manual half-shift}
	\tilde f_{y+\pi,x}\otimes x_{ji}=c_j u\tilde f_{yx}\otimes x_{\sigma(j)i}:[x,i]\to [y+\pi,j]=[y,\sigma(j)]
\end{equation}
\end{defn}

Here is an example to see that composition is well defined. ($x\le y\le z$ in this example.)
\[
	(\tilde f_{z+2\pi,y+\pi}\otimes x_{kj})(\tilde f_{y+\pi,x}\otimes x_{ji})=(c_{\sigma(k)}a_{kj}u\tilde f_{zy}\otimes x_{\sigma^2(k)\sigma(j)})(c_ju\tilde f_{yx}\otimes x_{\sigma(j)i})
\]
\[
	\tilde f_{z+2\pi,x}\otimes x_{ki}= c_{\sigma(k)}u\tilde f_{z+\pi,x}\otimes x_{\sigma(k)i}=c_{\sigma(k)}c_k u^2\tilde f_{zx}\otimes x_{\sigma^2(k)i}.
\]
In the first line, we simplify each factor then compose. In the second line we compose first. The results are equal since $a_{kj}c_j=c_k$.

%%%%%%%%%%%%%%%%%%%%%%%%%%%%

\begin{prop}\label{prop: P-sigma(S1)=P-tau(S1)}
$\widetilde \cP_\sigma(S^1)$ is $K[[u]]$-linearly equivalent to $\cP(S^1)$.
\end{prop}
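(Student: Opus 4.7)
The plan is to construct an explicit $K[[u]]$-linear functor $F\colon \widetilde\cP_\sigma(S^1)\to\cP(S^1)$ and verify that it is essentially surjective and fully faithful. The key conceptual point is that the statement asks only for $K[[u]]$-linear (algebraic) equivalence, not continuous equivalence, so $F$ need not be continuous; this is what allows the $\sigma$-twist to be trivialized.

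On objects I would set $F([x,i]) = P_{[x]}$, consistent with the identification $[x+\pi,i] = [x,\sigma(i)]$. On morphisms, the naive choice $\tilde f_{yx}\otimes x_{ji}\mapsto \tilde f_{yx}$ fails because the relations \eqref{eq:manual shift} and \eqref{eq:manual half-shift} in $\widetilde\cP_\sigma(S^1)$ carry the scalars $a_{ji}$ and $c_j$, whereas the analogous relations \eqref{eq: standard shift} and \eqref{eq: half-shift} in $\cP(S^1)$ do not. To compensate I would introduce ``trivializing'' scalars $d_i(y)\in K^\ast$ for $i\in[n]$ and $y\in\RR$ satisfying the recursion $d_j(y+\pi) = c_j\,d_{\sigma(j)}(y)$; such $d_i$ exist, since one may assign $d_i|_{[0,\pi)}$ arbitrarily (say $d_i\equiv 1$) and extend by the recursion to all of $\RR$ with no loop arising, because the recursion always relates distinct arguments $y$ and $y+\pi$. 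Then set $F(\tilde f_{yx}\otimes x_{ji}) = (d_j(y)/d_i(x))\,\tilde f_{yx}$, extended $K[[u]]$-linearly.

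Next I would check that $F$ is a well-defined functor: \eqref{eq:manual shift} reduces to the identity $a_{ji}=c_j/c_i$ built into the setup; \eqref{eq:manual half-shift} reduces to the defining recursion for the $d_i$; composition is preserved because the $d$-ratios telescope. Essential surjectivity is immediate. For fully faithfulness I would identify both $\Hom_{\widetilde\cP_\sigma(S^1)}([x,i],[y,j])$ and $\Hom_{\cP(S^1)}(P_{[x]}, P_{[y]})$ as free $K[[u]]$-modules of rank one (the former by using \eqref{eq:manual shift} and \eqref{eq:manual half-shift} to reduce every morphism to a unique minimal-weight representative) and observe that $F$ sends a generator to a $K^\ast$-multiple of a generator.

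The main obstacle is finding the correct trivialization of the $\sigma$-twist. A strict continuous $F$ would force the cycle products $\prod_{l=0}^{|A|-1} c_{\sigma^l(i)}$, one for each $\sigma$-orbit $A$, all to equal $1$, which is not ensured by the data $(\sigma, c_i)$. The freedom to define $d_i(y)$ non-continuously on $\RR$ (so that $d_i$ can ``jump'' by the appropriate factor at each $y\in\pi\ZZ$) is exactly what sidesteps this, and is legitimate precisely because $K[[u]]$-linear equivalence, unlike continuous equivalence, ignores the topology on the object space.
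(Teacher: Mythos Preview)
Your proposal is correct and is essentially the paper's argument in slightly different packaging. The paper constructs an embedding $F\colon\cP(S^1)\hookrightarrow\widetilde\cP_\sigma(S^1)$, $P_{[x]}\mapsto[x,\sigma(1)]$ for $0\le x<\pi$, together with a left inverse $G\colon\widetilde\cP_\sigma(S^1)\to\cP(S^1)$ sending $[x,i]\mapsto P_{[x]}$ and $u^n\tilde f_{y+\pi,x}\otimes x_{ji}\mapsto c_j u^n\tilde f_{y+\pi,x}$ for $0\le x,y<\pi$; it then checks $G\circ F=\mathrm{id}$ and $F\circ G\cong\mathrm{id}$ via the isomorphisms $\tilde f_{xx}\otimes x_{\sigma(1),i}$. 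Your functor, with the particular choice $d_i|_{[0,\pi)}\equiv 1$, is exactly the paper's $G$: for $0\le x,y<\pi$ your recursion gives $d_j(y+\pi)=c_j$ and $d_i(x)=1$, so your scalar $d_j(y+\pi)/d_i(x)=c_j$ matches. The difference is only that you verify full faithfulness directly (by identifying both hom sets as free of rank one and observing that a generator goes to a unit multiple of a generator), whereas the paper exhibits an explicit quasi-inverse. Your formulation via the recursion $d_j(y+\pi)=c_j\,d_{\sigma(j)}(y)$ is a clean way to encode what the paper does by restricting to the fundamental domain $[0,\pi)$, and your remark that continuity is neither required nor achievable is a useful conceptual point that the paper leaves implicit.
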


In fact $\widetilde \cP_\sigma(S^1)$ is a $K[[u]]$-equivalence cover of $\cP(S^1)$ but we don't need to show this.

\begin{proof}
Let $F:\cP(S^1)\to \widetilde \cP_\sigma(S^1)$ and left inverse $G:\widetilde \cP_\sigma(S^1)\to \cP(S^1)$ be given as follows. Every object of $\cP(S^1)$ can be represented as $P_{[x]}$ where $0\le x<\pi$. Let $F(P_{[x]})=[x,k]\in \widetilde \cP_\sigma(S^1)$ where $k=\sigma(1)$. For every $[x],[y]\in S^1$, $\Hom(P_{[x]},P_{[y]})$ is the free $K[[u]]$-module generated by $\tilde f_{yx}$ if $0\le x\le y<\pi$ and by $\tilde f_{y+\pi,x}$ if $0\le y<x<\pi$. Let
\[
	F(\tilde f_{yx})=\tilde f_{yx}\otimes x_{kk}:[x,k]\to [y,k]
\]
\[
	F(\tilde f_{y+\pi,x})=c_1^{-1}\tilde f_{y+\pi,x}\otimes x_{1k}:[x,k]\to [y+\pi,1]=[y,k]
\]

The left inverse $G$ is given as follows. $G[x,i]=P_{[x]}$ and for any morphism of the form $u^n\tilde f_{y+\pi,x}\otimes x_{ji}:[x,i]\to [y+\pi,j]$ where $0\le x,y<\pi$, let $G(u^n\tilde f_{y+\pi,x}\otimes x_{ji})=c_ju^n\tilde f_{y+\pi,x}$. We let $n\ge-1$ when $y\ge x$. It is straightforward to show that $G\circ F$ is the identity functor on $\cP(S^1)$ and $F\circ G$ is equivalent to the identity functor by the isomorphism $\tilde f_{xx}\otimes x_{ki}:[x,i]\cong [x,k]$.
\end{proof}

Comparing definitions, we have the following easy theorem.

\begin{thm}
The quotient category of $\widetilde \cP_\sigma(S^1)$ modulo the ideal of all morphisms divisible by $u$ is continuously isomorphic to the equivalence covering $\widetilde\cS^1_\sigma$ of the circle category $\cS^1$ from Definition \ref{def: S1 category} and Theorem \ref{thmA}.\qed
\end{thm}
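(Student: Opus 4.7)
The plan is to verify this by unpacking definitions and matching the data on both sides, with the functor being the identity on objects.

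First, both categories have the same object space by construction: $\widetilde\cP_\sigma(S^1)$ has object space $\widetilde S^1_\sigma = \RR\times[n]/\!\sim$ with $(x+\pi,i)\sim(x,\sigma(i))$ by Definition \ref{def: n-fold covering of P(S1)}, and by Theorem \ref{thmA} the equivalence covering $\widetilde\cS^1_\sigma \to \cS^1$ with holonomy $\sigma$ has the same underlying covering space. So I would take the functor $F:\widetilde\cP_\sigma(S^1)/(u) \to \widetilde\cS^1_\sigma$ to be the identity on $\widetilde S^1_\sigma$.

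Next, I would match the morphism spaces. Every morphism in $\widetilde\cP_\sigma(S^1)$ is a $K[[u]]$-linear combination of basic morphisms $\tilde f_{yx}\otimes x_{ji}$, and quotienting by $u$ leaves only $K$-multiples of these. Under this quotient, relation \eqref{eq:manual half-shift} becomes $\tilde f_{y+\pi,x}\otimes x_{ji}=0$, which kills every basic morphism whose length $y-x$ is at least $\pi$, while relation \eqref{eq:manual shift} survives and imposes exactly the holonomy identification $\tilde f_{y+\pi,x+\pi}\otimes x_{ji}=a_{ji}\,\tilde f_{yx}\otimes x_{\sigma(j)\sigma(i)}$. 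On the other side, $\widetilde\cS^1_\sigma$ is obtained from $\cP_\RR\otimes\cC_n$ by first dividing out all morphisms of length $\ge\pi$ (the construction of $\cR_\pi$ in Definition \ref{def: truncated path category} and $\cS^1$ in Definition \ref{def: S1 category}) and then pasting sheets together with the holonomy action described in Theorem \ref{thm: classification of gab}. So $F$ sends each basic generator $\tilde f_{yx}\otimes x_{ji}$ to the basic morphism of the corresponding local chart in $\widetilde\cS^1_\sigma$, and this is a well-defined $K$-linear bijection on hom-sets.

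Finally, I would check that composition and the topology transport correctly. The composition rule in Definition \ref{def: standard P(S1)}, namely $f_{zy}\circ f_{yx}=f_{zx}$ when $\alpha(x,y)+\alpha(y,z)<\pi$ and $u\,f_{zx}$ otherwise, reduces modulo $u$ to the truncated-path composition defining $\cR_\pi$ and hence $\cS^1$; the tensor factor $x_{ji}$ composes in $\cC_n$ identically on both sides. For continuity, the morphism space of $\widetilde\cP_\sigma(S^1)$ is a quotient of $\RR^2\times K[[u]]\times\{x_{ji}\}$ by the relations~\eqref{eq:manual shift}–\eqref{eq:manual half-shift}, and setting $u=0$ gives a quotient of $\RR^2\times K\times\{x_{ji}\}$ by precisely the identifications that Definition \ref{def: equivalence covering} together with Remark \ref{rem: quotient topology} use to define $\widetilde\cS^1_\sigma$, so $F$ is a homeomorphism on morphism spaces.

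The only subtlety, and the one place I would be careful, is that the two quotient topologies coincide at the ``boundary'' $y-x\to\pi$: one must verify that a sequence $r\,\tilde f_{y_kx}\otimes x_{ji}$ with fixed $r\in K^\ast$ and $y_k\to x+\pi$ converges to $0$ in the $u$-quotient (which follows from \eqref{eq:manual half-shift} since $\tilde f_{y_k+\pi,x}\otimes x_{ji}$ goes to $c_j u\cdot(\text{bounded})=0$), while sequences with varying scalars $r_k$ do not converge; this matches the description of the topology on $\cS^1$ given in Remark \ref{rem: quotient topology} and is the main content to be checked.
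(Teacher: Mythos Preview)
Your proposal is correct and is exactly the approach the paper intends: the theorem is stated with a \qed\ immediately after the sentence ``Comparing definitions, we have the following easy theorem,'' so the paper's proof is precisely this unpacking of object spaces, hom-sets modulo $u$, composition, and the quotient topology. The only slip is in your final parenthetical, where you write $\tilde f_{y_k+\pi,x}$ but mean that the limit $\tilde f_{x+\pi,x}\otimes x_{ji}$ equals $c_j u\,\tilde f_{xx}\otimes x_{\sigma(j)i}$ by \eqref{eq:manual half-shift} and hence vanishes mod $u$; the argument itself is fine.
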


%%%%%%%%%%%%%%%%%%%%%%%%%%%%

Now we need to consider a double covering of $\widetilde \cP_\sigma(S^1)$. For the circle $S^1$, the double cover is $\widetilde S^1=\RR/2\pi\ZZ$. For $\widetilde S^1_\sigma$, the double cover is:
\[
	{\widetilde{\widetilde {S_\sigma^1}}}:=\RR\times [n]\times \{+,-\}/\sim
\]
where $(x+\pi,i,\vare)\sim (x,\sigma(i),-\vare)$. Points in ${\widetilde{\widetilde {S_\sigma^1}}}$ are the equivalence classes $[x,i,\vare]$. We view ${\widetilde{\widetilde {S_\sigma^1}}}$ as a $2n$-fold covering of the small circle $S^1$. The sheets of the cover are labeled $(i,\vare)$. As the continuous parameter $x$ increases to $x+\pi$, the point $(x,i,+)$ moves to sheet $(\sigma(i),-)$. Then as $x$ keeps increasing to $x+2\pi$, the point comes back to the same point $[x]\in S^1$ on sheet $(\sigma^2(i),+)$. In particular, the path will only return to the same point after going around the circle an even number of times, moving a distance of $2\pi n$ where $\sigma^{2n}(i)=i$.

Note that every point in ${\widetilde{\widetilde {S_\sigma^1}}}$ is equivalent to a positive point since 
\[
[x,i,-]=[x-\pi,\sigma(i),+].\]
As a quotient of the space of positive points, we have
\[
	{\widetilde{\widetilde {S_\sigma^1}}}=\RR\times [n]/\sim
\] 
where $(x+2\pi,i)\sim (x,\sigma^2(i))$.

%%%%%%%%%%%%%%%%%%%%%%%%%%%%

\begin{defn}
For any $\sigma\in \Aut(\cC_n)$ with lifting to $GL_n(K)$ given by scalars $c_i$, let ${\widetilde{\widetilde {\cP_\sigma}}}(S^1)$ be the topological $K[[t]]$ category with object space ${\widetilde{\widetilde {S_\sigma^1}}}$ and morphisms
\[
	r\tilde f_{yx}\otimes x_{ji}:[x,i,+]\to [y,j,+]
\]
for $x\le y\in \RR$ and $r\in K[[t]]$, considered as elements of 
\[
	\Mor\left({\widetilde{\widetilde {\cP_\sigma}}}(S^1)\right) :=K[[t]]\times \{(x,y)\in\RR^2\,|\, x\le y\}\times [n]^2/\sim
\]
(with the quotient topology), where the equivalence relation is given by
\[
	\tilde f_{y+2\pi,x+2\pi}\otimes x_{ji}=b_{ji}\tilde f_{yx}\otimes x_{\sigma^2(j),\sigma^2(i)}: [x,\sigma^2(i),+]\to [y,\sigma^2(j),+]
\]
\[
	\tilde f_{y+2\pi,x}\otimes x_{ji} = d_j t \tilde f_{yx}\otimes x_{\sigma^2(j)i}:[x,i,+]\to [y+2\pi, j,+]=[y,\sigma^2(j),+]
\]
where  $d_j=c_{\sigma(j)}c_j$ and $b_{ji}=a_{\sigma(j)\sigma(i)}a_{ji}=d_jd_i^{-1}$ are the scalars associated with $\sigma^2$ and its lifting $PDPD=P^2 (P^{-1}DP)D\in GL_n(K)$.
\end{defn}

When $n=1$ and $\sigma\in \Aut(\cC_1)$ is the identity element, the topological $K[[t]]$-category is continuously isomorphic  to the category $\cP(\widetilde S^1)$, where $\widetilde S^1=\RR/2\pi\ZZ$, which is used in \cite{cfc} to construct the continuous Frobenius category. Following the same proof as in Proposition \ref{prop: P-sigma(S1)=P-tau(S1)} we have the following.

\begin{prop}\label{prop: PP-sigma=PP-tau}
The $K[[t]]$-category ${\widetilde{\widetilde {\cP_\sigma}}}(S^1)$ is equivalent to the category $\cP(\widetilde S^1)$ from \cite{cfc}.
\end{prop}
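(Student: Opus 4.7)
The plan is to mimic the proof of Proposition \ref{prop: P-sigma(S1)=P-tau(S1)} essentially verbatim, substituting $2\pi$ for $\pi$, $t$ for $u$, $\sigma^2$ for $\sigma$, the scalars $d_i = c_{\sigma(i)}c_i$ for $c_i$, and the coefficients $b_{ji} = d_j d_i^{-1}$ for $a_{ji}$. The point is that by construction ${\widetilde{\widetilde {\cP_\sigma}}}(S^1)$ is defined by the same template as $\widetilde \cP_\sigma(S^1)$, only with the role of the automorphism $(\sigma,(c_i)) \in \Aut(\cC_n) \subset GL_n(K[[u]])$ replaced by its square $(\sigma^2,(d_i)) \in \Aut(\cC_n) \subset GL_n(K[[t]])$, and with the radius of the base circle doubled from $\pi$ to $2\pi$. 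Since the special case $n=1, \sigma = \mathrm{id}$ recovers $\cP(\widetilde S^1)$ exactly, this amounts to constructing an equivalence $\cP(\widetilde S^1) \simeq {\widetilde{\widetilde {\cP_\sigma}}}(S^1)$ exactly as in the earlier proposition.

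First I would fix a reference index, say $k = 1$, and define a $K[[t]]$-linear functor
\[
F : \cP(\widetilde S^1) \to {\widetilde{\widetilde {\cP_\sigma}}}(S^1)
\]
on objects by $F(P_{[x]}) = [x, k, +]$, where $0 \le x < 2\pi$ is the canonical representative of $[x] \in \widetilde S^1 = \RR/2\pi\ZZ$. On generating morphisms, for $0 \le x \le y < 2\pi$ set $F(\tilde f_{yx}) = \tilde f_{yx} \otimes x_{kk}$, and for $0 \le y < x < 2\pi$ set
\[
F(\tilde f_{y+2\pi,x}) \;=\; d_k^{-1}\,\tilde f_{y+2\pi,x} \otimes x_{\sigma^{-2}(k)\,k},
\]
so that the target $[y+2\pi,\sigma^{-2}(k),+]=[y,k,+]$ again lies on sheet $k$. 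This formula is forced by insisting that $F$ be $K[[t]]$-linear and that the generator $\tilde f_{y+2\pi,x}$ of the rank-one hom module in $\cP(\widetilde S^1)$ be sent to a generator of the corresponding hom module in ${\widetilde{\widetilde {\cP_\sigma}}}(S^1)$; the scalar $d_k^{-1}$ is what will make the left inverse work.

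Next I would define $G: {\widetilde{\widetilde {\cP_\sigma}}}(S^1) \to \cP(\widetilde S^1)$ on objects by $G([x,i,+]) = P_{[x]}$ and on morphisms by the natural ``forgetful'' formula $G(r\tilde f_{yx} \otimes x_{ji}) = r d_j \tilde f_{yx}$ whenever $y \ge x + 2\pi$ (and the obvious $r\tilde f_{yx}$ otherwise, when no wraparound occurs), chosen so that $G \circ F = \mathrm{id}_{\cP(\widetilde S^1)}$ on the nose. The verification that $G$ is well-defined on the quotient $\Mor\bigl({\widetilde{\widetilde {\cP_\sigma}}}(S^1)\bigr)$ requires exactly the two defining relations, i.e., the computation
\[
G\bigl(\tilde f_{y+2\pi,x} \otimes x_{ji}\bigr)
= d_j t \tilde f_{yx} \cdot (\text{matches both sides using }b_{ji}=d_j d_i^{-1})
\]
which is the same multiplicative bookkeeping illustrated in the example following Definition~\ref{def: n-fold covering of P(S1)}. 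Finally I would exhibit a continuous natural isomorphism $\eta : F \circ G \Rightarrow \mathrm{id}$ whose component at $[x,i,+]$ is a scalar multiple of $\tilde f_{xx} \otimes x_{ki}$, with scalar chosen (analogous to the factor $c_1^{-1}$ in the earlier proof) so that naturality against the two relations holds.

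The main obstacle, as in Proposition \ref{prop: P-sigma(S1)=P-tau(S1)}, is the bookkeeping of scalars at the wrap-around boundary: one must check that the explicit formula for $F$ on $\tilde f_{y+2\pi,x}$ is consistent both with $K[[t]]$-linearity and with the quotient relation $\tilde f_{y+2\pi,x} \otimes x_{ji} = d_j t \,\tilde f_{yx}\otimes x_{\sigma^2(j) i}$, which in turn relies on the fact that $b_{ji}=d_j d_i^{-1}$ so that $(\sigma^2,(d_i))$ genuinely lifts $\sigma^2 \in \Aut(\cC_n)$ to $GL_n(K[[t]])$. Once that identity is in hand, the verifications $GF=\mathrm{id}$, the naturality of $\eta$, and continuity of $F,G,\eta$ are immediate, and the proof is complete.
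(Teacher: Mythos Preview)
Your approach is exactly what the paper does: it simply says ``Following the same proof as in Proposition \ref{prop: P-sigma(S1)=P-tau(S1)}'' and states the result. Your identification of the substitutions $\pi\mapsto 2\pi$, $u\mapsto t$, $\sigma\mapsto\sigma^2$, $c_i\mapsto d_i=c_{\sigma(i)}c_i$, $a_{ji}\mapsto b_{ji}=d_jd_i^{-1}$ is precisely the point, and the construction of $F$, $G$, and the natural isomorphism $\eta$ parallels the earlier proof correctly.

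One small bookkeeping caution: in your formula $F(\tilde f_{y+2\pi,x}) = d_k^{-1}\,\tilde f_{y+2\pi,x}\otimes x_{\sigma^{-2}(k)\,k}$, applying the defining relation $\tilde f_{y+2\pi,x}\otimes x_{ji}=d_j t\,\tilde f_{yx}\otimes x_{\sigma^2(j)i}$ with $j=\sigma^{-2}(k)$ gives a factor $d_{\sigma^{-2}(k)}$, not $d_k$. In the paper's proof of Proposition \ref{prop: P-sigma(S1)=P-tau(S1)} this is handled by choosing $k=\sigma(1)$ so that the index appearing in the scalar is simply $1$; the analogous choice here would be $k=\sigma^2(1)$, making the scalar $d_1^{-1}$. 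This is a cosmetic adjustment, not a gap in the argument.
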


\begin{cor}\label{cor: isomorphisms do not factor}
An isomorphism $X\to Y$ between indecomposable objects of ${\widetilde{\widetilde {\cP_\sigma}}}(S^1)$ cannot factor through an object $Z$ in $add\,{\widetilde{\widetilde {\cP_\sigma}}}(S^1)$ none of whose components is isomorphic to $X$.\qed
\end{cor}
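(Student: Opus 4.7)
The plan is to transport the statement across the equivalence ${\widetilde{\widetilde {\cP_\sigma}}}(S^1)\simeq\cP(\widetilde S^1)$ of Proposition \ref{prop: PP-sigma=PP-tau} and then verify it in $\cP(\widetilde S^1)$ by a direct computation with basic morphisms and powers of $t$. Since the property ``an isomorphism cannot factor through a direct sum of non-isomorphic indecomposables'' is invariant under $K[[t]]$-linear equivalence of additive categories, it suffices to prove the statement in $\cP(\widetilde S^1)$, where the indecomposable objects are the $P_x$ and $\Hom(P_x,P_y)$ is a free rank-one $K[[t]]$-module on a basic morphism.

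I would first compose with the inverse $Y\to X$ of the given isomorphism and thereby reduce to the claim that $\mathrm{id}_{P_x}$ cannot factor through $Z=\bigoplus_k P_{z_k}$ when $z_k\ne x$ in $\widetilde S^1$ for every $k$. A hypothetical factorization $\mathrm{id}_{P_x}=h\circ g$ with $g=(g_k)\colon P_x\to \bigoplus_k P_{z_k}$ and $h=(h_k)\colon \bigoplus_k P_{z_k}\to P_x$ yields
\[
\mathrm{id}_{P_x}=\sum_k h_k\circ g_k\ \in\ \Hom(P_x,P_x)=K[[t]]\cdot\mathrm{id}_{P_x}.
\]
The main step is then to show each summand lies in $tK[[t]]\cdot\mathrm{id}_{P_x}$. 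Writing $g_k=r_k\tilde f_{z_k'x}$ and $h_k=s_k\tilde f_{x''z_k''}$ with real lifts satisfying $x\le z_k'$, $z_k''\le x''$ and $[x'']=[x]\in\widetilde S^1$, the composition rule of Definition \ref{def: standard P(S1)} gives $h_k\circ g_k=r_ks_k\,t^{n_k}\,\mathrm{id}_{P_x}$, where $n_k$ is the wrap number of the concatenated path from $x$ to $x''$ through $z_k',z_k''$.

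The heart of the proof is the observation that $n_k\ge 1$: since $z_k\ne x$ in $\widetilde S^1$ and the arcs in $\cP(\widetilde S^1)$ only run forward, the combined forward distance in $\RR$ is at least $2\pi$, i.e., the path traverses $\widetilde S^1$ at least once. Consequently $\sum_k h_k\circ g_k\in tK[[t]]\cdot\mathrm{id}_{P_x}$, contradicting $1\notin tK[[t]]$. The only real bookkeeping is tracking the lifts across $\RR\twoheadrightarrow\widetilde S^1$, but this is immediate once one notes that any two lifts of $x$ differ by a nonzero multiple of $2\pi$ precisely when the intermediate object is distinct from $X$; this supplies the required factor of $t$ and completes the argument.
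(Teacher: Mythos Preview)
Your approach is correct and coincides with the paper's: the corollary is marked \qed\ immediately after Proposition \ref{prop: PP-sigma=PP-tau}, so the intended proof is exactly to transport the statement across that equivalence to $\cP(\widetilde S^1)$, where it holds (implicitly, or by reference to \cite{cfc}). You have simply spelled out the verification in $\cP(\widetilde S^1)$ that the paper leaves implicit---namely that any round trip $P_x\to P_{z_k}\to P_x$ with $z_k\neq x$ picks up at least one factor of $t$, so $\mathrm{id}_{P_x}$ cannot lie in $tK[[t]]\cdot\mathrm{id}_{P_x}$; note that Definition \ref{def: standard P(S1)} is stated for the small circle with $u$, so strictly speaking you are invoking its analogue for $\widetilde S^1$ with $t$.
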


%%%%%%%%%%%%%%%%%%%%%%%%%%%%

\subsection{Matrix factorizations}

Following \cite{cfc}, we construct continuous Frobenius categories as categories of matrix factorizations for objects in $add\,{\widetilde{\widetilde {\cP_\sigma}}}(S^1)$.

By a \emph{matrix factorization of $t$} in an additive $K[[t]]$-category $\cP$, we mean a pair $(P,d)$ where $P\in \cP$ and $d:P\to P$ is an $K[[t]]$-linear endomorphism whose square is $t$ times the identity on $P$. A morphism of matrix factorizations $(P,d)\to (P',d)$ is a morphism $f:P\to P'$ in $\cP$ which commutes with $d$, i.e. $fd=df$.

\begin{defn}\label{def: M(x,y,i)}
For $\cP=add\,{\widetilde{\widetilde {\cP_\sigma}}}(S^1)$, the basic example of a matrix factorization is the object $M(x,y,i)=\left([x,i,-]\oplus [y,i,+],\mat{0 & d_+\\ d_- & 0}\right)$ for real numbers $x,y$ with $|y-x|\le \pi$ and $i\in[n]$ which we visualize as:
\[
\xymatrixrowsep{10pt}\xymatrixcolsep{10pt}
\xymatrix{%begin xy matrix
M(x,y,i): & [x,i,-]\ar@/_1.5pc/[rr]^{d_-}&&  [y,i,+]\ar@/_1.5pc/[ll]^{d_+}
	}%end xy matrix
\]
where
\[
	d_-=c_i^{-1} f_{y,x-\pi}\otimes x_{i\sigma(i)}:[x,i,-]=[x-\pi,\sigma(i),+]\to [y,i,+]
\]
\[
	d_+=c_{\sigma^{-1}(i)i}^{-1}f_{x+\pi,y}\otimes x_{\sigma^{-1}(i)i}:[y,i,+]\to [x,i,-]=[x+\pi,\sigma^{-1}(i),+]
\]
Computation shows that $d_+d_-$ and $d_-d_+$ are both $t$ times identity morphisms of $[x,i,-]$ and $[y,i,+]$, respectively. Since we are using the James construction for $add\,{\widetilde{\widetilde {\cP_\sigma}}}(S^1)$, $\oplus$ is strictly commutative. So, 
\[
[x,i,-]\oplus [y,i,+]= [x-\pi,\sigma(i),+]\oplus [y-\pi,\sigma(i),-]=[y-\pi,\sigma(i),-]\oplus [x-\pi,\sigma(i),+]
\]
and we have:
\begin{equation}\label{eq: equality for M(x,y,i)}
	M(x,y,i)=M(y-\pi,x-\pi,\sigma(i)).
\end{equation}
\end{defn}

\begin{prop}\label{prop: Krull-Schmidt}
The category of finitely generated matrix factorizations of $t$ in $add\,{\widetilde{\widetilde {\cP_\sigma}}}(S^1)$ is Krull-Schmidt and each indecomposable object is isomorphic to some $M(x,y,i)$.
\end{prop}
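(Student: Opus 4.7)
The plan is to leverage the $K[[t]]$-linear equivalence $F:{\widetilde{\widetilde {\cP_\sigma}}}(S^1)\to \cP(\widetilde S^1)$ from Proposition \ref{prop: PP-sigma=PP-tau} to transport the Krull-Schmidt result for matrix factorizations from the setting of \cite{cfc} to ours. Since the notion of a matrix factorization of $t$ (and of a morphism between such) is defined purely in terms of the $K[[t]]$-linear category structure, any $K[[t]]$-linear equivalence of additive $K[[t]]$-categories induces an equivalence between the corresponding categories of matrix factorizations. So the first step is to extend $F$ canonically to an equivalence $\tilde F:add\,{\widetilde{\widetilde {\cP_\sigma}}}(S^1)\to add\,\cP(\widetilde S^1)$ (which is immediate from the James construction, Definition \ref{def: James construction}), and observe that $(\tilde F P,\tilde F d)$ satisfies $(\tilde F d)^2 = \tilde F(d^2)=\tilde F(t\cdot id_P)=t\cdot id_{\tilde F P}$. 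In particular the Krull-Schmidt property of finitely generated matrix factorizations of $t$ in $add\,\cP(\widetilde S^1)$, which is established in \cite{cfc}, transfers verbatim to $add\,{\widetilde{\widetilde {\cP_\sigma}}}(S^1)$.

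Next I would identify the indecomposables. In \cite{cfc}, the indecomposable matrix factorizations in $add\,\cP(\widetilde S^1)$ are given by pairs of points on $\widetilde S^1=\RR/2\pi\ZZ$ together with the canonical basic morphisms $d_\pm$ going both ways between them. The task is then to check that the preimage under $\tilde F$ of each such indecomposable is isomorphic to some $M(x,y,i)$ as given in Definition \ref{def: M(x,y,i)}. This is a direct computation using the explicit formula for $F$ (patterned on Proposition \ref{prop: P-sigma(S1)=P-tau(S1)}, which treated the single-cover case): on objects, $P_{[x]}\oplus P_{[y]}$ pulls back to $[x,i,-]\oplus [y,i,+]$ for some choice of sheet $i$; on morphisms, the basic morphisms $f_{y,x-\pi}$ and $f_{x+\pi,y}$ of $\cP(\widetilde S^1)$ pull back to the prescribed $d_-$ and $d_+$, up to the scalars $c_i^{-1}$ and $c_{\sigma^{-1}(i)i}^{-1}$ which are precisely what compensate for the holonomy twist in \eqref{eq:manual half-shift}. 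The redundancy relation \eqref{eq: equality for M(x,y,i)}, $M(x,y,i)=M(y-\pi,x-\pi,\sigma(i))$, then corresponds under $\tilde F$ to the obvious relation that swapping the two summands of $P_{[x]}\oplus P_{[y]}$ in $\cP(\widetilde S^1)$ gives back the same matrix factorization — this works because $\oplus$ is strictly commutative in the James construction and because $[x,i,-]=[x-\pi,\sigma(i),+]$ by definition.

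The main obstacle is a bookkeeping check: verifying that the two scalars appearing in Definition \ref{def: M(x,y,i)} are exactly what is needed so that $d_+d_-=t\cdot id$ and $d_-d_+=t\cdot id$ in the $\sigma$-twisted category, rather than $t$ multiplied by some spurious product of the $c_i$. Concretely, one composes $d_-$ and $d_+$ using \eqref{eq:manual shift} and \eqref{eq:manual half-shift} and observes that the factors $c_i$ entering from half-shifts cancel against the chosen normalizations $c_i^{-1}$ and $c_{\sigma^{-1}(i)i}^{-1}$, leaving precisely $t\cdot id$. Once these scalars are matched up, the argument via $\tilde F$ is automatic: every indecomposable matrix factorization of $t$ in $add\,{\widetilde{\widetilde {\cP_\sigma}}}(S^1)$ is isomorphic to $\tilde F^{-1}$ of an indecomposable in $add\,\cP(\widetilde S^1)$, which is in turn isomorphic to some $M(x,y,i)$, and the Krull-Schmidt decomposition is inherited.
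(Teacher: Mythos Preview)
Your approach is correct and matches the paper's proof exactly: invoke the $K[[t]]$-linear equivalence of Proposition~\ref{prop: PP-sigma=PP-tau} to transport the Krull--Schmidt result and the classification of indecomposables from $add\,\cP(\widetilde S^1)$ (established in \cite{cfc}) over to $add\,{\widetilde{\widetilde {\cP_\sigma}}}(S^1)$. The paper's proof is a one-line citation of that equivalence and \cite{cfc}; your additional bookkeeping on the scalars in $d_\pm$ is sound but not strictly needed for the proposition as stated, since equivalence already guarantees the indecomposables correspond.
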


\begin{proof}
This follows from Proposition \ref{prop: PP-sigma=PP-tau} since the analogous statement for $add\,\cP(\widetilde S^1)$ was shown in \cite{cfc}.
\end{proof}

\begin{rem}\label{rem: injective-projectives}
Note that, in the special case when $y=x-\pi$, $d_-:[x,i,-]=[y,\sigma(i),+]\to [y,i,+]$ is an isomorphism. When $y=x+\pi$, $d_+:[y,i,+]\to [x,i,-]=[y,\sigma^{-1}(i),+]$ is an isomorphism. Furthermore, these give the same object when $i$ is shifted:
\[
	M(x,x+\pi,i)=M(x,x-\pi,\sigma(i))=M(x+2\pi,x+\pi,\sigma^{-1}(i))
\]
We will denote this object $I_{\sigma(i)}(x-\pi)=I_{\sigma^{-1}(i)}(x+\pi)$. Thus $I_i(x)=M(x+\pi,x,i)$.
\end{rem}

%%%%%%%%%%%%%%%%%%%%%%%%%%%%

\begin{thm}
Let $\overline \cM_\sigma(K[[t]])$ denote the full subcategory of the matrix factorization category of $t$ in $add\,{\widetilde{\widetilde {\cP_\sigma}}}(S^1)$ with objects $M(x,y,i)$ for all $x,y\in\RR$ with $|y-x|\le \pi$ and all $i\in[n]$ modulo the relation \eqref{eq: equality for M(x,y,i)}. Then the object space of $\overline \cM_\sigma(K[[t]])$ is an $n$-fold covering space of the closed Moebius strip.
\end{thm}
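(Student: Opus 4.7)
The plan is to model the object space of $\overline\cM_\sigma(K[[t]])$ as the orbit space of a free, properly discontinuous $\ZZ$-action on an $n$-sheeted cover of a strip in $\RR^2$, where that $\ZZ$-action covers the defining $\ZZ$-action for the closed Moebius strip. Standard covering-space theory will then give the conclusion.

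First I would set $\overline D := \{(x,y) \in \RR^2 : |y-x| \leq \pi\}$ and $\iota: \overline D \to \overline D$, $\iota(x,y) := (y-\pi, x-\pi)$. Then $\iota^2(x,y) = (x-2\pi, y-2\pi)$, so $\langle\iota\rangle \cong \ZZ$; the action is free (no $\iota^k$ has a fixed point in $\overline D$) and properly discontinuous (any compact subset of $\overline D$ meets only finitely many of its $\iota$-translates). Since $\iota = (SG^2_\pi)^{-1}$, the subgroups $\langle \iota\rangle$ and $\langle SG^2_\pi\rangle$ coincide, so the quotient $\overline D/\langle \iota\rangle$ is the closed Moebius strip $\overline\cM$, whose interior is the $\cM_0$ of Definition \ref{def: Moebius strip category M0}.

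Next I would identify the object space of $\overline\cM_\sigma(K[[t]])$ with the orbit space of the lifted $\ZZ$-action on $\widetilde D := \overline D \times [n]$ generated by $\widetilde\iota(x,y,i) := (y-\pi, x-\pi, \sigma(i))$. By Proposition \ref{prop: Krull-Schmidt}, every indecomposable object is isomorphic to some $M(x,y,i)$; and by Corollary \ref{cor: isomorphisms do not factor} combined with Definition \ref{def: M(x,y,i)}, the only identifications among these indecomposable representatives are those forced by relation \eqref{eq: equality for M(x,y,i)}, i.e., $M(x,y,i)=M(y-\pi,x-\pi,\sigma(i))$, which is exactly the $\widetilde\iota$-relation. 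Since $\widetilde\iota$ acts on the $(x,y)$-coordinates precisely as $\iota$ and only permutes the discrete label $i \in [n]$ by the bijection $\sigma$, the $\widetilde\iota$-action is again free and properly discontinuous, and $\Ob(\overline\cM_\sigma(K[[t]])) = \widetilde D / \langle \widetilde\iota\rangle$ as topological spaces (the morphism-space topology induces the product topology on $\widetilde D$, and then the quotient topology).

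The projection $p: \widetilde D \to \overline D$, $p(x,y,i) = (x,y)$, is $\widetilde\iota$-equivariant over $\iota$ and hence descends to a continuous map $\bar p: \widetilde D/\langle \widetilde\iota\rangle \to \overline\cM$. Given any point of $\overline\cM$, choose a lift $(x_0, y_0) \in \overline D$ together with a connected open neighborhood $U \subset \overline D$ disjoint from all its nontrivial $\iota$-translates (which exists by proper discontinuity); then $p^{-1}(U) = U \times [n]$, and this descends to $n$ disjoint copies of $U$ in the quotient, giving the local triviality of $\bar p$ and showing each fiber has exactly $n$ elements. The step that requires the most care is the boundary circle $|y-x| = \pi$ of $\overline\cM$, where the two boundary components of $\overline D$ are glued by $\iota$; but since $\widetilde\iota$ effects the same gluing on $\widetilde D$ while merely permuting the $[n]$-label by $\sigma$, evenly-covered neighborhoods of boundary points still lift to $n$ disjoint connected open sets, and local triviality persists. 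This completes the proof that $\Ob(\overline\cM_\sigma(K[[t]]))$ is an $n$-fold covering of $\overline\cM$.
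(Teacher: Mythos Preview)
Your proposal is correct and takes essentially the same approach as the paper: the object space is, by definition, the quotient of $\overline D\times[n]$ by the $\ZZ$-action generated by $\widetilde\iota$, and this action covers the defining $\ZZ$-action on $\overline D$ for the closed Moebius strip, yielding an $n$-fold cover with holonomy the underlying permutation of $\sigma$. The paper's proof says exactly this in two sentences; you have simply spelled out the covering-space details. One small remark: your appeal to Proposition~\ref{prop: Krull-Schmidt} and Corollary~\ref{cor: isomorphisms do not factor} is unnecessary, since the theorem \emph{defines} the object space as the set of symbols $M(x,y,i)$ modulo relation~\eqref{eq: equality for M(x,y,i)}, so there is nothing to check about which identifications occur.
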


\begin{proof}
This follows from the description of the object space given in the statement. The space of all $M(x,y,i)$ for $|y-x|\le \pi$ with $i\in[n]$ is $n$ copies of the fundamental domain of the closed Moebius strip and the relation \eqref{eq: equality for M(x,y,i)} identifies the two ends in such a way that the holonomy is given by the underlying permutation of $\sigma$.
\end{proof}

Let $\overline \cM(K[[t]])$ denote $\overline\cM_1(K[[t]])$ where $\sigma=1$ is the identity element of $\Aut(\cC_1)$. We call $\overline\cM(K[[t]])$ the \emph{closed Moebius strip category}.

%%%%%%%%%%%%%%%%%%%%%%%%%%%%

\begin{prop}
The topological full subcategory of $\overline \cM_\sigma(K[[t]])$ with objects $M(x,x,i)$ for all $(x,i)\in\RR\times[n]$ is continuously isomorphic to $\widetilde \cP_\sigma(S^1)$.
\end{prop}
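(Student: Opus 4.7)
The plan is to construct a continuous $K[[t]]$-linear isomorphism
\[
F\colon \widetilde\cP_\sigma(S^1)\longrightarrow \overline\cM_\sigma(K[[t]])_{\mathrm{diag}},
\]
where $\overline\cM_\sigma(K[[t]])_{\mathrm{diag}}$ denotes the full subcategory of $\overline\cM_\sigma(K[[t]])$ on the objects $M(x,x,i)$. On objects, set $F([x,i])=M(x,x,i)$; this respects the quotient relations because \eqref{eq: equality for M(x,y,i)} gives $M(x+\pi,x+\pi,i)=M(x,x,\sigma(i))$, which matches the holonomy identification $[x+\pi,i]=[x,\sigma(i)]$ in $\widetilde S^1_\sigma$, and is visibly a homeomorphism of object spaces.

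For morphisms, I would analyze $\Hom(M(x,x,i),M(y,y,j))$ for $0\le y-x<\pi$ by writing a matrix-factorization morphism as $f=(f_{--},f_{-+},f_{+-},f_{++})$ and imposing $fd=df$. This $K[[t]]$-module decomposes as a direct sum of an \emph{even} summand (with $f_{+-}=f_{-+}=0$) and an \emph{odd} summand (with $f_{++}=f_{--}=0$). An even morphism is determined up to one scalar in $K[[t]]$ by taking $f_{++}$ to be a multiple of $\tilde f_{yx}\otimes x_{ji}$ and $f_{--}$ the unique compatible multiple of $\tilde f_{y-\pi,x-\pi}\otimes x_{\sigma(j)\sigma(i)}$; an odd morphism is similarly determined by taking $f_{+-}$ a multiple of $\tilde f_{y,x-\pi}\otimes x_{j,\sigma(i)}$ and $f_{-+}$ the forced multiple of $\tilde f_{y+\pi,x}\otimes x_{\sigma^{-1}(j),i}$. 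The endomorphism $d=\bigl(\begin{smallmatrix}0&d_+\\ d_-&0\end{smallmatrix}\bigr)$ satisfies $d^{2}=t\cdot\mathrm{id}$, and left composition with $d$ carries the chosen even generator to the chosen odd generator. Under the identification $u\leftrightarrow d$, this exhibits $\Hom(M(x,x,i),M(y,y,j))$ as a free $K[[u]]$-module of rank one, matching $\Hom([x,i],[y,j])$ in $\widetilde\cP_\sigma(S^1)$. Define $F$ on basic morphisms by sending $\tilde f_{yx}\otimes x_{ji}$ to the even generator normalized as above; the identity $u\cdot\tilde f_{yx}\otimes x_{ji}=c_{\sigma^{-1}(j)}^{-1}\tilde f_{y+\pi,x}\otimes x_{\sigma^{-1}(j),i}$ coming from \eqref{eq:manual half-shift} then pins down the image of each remaining basic morphism as the corresponding odd generator.

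The remaining verifications are: (i) that $F$ respects the defining relations \eqref{eq:manual shift} and \eqref{eq:manual half-shift}, which reduces to tracking the scalars $a_{ji}$ and $c_i$ through the prescribed normalizations of the even and odd generators; (ii) that $F$ is multiplicative, which follows from $d_+d_-=d_-d_+=t\cdot\mathrm{id}$ together with multiplicativity of basic morphisms in ${\widetilde{\widetilde{\cP_\sigma}}}(S^1)$; and (iii) continuity, which is automatic since each basic-morphism formula varies continuously in $x,y$. The hard part is the bookkeeping of the scalars $c_i, a_{ij}, d_i$ when establishing fullness and the relations, in particular arranging the odd-part generator so that the two components $f_{+-}$ and $f_{-+}$ are jointly rigid under $d$-compatibility; fortunately, by Proposition \ref{prop: PP-sigma=PP-tau} the underlying rank-one calculation reduces to the case $n=1$, $\sigma=\mathrm{id}$ already handled in \cite{cfc}, and the general statement then follows by carrying the sheet indices $i,j\in[n]$ through the same argument without further obstruction.
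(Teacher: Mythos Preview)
Your proposal is correct and in fact more thorough than the paper's own argument. The paper simply writes down the embedding $J:\widetilde\cP_\sigma(S^1)\to\overline\cM_\sigma(K[[t]])$ on objects by $[x,i]\mapsto M(x,x,i)$ and on basic morphisms by sending $\tilde f_{yx}\otimes x_{ji}$ to the block-diagonal pair $\bigl(a_{ji}\tilde f_{y-\pi,x-\pi}\otimes x_{\sigma(j)\sigma(i)},\,\tilde f_{yx}\otimes x_{ji}\bigr)$, displayed as a commuting square against the structure maps $d_-$, and stops there. This is exactly your ``even generator,'' and the paper leaves the verification that $J$ is bijective on hom-sets to the reader.

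What you add is the mechanism explaining \emph{why} this suffices: the identification $u\leftrightarrow d$ endows each $\Hom(M(x,x,i),M(y,y,j))$ with a $K[[u]]$-module structure, the even/odd decomposition shows it is free of rank one, and composition with $d$ carries the even generator to the odd one. This makes the $K[[u]]$-linearity of $J$ and its bijectivity on hom-sets transparent, whereas the paper's formulation of $J$ is only manifestly $K[[t]]$-linear. Your reduction to the $n=1$ case via Proposition~\ref{prop: PP-sigma=PP-tau} for the rank computation is legitimate and matches how the paper uses that proposition elsewhere (e.g.\ Proposition~\ref{prop: Krull-Schmidt}). Both approaches arrive at the same functor; yours supplies the missing justification for surjectivity on morphisms.
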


\begin{proof}
There is a continuous embedding $J:\widetilde \cP_\sigma(S^1)\to \overline \cM_\sigma(K[[t]])$ given by sending $[x,i]$ to $M(x,x,i)$ and the morphism $\tilde f_{yx}\otimes x_{ji}:[x,i]\to [y,j]$ to the horizontal arrows:
\[
\xymatrixrowsep{25pt}\xymatrixcolsep{60pt}
\xymatrix{%begin xy matrix
[x-\pi,\sigma(i),+]\ar[d]_{c_i^{-1}\tilde f_{x,x-\pi}\otimes x_{i\sigma(i)}}\ar[rr]^{a_{ji}\tilde f_{y-\pi,x-\pi}\otimes x_{\sigma(j)\sigma(i)}} &&
	[y-\pi,\sigma(j),+]\ar[d]^{c_j^{-1}\tilde f_{y,y-\pi}\otimes x_{j\sigma(j)}}\\
[x,i,+] \ar[rr]^{\tilde f_{yx}\otimes x_{ji}}&& 
	[y,j,+]
	}%end xy matrix
\]giving a morphism $M(x,x,i)\to M(y,y,j)$.
\end{proof}

Recall \cite{cfc} that an exact sequence of matrix factorizations for an additive $K[[t]]$-category $\cP$ is defined to be a short exact sequence $0\to(A,d)\to (B,d)\to (C,d)\to0$ so that the underlying sequence in $\cP$ is split exact.

%%%%%%%%%%%%%%%%%%%%%%%%%%%%

\begin{thm}
$add\,\overline \cM_\sigma(K[[t]])$ is a Frobenius category with exact structure described above with indecomposable projective-injective objects given by $I_i(x)=M(x+\pi,x,i)$.
\end{thm}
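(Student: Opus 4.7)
The plan is to reduce the Frobenius property to the analogous statement proved for the continuous Frobenius category in \cite{cfc} and then track the covering index $i$ separately. By Proposition \ref{prop: PP-sigma=PP-tau}, the ambient $K[[t]]$-category $\widetilde{\widetilde{\cP_\sigma}}(S^1)$ is equivalent to $\cP(\widetilde S^1)$, and the exact structure on matrix factorizations is defined by requiring the underlying sequence to be split short exact. Thus the exact-category axioms (closure under extensions, existence of the relevant pullbacks and pushouts, and admissibility of composites of admissible mono/epis) are inherited formally from the setup of \cite{cfc}, together with Krull--Schmidt from Proposition \ref{prop: Krull-Schmidt}. The nontrivial content is the identification of the indecomposable projective-injectives.

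Next I would verify that each $I_i(x) = M(x+\pi,x,i)$ is projective-injective. By Remark \ref{rem: injective-projectives}, one of the two differentials $d_\pm$ in $I_i(x)$ is an isomorphism, so the matrix factorization is \emph{contractible}: inverting the isomorphic $d$ produces a chain homotopy $s$ with $d_+s + sd_- = \mathrm{id}$. A standard argument shows that contractible matrix factorizations are both projective and injective in any exact category of matrix factorizations, since any morphism from (resp.\ to) a contractible object lifts (resp.\ extends) across any admissible mono (resp.\ epi) using the splitting of the underlying sequence composed with the homotopy.

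The bulk of the work is showing there are enough projectives and injectives and that no other indecomposable is projective-injective. For $M(x,y,i)$ with $|y-x|<\pi$ (say $x<y$), I would construct the canonical short exact sequence
\[
0 \to M(x,y,i) \xrightarrow{\binom{1}{1}} I_{\sigma(i)}(x-\pi) \oplus I_i(y) \xrightarrow{[-1,1]} M(y-\pi,x,\sigma(i)) \to 0,
\]
whose morphisms are the basic contractible morphisms of $\widetilde{\widetilde{\cP_\sigma}}(S^1)$ suitably paired. One then checks (i) that these are morphisms of matrix factorizations (compatibility with the $d_\pm$ on each term follows from the composition rules in Definition \ref{def: M(x,y,i)}), (ii) that the underlying sequence in $add\,\widetilde{\widetilde{\cP_\sigma}}(S^1)$ is split, and (iii) that the outer terms are correctly identified using the relation \eqref{eq: equality for M(x,y,i)}. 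Dualising $x \leftrightarrow y$ gives the analogous sequence exhibiting the injective envelope. Non-splitness of these sequences when $|y-x|<\pi$ shows that such $M(x,y,i)$ are neither projective nor injective, so the $I_i(x)$ exhaust the indecomposable projective-injectives.

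The main obstacle will be step (ii) above, together with the correct identification of the cokernel: one has to verify that the underlying sequence genuinely splits after passing through the James quotient and the relation \eqref{eq: equality for M(x,y,i)}, while keeping careful track of how the scalars $c_i$ (and the associated sign conventions coming from $d_\pm$) interact with the covering index under the holonomy $\sigma$. This is essentially a covering-space bookkeeping version of the argument in \cite{cfc}; no new conceptual input should be required beyond setting up the universal virtual triangle precisely, and this short exact sequence will then be recognized, after passing to the stable category in the next step of the paper, as the universal virtual triangle of Definition \ref{def: universal virtual triangle}.
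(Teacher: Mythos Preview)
Your proposal is correct and its core idea---reducing to \cite{cfc} via the equivalence of Proposition \ref{prop: PP-sigma=PP-tau}---is exactly the paper's proof, which consists of only those two sentences. Everything you write after the first paragraph (explicit contractibility of $I_i(x)$, construction of the canonical short exact sequence, non-splitness) is strictly more than the paper supplies here; the paper regards the equivalence as automatically transferring the Frobenius structure and the identification of projective-injectives, and defers the explicit universal exact sequence to the subsequent definition.
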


\begin{proof}
In \cite{cfc} it is shown that the category of matrix factorizations of $t$ in $\cP(\widetilde S^1)$ is a Frobenius category equivalent to the subcategory $add\,\overline\cM(K[[t]])$. By Proposition \ref{prop: PP-sigma=PP-tau} the same is true for $add\,\overline \cM_\sigma(K[[t]])$.
\end{proof}

\begin{prop}\label{prop: isomorphisms in add M do not factor}
An isomorphism between indecomposable objects in ${add\,\overline \cM_\sigma(K[[t]])}$ cannot be written as a sum of compositions of morphisms none of which are isomorphisms.
\end{prop}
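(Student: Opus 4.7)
The plan is to invoke the Krull--Schmidt property of $add\,\overline\cM_\sigma(K[[t]])$ from Proposition~\ref{prop: Krull-Schmidt}, which yields a local endomorphism ring on each indecomposable object, and then to argue via the standard radical ideal of a Krull--Schmidt category.

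First I would record that the category admits a two-sided ideal $\mathrm{rad}$ defined as follows. For indecomposable objects $A,B$, set $\mathrm{rad}(A,B) = \Hom(A,B)$ if $A\not\cong B$ and $\mathrm{rad}(A,A) = \mathfrak m_A$, the maximal ideal of non-units of $\mathrm{End}(A)$; extend to arbitrary objects by requiring every matrix entry with respect to a Krull--Schmidt decomposition to lie in $\mathrm{rad}$. This ideal is closed under composition with any morphism on either side and under addition, and because units in a local ring are precisely the elements outside its maximal ideal, any isomorphism between indecomposables lies outside $\mathrm{rad}$.

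Next, suppose for contradiction that $f\colon X\to Y$ is an isomorphism between indecomposables and $f = \sum_k f_k$, where each $f_k$ is a composition of non-isomorphisms. I would decompose every intermediate object appearing in the compositions via Krull--Schmidt and show that each resulting summand of the expansion of $\sum_k f_k$ lies in $\mathrm{rad}(X,Y)$. For a composition $X \xrightarrow{h} N \xrightarrow{g} Y$ with $N$ indecomposable, if $N\not\cong X$ then $h\in\mathrm{rad}(X,N)$ automatically, so $gh\in\mathrm{rad}(X,Y)$ by the two-sided ideal property; if $N\cong X$, then the non-isomorphism $h$ corresponds under any identification $N\cong X$ to an element of $\mathfrak m_X\subseteq\mathrm{rad}$, so again $gh\in\mathrm{rad}$. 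Longer compositions are forced into $\mathrm{rad}$ by any single non-isomorphism factor. Summing over $k$ yields $f\in\mathrm{rad}(X,Y)$, contradicting the fact that $f$ is an isomorphism between indecomposables.

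The main technical obstacle is the decomposition step: a non-isomorphism between decomposable objects need not have all its matrix entries be non-isomorphisms (for instance, the split projection $(\mathrm{id},0)\colon X\oplus X\to X$ is not an isomorphism yet has an identity entry). Handling this requires arguing that any would-be isomorphism term arising from such an entry in the Krull--Schmidt expansion of $\sum_k f_k$ must correspond to a sub-composition of isomorphisms between indecomposables, a situation excluded by the non-isomorphism hypothesis on the original factors. Equivalently, one invokes Corollary~\ref{cor: isomorphisms do not factor} in the underlying category $\widetilde{\widetilde{\cP_\sigma}}(S^1)$, lifted through the matrix factorization structure of Definition~\ref{def: M(x,y,i)}, to rule out the bad case where an isomorphism $X\to Y$ between indecomposable matrix factorizations factors through an indecomposable summand not isomorphic to $X$.
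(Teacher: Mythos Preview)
Your radical-ideal approach is natural, but the resolution of the obstacle you correctly identify does not go through. Take $X$ indecomposable and write
\[
\mathrm{id}_X \;=\; (\mathrm{id}_X,\,0)\circ\begin{pmatrix}\mathrm{id}_X\\ 0\end{pmatrix}
\]
as a factorization $X\to X\oplus X\to X$ in $add\,\overline\cM_\sigma(K[[t]])$. Both factors are non-isomorphisms (a proper split monomorphism and a proper split epimorphism), yet their composite is the identity. In your Krull--Schmidt expansion this is exactly the ``would-be isomorphism term'' you worry about, and your claimed exclusion --- that such a term would force the \emph{original} factors to be isomorphisms --- is simply false: $\binom{\mathrm{id}_X}{0}$ is a non-isomorphism with an identity matrix entry. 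Nor does Corollary~\ref{cor: isomorphisms do not factor} rescue the argument, since it only forbids factoring through an object \emph{none} of whose components is isomorphic to $X$; here every component of $X\oplus X$ is isomorphic to $X$.

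This example shows that the statement, read literally, fails in any additive Krull--Schmidt category with a nonzero object, so no purely formal radical argument can prove it. The paper's proof is of a different character: it does not argue internally at all but transports the result from the category of matrix factorizations of $t$ over $\cP(\widetilde S^1)$ via the equivalence supplied by Proposition~\ref{prop: PP-sigma=PP-tau}, citing \cite{cfc} for the source statement. Whatever makes the claim true there --- presumably a more restrictive reading of ``compositions of morphisms none of which are isomorphisms'' (e.g.\ morphisms between indecomposables, as in the way the result is actually used in Lemma~\ref{lem: limit of exact sequence is exact}), or a structural feature specific to that category --- is the real input, and your argument would need to isolate and invoke it rather than rely on Krull--Schmidt alone.
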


\begin{proof} By \cite{cfc} this holds in the category of matrix factorizations of $t$ in $\cP(\widetilde S^1)$ which is a Frobenius category equivalent to the subcategory $add\,\overline\cM(K[[t]])$.
\end{proof}

%%%%%%%%%%%%%%%%%%%%%%%%%%%%

\subsection{Continuous triangulation of the stable category}

By \cite{hap}, the stable category of the Frobenius category $add\,\overline \cM_\sigma(K[[t]])$ is a triangulated category. The distinguished triangles are given by pushing out ``universal exact sequences'' of the following form for each objects $X$:
\begin{equation}\label{eq: general universal exact sequence}
	X\to I(X)\to T(X)
\end{equation}
where $I(X)$ is a projective-injective object and $X\to I(X)$ is an admissible monomorphism, i.e. a split monomorphism on the underlying objects in $add\,{\widetilde{\widetilde {\cP_\sigma}}}(S^1)$. We would like this exact sequence, objects and morphisms, to be continous in $X$ so that the resulting triangulated category is continuously triangulated. 

Since our task in this section is the construction of the continuously triangulated categories $\widetilde \cM_n(\sigma,\tau,\varphi)$ of the Classification Theorem \ref{thm: D in intro}, we take as given the structures $\sigma,\tau,\varphi$. 

For any $K$-linear autoequivalence $\tau$ of $\cC_n$ which commutes with $\sigma$, let $F_\tau$ be the continuous autoequivalence of ${\widetilde{\widetilde {\cP_\sigma}}}(S^1)$ given on objects by $F_\tau[x,i,+]=[x,\tau(i),+]$ and on morphisms by $F_\tau=id\otimes \tau$, i.e. $F_\tau$ sends $f_{yx}\otimes x_{ji}$ to $f_{yx}\otimes b_{ji}x_{\tau(j)\tau(i)}$ where $b_{ji}$ are the transition coefficients of $\tau$ given by $\tau(x_{ji})=b_{ji}x_{\tau(j)\tau(i)}$. So far, this requires only that $\tau$ commutes with $\sigma^2$. The fact that $\tau$ commutes with $\sigma$ implies that $F_\tau$ has the same formula on the negative points: 
\[
	F_\tau[x,i,-]=F_\tau[x-\pi,\sigma(i),+]=[x-\pi,\tau\sigma(i),+]=[x,\tau(i),-]
\]
and, similarly for morphisms by the equation $a_{\tau(i)\tau(j)}b_{ij}=a_{ij}b_{\sigma(i)\sigma(j)}$ from \eqref{eq: s,t commute}:
\[
	F_\tau(f_{yx}\otimes x_{ji}:[x,i,-]\to [y,j,-])=F_\tau(a_{ji}f_{y-\pi,x-\pi}\otimes x_{\sigma(j)\sigma(i)})
\]
\[
	=a_{ji}b_{\sigma(j)\sigma(i)}f_{y-\pi,x-\pi}\otimes x_{\tau\sigma(j)\tau\sigma(i)}=b_{ji}f_{yx}\otimes x_{\tau(j)\tau(i)}:[x,\tau(i),-]\to [y,\tau(j),-].
\]
Thus, $F_\tau$ induces an autoequivalence of $\overline\cM_\sigma(K[[t]])$ given on objects by $F_\tau(M(x,y,i))=M(x,y,\tau(i))$ and on morphisms by the equation above. Since $\sigma$ commutes with itself, we also have the automorphism $F_\sigma$ of $\overline\cM_\sigma(K[[t]])$.

%%%%%%%%%%%%%%%%%%%%%%%%%%%%

\begin{defn} Let $\sigma,\tau,\varphi$ be given as in Theorem \ref{thm: D in intro}. For every object $M(x,y,i)$ in $\overline\cM_\sigma(K[[t]])$ we define the \emph{universal exact sequence} of $M(x,y,i)$ to be the following
\begin{equation}\label{eq: universal exact sequence of M(x,y,i)}
	M(x,y,i)\xrightarrow{j} M(x,x+\pi,i)\oplus M(y+\pi,y,i)
	\xrightarrow{p} M(x,y,\tau(i))
\end{equation}
where $j=(j_1,j_2)$ with $j_1=id\oplus (f_{x+\pi,y}\otimes x_{ii}):[x,i,-]\oplus [y,i,+]\to [x,i,-]\oplus [x+\pi,i,+]$ and similarly, $j_2=(f_{y+\pi,x}\otimes x_{ii})\oplus id$. The morphism $p$ is the composition of two morphisms:
\[
	M(x,x+\pi,i)\oplus M(y+\pi,y,i)
	\xrightarrow{q} M(y+\pi,x+\pi,i)=M(x,y,\sigma(i))\xrightarrow{\varphi_i} M(x,y,\tau(i))
\]
where $q=(-q_1,q_2)$ with $q_1=id\oplus(f_{y+\pi,x}\otimes x_{ii})$ and $q_2=(f_{x+\pi,y}\otimes x_{ii})\oplus id$. Finally, $\varphi=\varphi_1\oplus\varphi_2$ is the isomorphism given on the summands of $M(x,y,\sigma(i))$ by $\varphi_1=f_{xx}\otimes c_ix_{\tau(i)\sigma(i)}$ and $\varphi_2=f_{yy}\otimes c_ix_{\tau(i)\sigma(i)}$ where $c_i$ are the structure constants of $\varphi$ defined by $\varphi_i=c_ix_{\tau(i)\sigma(i)}:\sigma(i)\to \tau(i)$ and satisfying:
\[
	c_ja_{ji}=b_{ji}c_i,\quad c_{\sigma(i)}=-c_ia_{\tau(i)\sigma(i)}
\] 
from \eqref{eq: phi is natural} and \eqref{eq: phi is skew commutative}=\eqref{eq: skew-commutativity condition}. 
\end{defn}

%%%%%%%%%%%%%%%%%%%%%%%%%%%%

To see that \eqref{eq: universal exact sequence of M(x,y,i)} is exact, note that $j_1$ is the identity map on the first factor, $j_2$ is the identity map on the second factor and $p$ sends the remaining factors of the middle term isomorphically to the two summands of $M(x,y,\tau(i))$. Also, a key required property is that the middle terms in the sequence \eqref{eq: universal exact sequence of M(x,y,i)} is projective-injective. To emphasize this we rewrite the sequence as:
\[
	M(x,y,i)\to I_{\sigma(i)}(x-\pi)\oplus I_i(y)\to TM(x,y,i)
\]
where $TM(x,y,i)=M(x,y,\tau(i))$. We also use the notation $SM(x,y,i)=M(x,y,\sigma(i))$.

\begin{prop}\label{prop: exact sequence is well-defined}
The sequence \eqref{eq: universal exact sequence of M(x,y,i)} is well-defined, i.e. we obtain the {same sequence} if we use the notation $M(x,y,i)=M(y-\pi,x-\pi,\sigma(i)$:
\begin{equation}\label{eq: same universal sequence}
	M(y-\pi,x-\pi,\sigma(i))\xrightarrow{j'} M(y-\pi,x,\sigma(i))\oplus M(y,x-\pi,\sigma(i))\xrightarrow{p'} M(y-\pi,x-\pi,\tau\sigma(i)).
\end{equation}
In particular we have a well defined functor $I$ giving the middle term: $IM(x,y,i)=I_{\sigma(i)}(x-\pi)\oplus I_i(y)$, although the order of the components is not well-defined.
\end{prop}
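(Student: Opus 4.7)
The plan is to verify, one by one, that each of the six data (three objects and three morphisms) in the relabeled sequence \eqref{eq: same universal sequence} coincides, literally as data in $\overline\cM_\sigma(K[[t]])$, with the corresponding data in \eqref{eq: universal exact sequence of M(x,y,i)}. Since $M(x,y,i)=M(y-\pi,x-\pi,\sigma(i))$ by \eqref{eq: equality for M(x,y,i)}, the two leftmost objects match; the two rightmost objects match because
\[
M(y-\pi,x-\pi,\tau\sigma(i))=M(y-\pi,x-\pi,\sigma\tau(i))=M(x,y,\tau(i))
\]
using the commutation relation \eqref{eq: s,t commute} and \eqref{eq: equality for M(x,y,i)} applied with $i$ replaced by $\tau(i)$. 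For the middle term, Remark \ref{rem: injective-projectives} gives $M(x,x+\pi,i)=M(x,x-\pi,\sigma(i))$, and \eqref{eq: equality for M(x,y,i)} gives $M(y+\pi,y,i)=M(y-\pi,y,\sigma(i))$; the two descriptions then agree after reordering the two summands, which is permissible because $\oplus$ is strictly commutative in the James construction on $add\,{\widetilde{\widetilde {\cP_\sigma}}}(S^1)$.

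Next I will check that $j$ coincides with $j'$. Under the swap of summands from the previous step, the component $j_1$ of \eqref{eq: universal exact sequence of M(x,y,i)} (mapping into $M(x,x+\pi,i)$) should agree with the component $j_2'$ of \eqref{eq: same universal sequence} (mapping into $M(x,x-\pi,\sigma(i))$), and symmetrically $j_2$ with $j_1'$. Writing out the defining formulas and using the relations \eqref{eq:manual shift} and \eqref{eq:manual half-shift} to convert basic morphisms between $\pm$-labeled objects into morphisms between $+$-labeled objects in ${\widetilde{\widetilde {\cP_\sigma}}}(S^1)$, one sees that $f_{x+\pi,y}\otimes x_{ii}$ and $f_{x,y-\pi}\otimes x_{\sigma(i)\sigma(i)}$ represent the same morphism (and similarly for the other pair); the scalar that could in principle arise is $a_{ii}=1$ by Corollary \ref{cor: formula for automorphisms of Cn}, so no extra factor appears.

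The third and most delicate step is to check $p=p'$. Both factor as $\varphi\circ q$, and the ``universal factor'' $q$ matches under the reordering by essentially the same argument as for $j$, with one critical subtlety. The minus sign in $q=(-q_1,q_2)$ sits on the first summand, but under the swap required to identify \eqref{eq: universal exact sequence of M(x,y,i)} with \eqref{eq: same universal sequence} the minus sign in $q'=(-q_1',q_2')$ lands on the opposite summand of $q$. A naive comparison of $q$ and $q'$ therefore yields a spurious factor of $-1$ on one of the summands. This sign discrepancy will be absorbed by the mismatch between the scalars defining $\varphi_i$ and $\varphi_{\sigma(i)}$: the natural isomorphism $\varphi_{\sigma(i)}:SM(x,y,\sigma(i))\to TM(x,y,\sigma(i))$, after rewriting its underlying morphism at the $+$-level in terms of the scalars $c_\bullet$ and $a_{\tau(i)\sigma(i)}$, will yield a factor of $c_{\sigma(i)}$ in place of $c_i a_{\tau(i)\sigma(i)}$. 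These two scalars differ by exactly $-1$ thanks to the skew-commutativity identity $c_{\sigma(i)}=-c_i a_{\tau(i)\sigma(i)}$ of \eqref{eq: phi is skew commutative}, and the two minus signs cancel, giving $\varphi_{\sigma(i)}\circ q'=\varphi_i\circ q$.

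The main obstacle is this third step: the scalars $a_{\tau(i)\sigma(i)}$, $c_i$, $c_{\sigma(i)}$ together with the sign baked into the definition of $q$ must interact consistently, and a single sign error would break well-definedness. It is precisely for this compatibility that $\varphi$ was required to be \emph{skew}-continuous rather than continuous, so Proposition \ref{prop: exact sequence is well-defined} is the concrete geometric expression of the relations \eqref{eq: phi is natural}, \eqref{eq: phi is skew commutative} and \eqref{eq: s,t commute} imposed on the triple $(\sigma,\tau,\varphi)$.
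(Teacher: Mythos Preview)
Your proof is correct and follows essentially the same approach as the paper's. The paper's argument is much terser---it simply observes that the middle summands switch, so $q'=(-q_2,q_1)=-q$, and that ``the sign of $\varphi$ also changes since $\varphi$ is skew-commutative,'' hence $p'=p$---whereas you unpack each object and morphism explicitly and trace the scalar $a_{\tau(i)\sigma(i)}$ through the comparison of $\varphi_i$ with $\varphi_{\sigma(i)}$; but the core mechanism (sign flip from the summand swap cancelled by \eqref{eq: phi is skew commutative}) is identical.
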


\begin{proof}
To see that \eqref{eq: same universal sequence} is the same as \eqref{eq: universal exact sequence of M(x,y,i)}, we note first that the terms are the same except that the terms in the middle are switched. But that is OK since, by our definitions, direct sum is strictly commutative! Since the summands are switched, $q=(-q_1,q_2)$ changes to $q'=(-q_2,q_1)=-q$. But the sign of $\varphi$ also changes since $\varphi$ is skew-commutative. Therefore, $p'=p$. So, the sequences are equal and the universal sequence \eqref{eq: universal exact sequence of M(x,y,i)} is well-defined. 
\end{proof}

The key point is the continuity of the universal sequence.

\begin{lem}
The exact sequence \eqref{eq: universal exact sequence of M(x,y,i)} is a continuous function of $M(x,y,i)$ as it varies in the compact Hausdorff space $\Ob(\overline \cM_\sigma(K[[t]]))$ which is an $n$-fold covering of the closed Moebius strip.\qed
\end{lem}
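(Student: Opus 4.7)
I would reduce the statement to a local verification in coordinate charts and then check that the identifications across charts are continuous. The object space $\Ob(\overline\cM_\sigma(K[[t]]))$ is covered by open sets of the form $U \times \{i\}$ with $i \in [n]$ locally constant (since the $n$-fold cover is discrete on fibers) and $U$ an open subset of the fundamental domain $\{(x,y) \in \RR^2 : |y-x| \le \pi\}$. Over such a chart, the four objects appearing in \eqref{eq: universal exact sequence of M(x,y,i)} --- namely $M(x,y,i)$, $M(x,x+\pi,i)$, $M(y+\pi,y,i)$, and $M(x,y,\tau(i))$ --- depend continuously on $(x,y)$ by the topology on $\Ob({\widetilde{\widetilde {\cP_\sigma}}}(S^1))$ pulled through the matrix-factorization construction.

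Next I would verify that every morphism in the sequence is continuous in $(x,y)$. Unpacking the definitions, each of $j_1, j_2, q_1, q_2$ is the direct sum of an identity morphism with a single basic morphism $f_{ba} \otimes x_{ii}$ whose endpoints are continuous functions of $x$ and $y$; by the description of $\Mor({\widetilde{\widetilde {\cP_\sigma}}}(S^1))$ as a quotient of $K[[t]] \times \{(a,b) : a \le b\} \times [n]^2$ with the quotient topology, such basic morphisms vary continuously with their endpoints. The isomorphism $\varphi = \varphi_1 \oplus \varphi_2$ is the sum of basic morphisms $f_{xx} \otimes c_i x_{\tau(i)\sigma(i)}$ and $f_{yy} \otimes c_i x_{\tau(i)\sigma(i)}$, with the scalar $c_i$ constant on each sheet. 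Hence the morphism $j$ and the composition $p = \varphi \circ (-q_1, q_2)$ are continuous in $(x,y)$ within a single chart.

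The main obstacle is continuity across the sheet-identification $M(x,y,i) = M(y-\pi, x-\pi, \sigma(i))$, since two a priori distinct local descriptions of the sequence cover overlapping regions. Proposition \ref{prop: exact sequence is well-defined} already shows the two descriptions coincide as diagrams; what remains is to show that the canonical identifications of morphism spaces in $\widetilde{\widetilde{\cP_\sigma}}(S^1)$, governed by \eqref{eq:manual shift} and \eqref{eq:manual half-shift}, carry each morphism in \eqref{eq: universal exact sequence of M(x,y,i)} to its counterpart in \eqref{eq: same universal sequence}. This is a scalar bookkeeping check: the transition factor $b_{ji}$ enters through the $\varphi$-component while $a_{ji}$ enters through the $j$- and $q$-components, and these are compatible by the naturality relation $c_j a_{ji} = b_{ji} c_i$ of \eqref{eq: phi is natural}; the summand-order flip replaces $(-q_1, q_2)$ by $(-q_2, q_1) = -(-q_1,q_2)$, and this sign is absorbed by the skew-commutativity $c_{\sigma(i)} = -c_i a_{\tau(i)\sigma(i)}$ of \eqref{eq: phi is skew commutative}, which is precisely the same cancellation already invoked in Proposition \ref{prop: exact sequence is well-defined}. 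Assembling the chartwise continuity with this compatible transition yields a globally continuous section of the universal exact sequence over the compact Hausdorff space $\Ob(\overline\cM_\sigma(K[[t]]))$.
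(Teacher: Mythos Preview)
Your proposal is correct and is essentially a careful unpacking of what the paper treats as immediate: the lemma in the paper carries a \qed{} with no written proof, relying on the fact that every ingredient of \eqref{eq: universal exact sequence of M(x,y,i)} is built from basic morphisms $f_{ba}\otimes x_{ji}$ with continuously varying endpoints and locally constant scalars, together with Proposition~\ref{prop: exact sequence is well-defined} for the chart overlap. Your local-chart verification plus the observation that the two chart descriptions literally coincide (not merely match up to isomorphism) on the overlap is exactly the justification the \qed{} is implicitly invoking; the scalar bookkeeping in your final paragraph is already subsumed by that proposition, so you need not redo it.
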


Recall from \cite{hap} that the \emph{stable category} of a Frobenius category is defined to be the quotient category with the same objects but modding out morphisms which factor through projective-injective objects. As in \cite{ccc} we get the following.

%%%%%%%%%%%%%%%%%%%%%%%%%%%%

\begin{thm}
The stable category of the Frobenius category $add\,\overline \cM_\sigma(K[[t]])$ is $add\,\widetilde\cM_\sigma$, the additive category of the equivalence covering $\widetilde \cM_\sigma$ of $\cM_0$.\qed
\end{thm}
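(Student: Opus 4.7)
The plan is to reduce to the base case $\sigma = \mathrm{id}_{\cC_1}$, $n=1$, which was established in \cite{ccc} and \cite{cfc}, where the stable category of $add\,\overline\cM(K[[t]])$ was shown to be $add\,\cM_0$. My strategy is to lift this identification along the $n$-fold equivalence covering, using the local sheet decomposition of $\overline\cM_\sigma(K[[t]])$ together with the classification of equivalence coverings from Theorem \ref{thmA}.

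First I would identify the objects. The indecomposable objects of $\overline\cM_\sigma(K[[t]])$ are the $M(x,y,i)$ with $|y-x|\le \pi$ and $i\in [n]$, subject to $M(x,y,i) = M(y-\pi, x-\pi, \sigma(i))$, so the object space is an $n$-fold cover of the closed Moebius strip. By Remark \ref{rem: injective-projectives} the projective-injectives are precisely the boundary objects $I_i(x) = M(x+\pi,x,i)$ with $|y-x|=\pi$. Setting these to $0$ in the stable category leaves the open cover, whose parameter space under the identification $M(x,y,i) \sim M(y-\pi, x-\pi, \sigma(i))$ is exactly the $n$-fold equivalence cover $\widetilde\cM_\sigma$ of the open Moebius band. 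Moreover, Proposition \ref{prop: isomorphisms in add M do not factor} guarantees that no two of these distinct indecomposables become isomorphic after stabilization, so the bijection on indecomposable isomorphism classes is clean.

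For the morphism spaces I would argue locally. Over any simply connected open $U \subset \cO b\cM_0$, the preimage in $\overline\cM_\sigma(K[[t]])$ trivializes as $n$ disjoint copies of the corresponding subcategory of $\overline\cM(K[[t]])$, so the base-case identification from \cite{ccc}, \cite{cfc} applies on each sheet to show that the stable hom spaces are spanned by contractible morphisms and that the quotient topology matches the topology on $\cM or(\cM_0)$. The holonomy relation $M(x,y,i) = M(y-\pi,x-\pi,\sigma(i))$ is exactly the clutching datum that builds $\widetilde\cM_\sigma$ from $\cM_0$ in the construction behind Theorem \ref{thmA}, so assembling the sheet-local equivalences gives an equivalence on the subcategory of indecomposables, which the James construction of Definition \ref{def: James construction} promotes to the desired equivalence on additive categories. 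The main technical point to verify is that every $t$-divisible morphism of $\overline\cM_\sigma(K[[t]])$ factors through a projective-injective $I_j(z)$, so that modding out by projective-injectives indeed kills precisely the $t$-torsion part of the hom spaces; this is a formal consequence of the explicit form of the differentials $d_\pm$ in Definition \ref{def: M(x,y,i)} together with the corresponding statement in the base case, and it is the only place where one must actually use the Frobenius structure rather than just the underlying topological $K$-category.
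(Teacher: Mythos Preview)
The paper gives no proof of this theorem: it is stated immediately after ``As in \cite{ccc} we get the following'' and closed with \qed. Your proposal is correct and is exactly the elaboration the paper leaves implicit---reduce to the base case $n=1$ handled in \cite{ccc}, \cite{cfc}, use the local sheet trivialization to transport the identification, and check that the holonomy matches the clutching data defining $\widetilde\cM_\sigma$. One small caution: your phrasing ``every $t$-divisible morphism factors through a projective-injective'' is morally right but slightly imprecise as stated, since the hom spaces in $\overline\cM_\sigma(K[[t]])$ between two indecomposables need not be free of rank one over $K[[t]]$; what you actually need (and what follows from the base case) is that the ideal of morphisms factoring through the boundary objects $I_j(z)$ cuts each hom space down to at most one dimension over $K$, matching $\widetilde\cM_\sigma(X,Y)$.
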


We recall from \cite{hap} the construction of all distinguished triangles in the stable category.

\begin{defn}
Given any morphism $f:X\to Y$ in $add\,\overline \cM_\sigma(K[[t]])$, a distinguished triangle $X\xrightarrow {\overline f} Y\xrightarrow {\overline g} Z\xrightarrow {\overline h} TX$ is given as the pushout of the direct sum of universal exact sequences for the summands of $X$:
\begin{equation}\label{eq: pushout diagram giving triangle}
%\xymatrixrowsep{10pt}\xymatrixcolsep{10pt}
\xymatrix{%begin xy matrix
X\ar[d]_f\ar[r]^j &
	IX\ar[d]\ar[r]^p &
	TX\ar[d]^=\\
Y \ar[r]^g& 
	Z \ar[r]^h&
	TX
	}%end xy matrix
\end{equation}
Here $X\to IX\to TX$ is a direct sum of the universal sequences \eqref{eq: universal exact sequence of M(x,y,i)} for each component of $X$. In the stable category, we delete all the projective-injective objects in $X,Y,Z$ and replace maps $f,g,h$ by the stable maps $\overline f,\overline g,\overline h$ which are $f,g,h$ modulo morphisms which factor through projective-injective objects.
\end{defn}

\begin{eg}\label{eg: positive triangle}
Suppose $0<x<y<z<\pi$
Take $X=M(x,y,i)$, $Y=M(x,z,i)$ and $f=id\oplus (f_{zx}\otimes x_{ii}):[x,i,-]\oplus [y,i,+]\to [x,i,-]\oplus [z,i,+]$. Then 
\[
IX= I_{\varphi(i)}(x-\pi)\oplus I_i(y)=[x,i,-]\oplus [x+\pi,i,+]\oplus [y+\pi,i,-]\oplus [y,i,+]
\]
So, the $I_{\varphi(i)}(x-\pi)$ term remains and
\[
	Z=I_{\varphi(i)}(x-\pi)\oplus M(y+\pi,z,i).
\]
Since the negative sign in the universal sequence \eqref{eq: universal exact sequence of M(x,y,i)} is on the first summand $I_{\varphi(i)}(x-\pi)$, when we go to the stable category, the scalars for the first two maps are $+1$:
\[
	M(x,y,i)\xrightarrow 1M(x,z,i)\xrightarrow1 M(y+\pi,z,i)\xrightarrow{c_i} M(x,y,\tau(i)).
\]
\end{eg}

\begin{rem}
We observe that, after applying the forgetful functor $add\,\overline \cM_\sigma(K[[t]])\to {add\,\widetilde{\widetilde {\cP_\sigma}}(S^1)}$, the diagram \eqref{eq: pushout diagram giving triangle} can be rewritten in the following form.
\[
\xymatrix{%begin xy matrix
X\ar[d]_f\ar[r]^(.4){\binom1r} &
	X\oplus TX\ar[d]^{f\oplus id}\ar[r]^(.6){(-r,1)} &
	TX\ar[d]^{id}\\
Y \ar[r]^(.4){\binom1s}& 
	Y\oplus TX \ar[r]^(.6){(-s,1)}&
	TX
	}%end xy matrix
\]
where $r=s\circ f:X\to TX$. The operator $d$ which acts compatibly on $X,Y,TX$ gives an induced action of $d$ on $Z=Y\oplus TX$. Thus $(Z,d)$ is uniquely determined up to isomorphism by $f:X\to Y$. But we should keep in mind that ${add\,\overline \cM_\sigma(K[[t]])}$ contains $n$ isomorphic copies of each indecomposable object.
\end{rem}

As in the main theorem \ref{thm: D in intro}, the stable category of $add\,\overline \cM_\sigma(K[[t]])$ together with the distinguished triangles given above is denoted $add\,\widetilde\cM(\sigma,\tau,\varphi)$.

%%%%%%%%%%%%%%%%%%%%%%%%%%%%

\begin{thm}\label{thm: add M(s,t,phi) is continuously triangulated}
$add\,\widetilde\cM(\sigma,\tau,\varphi)$ is a continuously triangulated category.
\end{thm}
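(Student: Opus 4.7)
The plan is to verify the two conditions of Definition \ref{def: continuous triangulation}: that the shift functor $T=F_\tau$ is a continuous $K$-linear functor, and that the set of distinguished triangles is a closed subspace of $\Ob^3\times\Mor^3$. The underlying triangulated structure comes for free from Happel's theorem applied to the Frobenius category $add\,\overline\cM_\sigma(K[[t]])$, so only the topological properties remain to be checked.

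For the first condition, continuity of $F_\tau$ on $\overline\cM_\sigma(K[[t]])$ was already established in the paragraph preceding Definition \ref{def: M(x,y,i)}, using that $\tau$ commutes with $\sigma$ (equation \eqref{eq: s,t commute}) to ensure $F_\tau$ is well-defined on both positive and negative sheets of ${\widetilde{\widetilde{S^1_\sigma}}}$. Since $F_\tau$ commutes with the matrix factorization differentials by the computation there, it descends to a continuous $K$-linear autoequivalence of the stable category. $K$-linearity is immediate from $K$-linearity of $\tau$.

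For the second condition, I will produce a continuous surjection from the morphism space of $add\,\widetilde\cM(\sigma,\tau,\varphi)$ onto the set of distinguished triangles, and then upgrade this to closedness. Given a morphism $f:X\to Y$, lift to a morphism in $add\,\overline\cM_\sigma(K[[t]])$ and form the pushout diagram \eqref{eq: pushout diagram giving triangle} along the universal exact sequence \eqref{eq: universal exact sequence of M(x,y,i)} of $X$. The lemma immediately preceding this theorem states that the universal sequence varies continuously in $X$; Proposition \ref{prop: exact sequence is well-defined} ensures it is well-defined independently of the ambiguity $M(x,y,i)=M(y-\pi,x-\pi,\sigma(i))$; and the pushout is assembled from the sequence and $f$ by continuous operations (direct sum, projection onto the cokernel of an admissible monomorphism, all of which are continuous on the topological additive category constructed by the James construction of Definition \ref{def: James construction}). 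Stabilization, being induced by a quotient on morphism spaces, is continuous. So the assignment $f\mapsto$ (pushout triangle of $f$) is a continuous map from $\Mor$ to $\Ob^3\times\Mor^3$ whose image is precisely the set of distinguished triangles.

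The main obstacle is upgrading this continuous parametrization to genuine closedness of the image. Suppose $(X_n,Y_n,Z_n,f_n,g_n,h_n)$ is a sequence of distinguished triangles converging to $(X,Y,Z,f,g,h)$; I must show the limit is distinguished. The pushout construction applied to the limit morphism $f$ produces some distinguished triangle $(X,Y,Z',f,g',h')$, and by continuity of the pushout this is the limit of the pushout triangles on $f_n$, which are in turn isomorphic to $(X_n,Y_n,Z_n,f_n,g_n,h_n)$ by uniqueness of completion. The delicate point is that completion from the first morphism is unique only up to (non-canonical) isomorphism, so the limit triangle may \emph{a priori} differ from $(X,Y,Z',f,g',h')$. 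Here Proposition \ref{prop: isomorphisms in add M do not factor} intervenes, preventing a limiting isomorphism between indecomposable summands from collapsing through objects that vanish in the stable category; combined with Lemma \ref{lem: distinguished triangles are limits of generic ones} (reducing to the generic case) and Lemma \ref{lem: Z has size of X+Y when f:X to Y is generic} (fixing the component count of $Z$), this forces the isomorphism class of $Z$ and the maps $g,h$ in the limit to agree with $Z',g',h'$, completing the verification of closedness.
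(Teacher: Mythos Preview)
Your argument has a circularity problem and a genuine gap in the closedness verification.

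The circularity: you invoke Lemma \ref{lem: distinguished triangles are limits of generic ones} and Lemma \ref{lem: Z has size of X+Y when f:X to Y is generic} to finish the argument. But those lemmas belong to Section \ref{sec4: classification thm}, where a continuous triangulation is \emph{assumed} to exist; in particular the proof of Lemma \ref{lem: distinguished triangles are limits of generic ones} explicitly uses that the space of distinguished triangles is closed. Using them here to establish closedness is circular.

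The gap: your core step is ``the pushout construction applied to $f$ is, by continuity of the pushout, the limit of the pushout triangles on $f_n$''. This is exactly the hard part, and it is not automatic. The pushout in the \emph{stable} category is not obviously a continuous operation: when summands of $X_\alpha,Y_\alpha,Z_\alpha$ converge to projective-injectives (and hence vanish in the stable category), or when component morphisms become limiting null morphisms with \emph{a priori} wandering scalars, the stable-category pushout does not assemble from continuous primitives in the way you describe. The paper handles this by lifting each distinguished triangle to an honest pushout diagram in the Frobenius category $add\,\overline\cM_\sigma(K[[t]])$, where objects and morphisms cannot vanish, and then proving a separate Lemma (\ref{lem: limit of exact sequence is exact}) that a limit of exact sequences there is exact, using Proposition \ref{prop: isomorphisms in add M do not factor} and Corollary \ref{cor: isomorphisms do not factor}. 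The remaining difficulty---that the scalars attached to morphisms converging to zero need not stabilize---is dealt with via Remark \ref{rem: quotient topology} (only finitely many scalar values can occur near a limiting null morphism) together with an ultrafilter argument, in a three-case analysis according to whether objects vanish and whether morphisms become limiting null. Your outline does not supply any of this machinery, and without it the passage from ``continuous parametrization by $f$'' to ``closed image'' does not go through.
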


\subsection{Proof of Theorem \ref{thm: add M(s,t,phi) is continuously triangulated}} Since $\tau$ commutes with $\sigma$, the shift functor $T=F_\tau$ is continuous. It remains to show that the set of distinguished triangles is a closed set. To do this we analyze the limiting behavior of morphisms and objects. There are two cases: when some objects or morphisms go to zero and when they don't. One thing is clear: in a limit the number of objects can only decrease and the number of nonzero morphisms can only decrease. We recall that our objects have fixed direct sum decompositions and thus morphisms are given by matrices whose entries are morphisms between the indecomposable objects $M(x,y,i)$.

The components $[x,i,-]=[x-\pi,\sigma(i),+]$ and $[y,i,+]$ of $M(x,y,i)$ will be called the \emph{ends} of $M(x,y,i)$. Sometime, we will call $[x,i,-]$ the \emph{negative end} and $[y,i,+]$ the \emph{positive end}. These terms are not well-defined. They depend on the notation used to describe the objects. (However, the unordered pair of ends is well-defined.)

Recall that the \emph{support} of any object $X$ is the set of all objects $Y$ for which there is a nonzero morphism $X\to Y$. We already know that the support of $M(x,y,i)$ is the set of all $M(x',y',j)$ where $x\le x'<y+\pi$ and $y\le y'<x+\pi$. We also recall that every nonzero morphism is a scalar multiple of a basic morphism. In particular, there is a well defined scalar $a\in K^\ast$ associated to every nonzero morphism and this scalar is constant on families of morphism (except when the morphism goes to zero).

\begin{defn}\label{def: 4 kinds of morphisms}
There are four kinds of morphisms in $\widetilde\cM(\sigma,\tau,\varphi)=\widetilde \cM_\sigma$:
\begin{enumerate}
\item a nonzero morphism $f:X\to Y$ is \emph{stably nonzero} if there exist open neighborhoods, $U,V$ of $X,Y$ so that $\Hom(A,B)\neq 0$ for all $A\in U,B\in V$. Equivalently, $X=M(x,y,i)$ and $Y=M(x',y',j)$ where $x<x'<y+\pi$ and $y<x'<x+\pi$. (See Figure \ref{fig2}.)  \item a nonzero morphism $f:X\to Y$ is \emph{marginal} if it is not stably nonzero. Equivalently, $X,Y$ share an end and $f$ is an isomorphism at that end. For example, any isomorphism $f:X\cong Y$ is marginal since $X,Y$ have the same two ends and $f$ is an isomorphism at each end.
\item a zero morphism $f:X\to Y$ is \emph{stably zero} if it is not a limit of nonzero morphisms.
\item a \emph{limiting null morphism} is a zero morphism which is a limit of nonzero morphisms.
\end{enumerate}
\end{defn}

\begin{prop}\label{prop 6.24} %Nonzero morphisms $f:X\to Y$ satisfy the following.
\begin{enumerate}
\item If $f:X\to Y$ is stably nonzero then there are contractible open neighborhoods $U,V$ of $X,Y$ so that, for all $A\in U$, $B\in V$ there is a unique $f':A\to B$ which is nonzero and homotopic to $f$ through homotopies $f_t:A_t\to B_t$ where $A_t\in U$, $B_t\in V$. Furthermore, the set of all such maps $f'$ forms an open neighborhood of $f$ in $\Mor(\widetilde\cM_\sigma)$.
\item $f:X\to Y$ is marginal if $X=M(x,y,i)$ and either $Y=M(x,y',j)$ for some $y\le y'<x+\pi$ or $Y=M(x',y,j)$ for some $x\le x'<y+\pi$ (or both).
\item Nonzero morphisms are either marginal or stably nonzero. 
\item Every nonzero morphism has a contractible neighborhood in $\Mor(\widetilde\cM_\sigma)$ consisting of nonzero maps with the same scalar.
\item Limiting null morphisms have the form $M(x,y,i)\to M(y+\pi,z,j)$ where $y<z\le x+\pi$ or $M(x,y,i)\to M(w,x+\pi,j)$ where $x<w\le y+\pi$. Any zero morphism with domain and range not of this form is stably zero.
\end{enumerate}
\end{prop}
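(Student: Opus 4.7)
\smallskip
\noindent\textbf{Proof plan.} The entire proposition is an unpacking of the support geometry of $\widetilde\cM_\sigma$. The key fact I will use repeatedly is that, from the matrix factorization construction, $\mathrm{Hom}(M(x,y,i), M(x',y',j))$ is one-dimensional over $K$ generated by a contractible basic morphism precisely when $(x',y',j)$ lies in the ``support rhombus'' $\{x\le x'<y+\pi,\ y\le y'<x+\pi\}$ (with $j$ determined by the sheet, consistently with the identification $M(x,y,i)=M(y-\pi,x-\pi,\sigma(i))$), and is zero otherwise. Since $K$ is discrete, the scalar $a\in K^\ast$ of any nonzero morphism is locally constant on any connected family of nonzero morphisms.

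I would begin with parts (2) and (3), which I can handle simultaneously. Given $f:X\to Y$ nonzero with $X=M(x,y,i)$ and $Y=M(x',y',j)$, all four support inequalities $x\le x'<y+\pi$ and $y\le y'<x+\pi$ must hold. If all four are strict, then small perturbations of the four parameters remain in the support rhombus, so $f$ is stably nonzero. If some inequality is an equality of the form $x=x'$ or $y=y'$, then $X$ and $Y$ share an end, and perturbing the shared coordinate in one direction pushes $(x',y')$ off the rhombus, forcing the hom space to drop to zero. This gives (3) and the characterization of marginal morphisms in (2). For (1), I would pick $U,V$ to be contractible open neighborhoods of $X,Y$ small enough that the four strict support inequalities persist for every $(A,B)\in U\times V$; on such pairs, $\mathrm{Hom}(A,B)\cong K$, so scaling by the fixed scalar of $f$ produces the required unique $f':A\to B$. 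The homotopy $f_t:A_t\to B_t$ is obtained by linearly interpolating the four real parameters in the universal cover, which keeps us inside $U,V$ and produces a continuous path of nonzero morphisms. The fact that these maps assemble into an open neighborhood of $f$ in $\mathrm{Mor}(\widetilde\cM_\sigma)$ follows from the explicit quotient topology on the morphism space inherited from Definition \ref{def: Moebius strip category M0}.

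Part (4) is then immediate for stably nonzero morphisms: the neighborhood in (1) is contractible and the scalar, being a locally constant $K^\ast$-valued function on this connected set, must equal that of $f$. For marginal morphisms, I would shrink $U,V$ further so that perturbations are allowed \emph{only} in the directions that preserve the shared end (the ``thin'' direction of the support), and argue that on this smaller contractible slice the same scalar analysis applies. Part (5) is where the work concentrates. I would split zero morphisms $X\to Y$ into two cases based on which support inequality fails: either $(x',y')$ lies strictly outside the rhombus (in which case a whole neighborhood of $(X,Y)$ has zero hom space, so the morphism is stably zero), or $(x',y')$ sits on the boundary $x'=y+\pi$ or $y'=x+\pi$. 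In the boundary case, a sequence of nonzero morphisms in the universal cover $\widetilde{\widetilde{\cP_\sigma}}(S^1)$ with target approaching $(y+\pi,z)$ or $(w,x+\pi)$ acquires an extra factor of $u$ from \eqref{eq: half-shift}, which descends in the matrix factorization category to $t$ and hence to $0$ in the stable category, realizing the zero morphism as a limit of nonzero ones. Conversely, any sequence of nonzero morphisms converging to a zero morphism must, by the support description, approach the boundary of the rhombus.

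\smallskip
\noindent\textbf{Main obstacle.} The subtlest point is the bookkeeping in (5): pinning down exactly which indices $j\in[n]$ arise as limiting null targets of $M(x,y,i)$, and verifying that the four-parameter support picture is unaffected by the identification $M(x,y,i)=M(y-\pi,x-\pi,\sigma(i))$. This amounts to confirming that the boundary of the rhombus in the covering space $\widetilde{\widetilde{S^1_\sigma}}$ descends correctly to the Moebius cover, where the roles of positive and negative ends may be swapped on approach, and that the $\sigma$-action in the identification does not generate additional boundary components beyond those listed.
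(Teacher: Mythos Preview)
Your proposal is correct and follows the same approach as the paper: the paper's entire proof is the single sentence ``All of this follows directly from a description of the support of any object $M(x,y,i)$ and the fact that we are taking the discrete topology on $K$,'' and your plan is a faithful unpacking of exactly that. One minor simplification: in (4) for marginal morphisms you need not restrict perturbations to the ``thin'' direction, because the topology on $\Mor(\widetilde\cM_\sigma)$ already separates the nonzero component $\{a\}\times\{(X,Y):\Hom(X,Y)\neq 0\}$ from nearby zero morphisms except at the far boundary of the support rhombus (cf.\ Remark \ref{rem: quotient topology}), so a small half-disk neighborhood in the nonzero locus is automatically open and contractible.
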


\begin{proof}
All of this follows directly from a description of the support of any objects $M(x,y,i)$ and the fact that we are taking the discrete topology on $K$.
\end{proof}

To prove Theorem \ref{thm: add M(s,t,phi) is continuously triangulated}, we need to show that, when a family of distinguished triangles
\begin{equation}\label{eq: family of converging triangles}
	X_\alpha\xrightarrow{f_\alpha} Y_\alpha\xrightarrow{g_\alpha} Z_\alpha\xrightarrow{h_\alpha} TX_\alpha
\end{equation}
converges to a sequence
\begin{equation}\label{eq: limit triangles}
	X_\infty	\xrightarrow{f_\infty} Y_\infty\xrightarrow{g_\infty} Z_\infty\xrightarrow{h_\infty} TX_\infty
\end{equation}
the limit sequence is a distinguished triangle.

\underline{Case 1}: 
First consider the case when the terms $X_\infty,Y_\infty,Z_\infty$ in the limiting sequence \eqref{eq: limit triangles} has the same number of summands as (almost all of) the terms in \eqref{eq: family of converging triangles}. Suppose further that all morphisms in \eqref{eq: limit triangles} are stable: either nonzero or stably zero. In that case, we lift each distinguished triangle in \eqref{eq: family of converging triangles} to a push-out diagram \ref{eq: pushout diagram giving triangle}. By Proposition \ref{prop 6.24}, the scalars associated to nonzero morphisms become constant. By the following lemma, the limit is a push-out diagram of the universal exact sequence $X_\infty\to IX_\infty\to TX_\infty$ making \eqref{eq: limit triangles} into a distinguished triangle.

%%%%%%%%%%%%%%%%%%%%%%%%%%%%

\begin{lem}\label{lem: limit of exact sequence is exact}
If a family of exact sequence in ${add\,\overline \cM_\sigma(K[[t]])}$, say $A_\alpha\xrightarrow{f_\alpha}B_\alpha\xrightarrow{g_\alpha}C_\alpha$, converges to a sequence $A_\infty\xrightarrow{f_\infty}B_\infty\xrightarrow{g_\infty}C_\infty$, the limiting sequence is also exact.
\end{lem}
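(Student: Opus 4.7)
The key point is the definition of exactness in the Frobenius category $add\,\overline\cM_\sigma(K[[t]])$: a sequence $A \xrightarrow{f} B \xrightarrow{g} C$ is exact iff it is split exact on the underlying category $add\,{\widetilde{\widetilde{\cP_\sigma}}}(S^1)$, meaning there exist morphisms $s: B \to A$ and $r: C \to B$ (not required to commute with $d$) satisfying $sf = \mathrm{id}_A$, $gr = \mathrm{id}_C$, and $fs + rg = \mathrm{id}_B$. These identities are preserved under the continuous operations of composition and $K[[t]]$-linear combination, so the game is to exhibit limit splittings $s_\infty$, $r_\infty$ and then invoke continuity.

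First, for each $\alpha$ extract splittings $s_\alpha, r_\alpha$ from the given exactness. Using Proposition \ref{prop: Krull-Schmidt}, decompose each $A_\alpha, B_\alpha, C_\alpha$ into indecomposable summands $M(x,y,i)$; since the underlying splitting gives $B_\alpha \cong A_\alpha \oplus C_\alpha$, the total number of indecomposable summands is a fixed integer. I would pass to a subsequence so that each indecomposable summand of $A_\alpha$, $B_\alpha$, $C_\alpha$ converges individually in the locally compact object space of $\overline\cM_\sigma(K[[t]])$. In these matrix coordinates, each entry of $f_\alpha, g_\alpha, s_\alpha, r_\alpha$ is a $K[[t]]$-scalar multiple of a basic morphism in ${\widetilde{\widetilde{\cP_\sigma}}}(S^1)$. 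Because nonzero scalars vary locally constantly (the scalar part of the morphism space is discrete), passing to a further subsequence stabilizes every scalar entry; the basic-morphism factors then converge with their endpoints. The resulting limits $s_\infty, r_\infty$ are morphisms in $add\,{\widetilde{\widetilde{\cP_\sigma}}}(S^1)$, and joint continuity of composition and sum delivers the identities $s_\infty f_\infty = \mathrm{id}_{A_\infty}$, $g_\infty r_\infty = \mathrm{id}_{C_\infty}$, $f_\infty s_\infty + r_\infty g_\infty = \mathrm{id}_{B_\infty}$, exhibiting the limit as split exact and hence exact in the Frobenius sense.

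The main obstacle is the case where two or more indecomposable summands of $B_\alpha$ collide to the same indecomposable summand of $B_\infty$ (and correspondingly for $A_\alpha$ or $C_\alpha$). There, the naive termwise limit of $s_\alpha$ can fail the splitting identities because several scalar contributions superimpose on one block, and the endomorphism block of the collapsed summand might become non-invertible. To overcome this I would perform a local change of basis on the colliding family: the splitting identity for $s_\alpha f_\alpha$ restricted to the involved block is, for each $\alpha$, an invertible $k \times k$ matrix over the (local) endomorphism ring; by Corollary \ref{cor: isomorphisms do not factor} and Proposition \ref{prop: isomorphisms in add M do not factor}, this invertibility is controlled by its diagonal entries, which are locally constant scalars in $K$. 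Renormalizing the basis of the colliding summands using this invertible matrix before taking the limit yields splittings whose termwise limits are well-defined and satisfy the required identities. Once this collision case is handled, closedness of the splitting identities under continuous composition closes the argument.
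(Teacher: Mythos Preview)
Your approach diverges from the paper's and carries a genuine gap. The paper does not attempt to construct limit splittings $s_\infty, r_\infty$. Instead it first observes that in the $K[[t]]$-category $add\,\overline\cM_\sigma(K[[t]])$ (as opposed to the stable category) no object and no morphism can go to zero; hence the number of indecomposable summands is eventually constant and the scalar of every entry of $f_\alpha,g_\alpha$ is eventually constant. It then uses that exactness here means split exactness in $add\,\widetilde{\widetilde{\cP_\sigma}}(S^1)$, which is equivalent to certain matrix entries of $f_\alpha$ and $g_\alpha$ themselves being isomorphisms; by Proposition~\ref{prop: isomorphisms in add M do not factor} and Corollary~\ref{cor: isomorphisms do not factor}, an isomorphism between indecomposables cannot factor through non-isomorphisms, so those entries remain isomorphisms in the limit. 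Everything is read off the convergent data $f_\alpha,g_\alpha$ directly.

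Your argument instead introduces auxiliary splittings $s_\alpha,r_\alpha$ and then asserts that ``passing to a further subsequence stabilizes every scalar entry.'' That step is not justified: the $s_\alpha,r_\alpha$ are \emph{choices}, not part of the given convergent family, and their scalar coefficients range over the infinite discrete set $K[[t]]$ (or $K^\ast$). A subsequence of an arbitrary sequence in an infinite discrete set need not become constant. To repair this you would have to produce the splittings by a formula in the entries of $f_\alpha,g_\alpha$ that is locally constant in those entries---for instance by Gaussian elimination pivoting on the entries that are isomorphisms---and at that point you are essentially carrying out the paper's argument in disguise. Relatedly, the ``collision'' obstacle you highlight is not the real issue: in the James construction the summands are tracked separately and, as the paper emphasizes, none of them vanish in $add\,\overline\cM_\sigma(K[[t]])$; two summands becoming isomorphic in the limit does not collapse the direct sum or spoil the splitting identities once the scalars are known to stabilize.
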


\begin{proof}
In the topological $K[[t]]$-category ${add\,\overline \cM_\sigma(K[[t]])}$, morphisms cannot go to zero and objects cannot go to zero. Therefore, in any converging family of objects and morphisms, the number of objects becomes stable (constant) and the scalars associated to each morphism between indecomposables also becomes constant. So, zero morphisms cannot become nonzero and $g_\infty\circ f_\infty=0$.

Also, isomorphisms between indecomposables in ${add\,\overline \cM_\sigma(K[[t]])}$ do not factor through nonisomorphism by Proposition \ref{prop: isomorphisms in add M do not factor} and the same holds in ${\widetilde{\widetilde {\cP_\sigma}}(S^1)}$ by Corollary \ref{cor: isomorphisms do not factor}. Thus, the property of being exact, which is equivalent to certain component morphism in the category ${\widetilde{\widetilde {\cP_\sigma}}(S^1)}$ being isomorphisms, is preserved in the limit and the limit sequence is exact.
\end{proof}

This lemma implies that the family of pushout diagrams in ${add\,\overline \cM_\sigma(K[[t]])}$ associated to the family of distinguished triangles \eqref{eq: family of converging triangles} converges to a pushout diagram and the limiting sequence \ref{eq: limit triangles} is a distinguished triangle.

\underline{Case 2}:
Suppose that the objects $X_\alpha,Y_\alpha,Z_\alpha$ do not converge to zero objects but some of the morphisms between some components go to zero. In other words some of the morphism between components of $X_\infty,Y_\infty,Z_\infty$ are limiting null morphisms (Definition \ref{def: 4 kinds of morphisms}(4)). 

In this case, we apply the concept from Remark \ref{rem: quotient topology} which implies that the scalar associated to morphisms converging to zero can take only finitely many values. If we choose an ultrafilter for the set of parameters $\{\alpha\}$ we can assume that there is only one limiting scalar. This implies that, when we lift objects and morphisms in the stable category to ${add\,\overline \cM_\sigma(K[[t]])}$, the limit is well-defined. So, Lemma \ref{lem: limit of exact sequence is exact} applies and the push-out diagrams in ${add\,\overline \cM_\sigma(K[[t]])}$ converge to a pushout diagram and therefore the limiting sequence \eqref{eq: limit triangles} is a distinguished triangle.

\underline{Case 3}: Suppose that some components of $X_\alpha,Y_\alpha,Z_\alpha$ converge to zero so that the number of components in the limit sequence \eqref{eq: limit triangles} is strictly smaller than the number of components in the family of triangles \eqref{eq: family of converging triangles}. 

In this case, when the triangles \eqref{eq: family of converging triangles} are lifted to pushout diagrams, some of the components converge to projective-injective objects. As in Case 2, using Remark \ref{rem: quotient topology}, we may assume that the scalars associated to the morphisms in this diagram are constant and, therefore, the morphisms have well-defined limits. Again Lemma \ref{lem: limit of exact sequence is exact} applies to show that the limit diagram is a pushout diagram and the limiting sequence \eqref{eq: limit triangles} is a distinguished triangle.

This resolves all the cases and concludes the proof of Theorem \ref{thm: add M(s,t,phi) is continuously triangulated}

\subsection{Universal virtual triangles}

To conclude the proof of the main Theorem \ref{thm: D in intro}, we need to show that in the stable category $add\,\widetilde\cM(\sigma,\tau,\varphi)$ the universal virtual triangles given in \eqref{eq: universal virtual triangles} are distinguished triangles up to sign equivalence. This straightforward calculation will conclude this paper.

Let $X=M(x,y,i)$ where $|y-x|<\pi$. Let 
\[
f=(f_1,f_2):X\to I_1^\vare X\oplus I_2^\vare X=M(y+\pi-\vare,y,i)\oplus M(x,x+\pi-\vare,i)
\]
where $f_1=f_{y+\pi-\vare,x}\otimes x_{ii}\oplus id$ and $f_2=id\oplus f_{x+\pi-\vare,y}\otimes x_{ii}$. We denote these as $f=(1,1)$. Since the morphism $X\to I_{\sigma(i)}(x-\pi)\oplus I_i(y)$ factors through this map, the pushout is $M(y+\pi-\vare,x+\pi-\vare,i)\oplus I_{\sigma(i)}(x-\pi)\oplus I_i(y)$ and the pushout diagram is the following.
\[
\xymatrixrowsep{50pt}\xymatrixcolsep{60pt}
\xymatrix{%begin xy matrix
M(x,y,i)\ar[d]_{\binom11}\ar[r]^(.4){\binom11} &
	I_{\sigma(i)}(x-\pi)\oplus I_i(y)\ar[d]^{\mat{0 & 0\\ 1 & 0\\0 & 1}}\ar[r]^(.55){(-c_i,c_i)} &
	M(x,y,\tau(i))\ar[d]^=\\
I_1^\vare X\oplus I_2^\vare X \ar[r]_(.4){\mat{-1 & 1\\ 1 & 0\\0 & 1}}& 
	Y\oplus I_{\sigma(i)}(x-\pi)\oplus I_i(y) \ar[r]^(.6){(-c_i,-c_i,c_i)}&
	M(x,y,\tau(i))
	}%end xy matrix
\]
Thus, we obtain the distinguished triangle
\[
	X\xrightarrow{\binom11} I_1^\vare X\oplus I_2^\vare X \xrightarrow{(-1,1)} Y \xrightarrow{-c_i} TX
\]
which is sign equivalent to the desired system of distinguished triangles.

%%%%%%%%%%%%%%%%%%%%%%%%%%%%

%\newpage

\section*{Acknowledgements}

Research for this project was funded by the National Security Agency NSA Grant \#H98230-13-1-0247. The first author also acknowledges support from a GAANN grant (Graduate Assistance in Areas of National Need) during this research period. Both authors are very grateful for the support and encouragement of Gordana Todorov. These results were presented by the first author at the Auslander Conference at Woods Hole in May, 2015 which was supported by the National Science Foundation. This revised version was presented by the second author at a workshop at Tsinghua University in Beijing in July 2017. Both of these conferences were very fruitful and enjoyable events and the authors would like to thank the organizes of both events. Finally, the authors are grateful to the referee for many very helpful suggestions.

\end{document}